\definecolor{note_fontcolor}{rgb}{0.800781, 0.800781, 0.800781}
\numberwithin{equation}{section}
\numberwithin{figure}{section}
\theoremstyle{plain}
\newtheorem*{thm*}{\protect\theoremname}
\theoremstyle{plain}
\newtheorem{thm}{\protect\theoremname}[section]
\theoremstyle{definition}
\newtheorem{example}[thm]{\protect\examplename}
\theoremstyle{definition}
\newtheorem{defn}[thm]{\protect\definitionname}
\theoremstyle{plain}
\newtheorem{cor}[thm]{\protect\corollaryname}
\theoremstyle{plain}
\theoremstyle{plain}
\newtheorem{lem}[thm]{\protect\lemmaname}
\theoremstyle{remark}
\newtheorem{rem}[thm]{\protect\remarkname}
\theoremstyle{plain}
\newtheorem{prop}[thm]{\protect\propositionname}
\theoremstyle{plain}
\newtheorem{criterion}[thm]{\protect\criterionname}
\newtheorem{Thm}{Theorem}
\newtheorem{Cor}[Thm]{Corollary}
\providecommand{\corollaryname}{Corollary}
\providecommand{\criterionname}{Criterion}
\providecommand{\definitionname}{Definition}
\providecommand{\examplename}{Example}
\providecommand{\factname}{Fact}
\providecommand{\lemmaname}{Lemma}
\providecommand{\propositionname}{Proposition}
\providecommand{\remarkname}{Remark}
\providecommand{\theoremname}{Theorem}
\global\long\def\res{\!\restriction}%
\global\long\def\N{\mathbb{N}}%
\global\long\def\Z{\mathbb{Z}}%
\global\long\def\Q{\mathbb{Q}}%
\global\long\def\R{\mathbb{R}}%
\global\long\def\C{\mathbb{C}}%
\global\long\def\T{\mathbb{T}}%
\global\long\def\bG{\mathbf{G}}%
\global\long\def\bH{\mathbf{H}}%
\global\long\def\bR{\mathbf{R}}%
\global\long\def\bS{\mathbf{S}}%
\global\long\def\bU{\mathbf{U}}%
\global\long\def\bL{\mathbf{L}}%
\global\long\def\bP{\mathbf{P}}%
\global\long\def\bN{\mathbf{N}}%
\global\long\def\bZ{\mathbf{Z}}%
\newcommand{\FC}{\operatorname{FC}}
\newcommand{\Lie}{\operatorname{Lie}}
\newcommand{\Prob}{\operatorname{Prob}}
\newcommand{\Erg}{\operatorname{Erg}}
\newcommand{\Res}{\operatorname{Res}}
\newcommand{\Ind}{\operatorname{Ind}}
\newcommand{\Ch}{\operatorname{Ch}}
\newcommand{\Tr}{\operatorname{Tr}}
\newcommand{\PD}{\operatorname{PD}}
\newcommand{\Rad}{\operatorname{Rad}}
\newcommand{\Sub}{\operatorname{Sub}}
\newcommand{\twoone}{\operatorname{II_1}}
\newcommand{\supp}{\operatorname{supp}}
\newcommand{\bary}{\operatorname{bar}}
\newcommand{\Fix}{\operatorname{Fix}}
\newcommand{\Aut}{\operatorname{Aut}}
\newcommand{\Ext}{\operatorname{Ext}}
\newcommand{\IM}{\operatorname{Im}}
\newcommand{\SL}{\operatorname{SL}}
\newcommand{\GL}{\operatorname{GL}}
\newcommand{\SO}{\operatorname{SO}}
\newcommand{\SU}{\operatorname{SU}}
\newcommand{\dd }{\,{\rm d}}
\newcommand{\ext}{\operatorname{ext}}
\begin{document}

\title{Charmenability and Stiffness of Arithmetic Groups}
\author{Uri Bader, Itamar Vigdorovich}
\maketitle

\begin{abstract}

We characterize
charmenability among arithmetic groups and deduce dichotomy statements
pertaining normal subgroups, characters, dynamics, 
representations and associated operator algebras. We do this by studying the stationary
dynamics on the space of characters of the
amenable radical, and in particular we establish stiffness: any stationary
probability measure is invariant. This generalizes a classical result
of Furstenberg for dynamics on the torus. Under a higher rank assumption, we show that any action
on the space of characters of a finitely generated virtually nilpotent group  is stiff. 
\end{abstract}

\section{Introduction}

In this paper we are concerned with the dynamics of the conjugation action of a group $\Lambda$ 
on the compact convex space $\PD_{1}(\Lambda)$ consisting all normalized positive-definite functions on $\Lambda$. 
In particular, we are interested in the structure of invariant compact convex subsets and faces of $\PD_{1}(\Lambda)$.
A special attention will be given to the subset of fixed points $\Tr(\Lambda):=\PD_{1}(\Lambda)^\Lambda$,
and the corresponding set of extreme points $\Ch(\Lambda):=\ext(\Tr(\Lambda))$.
An element of $\Tr(\Lambda)$ will be called a \emph{trace} of $\Lambda$ 
and an element of $\Ch(\Lambda)$ will be called a \emph{character}\footnote{Note that in some texts, e.g \cite{bader2022charmenability},
the term character refers to what we call here a trace.}.

\begin{defn}\label{def:charmenability}
A countable group $\Lambda$ is said to be \emph{charmenable} if it satisfies the following two properties:
\begin{enumerate}
\item (Ubiquity of traces) Every compact convex $\Lambda$-invariant subset of $\PD_{1}(\Lambda)$
contains a trace. 
\item (Character dichotomy) Every character of $\Lambda$ is either supported on $\Rad(\Lambda)$
or it is von Neumann amenable (see Definition~\ref{def:vNamenable}). 
\end{enumerate}
\end{defn}

Here $\Rad(\Lambda)$ denotes the \emph{amenable radical} of $\Lambda$,
namely its unique maximal normal amenable subgroup, and an element of $\PD_{1}(\Lambda)$ is supported on 
$\Rad(\Lambda)$ if it vanishes outside this subgroup. 
We note that amenable groups are charmenable in a strong way: they satisfy both conditions in item 2 above. 
For non-amenable groups these two conditions are exclusive.

Our main result is the characterization of charmenability among arithmetic groups,
which in this paper are defined as follows.

\begin{defn} \label{def:arithmeticity}
Given a $\Q$-algebraic group $\bG$,
a subgroup $\Lambda<\bG(\Q)$ is said to be an \emph{arithmetic subgroup of $\bG$} if 
for some (equivalently, any) faithful $\Q$-rational representation $\bG \to \GL_{n}$,
$\Lambda$ is commensurable with the preimage of $\GL_{n}(\Z)$ in $\bG(\Q)$.
\end{defn}

\begin{Thm} \label{thm:Main-charmenability} 
Let $\bG$ be a connected $\Q$-algebraic group.
Then either all arithmetic subgroups of $\bG$
are charmenable or none of them are. 
Denoting by $\bR$ the solvable radical of $\bG$,
the first case occurs if and only if the real rank of $\bG/\bR$
is not equal to $1$ and $\bG/\bR$ has at most one $\R$-isotropic $\Q$-simple 
factor. 
\end{Thm}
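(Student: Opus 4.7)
The plan is to combine three ingredients: (i) known charmenability results for irreducible higher-rank lattices, (ii) the paper's own stiffness theorem for the dynamics on characters of the amenable radical, and (iii) explicit constructions of non-amenable non-radical characters in the forbidden cases. The ``all or none'' clause follows immediately once one verifies that charmenability is a commensurability invariant: both defining conditions in Definition \ref{def:charmenability} transfer between $\Lambda$ and any finite-index subgroup via induction and restriction of positive-definite functions, and any two arithmetic subgroups of $\bG$ are commensurable. Hence it suffices to prove the characterization for one conveniently chosen arithmetic $\Lambda$.

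The main structural reduction is to the short exact sequence $1 \to N \to \Lambda \to \bar\Lambda \to 1$, where $N = \Lambda \cap \bR(\Q)$ is solvable (hence amenable, hence contained in $\Rad(\Lambda)$) and $\bar\Lambda$ is arithmetic in $\bG/\bR$. For the sufficiency direction, assume the rank/factor hypothesis. If the real rank of $\bG/\bR$ is $0$, then $\bar\Lambda$ is finite and $\Lambda$ is virtually solvable, so both properties in Definition \ref{def:charmenability} hold trivially. Otherwise $\bG/\bR$ has real rank $\geq 2$ and, by Margulis arithmeticity combined with the ``at most one $\R$-isotropic $\Q$-simple factor'' hypothesis, $\bar\Lambda$ is an irreducible higher-rank lattice; such lattices are known to be charmenable. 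To lift this to $\Lambda$, I would (a) use ubiquity of traces on $\bar\Lambda$ to extract an invariant element from any $\Lambda$-invariant compact convex subset of $\PD_1(\Lambda)$, after suitably integrating out the $N$-direction, and (b) apply the paper's stiffness theorem to the $\Lambda$-action on the character space $\Ch(N)$ (or more generally on $\Ch(\Rad(\Lambda))$) to show that the restriction of any character of $\Lambda$ to $\Rad(\Lambda)$ is a $\Lambda$-invariant trace, so the character dichotomy for $\bar\Lambda$ propagates to $\Lambda$.

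For the necessity direction, one must exhibit obstructions to charmenability whenever the hypothesis fails. If the real rank of $\bG/\bR$ equals $1$, then $\bar\Lambda$ is a lattice in a real rank-one Lie group and admits non-amenable characters not supported on its radical (e.g.\ from non-trivial normal subgroups produced by relative hyperbolicity, or from traces on non-amenable factor representations), which pull back to $\Lambda$ via the quotient $\Lambda\twoheadrightarrow\bar\Lambda$. If $\bG/\bR$ has at least two $\R$-isotropic $\Q$-simple factors, project $\bar\Lambda$ to one such factor: the projection has infinite kernel, and the pullback of the regular trace of the image yields a character of $\bar\Lambda$ that is non-amenable and supported off $\Rad(\bar\Lambda)$. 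The main obstacle in this whole program is the stiffness input in step (b) above: to push charmenability from the semisimple quotient $\bar\Lambda$ to $\Lambda$ itself, one must prove that every $\Lambda$-stationary probability measure on the character space of the amenable radical is actually $\Lambda$-invariant, a substantial non-abelian generalization of Furstenberg's theorem on stationary measures on the torus and, as the abstract emphasizes, the technical heart of the paper.
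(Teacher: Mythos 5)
Your necessity argument is essentially the paper's (the paper phrases the obstruction via normal subgroups that are neither amenable nor co-amenable, invoking Proposition \ref{prop:charmenable dichotomy}(\ref{charm-item-normal}), rather than via explicitly constructed characters, but the idea is the same), and you correctly identify stiffness as the technical heart. The sufficiency direction, however, has a genuine gap. Charmenability of the quotient $\bar{\Lambda}$ does not propagate to $\Lambda$ by ``integrating out the $N$-direction'': a $\Lambda$-invariant compact convex subset of $\PD_1(\Lambda)$ has no natural image in $\PD_1(\bar{\Lambda})$, and you give no mechanism for producing a trace in it from data on $\bar{\Lambda}$. The paper's actual route is Criterion \ref{crit:criterion for charmenability} (from \cite{bader2022charmenability}): one exhibits a single boundary $B$ (the Poisson boundary of $\bar{\Lambda}$, promoted to a $\Lambda$-boundary via Lemma \ref{lem:amenable-metrically-ergodic} because the kernel $\Lambda\to\bar{\Lambda}$ is amenable) and verifies two conditions. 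The first is a noncommutative boundary-rigidity statement about $(\bar{\Lambda}, L^{\infty}(B))$-von Neumann algebras (Proposition \ref{prop:first-condition-criterion}, resting on \cite{boutonnet2021stationary} and \cite{bader2022charmenability}); this is strictly stronger input than ``irreducible higher-rank lattices are charmenable'' and is entirely absent from your proposal. The second is that every Lebesgue $\Lambda$-map $B\to\PD_1(\Rad(\Lambda))$ is essentially constant, and this --- not the $\Lambda$-invariance of restrictions of characters to $\Rad(\Lambda)$, which holds automatically by Lemma \ref{lem:restriction-of-characters} --- is where Theorem \ref{thm:stiffness of algebraic groups} enters (Proposition \ref{prop:second-condition-criterion}): a boundary map yields a $\mu$-stationary trace on $\Rad(\Lambda)$, stiffness makes the corresponding measure on $\Ch(\Rad(\Lambda))$ invariant, and Lemma \ref{lem:invcrit} then forces the map to be essentially constant.

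A second, smaller issue: your ``all or none'' clause leans on the claim that charmenability is a commensurability invariant, asserted via ``induction and restriction of positive-definite functions'' but not proved; transferring the ubiquity-of-traces condition across a finite-index inclusion is not obviously a formality. The paper avoids this entirely by proving the if-and-only-if criterion for an arbitrary arithmetic subgroup $\Lambda$ of $\bG$, so that the all-or-none statement is automatic.
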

The following proposition describes a variety of properties that charmenable groups satisfy, most of which are proved in \cite{bader2022charmenability}. An exception is the third property which is novel (see Theorem \ref{thm:charm-dichotomy-IRS}).
The notation involved will be clarified and its proof will be discussed in \S\ref{sec:cor-of-charm}.

\begin{prop}
	\label{prop:charmenable dichotomy} 
For a charmenable group $\Lambda$ the following hold:
	\begin{enumerate}
		\item \label{charm-item-normal}Every normal subgroup is either amenable or co-amenable. 
		\item \label{charm-item-trace}Every trace is either amenable or supported on $\Rad(\Lambda)$. 
		\item \label{charm-item-IRS} For every probability measure preserving action with a spectral gap, the point stabilizers are either a.e amenable or a.e co-amenable. 
		\item \label{charm-item-URS} Every URS is either contained in $\Sub(\Rad(\Lambda))$ or it admits
		a $\Lambda$-invariant probability measure (in which case the previous item applies). 
		\item \label{charm-item-unirep}Every unitary representation is either amenable or weakly contains
		a representation which is induced from $\Rad(\Lambda)$.
		\item \label{charm-item-vn}If $M$ is a finite factor von Neumann algebra admitting a representation $\Lambda\to \mathcal{U}(M)$ with ultraweak-dense image, then either $M$ is  hyperfinite, or $M$ is isomorphic to the von Neumann algebra of a representation which is induced from $\Rad(\Lambda)$. 
		\item \label{charm-item-C*}The $C^*$-algebra $A$ of any non-amenable unitary representation of $\Lambda$ admits a unital surjective *-homomorphism  onto the $C^*$-algebra of a representation which is induced from $\Rad(\Lambda)$.   Furthermore, any maximal ideal of $A$ is the kernel of such a surjection, and any tracial state on $A$ factorizes through such a surjection. 
	\end{enumerate}
\end{prop}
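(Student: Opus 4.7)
The plan is as follows. Items~(\ref{charm-item-normal}), (\ref{charm-item-trace}), (\ref{charm-item-URS}), (\ref{charm-item-unirep}), (\ref{charm-item-vn}), and (\ref{charm-item-C*}) all follow from the two defining properties of charmenability via a common mechanism, which is essentially what \cite{bader2022charmenability} axiomatizes. In each case one identifies a compact convex $\Lambda$-invariant subset of $\PD_1(\Lambda)$ whose existence (or richness) encodes the relevant structural data: the PD functions factoring through $\Lambda/N$ for a normal subgroup $N$ (giving (\ref{charm-item-normal})); the trace itself (giving (\ref{charm-item-trace})); the closed convex hull of diagonal matrix coefficients of a unitary representation (giving (\ref{charm-item-unirep}), and thence the von~Neumann and $C^\ast$ statements (\ref{charm-item-vn}), (\ref{charm-item-C*})); and an invariant probability measure on a URS pushed forward to $\PD_1(\Lambda)$ via stabilizer-type PD functions (giving (\ref{charm-item-URS})). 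In each case the ubiquity axiom produces a trace inside this convex set, a Choquet decomposition splits it into characters, and the character dichotomy of Definition~\ref{def:charmenability}(2) applied component-wise yields the stated alternative after translating the two cases back into the relevant language (amenable versus co-amenable subgroup, hyperfinite versus induced von Neumann algebra, etc.). I would simply cite \cite{bader2022charmenability} for these; no new idea is needed once charmenability is in place.

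The novel content is item (\ref{charm-item-IRS}), i.e.\ Theorem~\ref{thm:charm-dichotomy-IRS}. To a probability measure preserving action $\Lambda\actson(X,\mu)$ I would associate the stabilizer trace
\[
\varphi_\mu(g)\;:=\;\mu\bigl\{x\in X:\ g\in\Stab_\Lambda(x)\bigr\},
\]
which is $\Lambda$-conjugation-invariant by $\Lambda$-invariance of $\mu$, and positive definite by the standard computation $\sum_{i,j} c_i\bar c_j\varphi_\mu(g_ig_j^{-1}) = \int_X \sum_{i,j} c_i\bar c_j\mathbf{1}_{g_i^{-1}x=g_j^{-1}x}\,d\mu(x)\ge 0$. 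Thus $\varphi_\mu\in\Tr(\Lambda)$. Writing $\varphi_\mu=\int_{\Ch(\Lambda)}\chi\,d\tau(\chi)$ as a Choquet integral of characters and applying the character dichotomy gives two types of characters in the support of $\tau$: those supported on $\Rad(\Lambda)$ (which correspond, via the defining formula for $\varphi_\mu$, to point stabilizers that are a.e.\ contained in $\Rad(\Lambda)$, hence amenable), and those which are von~Neumann amenable (which, by the standard correspondence between IRS traces and quasi-regular representations, correspond to point stabilizers that are co-amenable in $\Lambda$).

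The main obstacle is ruling out the \emph{mixed} scenario in which both types of characters carry positive $\tau$-mass. This is exactly where the spectral gap hypothesis enters: without it, one can decompose $(X,\mu)$ into an invariant piece with amenable stabilizers and one with co-amenable stabilizers, so the dichotomy genuinely fails. The subsets of $X$ on which $\Stab_\Lambda(x)$ is amenable, respectively co-amenable, are $\Lambda$-invariant and Borel, so it suffices to establish strong ergodicity of the action on $X$---which is what spectral gap provides---to force one of these sets to have full measure. Equivalently, one shows that the Choquet decomposition $\tau$ of $\varphi_\mu$ inherits a spectral-gap-type property from the Koopman representation on $L^2_0(X,\mu)$, so that $\tau$ cannot split as a non-trivial convex combination of an ``amenable'' part and a ``$\Rad$-supported'' part. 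Carrying out this ergodicity transfer cleanly, and in particular verifying that von~Neumann amenability of the character corresponds exactly to co-amenability of the IRS stabilizer, is the technical heart of the argument.
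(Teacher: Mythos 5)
Your treatment of items (\ref{charm-item-normal}), (\ref{charm-item-trace}), (\ref{charm-item-URS}), (\ref{charm-item-unirep}), (\ref{charm-item-vn}) and (\ref{charm-item-C*}) matches the paper: these are either cited from \cite{bader2022charmenability} or proved by the convex-set-plus-dichotomy mechanism you describe (the paper writes out the representation/$C^*$ case in Proposition~\ref{prop:charmenable-reps}, and gets (\ref{charm-item-vn}) from Theorem~\ref{thm:thoma correspondence} together with hyperfiniteness of amenable finite factors). The stabilizer trace $\varphi(g)=\nu(\Fix(g))$ is also the correct starting point for item (\ref{charm-item-IRS}).

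However, for item (\ref{charm-item-IRS}) you have misplaced the difficulty, and the route you sketch for the part you acknowledge as ``the technical heart'' would not close the gap. First, there is no mixed Choquet scenario to rule out: Proposition~\ref{prop:trace-dichotomy} (i.e.\ \cite[Proposition 3.2]{bader2022charmenability}) already upgrades the character dichotomy to a dichotomy for \emph{traces} -- every trace is either amenable (meaning its GNS representation is amenable, a weaker notion than von Neumann amenability) or supported on $\Rad(\Lambda)$ -- and this holds with no spectral gap hypothesis. Applying it directly to $\varphi$ disposes of your ``mixed'' case. The genuine content, which your proposal leaves unproved, is the implication: \emph{if $\varphi$ is an amenable trace and the action has a spectral gap, then a.e.\ stabilizer is co-amenable}. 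Your suggested mechanism -- strong ergodicity forcing one of the two invariant Borel sets (amenable stabilizers / co-amenable stabilizers) to be conull -- does not work: plain ergodicity already makes each of these sets null or conull, and the issue is that a priori \emph{both} could be null (a.e.\ stabilizer could be neither amenable nor co-amenable). The paper's argument (Lemma~\ref{lem:Bekka-amemable-co-amenable}) is quantitative: one first shows the two-sided GNS representation $\pi\times\rho$ on $L^2$ of the orbit equivalence relation is amenable (Proposition~\ref{prop:amenable traces are bi-amenable}, which itself rests on a spectral-projection argument for commuting subgroups, Proposition~\ref{prop:inv vectors of commuting subgroups}), extracts almost invariant probability measures on $\mathcal{R}$, uses the spectral gap to force their projections to $\Omega$ to converge to $\nu$ in total variation, and then disintegrates along orbits with a Markov-inequality/Borel--Cantelli argument to produce almost invariant measures on $\Lambda/\Lambda_\omega$ for a.e.\ $\omega$, whence co-amenability. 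Your appeal to a ``standard correspondence between IRS traces and quasi-regular representations'' identifying von Neumann amenable characters with co-amenable stabilizers is not a substitute for this chain; as written the proposal has a real gap at exactly this step.
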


When property (T) is assumed, stronger statements are obtained. For example, amenable  (co-amenable) subgroups must be finite (of finite index),  amenable representations must contain a finite dimensional subrepresentation, etc.  With further assumptions on the amenable radical we obtain:
\begin{prop}\label{prop:charmenable dichotomy prop (T)}
	Let $\Lambda$ be a charmenable group with property (T). Assume that $\Rad(\Lambda)$ has countably many subgroups. Then every  ergodic IRS is finitely supported and every URS is finite. 
\end{prop}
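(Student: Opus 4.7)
The plan is to derive both halves from items~\ref{charm-item-IRS} and~\ref{charm-item-URS} of Proposition~\ref{prop:charmenable dichotomy}, together with two external inputs: the Bader--Duchesne--L\'ecureux theorem that every amenable IRS of a countable group is almost surely contained in the amenable radical, and the standard consequence of property (T) that every co-amenable subgroup has finite index (the quasi-regular representation on $\ell^2(\Lambda/H)$ weakly contains the trivial representation for $H$ co-amenable, which (T) upgrades to an actual invariant vector, forcing $\Lambda/H$ to be finite).

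For an ergodic IRS $\mu$, I would apply item~\ref{charm-item-IRS} to the conjugation action $\Lambda \actson (\Sub(\Lambda), \mu)$. This is pmp, property (T) combined with ergodicity of $\mu$ produces spectral gap, and the point stabilizers here are the normalizers $N_\Lambda(H)$. Item~\ref{charm-item-IRS} together with ergodicity then yields a dichotomy. In the amenable branch $H \leq N_\Lambda(H)$ is $\mu$-a.s.\ amenable, so by Bader--Duchesne--L\'ecureux $\mu$ is supported on the countable space $\Sub(\Rad(\Lambda))$. In the co-amenable branch property (T) gives $[\Lambda : N_\Lambda(H)] < \infty$ for $\mu$-a.e.\ $H$, so the $\Lambda$-orbit of such an $H$ in $\Sub(\Lambda)$ has finite size $[\Lambda : N_\Lambda(H)]$. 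In either case $\mu$ is an ergodic probability measure on a countable $\Lambda$-space, hence uniform on a single finite orbit, and therefore finitely supported.

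For a URS $\mathcal{U}$, I would invoke item~\ref{charm-item-URS}. If $\mathcal{U} \subseteq \Sub(\Rad(\Lambda))$, then $\mathcal{U}$ is a countable compact Hausdorff space, so by the Baire category theorem it contains an isolated point; the set $I$ of isolated points is a $\Lambda$-invariant nonempty open subset, and $\mathcal{U} \setminus I$ is a closed $\Lambda$-invariant subset which by minimality must be empty, so $\mathcal{U}$ is discrete and hence finite. Otherwise $\mathcal{U}$ carries a $\Lambda$-invariant probability measure; any ergodic component has closed nonempty $\Lambda$-invariant support, equal to $\mathcal{U}$ by minimality, and is finitely supported by the IRS statement just established, so $\mathcal{U}$ is finite.

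The main place I expect the argument to need care is the co-amenable branch of the IRS analysis: one must verify that the function $H \mapsto [\Lambda : N_\Lambda(H)]$ is Borel measurable and $\Lambda$-invariant so that ergodicity forces it to be a.s.\ equal to a single finite value, and then use this to conclude that $\mu$ really concentrates on one finite conjugacy class rather than on a countable family of them with the same orbit size.
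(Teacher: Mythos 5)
Your URS argument is the same as the paper's (isolated points via Baire category in the $\Sub(\Rad(\Lambda))$ branch, reduction to the IRS statement in the invariant-measure branch), but your IRS argument takes a genuinely different route. The paper realizes the ergodic IRS $\mu$ as the stabilizer IRS of an ergodic p.m.p.\ action (Ab\'ert--Glasner--Vir\'ag), which has spectral gap by property (T), and applies Theorem~\ref{thm:charm-dichotomy-IRS} to \emph{that} action; the dichotomy then lands on $H$ itself: either $H\le\Rad(\Lambda)$ a.s., or $H$ is co-amenable a.s., hence of finite index by (T). In both branches $\mu$ is an ergodic measure on a countable set ($\Sub(\Rad(\Lambda))$ by hypothesis, respectively the finite-index subgroups of a finitely generated group), so it is finitely supported with no further work. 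You instead apply item~\ref{charm-item-IRS} to the conjugation action on $(\Sub(\Lambda),\mu)$, whose stabilizers are the normalizers $N_\Lambda(H)$. This buys you a statement about $N_\Lambda(H)$ rather than $H$: in the amenable branch you must pass from ``$H$ amenable'' to ``$H\le\Rad(\Lambda)$'' via the external Bader--Duchesne--L\'ecureux theorem (the paper gets this for free, since Theorem~\ref{thm:charm-dichotomy-IRS} already produces containment in $\Rad(\Lambda)$ rather than mere amenability), and in the co-amenable branch you only learn that $[\Lambda:N_\Lambda(H)]<\infty$, i.e.\ that a.e.\ orbit is finite --- which, unlike the paper's conclusion, does \emph{not} place $\mu$ on a countable set, since subgroups with finite-index normalizer need not form a countable family.

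That last point is the one genuine gap, and you correctly flag it but do not close it. Your sentence ``in either case $\mu$ is an ergodic probability measure on a countable $\Lambda$-space'' is false in the co-amenable branch; what you actually need is the lemma that an ergodic invariant probability measure for a countable group action on a standard Borel space, almost all of whose orbits are finite, is the uniform measure on a single finite orbit. This is true but requires an argument: the orbit-size function is Borel and invariant, hence a.s.\ equal to some $n<\infty$; the restriction of the orbit equivalence relation to the conull set of orbits of size $n$ is a finite Borel equivalence relation and so admits a Borel transversal $T$; by invariance each orbit contributes exactly $1/n$ of its mass to $T$, so for every Borel $S\subseteq T$ one has $\mu(\Lambda\cdot S)=n\,\mu(S)\in\{0,1\}$ by ergodicity, whence $\mu\res_T$ is $\{0,1/n\}$-valued on a countably separated space and is therefore a point mass; its orbit is then conull. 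With this lemma supplied, your proof is complete; without it, the co-amenable branch does not yet yield finite support.
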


The amenable radical of an arithmetic group is virtually polycyclic (Corollary \ref{cor:amenable radical is virtually polycyclic}) and thus has countably many subgroups. We therefore deduce the following corollary. 
\begin{Cor} \label{cor:arithmetic dichotomy} 
Let $\bG$ be a connected $\Q$-algebraic group
and let $\Lambda<\bG(\Q)$ be an arithmetic subgroup.
Denoting by $\bR$ the solvable radical of $\bG$, if the real rank of $\bG/\bR$
is not equal to $1$ and $\bG/\bR$ has at most one $\R$-isotropic $\Q$-simple 
factor then $\Lambda$ satisfies all properties in Proposition~\ref{prop:charmenable dichotomy}. 
If moreover $\Lambda$ has property (T), then every ergodic IRS is finitely supported and every URS is finite. 
\end{Cor}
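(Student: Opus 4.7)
The plan is to assemble the corollary from three ingredients already established in the excerpt: Theorem~\ref{thm:Main-charmenability}, which gives the equivalence between the rank/factor condition and charmenability of $\Lambda$; Proposition~\ref{prop:charmenable dichotomy}, which lists the dichotomies enjoyed by any charmenable group; and Proposition~\ref{prop:charmenable dichotomy prop (T)}, which requires the additional hypothesis that $\Rad(\Lambda)$ has countably many subgroups. The structural input that bridges the general charmenability picture with arithmeticity is the assertion, recorded as Corollary~\ref{cor:amenable radical is virtually polycyclic}, that the amenable radical of any arithmetic subgroup of a connected $\Q$-algebraic group is virtually polycyclic.

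The steps I would carry out are as follows. First, invoke Theorem~\ref{thm:Main-charmenability}: under the stated hypotheses on the real rank of $\bG/\bR$ and on the number of $\R$-isotropic $\Q$-simple factors, every arithmetic subgroup of $\bG$ is charmenable, so in particular $\Lambda$ is charmenable. Second, apply Proposition~\ref{prop:charmenable dichotomy} directly to $\Lambda$ to conclude that all seven listed properties hold. Third, to handle the property (T) addendum, note that by Corollary~\ref{cor:amenable radical is virtually polycyclic} the group $\Rad(\Lambda)$ is virtually polycyclic; one now needs to observe that a virtually polycyclic group has only countably many subgroups. This is standard: polycyclic groups are Noetherian, so every subgroup is finitely generated, and a countable group whose subgroups are all finitely generated has only countably many subgroups (each is determined by a finite subset of a countable ambient group). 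The same conclusion then passes to virtually polycyclic groups by taking the finitely many cosets of a polycyclic finite-index subgroup into account. With this, the hypothesis of Proposition~\ref{prop:charmenable dichotomy prop (T)} is verified, and the statement on ergodic IRSs being finitely supported and URSs being finite follows immediately.

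There is essentially no obstacle in the deduction: the corollary is a packaging of the theorem, the two propositions, and the structural fact about amenable radicals of arithmetic groups. The only minor point that requires a short justification beyond citing earlier results is the passage from \emph{virtually polycyclic} to \emph{at most countably many subgroups}, which is the one line indicated above.
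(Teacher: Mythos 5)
Your proposal is correct and follows essentially the same route as the paper: the corollary is deduced by combining Theorem~\ref{thm:Main-charmenability} with Propositions~\ref{prop:charmenable dichotomy} and \ref{prop:charmenable dichotomy prop (T)}, using Corollary~\ref{cor:amenable radical is virtually polycyclic} to verify that $\Rad(\Lambda)$ has countably many subgroups. The paper states the last step ("virtually polycyclic, thus countably many subgroups") without proof; your Noetherian argument is a correct justification of it.
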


The proof of
Theorem ~\ref{thm:Main-charmenability} will be given in \S\ref{subsec:thm:main-charmenability} and the proof of Propositions \ref{prop:charmenable dichotomy} and \ref{prop:charmenable dichotomy prop (T)} are concluded in \S\ref{sec:cor-of-charm}.
Theorem~\ref{thm:Main-charmenability} in case $\bR$ is trivial, namely when $\bG$ is semisimple, was proved in 
\cite{boutonnet2021stationary} and \cite{bader2022charmenability}.
Our main concern in this paper is dealing with the case where $\bR$ is non-trivial.
In that case \cite[Proposition~4.19]{bader2022charmenability} enables us to reduce the proof of Theorem~\ref{thm:Main-charmenability}
to a certain stiffness result.
A prototypical example is the proof of \cite[Theorem~C]{bader2022charmenability} which 
establishes the charmenability of $\SL_n(\Z)\ltimes \Z^n$
using Furstenberg's classical theorem \cite{furstenberg1998stiffness} regarding the stiffness of the standard action of $\SL_{n}(\Z)$ on $\T^{n}$  .
The stiffness result that we need is Theorem~\ref{thm:stiffness of algebraic groups} below.
In order to state it, we need some preparation.

\begin{defn}
Let $\Lambda$ be a countable group acting on a topological space $X$ by homeomorphisms.
Given a probability measure $\mu$ on $\Lambda$,
we say that the $\Lambda$-action on $X$ is \emph{$\mu$-stiff} if any $\mu$-stationary probability
measure\footnote{In this paper, all measures on topological spaces are assumed to be Borel.} $\nu$ on $X$, that is $\nu$ for which $\mu*\nu:=\sum_{g\in\Lambda}\mu(g)g.\nu=\nu$,
is $\Lambda$-invariant.
%We say that a the $\Lambda$-action on $X$ is \emph{stiff} if it is $\mu$-stiff for every probability measure $\mu$ on $\Lambda$.
\end{defn}

We note that since we are not assuming $X$ to be compact, there may be no $\mu$-stationary measures at all on $X$,
in which case the action is stiff in a vacuous sense. 
This is never the case in the type of dynamical systems we consider. 

\begin{prop}\label{Char-kakutani}
Let $\Lambda$ be a countable group acting by automorphisms on a countable group $\Gamma$.
Then for every probability measure $\mu$ on $\Gamma$, the set of $\mu$-stationary probability measures on $\Ch(\Gamma)$ is non-empty.
\end{prop}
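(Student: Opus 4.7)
The plan is to transform the problem into finding a $\mu$-stationary point in the compact convex space $\Tr(\Gamma)$, by identifying $\Prob(\Ch(\Gamma))$ with $\Tr(\Gamma)$ through the barycenter map.

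First I would observe that $\Tr(\Gamma)\subset \PD_{1}(\Gamma)$ is a nonempty, compact, convex set in the topology of pointwise convergence: it is the $\Gamma$-fixed subset of the compact convex space $\PD_{1}(\Gamma)$ and contains, e.g., the constant function $1_\Gamma$. The $\Lambda$-action on $\Gamma$ by automorphisms induces a continuous affine $\Lambda$-action on $\Tr(\Gamma)$. The central structural input is the classical fact that $\Tr(\Gamma)$ is a Choquet simplex whose extreme boundary is exactly $\Ch(\Gamma)$ (this is the usual trace-simplex property of a separable C*-algebra applied to $C^{*}(\Gamma)$). Consequently, the barycenter map
\[
\beta\colon \Prob(\Ch(\Gamma))\to \Tr(\Gamma),\qquad \eta\mapsto \int_{\Ch(\Gamma)}\chi\, d\eta(\chi),
\]
is an affine bijection. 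Because $\Lambda$ acts affinely on $\Tr(\Gamma)$ and permutes $\Ch(\Gamma)$, the uniqueness of the representing measure forces $\beta$ to be $\Lambda$-equivariant: $\beta(\lambda_{*}\eta)=\lambda.\beta(\eta)$ for every $\lambda\in\Lambda$ and $\eta\in \Prob(\Ch(\Gamma))$.

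Given this identification, the proposition reduces to producing a $\mu$-stationary point in $\Tr(\Gamma)$: if $\tau_{0}\in\Tr(\Gamma)$ satisfies $\mu*\tau_{0}=\tau_{0}$, then $\eta_{0}:=\beta^{-1}(\tau_{0})$ satisfies $\mu*\eta_{0}=\beta^{-1}(\mu*\tau_{0})=\beta^{-1}(\tau_{0})=\eta_{0}$ by the $\Lambda$-equivariance and affinity of $\beta^{-1}$. A $\mu$-stationary trace certainly exists: the trivial character $1_{\Gamma}$ is already $\Lambda$-invariant, and more robustly one can apply the Markov--Kakutani fixed point theorem to the affine continuous operator $\tau\mapsto \mu*\tau$ on the compact convex space $\Tr(\Gamma)$, or take any weak-$*$ limit point of the Ces\`aro averages $\tfrac{1}{N}\sum_{n<N}\mu^{*n}*\tau$ starting from any $\tau\in\Tr(\Gamma)$.

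The main substantive step in this plan is verifying that $\Tr(\Gamma)$ is a Choquet simplex with $\ext\,\Tr(\Gamma)=\Ch(\Gamma)$, and consequently that $\beta^{-1}$ is well-defined, affine, and $\Lambda$-equivariant; once these are in hand, the existence of a stationary measure on the possibly non-compact space $\Ch(\Gamma)$ follows formally from the compactness and convexity of $\Tr(\Gamma)$.
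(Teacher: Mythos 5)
Your proposal is correct and follows essentially the same route as the paper: the paper proves this by combining Thoma's theorem (that $\Tr(\Gamma)$ is a metrizable Choquet simplex with extreme boundary $\Ch(\Gamma)$) with its Lemma on Choquet simplices, which says exactly that the barycenter map $\Prob(\Ch(\Gamma))\to\Tr(\Gamma)$ is a $\Prob(\Lambda)$-equivariant bijection and that $\Tr(\Gamma)^{\mu}$ is non-empty by Kakutani's fixed point theorem. The only cosmetic difference is that equivariance of the barycenter map is immediate from its definition (uniqueness of the representing measure is what you need for $\beta^{-1}$ to be well-defined, not for equivariance), but this does not affect the argument.
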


Note that $\Ch(\Gamma)$ is Polish, but may be non-compact (see Example \ref{exa:heisenberg} bellow).
The proof of Proposition~\ref{Char-kakutani} will be given in \S\ref{subsec:Basics}.

A general group has no canonical probability measure, and stiffness may very well depend on different choices of $\mu$. 
However, for arithmetic groups we have preferred measures.

\begin{defn} \label{defn:Furstenberg-arithmetic}
Let $\bG$ be an $\R$-algebraic group, $\bP<\bG$ be a minimal $\R$-parabolic subgroup 
and $\bN<\bG$ be the normalizer of $\bP$
(note that $\bP$ contains the solvable radical of $\bG$, $\bP<\bN$ is of finite index and if $\bG$ is connected then $\bP=\bN$).
A probability measure $\mu$ on $\bG(\R)$ (not necessarily fully supported)
is called a \emph{Furstenberg measure} if in the $\bG(\R)$-invariant measure class on $\bG(\R)/\bN(\R)$ 
there exists a probability measure with which this space is a Furstenberg-Poisson boundary for $(\bG(\R),\mu)$.
If $\Lambda<\bG(\R)$ is a closed subgroup which is generated as a semigroup by the support of the Furstenberg measure $\mu$,
we will say that $\mu$ is a \emph{Furstenberg measure on $\Lambda$}.
\end{defn}

Furstenberg proved that lattices in semisimple groups carry Furstenberg measures, see Proposition~\ref{prop:Fmeasures}
for an elaboration.
The following is a corollary of the above fact.

\begin{prop} \label{prop:furstmeasures-arithmetic}
For every $\Q$-algebraic group $\bG$ and an arithmetic subgroup $\Lambda$,
there exists a Furstenberg measure on $\Lambda$.
\end{prop}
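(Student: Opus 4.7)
The plan is to reduce to the semisimple case via the solvable radical and invoke Proposition~\ref{prop:Fmeasures}. Let $\bH := \bG/\bR$, a $\Q$-semisimple algebraic group, and let $\pi\colon \bG\to \bH$ be the quotient morphism. By Borel's theorem on images of arithmetic groups, $\pi(\Lambda)$ is an arithmetic subgroup of $\bH$, and since $\bH$ is semisimple it is in fact a lattice in $\bH(\R)$ by Borel--Harish-Chandra. Proposition~\ref{prop:Fmeasures} then supplies a Furstenberg measure $\bar\mu$ on $\pi(\Lambda)$: its support generates $\pi(\Lambda)$ as a semigroup, and $\bH(\R)/\bN_\bH(\R)$ is a Furstenberg--Poisson boundary for $(\bH(\R),\bar\mu)$, where $\bN_\bH$ denotes the normalizer in $\bH$ of a minimal $\R$-parabolic. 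Note that since $\bR\subset \bN$, the canonical map $\bG(\R)/\bN(\R)\to \bH(\R)/\bN_\bH(\R)$ is a $\bG(\R)$-equivariant bijection and $\bR(\R)$ acts trivially on $\bG(\R)/\bN(\R)$.

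Next I would lift $\bar\mu$ to $\Lambda$ and enrich it to account for the kernel of $\pi|_\Lambda$. Pick a set-theoretic section $s\colon \supp(\bar\mu)\to \Lambda$ of the surjection $\pi|_\Lambda\colon \Lambda\to \pi(\Lambda)$ and set $\mu_1 := \sum_h \bar\mu(h)\,\delta_{s(h)}$. The kernel $\Lambda_R := \Lambda\cap\bR(\Q)$ is an arithmetic subgroup of the solvable $\Q$-algebraic group $\bR$, hence virtually polycyclic, and in particular finitely generated; choose a symmetric, finitely supported probability measure $\mu_2$ on $\Lambda_R$ whose support generates $\Lambda_R$ as a group, and put $\mu := \tfrac12\mu_1 + \tfrac12\mu_2$. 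The support of $\mu$ generates $\Lambda$ as a semigroup: for $g\in\Lambda$, a semigroup factorization $\pi(g) = h_1\cdots h_k$ with $h_i\in\supp(\bar\mu)$ lifts to $g = s(h_1)\cdots s(h_k)\cdot r$ with $r\in\Lambda_R$, and by the symmetry of $\mu_2$ the factor $r$ is a semigroup word in $\supp(\mu_2)$.

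Finally I would check that $\bG(\R)/\bN(\R)$ carries a measure making it a Furstenberg--Poisson boundary for $(\bG(\R),\mu)$. Since $\mu_2$ is supported in $\bR(\R)$, which acts trivially on $\bG(\R)/\bN(\R)$, the induced walk on this space coincides with the lazy walk on $\bH(\R)/\bN_\bH(\R)$ driven by $\pi_*\mu = \tfrac12\bar\mu + \tfrac12\delta_e$. The $\bar\mu$-stationary measure from Proposition~\ref{prop:Fmeasures} is also $\pi_*\mu$-stationary, and yields a $\mu$-stationary measure on $\bG(\R)/\bN(\R)$; thus this space is a $(\bG(\R),\mu)$-boundary. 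The hard part is \emph{maximality}: that this boundary is the full Poisson boundary of $(\bG(\R),\mu)$ rather than merely a quotient of it. Two observations must be combined: first, adding an atom at the identity does not alter the Poisson boundary, so $\bH(\R)/\bN_\bH(\R)$ remains the Poisson boundary of $(\bH(\R),\pi_*\mu)$; second, lifting $\pi_*\mu$ to $\mu$ on $\bG(\R)$ alters the walk only by a symmetric step in the amenable kernel $\bR(\R)$, which acts trivially on the candidate boundary and hence cannot produce additional bounded harmonic functions on $\bG(\R)$. This second point, which is the technical heart of the argument, can be handled via a Kaimanovich-type conditional entropy criterion, exploiting the solvability (and hence amenability) of $\bR(\R)$ together with the triviality of the Poisson boundary of a symmetric random walk on an amenable group.
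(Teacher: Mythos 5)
Your reduction to the semisimple quotient is exactly the paper's: pass to $\bH=\bG/\bR$, note that $\pi(\Lambda)$ is arithmetic in $\bH$ and hence a lattice in $\bH(\R)$, apply Proposition~\ref{prop:Fmeasures}, and identify $\bG(\R)/\bN(\R)$ with $\bH(\R)/\bN_{\bH}(\R)$. The gap is in the lifting step. You build an explicit lift $\mu=\tfrac12\mu_1+\tfrac12\mu_2$ with $\mu_2$ finitely supported and symmetric on the kernel, and you must then prove that $\bG(\R)/\bN(\R)$ is the \emph{full} Poisson boundary of $(\Lambda,\mu)$, not merely a $\mu$-boundary. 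Your argument for maximality rests on the principle that a symmetric step in an amenable normal subgroup acting trivially on the candidate boundary ``cannot produce additional bounded harmonic functions.'' That principle is false in general: for the lamplighter group $\bigl(\bigoplus_{\Z^3}\Z/2\Z\bigr)\rtimes\Z^3$ the kernel is amenable (even locally finite) and acts trivially on the trivial boundary of the quotient $\Z^3$, yet symmetric finitely supported nondegenerate measures have nontrivial Poisson boundary. The danger is concrete in the present setting: for $\Lambda=\SL_n(\Z)\ltimes\Z^n$ the conditional walk on the kernel, given the quotient trajectory, has its increments conjugated by products $\gamma_1\cdots\gamma_k$ with positive Lyapunov exponents, and the component of $\sum_k\gamma_1\cdots\gamma_{k-1}v_k$ along a contracting subspace can converge, producing tail randomness invisible on the flag variety. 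Triviality of the \emph{unconditioned} symmetric walk on the polycyclic kernel (which is what your appeal to amenability would give, and which itself requires a theorem of Kaimanovich rather than mere amenability) does not rule this out, and a finitely supported $\mu_2$ gives you no leverage against it.

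The paper sidesteps this entirely by invoking Kaimanovich's theorem on amenable extensions \cite[Theorem~2]{kaimanovich2002extensions}, which asserts the \emph{existence} of some measure $\mu$ on $\Lambda$ --- not an arbitrary or finitely supported lift; the construction spreads the conditional measures on the fibers over large F\o lner-type subsets of the kernel --- whose Poisson boundary coincides with that of $(\bar{\Lambda},\bar{\mu})$. Since the proposition only claims existence of a Furstenberg measure, that is all that is needed. To salvage your version you would have to actually carry out the conditional entropy (or strip) criterion for your specific $\mu$ and this specific extension; as stated, the key step is asserted rather than proved, and the assertion it relies on is false in general.
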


The proof of Proposition~\ref{prop:furstmeasures-arithmetic} will be given in \S\ref{sec:FM}.
We are now ready to state our main stiffness result. 

\begin{Thm} \label{thm:stiffness of algebraic groups}
For an arithmetic group $\Lambda$ endowed with a Furstenberg measure $\mu$,
the action of $\Lambda$ on $\Ch(\Rad(\Lambda))$ is $\mu$-stiff.
\end{Thm}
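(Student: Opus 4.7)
The plan is to adapt Furstenberg's classical torus-stiffness argument, which handles $\SL_n(\Z) \acts \T^n = \Ch(\Z^n)$, to the richer character space $\Ch(\Rad(\Lambda))$. The key ingredients are a boundary disintegration, the algebraicity of the conjugation action of $\bG$ on $\Rad(\Lambda)$ via the solvable radical $\bR$, and a fibered stiffness argument handling the Thoma-type layers of characters of a virtually polycyclic group.

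First I would set up the boundary framework. By Proposition~\ref{Char-kakutani}, fix a $\mu$-stationary probability measure $\nu$ on $\Ch(\Rad(\Lambda))$, and let $(B,\beta) = (\bG(\R)/\bN(\R),\beta)$ be the Furstenberg-Poisson boundary implicit in the definition of a Furstenberg measure for $\mu$. The standard stationary-to-boundary correspondence yields a $\Lambda$-equivariant measurable map $\phi \colon B \to \Prob(\Ch(\Rad(\Lambda)))$ whose barycenter is $\nu$. The desired stiffness is equivalent to $\phi$ being essentially constant, since then $\nu = \phi(b)$ for $\beta$-almost every $b$ and equivariance forces $\Lambda$-invariance.

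Second, I would leverage the algebraic structure. The amenable radical $\Rad(\Lambda)$ is virtually polycyclic (Corollary~\ref{cor:amenable radical is virtually polycyclic}) and, after passing to a finite-index subgroup, sits as an arithmetic lattice in $\bR(\R)$. The conjugation action of $\Lambda$ on $\Rad(\Lambda)$ is thus the restriction of the algebraic action of $\bG(\R)$ on $\bR(\R)$, which induces a $\bG(\R)$-action on $\Ch(\Rad(\Lambda))$ of algebraic nature: a Thoma-type parameterization realizes $\Ch(\Rad(\Lambda))$ $\bG(\R)$-equivariantly as fibered over an algebraic $\bG$-set (the normal-subgroup parameter) with fibers being inverse limits of tori (the character parameter on central abelian quotients). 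In particular $\phi$ factors through probability measures on an inverse limit of algebraic $\bG(\R)$-varieties.

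Third, I would prove stiffness layer by layer. On the base (normal subgroup) layer, standard projective-boundary rigidity for equivariant measurable maps from $B$ into algebraic $\bG(\R)$-spaces forces the base component of $\phi$ to be essentially constant, reducing the problem to a single fiber. On a toral fiber, the induced $\Lambda$-action is by algebraic automorphisms inherited from $\bG(\R)$, so Furstenberg's original stiffness theorem applies; on the nilpotent part of the polycyclic structure, the virtually-nilpotent stiffness result announced in the paper's abstract takes over. Combining these fiberwise conclusions yields that $\phi$ is essentially constant, hence $\nu$ is $\Lambda$-invariant. The main obstacle, I anticipate, is the bookkeeping in step two: setting up an equivariant Thoma-type parameterization of $\Ch(\Rad(\Lambda))$ in the algebraic category, ensuring its measurability, and matching it to the stationary disintegration so that the layerwise stiffness inputs in step three can be invoked uniformly across the multi-step Mackey/induction structure of characters of virtually polycyclic groups.
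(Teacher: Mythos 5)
Your step two is where the argument breaks down. You posit a $\bG(\R)$-equivariant ``Thoma-type parameterization'' of $\Ch(\Rad(\Lambda))$ as fibered over an algebraic $\bG$-set with fibers that are inverse limits of tori, so that the boundary map $\phi$ lands in probability measures on an inverse limit of algebraic varieties. No such algebraic structure exists: as the introduction points out (Example~\ref{exa:heisenberg}), the character space of a finitely generated nilpotent group that is not virtually abelian is neither homogeneous nor a manifold, and the natural base of the fibration the paper actually uses is the \emph{countable} set $\Sub(Z)$ (handled by Proposition~\ref{prop:countable-fibers-stiffness}, not by projective-boundary rigidity into algebraic varieties) together with the non-algebraic space $\Ch_{\Lambda_\bR}(\Lambda_\bU)$. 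Relatedly, your appeal to ``the virtually-nilpotent stiffness result announced in the abstract'' is circular in the wrong direction: that result (Theorem~\ref{thm:higher-rank-stiffness}) carries a higher-rank hypothesis which Theorem~\ref{thm:stiffness of algebraic groups} does not, and the ingredient one must actually prove is Theorem~\ref{thm:main-stiffness}, whose content (induction on $\dim U$, Mozes' epimorphic-subgroup measure rigidity replacing Furstenberg's torus theorem, and central inductivity of nilpotent groups to push faithful characters onto the center) is the heart of the matter and cannot be black-boxed.

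The second genuine gap is the passage from the unipotent radical to the full solvable radical. $\Rad(\Lambda)$ is virtually polycyclic but in general not virtually nilpotent, so neither Furstenberg's torus theorem nor any nilpotent stiffness statement applies to it directly, and your ``toral fiber, algebraic automorphisms'' picture does not describe the fibers of the restriction map $r:\Ch(\Lambda_\bR)\to\Ch_{\Lambda_\bR}(\Lambda_\bU)$. The paper's actual mechanism is: Theorem~\ref{thm:main-stiffness} gives stiffness of $\Lambda_\bS\acts\Ch(\Lambda_\bU)$, hence of $\Ch_{\Lambda_\bR}(\Lambda_\bU)$; then the Carey--Moran lemma (Lemma~\ref{lem:H/N dual acts transitively on fibers}) shows the compact group $\widehat{A}$, $A=\Lambda_\bR/\Lambda_\bU$, acts transitively on the fibers of $r$ commuting with $\Lambda_\bS$, and Proposition~\ref{prop:X->X/K is relatively stiff} (proved via Zimmer's minimal cocycles) upgrades this to relative stiffness, which combines with base stiffness via Lemma~\ref{lem:relaltively-stiff}. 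This compact-extension step is a substantive idea, not bookkeeping, and your proposal has no substitute for it.
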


Note that this theorem, unlike Theorems~\ref{thm:Main-charmenability} and \ref{cor:arithmetic dichotomy},
involves no rank assumption.
A main ingredient in the proof of 
Theorem~\ref{thm:stiffness of algebraic groups} is Theorem~\ref{thm:main-stiffness}
which regards stiffness of the action of lattices in a semisimple Lie group on the space of characters of a nilpotent group.  
Another consequence of the latter theorem, which might be of independent interest, is the following.

\begin{Thm}
\label{thm:higher-rank-stiffness}
Let $\Lambda$ be an irreducible lattice in a connected semisimple Lie group $G$ with finite center and let
$\mu$ be a Furstenberg measure on $\Lambda$. Assume that $G$ is
not isogenous to a group of the form $\SO(1,m)\times K$ or $\SU(1,m)\times K$,
with $K$ compact. Then for any action of $\Lambda$ by automorphisms
on a finitely generated virtually nilpotent group $\Gamma$, the corresponding
action on $\Ch(\Gamma)$ is $\mu$-stiff. 
\end{Thm}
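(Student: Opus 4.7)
The plan is to deduce Theorem~\ref{thm:higher-rank-stiffness} from Theorem~\ref{thm:main-stiffness}, which treats the nilpotent case under the same rank hypothesis on $G$, by a reduction to a finite-index nilpotent subgroup.

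First I would fix a characteristic finite-index nilpotent normal subgroup $\Gamma_0\leq\Gamma$. Such a subgroup exists for every finitely generated virtually nilpotent group: for instance, the intersection of all subgroups of $\Gamma$ of some sufficiently small finite index is characteristic, of finite index, and, for small enough index, contained in a fixed nilpotent finite-index subgroup. Being characteristic, $\Gamma_0$ is preserved by the $\Lambda$-action on $\Gamma$, so Theorem~\ref{thm:main-stiffness} applies to the induced $\Lambda$-action on $\Ch(\Gamma_0)$ and gives $\mu$-stiffness there.

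Now let $\nu$ be a $\mu$-stationary probability measure on $\Ch(\Gamma)$, which exists by Proposition~\ref{Char-kakutani}. For each $\chi\in\Ch(\Gamma)$ the restriction $\chi|_{\Gamma_0}$ is a $\Gamma$-invariant trace on $\Gamma_0$ whose GNS representation has finite-dimensional center of dimension at most $[\Gamma:\Gamma_0]$, yielding a canonical finite decomposition $\chi|_{\Gamma_0}=\sum_{i=1}^{r}c_i\chi_i$ with weights $c_i>0$ and characters $\chi_i\in\Ch(\Gamma_0)$ that form a single $\Gamma$-orbit. This assignment defines a $\Lambda$-equivariant Borel map $\Phi\colon\Ch(\Gamma)\to\Prob(\Ch(\Gamma_0))$ with finitely supported image measures. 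Pushing $\nu$ forward by $\Phi$ and then taking barycenters yields a $\mu$-stationary $\bar\nu\in\Prob(\Ch(\Gamma_0))$, which is $\Lambda$-invariant by Theorem~\ref{thm:main-stiffness}.

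The remaining task is to lift $\Lambda$-invariance from $\bar\nu$ to $\nu$. The fibers of $\Phi$ are essentially finite: a character of $\Gamma_0$ admits only finitely many extensions to a character of $\Gamma$, parameterized by Mackey-theoretic data attached to the finite quotient $\Gamma/\Gamma_0$. Disintegrating $\nu$ over $\bar\nu$ along $\Phi$ then reduces $\Lambda$-invariance of $\nu$ to $\Lambda$-equivariance of the fiberwise measures, which I expect to be the main obstacle: although the fibers are finite, the $\Lambda$-action on them carries nontrivial structural data that must be controlled, and one must combine the stationarity equation for $\nu$ with $\Lambda$-invariance of $\bar\nu$ to force the fiberwise measures to be $\Lambda$-equivariant. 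Once this is in hand, $\nu$ itself is $\Lambda$-invariant, completing the proof.
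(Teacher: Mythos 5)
There is a genuine gap at the very first application of Theorem~\ref{thm:main-stiffness}. That theorem does not treat an abstract action of a lattice on a nilpotent group: its hypotheses require a reductive group $S$ with compact center acting \emph{continuously} by automorphisms on a simply connected nilpotent Lie group $U$, with the acting lattice sitting inside $S$ and preserving a lattice $H\leq U$. For an arbitrary action of $\Lambda$ by automorphisms of your characteristic nilpotent subgroup $\Gamma_0$ there is no reason such a continuous extension $G\curvearrowright U$ should exist, so the sentence ``Theorem~\ref{thm:main-stiffness} applies to the induced $\Lambda$-action on $\Ch(\Gamma_0)$ and gives $\mu$-stiffness there'' is unjustified. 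This is exactly where the paper invokes Margulis superrigidity: after reducing to a torsion-free finitely generated nilpotent $\Gamma_0$ realized as a lattice in its Mal'cev completion $U$, either the action map $\Lambda\to\Aut(\Gamma_0)$ has finite image (handled by Lemma~\ref{lem:finite-is-stiff}) or its image is unbounded in $\Aut(U)$, and superrigidity then produces a finite-index subgroup $\Lambda'\leq\Lambda$ and a continuous homomorphism $G\to\Aut(U)$ extending the action of $\Lambda'$; only then does Theorem~\ref{thm:main-stiffness} apply (to $\Lambda'$ with the hitting measure), and Lemma~\ref{lem:stiffness-subgroup} passes the conclusion back to $\Lambda$. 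Note that your argument nowhere uses the hypothesis that $G$ is not isogenous to $\SO(1,m)\times K$ or $\SU(1,m)\times K$; that hypothesis is there precisely to make superrigidity available, which is a strong signal that a key step is missing.

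The remainder of your outline (reduction to a characteristic finite-index nilpotent subgroup, finiteness of the fibers of the restriction map $\Ch(\Gamma)\to\Ch_{\Gamma}(\Gamma_0)$, and lifting invariance through finite fibers) is sound in substance and is exactly what the paper packages as Corollary~\ref{cor:stiff commen}, resting on Proposition~\ref{prop:finite-index-finite-fibers}, Proposition~\ref{prop:Ch(N) stiff then also Ch_H(N)} and Proposition~\ref{prop:countable-fibers-stiffness}; the step you flag as ``the main obstacle'' is resolved there by relative stiffness over countable fibers, so no new Mackey-theoretic analysis is needed. You should also address torsion explicitly: a finite-index nilpotent subgroup of $\Gamma$ need not be torsion-free, and torsion-freeness is required to embed $\Gamma_0$ as a lattice in a simply connected nilpotent Lie group; the paper handles this with a second application of Corollary~\ref{cor:stiff commen}, passing to the quotient by the finite characteristic torsion subgroup.
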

The proofs of Theorem~\ref{thm:stiffness of algebraic groups}, Theorem~\ref{thm:higher-rank-stiffness} and Theorem~\ref{thm:main-stiffness} 
will be given in \S\ref{sec:Stiffness results}.
If $\Gamma$ is a finitely generated abelian group then $\Ch(\Gamma)$, which coincides with $\hat{\Gamma}$, the Pontryagin dual of $\Gamma$, 
is a finite union of tori.
However, Theorems~\ref{thm:stiffness of algebraic groups}, \ref{thm:higher-rank-stiffness} and \ref{thm:main-stiffness}
are not to be confused with 
the results of Benoist-Quint regarding stiffness for actions on
nilmanifolds \cite{benoist2013stationary}. 
Indeed, the space $\Ch(\Gamma)$ for a finitely generated virtually nilpotent group which is not virtually abelian is
neither homogeneous nor a manifold. 

\begin{example} \label{exa:heisenberg}
The space of characters $\Ch(H)$ of the discrete Heisenberg group
\[ H=\left\{ \left(\begin{array}{ccc}
1 & x & z\\
0 & 1 & y\\
0 & 0 & 1
\end{array}\right):x,y,z\in\Z\right\} \]
is homeomorphic to the subset
of $\mathbb{R}^{3}$ illustrated in the following picture:
\begin{figure}[!tph]
\begin{centering}
\includegraphics[clip,scale=0.6]{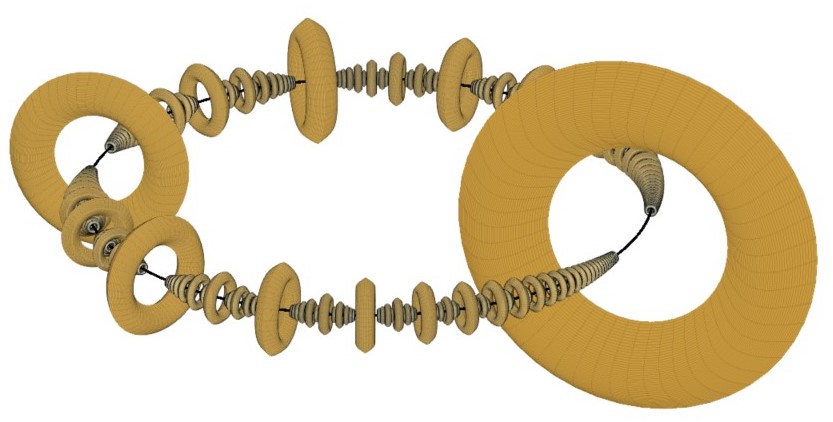}
\par\end{centering}
%\caption{\label{fig:choker}A realization of the space of characters
%of the discrete Heisenberg group (thanks to Tamuz Ofer)}
\end{figure}

\noindent Here points on the horizontal circle represents the space of characters of the 
center $Z<H$.
The inclusion $Z\subseteq H$ induces
a surjective continuous map $\Ch(H)\to\Ch(Z)\simeq \hat{Z}\cong\R/\Z$ (see Lemma
\ref{lem:restriction-of-characters}). 
The fiber over an irrational
point of $\R/\Z$ consists of a single point, whereas the fiber
of a reduced fraction $\frac{p}{q}\in \R/\Z$ 
could be identified with $\R^{2}/q^{-1}\Z^{2}$,
the Pontryagin dual of $q\Z^{2}$ which could be identified with 
the center of $H/qZ$ (see Theorem~\ref{thm:central-induction}).
We note that $\Ch(H)$ is not compact: the rational points of the horizontal circle in the picture above are excluded.   
\end{example}

\subsection{Related works}

An excellent introduction to the theory of characters is \cite{bekka2019unitary}. A comprehensive study of characters of nilpotent groups was carried since the 1970's, see e.g 
\cite{howe1977representations, kaniuth1980ideals, carey-moran1984nil-char, kaniuth2006induced}. This study was recently generalized to polycyclic groups, and thus to arithmetic subgroups of solvable algebraic groups \cite{levit2022characters}.

A classification of the characters of arithmetic subgroups of higher rank semisimple algebraic groups was completed
in \cite{peterson2014character} following \cite{bekka2007operator}.
The situation in the rank one case is complicated and somewhat less understood, see e.g \cite{poulsen22freegroup}.
We know of no reference dealing with character theory of general arithmetic groups,
though the paper \cite{bekka2020characters}
is closely related.

Dynamical aspects of spaces of positive-definite functions on lattices in semisimple groups 
were considered recently in \cite{boutonnet2021stationary}, \cite{bader2022charmenability} and \cite{bader2021charmenability}, see also \cite{houdayer2021noncommutative}.
The notion of ``charmenability'' is highlighted in \cite{bader2022charmenability}.
Dynamical questions regarding spaces of measures on tori are classical.
In particular, the question of ``stiffness'' is dealt with in \cite{BFLM2011stiffness} and \cite{benoist2013stationary},
following the pioneering work \cite{furstenberg1998stiffness}.
Surprisingly, it seems that questions regarding dynamics on the spaces of characters
of non-abelian nilpotent groups were never studied before.
For our main goal in this paper, proving Theorem~\ref{thm:Main-charmenability},
the stiffness result of Theorem~\ref{thm:stiffness of algebraic groups} is sufficient. 
However, it seems to us that the latter theorem is not optimal,
as we deal only with Furstenberg measures,
and we hope to improve it in a future work.
Another aspect in which we hope to improve this work in the future is by considering $S$-arithmetic groups.
This will force us to tackle characters on nilpotent groups which are not finitely generated, but we do not foresee a major problem in doing so.

\subsection{The structure of this paper}

Sections \S\ref{sec:Stationary} and \S\ref{sec:Characters} are devoted to introducing the needed preliminaries 
and basic results regarding stationary dynamics and characters correspondingly. 
In \S\ref{sec:Stiffness results} we state and prove Theorem~\ref{thm:main-stiffness} and use it to prove 
Theorems~\ref{thm:stiffness of algebraic groups} and \ref{thm:higher-rank-stiffness}.  
In \S\ref{sec:Charmenability} we prove Theorem \ref{thm:Main-charmenability}.
 \S\ref{sec:cor-of-charm} is devoted to proving Propositions~\ref{prop:charmenable dichotomy} and \ref{prop:charmenable dichotomy prop (T)}.

\subsection{Acknowledgments}
This work was supported by the ISF Moked 713510 grant number 2919/19.
The second author would like thank Barak Weiss for useful discussions and references. 
We are grateful to Tamuz Ofer for the figure in Example~\ref{exa:heisenberg}.
Both authors thank the members of the Midrasha on Groups at the Weizmann Institute for their support, friendship and 
professional encouragement.

\section{Stationary dynamics} \label{sec:Stationary} 
In this section we define and discuss general statements regarding stationary dynamics associated to a group $G$. For almost all of our purposes it suffices to consider the case where $G$ is countable. However, as some of our claims might be of independent interest, we shall mostly work in the setting where $G$ is a general locally compact second countable group. 

\subsection{Ergodic theoretical preliminaries}
By a \emph{Borel space} we mean a set endowed with a $\sigma$-algebra. Morphisms between Borel spaces are just measurable maps, and they are called \emph{Borel maps}.
A \emph{standard Borel space} is a Borel space which is isomorphic
as such to the underlying Borel space of a Polish space.   By
a \emph{Lebesgue space} we understand a standard Borel space endowed
with a measure class. A \emph{Lebesgue map} from a Lebesgue space to a Borel space is a
class of a.e equal Borel maps, defined on a conull subset of the domain.
Morphisms between Lebesgue spaces are measure class preserving Lebesgue maps.
We note that the category of Lebesgue spaces is equivalent to the opposite  category of abelian von Neumann subalgebras of bounded operators on  separable Hilbert spaces, by the functor
$X\mapsto L^\infty(X)\subseteq \mathcal{B}(L^2(X))$.

Let $G$ be a locally compact second countable group. An action of $G$ on a Borel space is \emph{Borel} if the corresponding action map is Borel. 
A\emph{ Borel
$G$-space} is a Borel space
endowed with a Borel action of $G$.  A \emph{Lebesgue $G$-space} is a Lebesgue space endowed
with a Borel $G$-action that preserves the measure class. The Lebesgue $G$-space is called
\emph{ergodic} if every measurable $G$-invariant subset is either null
or conull. 

For a topological space $X$ we denote by $\Prob(X)$ the space of all Radon probability  measures endowed with the weak-* topology. If $X$ is Polish (resp. compact) then so is $\Prob(X)$, and all Borel probability measures are Radon. Suppose $X$ and $Y$ are Polish spaces and that $p:X\to Y$ is a
continuous surjective map. Let $\eta\in \Prob(Y)$ and consider a Lebesgue map $Y\to \Prob(X)$ that takes $y$ to a measure $\nu_y$ supported on the fiber $p^{-1}(y)$. Then one may integrate these $\nu_{y}$
into a probability measure $\nu=\int_{Y}\nu_{y}d\eta$.
That is, for a bounded continuous function $f$ on $X$,
$\int_{X}f\dd\nu=\int_{Y}\left(\int_{p^{-1}(y)}fd\nu_{y}\right)\dd\eta$.
The \emph{disintegration theorem} states that every $\nu\in \Prob(X)$ is given by an integration as above w.r.t a unique Lebesgue map  $Y\to \Prob(X):y\mapsto\nu_{y}$. This is called the \emph{disintegration of $\nu$ along $p$}.

Suppose $X$ and $Y$ are Polish $G$-spaces and that $p:X\to Y$
is a continuous and surjective $G$-equivariant map.
Consider $\nu\in \Prob(X)$ whose measure class is preserved by $G$, and the pushforward measure $p_{*}\nu$ on $Y$. Then $X$
is said to be \emph{measure preserving relative to $Y$} if the disintegration
map $Y\to \Prob(X)$ associated with $\nu$ is a $G$-equivariant.

\subsection{Choquet simplices and stationary dynamics}

A general reference for Choquet Theory is \cite{phelps2001lectures}.
Recall that any compact convex set $C$ in a locally convex topological vector space is 
the closed convex hull of its extreme points $\Ext(C)$, by the Krein-Milman theorem.
Assuming that $C$ is metrizable, $\Ext(C)$ is a $G_\delta$ subset of $C$, thus a Polish space.
For a probability measure $\nu$ on $\Ext(C)$ we define the barycenter, $\bary(\nu)$, to be the unique
point $c\in C$ such that $\phi(c)=\int \phi \dd\nu$ for every continuous linear functional on the ambient space.
We obtain an affine continuous map $\bary:\Prob(\Ext(C))\to C$.
Choquet's theorem says that this map is surjective.

\begin{defn}
The metrizable compact convex set $C$ is said to be 
a \emph{Choquet simplex} if the map $\bary$ is bijective.
\end{defn}

For a metrizable Choquet simplex $C$, the space $\Prob(\Ext(C))$ is a Polish space,
thus the map $\bary$ is a Borel isomorphism by Suslin's theorem.
It thus makes sense to consider
the category whose objects are Polish convex spaces and its morphisms are Borel affine maps.
The space $\Prob(X)$, for a Polish space $X$, is a prominent example and a
Polish convex space is called a \emph{Polish simplex} if it is Borel affine isomorphic to 
a space of the form $\Prob(X)$.
Note that a Polish simplex is a Choquet simplex iff it is compact.

Given a Polish space $X$ and  a continuous action of topological group $G$ by homeomorphisms,
we have the standard identification $\Erg_G(X)=\Ext(\Prob(X)^G)$,
where $\Erg_G(X)$ denote the space of ergodic $G$-invariant probability measures on $X$.
Integration gives a continuous affine map
$\Prob(\Erg_G(X)) \to \Prob(X)^G$,
and this map is a bijection by the existence and uniqueness of the disintegration map,
considering for a given $\nu\in \Prob(X)^G$ the $\nu$-a.e defined Borel map dualizing the von Neumann algebra inclusion 
$L^\infty(X,\nu)^G<L^\infty(X,\nu)$. 
Thus $\Prob(X)^G$ is a Polish simplex.

\begin{prop} \label{prop:choqueinv}
Let $C$ be a metrizable Choquet simplex and assume the group $G$ acts continuously on $C$ by affine transformations.
then $C^G$ is a metrizable Choquet simplex.
\end{prop}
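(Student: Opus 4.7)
The plan is to deduce the Choquet simplex property of $C^G$ from that of $C$, using the ergodic decomposition of $G$-invariant measures as the bridge. First, $C^G$ is metrizable, compact, and convex as a closed convex subset of $C$, and since $G$ acts on $C$ by affine homeomorphisms it preserves $\ext(C)$ setwise, so the pushforward action of $G$ on $\Prob(\ext(C))$ is well-defined and the subset $\Prob(\ext(C))^{G}$ of $G$-invariant probability measures makes sense.

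The first key step is to show that the barycenter map $\bary_C:\Prob(\ext(C))\to C$, a Borel affine isomorphism by the discussion preceding the statement, restricts to a Borel affine bijection
\[
\beta:\Prob(\ext(C))^{G}\longrightarrow C^{G}.
\]
Injectivity is immediate; for surjectivity, given $c\in C^G$ and the unique $\nu$ with $\bary_C(\nu)=c$, the identity $\bary_C(g_{*}\nu)=g.c=c$ combined with uniqueness forces $g_{*}\nu=\nu$ for every $g\in G$. Since $\beta$ is an affine bijection of convex sets, it carries extreme points to extreme points; the extreme points of $\Prob(\ext(C))^{G}$ are precisely the ergodic measures $\Erg_G(\ext(C))$ by the standard characterization of the invariant simplex, so
\[
\ext(C^G)=\beta(\Erg_G(\ext(C))).
\]

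To conclude, I invoke the ergodic decomposition on the Polish space $\ext(C)$, which, as recalled in the excerpt, yields a continuous affine bijection
\[
\iota:\Prob(\Erg_G(\ext(C)))\to \Prob(\ext(C))^{G},\qquad \mu\mapsto \int \nu\,d\mu(\nu).
\]
Pushing forward by the Borel bijection $\beta^{-1}|_{\ext(C^G)}:\ext(C^G)\to \Erg_G(\ext(C))$ and then applying $\iota$ followed by $\beta$, one obtains a Borel affine bijection $\Prob(\ext(C^G))\to C^G$; a routine exchange of integration using affinity of $\bary_C$ identifies this composition with $\bary_{C^G}$, since for $\mu\in\Prob(\ext(C^G))$ one has $\beta\big(\int \nu \, d(\beta^{-1}_{*}\mu)(\nu)\big)=\int \bary_C(\beta^{-1}(e))\,d\mu(e)=\int e\,d\mu(e)$. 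Hence $\bary_{C^G}$ is a bijection and $C^G$ is a metrizable Choquet simplex. The only point requiring care is the extreme-point identification under $\beta$, which follows from elementary affine calculus once $\beta$ is established as an affine Borel isomorphism.
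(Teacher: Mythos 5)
Your proof is correct and follows essentially the same route as the paper: restrict the Borel affine $G$-equivariant barycenter isomorphism $\Prob(\Ext(C))\to C$ to invariants, and combine with the ergodic decomposition $\Prob(\Erg_G(\Ext(C)))\simeq\Prob(\Ext(C))^G$. The only difference is cosmetic — the paper packages the conclusion via its notion of a compact Polish simplex being a Choquet simplex, whereas you unwind that step and verify bijectivity of $\bary_{C^G}$ directly.
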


\begin{proof}
Denote $X=\Ext(C)$.
Then the Borel affine $G$-equivariant isomorphism $\bary:\Prob(X)\to C$ induces a
Borel affine isomorphism $\Prob(X)^G\to C^G$.
Since $\Prob(X)^G$ is a Polish simplex, it follows that $C^G$ is a Polish simplex.
It is a Choquet simplex as it is compact.
\end{proof}
For another proof of Proposition~\ref{prop:choqueinv}, see \cite[Corrolary~12.13]{kennedy2021noncommutative}.

\smallskip
Next, we provide an analogue which regards stationary measures.
We start by giving some background. 
Consider a continuous (strong operator topology) action of locally compact second countable group $G$ on a Banach space $E$ by linear isometries. This induces a continuous action on the dual space $E^*$ endowed with the weak-* topology. We define the convolution between a probability measure $\mu$ on $G$ and an element $\varphi \in E^*$ by:
\begin{align}\label{convolution} (\mu * \varphi) (v):= \int_{G}  \varphi(g^{-1}.v)\dd\mu(g) \hspace{20 pt} (v\in E) 
\end{align}
It is easy to see that this integral is absolutely convergent, and that $\mu*\varphi$ is again an element of $E^*$. We say that $\varphi$ is \emph{$\mu$-stationary} if $\mu*\varphi = \varphi$. 
Equation \ref{convolution} thus defines a continuous action of the topological semigroup $\Prob(G)$, with the convolution operation and the weak-* topology, on $E^*$. We are interested in $\Prob(G)$-invariant subsets $C\subseteq E^*$.  
If $C$ is convex, weak-* compact and $G$-invariant, then $C$ is definitely $\Prob(G)$-invariant. Indeed the convolution operation is given by the following composition: 
\[ \Prob(G)\times C \to \Prob(G \times C) \to \Prob(C)  \to C, \quad (\mu,c) \mapsto (\mu\times \delta_c) \mapsto 
a_*(\mu\times \delta_c) \mapsto \bary(a_*(\mu\times \delta_c)).\]
where $a:G\times C\to C$ is the action map. 
Another important example is when $C=\Prob(X)$ where $X$ is a Polish space and the action on $C$ comes from a continuous action of $G$ on $X$. In that case $C$ is a convex subset of the dual of the Banach space of all bounded continuous functions on $X$ and the convolution operation is given by the pushforward of the action map: 
\[
\Prob(G)\times\Prob(X)\hookrightarrow \Prob(G\times X)\to \Prob(X)
\]

\begin{defn}
	Let $G$ be a locally compact group. 
	$\mu\in \Prob(G)$ is said to be \emph{admissible} if  it is absolutely continuous with respect to the Haar measure class on $G$, and it is not supported on a closed sub-semigroup of $G$. 
\end{defn}

It is well known and easy to prove that, for $\mu$ admissible, any $\mu$-stationary probability measure has a $G$-invariant class.

\begin{lem}
\label{lem:support-of-stationary-is-invariant and ergodic-iff-extremal}
Let $X$ be a Polish space and $G$ a locally compact second countable group that acts on it 
continuously by homeomorphisms.
Fix an admissible probability measure $\mu$ on $G$. Then a  $\mu$-stationary probability measure $\nu$ on $X$  is ergodic if and only if
it is an extreme point of $\Prob(X)^{\mu}$.
\end{lem}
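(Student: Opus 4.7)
The plan is as follows. The easy direction ``extreme $\Rightarrow$ ergodic'' is handled by a direct decomposition, while the reverse implication reduces to showing that the density of any $\mu$-stationary measure $\nu'\ll\nu$ with respect to $\nu$ is $G$-invariant $\nu$-a.e. The proof of this density-invariance step, where both parts of the admissibility condition are used, is the main obstacle.

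For the contrapositive of ``extreme $\Rightarrow$ ergodic,'' I would take a $G$-invariant Borel set $A\subseteq X$ with $0<\nu(A)<1$. The normalized restrictions $\nu_A:=\nu(A)^{-1}\nu|_A$ and $\nu_{A^c}$ are each $\mu$-stationary --- for any Borel $B$, the $G$-invariance of $A$ and stationarity of $\nu$ give $(\mu*\nu_A)(B)=(\mu*\nu)(B\cap A)/\nu(A)=\nu_A(B)$ --- so $\nu=\nu(A)\nu_A+(1-\nu(A))\nu_{A^c}$ is a nontrivial convex combination, showing $\nu$ is not extremal.

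For ``ergodic $\Rightarrow$ extreme,'' suppose $\nu=t\nu_1+(1-t)\nu_2$ with $t\in(0,1)$ and distinct $\nu_i\in\Prob(X)^\mu$. By admissibility, $\nu$ is $G$-quasi-invariant, so the Radon--Nikodym cocycle $c(g,x):=\dd g_*\nu/\dd\nu(x)$ is well-defined and satisfies $\int c(g,x)\dd\mu(g)=1$ for $\nu$-a.e.\ $x$ by the stationarity identity $\mu*\nu=\nu$. The bounded densities $f_i:=\dd\nu_i/\dd\nu$ satisfy the weighted harmonicity equation $f_i(x)=\int f_i(g^{-1}x)\,c(g,x)\dd\mu(g)$ for $\nu$-a.e.\ $x$. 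Granting that each $f_i$ is $G$-invariant $\nu$-a.e., ergodicity makes $f_i$ constant, so $\nu_1=\nu_2=\nu$, contradicting distinctness. To establish the invariance --- the heart of the argument --- I would apply Jensen's inequality to the probability kernel $c(g,x)\dd\mu(g)$ with the convex function $t\mapsto t^2$: $f_i(x)^2\leq\int f_i(g^{-1}x)^2\,c(g,x)\dd\mu(g)$; integrating against $\nu$ and using stationarity yields equality $\nu$-a.e., which forces $f_i(g^{-1}x)=f_i(x)$ for $\mu\otimes\nu$-a.e.\ $(g,x)$. Fubini then makes $H:=\{g\in G:f_i\circ g=f_i\text{ in }L^\infty(\nu)\}$ into a subgroup of $G$ with $\mu(H)=1$; admissibility enters decisively here, as absolute continuity with respect to Haar combined with Steinhaus shows $H$ is an open --- hence closed --- subgroup of $G$, and the non-containment of $\supp\mu$ in any proper closed sub-semigroup then forces $H=G$, completing the proof.
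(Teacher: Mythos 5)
Your proof is correct, but it is worth noting that the paper does not actually prove this lemma: it simply cites \cite[Corollary 2.7]{bader2006factor} and remarks that the compactness hypothesis there can be relaxed to $X$ Polish. So you have supplied a genuine self-contained argument where the paper defers to the general machinery of $(G,\mu)$-spaces developed in that reference. Your route is the classical ``harmonic density'' argument: the easy direction by restricting $\nu$ to an invariant set (correct as written, including the verification that $\mu*\nu_A=\nu_A$ uses only the invariance of $A$ and stationarity of $\nu$), and the hard direction by showing that the density $f_i=d\nu_i/d\nu$ of any stationary component satisfies the weighted mean-value identity $f_i(x)=\int f_i(g^{-1}x)\,c(g,x)\,d\mu(g)$ and then squeezing invariance out of the equality case of Jensen. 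The Jensen step is particularly clean with $t\mapsto t^2$, since integrating the identity $\int f_i(g^{-1}x)^2c(g,x)\,d\mu(g)-f_i(x)^2=\int\bigl(f_i(g^{-1}x)-f_i(x)\bigr)^2c(g,x)\,d\mu(g)$ against $\nu$ gives zero on the left by stationarity. The Steinhaus--Weil endgame, using both halves of admissibility to promote a conull subgroup to all of $G$, is exactly right and is where the hypothesis is genuinely needed.

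Two small points deserve a sentence each in a written-up version. First, to pass from ``$f_i(g^{-1}x)=f_i(x)$ for $c(g,x)\,d\mu(g)$-a.e.\ $g$'' (which is what equality in Jensen literally gives) to ``for $\mu$-a.e.\ $g$,'' you need $c(g,x)>0$ for $\mu\otimes\nu$-a.e.\ $(g,x)$; this follows from the fact that admissibility makes each $g_*\nu$ equivalent to $\nu$ (not merely absolutely continuous), plus Fubini. Second, the whole computation presupposes a jointly measurable version of the Radon--Nikodym cocycle $c(g,x)$ and the measurability of the set $H$; both are standard for Borel actions of locally compact second countable groups on standard Borel spaces with a quasi-invariant measure, but should be flagged. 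Neither point is a gap in substance.
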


\begin{proof}
This is \cite[Corollary 2.7]{bader2006factor}. 
In this reference, $X$ is assumed compact, but the proof works just as well if $X$ is Polish. 
\end{proof}

We denote by $\Erg_\mu(X)\subset \Prob(X)^\mu$ the subset of ergodic measures.
In view of Lemma~\ref{lem:support-of-stationary-is-invariant and ergodic-iff-extremal},
we have that the integration map
$\Prob(\Erg_\mu(X)) \to \Prob(X)^\mu$,
establishes a continuous affine bijection, again by 
the existence and uniqueness of the disintegration map,
considering for a given $\nu\in \Prob(X)^\mu$ the $\nu$-a.e defined Borel map dualizing the von Neumann algebra inclusion 
$L^\infty(X,\nu)^\mu <L^\infty(X,\nu)$. 
Thus $\Prob(X)^\mu$ is a Polish simplex.

\begin{lem}\label{lem:choquet-kakutani}
Let $G$ act on a metrizable Choquet simplex $C$ continuously by affine homeomorphisms. 
Let $X$ denote the set of extreme points of $C$ and consider the induced action of $\Prob(G)$ on $\Prob(X)$. 
Then the barycenter map $\Prob(X)\to C$ is a $\Prob(G)$-equivariant continuous bijection
and thus induces a continuous bijection of non-empty spaces $\Prob(X)^\mu \to C^\mu$, for any $\mu\in \Prob(G)$.
\end{lem}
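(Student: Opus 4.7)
The plan is to assemble the lemma from three pieces: (i) continuity and bijectivity of $\bary\colon\Prob(X)\to C$, (ii) its $\Prob(G)$-equivariance, and (iii) non-emptiness of the fixed-point sets. Piece (i) is essentially recorded in the material preceding the lemma: continuity of $\bary$ comes from pairing against continuous linear functionals on the ambient locally convex space, surjectivity is Choquet's theorem, and injectivity is the defining property of a Choquet simplex.

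For piece (ii) I would first note that since $G$ acts on $C$ by \emph{affine} homeomorphisms, it preserves $X=\ext(C)$, so $X$ is a Polish $G$-space and $\Prob(X)$ carries the induced continuous $\Prob(G)$-action defined in the paragraph preceding the lemma via pushforward and convolution. Then, testing against an arbitrary continuous linear functional $\phi$ on the ambient space of $C$ and using Fubini, I would compute
\[
\phi(\bary(\mu*\nu))=\int_{G\times X}\phi(g.x)\,d(\mu\times\nu)(g,x)=\int_G\phi(g.\bary(\nu))\,d\mu(g)=\phi(\mu*\bary(\nu)),
\]
where the middle equality uses that each $g\in G$ acts by a continuous affine map on $C$, so $g.\bary(\nu)=\bary(g_*\nu)=\int_X \phi(g.x)\,d\nu(x)$ after pairing with $\phi$. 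Since this holds for all $\phi$, we obtain $\bary(\mu*\nu)=\mu*\bary(\nu)$, and consequently $\bary$ restricts to a continuous bijection $\Prob(X)^\mu\to C^\mu$.

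Finally, for piece (iii) I would invoke a standard Krylov--Bogolyubov/Kakutani averaging: pick any $c_0\in C$, form the Ces\`aro averages $c_n\defq \tfrac1n\sum_{k=0}^{n-1}\mu^{*k}*c_0\in C$, and extract a weak-$*$ accumulation point $c\in C$ using compactness of $C$; continuity of the convolution operation shows $\mu*c=c$, so $C^\mu\neq\emptyset$, whence $\Prob(X)^\mu\neq\emptyset$ via the bijection. The whole argument is a routine assembly of the Choquet machinery introduced above; the only step requiring real care is the Fubini calculation establishing equivariance, since it is the one place where the $\Prob(G)$-actions on the two spaces must be matched, but this becomes transparent once the convolution action is set up via the formula recalled in the paragraph preceding the lemma.
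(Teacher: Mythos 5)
Your proposal is correct and follows essentially the same route as the paper: the paper's proof simply records that $\bary$ is a continuous affine bijection (Choquet), that it is obviously $G$-equivariant and hence $\Prob(G)$-equivariant, and that $C^\mu\neq\emptyset$ by Kakutani's fixed point theorem. Your Fubini computation just makes explicit the equivariance the paper calls obvious, and your Ces\`aro-averaging argument is the standard proof of the fixed-point statement the paper cites, so there is no substantive difference.
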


\begin{proof}
The map $\Prob(X)\to C$ is a continuous affine bijection.
It is obviously $G$-equivariant, thus also $\mu$-equivariant for any $\mu\in \Prob(G)$.
Therefore it induces a bijection $\Prob(X)^\mu \to C^\mu$.
By Kakutani's fixed point theorem, $C^\mu$ is non-empty.
It follows that $\Prob(X)^\mu$ is non-empty as well.
\end{proof}

Arguing as in the proof of Proposition~\ref{prop:choqueinv},
we deduce the following.

\begin{prop} \label{prop:choquestat}
Let $C$ be a metrizable Choquet simplex and assume the locally compact second countable group $G$ acts on $C$ continuously by affine transformations.
Let $\mu$ be an admissible probability measure on $G$ and consider its convolution action on $C$.
Then $C^\mu$ is a metrizable Choquet simplex.
\end{prop}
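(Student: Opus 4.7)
The plan is to mimic the proof of Proposition~\ref{prop:choqueinv}, using the stationary barycenter bijection of Lemma~\ref{lem:choquet-kakutani} in place of the invariant one. Set $X=\Ext(C)$, a Polish $G$-space. First, I would verify that $C^{\mu}$ is a metrizable compact convex subset of $C$. Convexity is immediate from the linearity of convolution. For closedness in $C$, I would note that the map $c\mapsto\mu\ast c$ on $C$ is continuous: it factors as the continuous action map $G\times C\to C$, the pushforward to $\Prob(G\times C)\to\Prob(C)$, and the barycenter map $\Prob(C)\to C$, each of which is continuous. Hence the fixed point set $C^{\mu}$ is closed in $C$, thus compact and metrizable.

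Next, I would apply Lemma~\ref{lem:choquet-kakutani} to obtain a continuous affine bijection $\bary:\Prob(X)^{\mu}\to C^{\mu}$. By the discussion preceding that lemma, $\Prob(X)^{\mu}$ is itself a Polish simplex: disintegration dualizing the inclusion of abelian von Neumann algebras $L^{\infty}(X,\nu)^{\mu}\subseteq L^{\infty}(X,\nu)$ produces a continuous affine bijection $\Prob(\Erg_{\mu}(X))\to \Prob(X)^{\mu}$, whose inverse is Borel by Suslin's theorem, so the two sides are Borel affine isomorphic.

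Finally, I would upgrade $\bary:\Prob(X)^{\mu}\to C^{\mu}$ from a continuous affine bijection to a Borel affine isomorphism. Both sides are standard Borel, being closed subsets of the Polish spaces $\Prob(X)$ and $C$, so Suslin's theorem again makes the inverse automatically Borel. Composing with the Borel affine isomorphism $\Prob(\Erg_{\mu}(X))\to\Prob(X)^{\mu}$ of the previous paragraph exhibits $C^{\mu}$ as Borel affine isomorphic to $\Prob(\Erg_{\mu}(X))$, so $C^{\mu}$ is a Polish simplex. Being compact, it is a Choquet simplex, and non-empty by the Kakutani argument recorded in the proof of Lemma~\ref{lem:choquet-kakutani}. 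The only subtle step beyond the two cited lemmas is precisely this Borel-affine upgrade, which is where admissibility of $\mu$ enters (through Lemma~\ref{lem:support-of-stationary-is-invariant and ergodic-iff-extremal} behind the identification of $\Prob(X)^{\mu}$ as a Polish simplex); once this is in hand, everything else is bookkeeping parallel to the invariant case.
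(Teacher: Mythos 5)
Your proposal is correct and follows essentially the same route as the paper, whose entire proof is the remark that one argues as in Proposition~\ref{prop:choqueinv}: transport the problem to $\Prob(\Ext(C))^{\mu}$ via the $\Prob(G)$-equivariant barycenter bijection of Lemma~\ref{lem:choquet-kakutani}, use the fact (established in the discussion preceding that lemma, where admissibility enters through Lemma~\ref{lem:support-of-stationary-is-invariant and ergodic-iff-extremal}) that $\Prob(\Ext(C))^{\mu}$ is a Polish simplex, and invoke Suslin's theorem plus compactness to conclude. You have merely spelled out details the paper leaves implicit, such as the closedness of $C^{\mu}$ and the Borel upgrade of the inverse.
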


\subsection{Stiffness} \label{sec:stationary}
We retain the setting of a locally compact second countable group $G$ acting on a Banach space $E$ by linear isometries, along with the corresponding continuous action of the semigroup $\Prob(G)$ on $E^*$ as given in Equation (\ref{convolution}). 

\begin{defn}\label{def:affine-stiff}
Let $C$ be a $\Prob(G)$-invariant subset of $E^*$. We say that the action of $G$ on $C$ is \emph{affinely $\mu$-stiff} for a probability measure $\mu\in \Prob(G)$, if any $\mu$-stationary point in $C$ is $G$-invariant.  A continuous action of $G$ on a Polish space $X$ is said to be \emph{topologically $\mu$-stiff} if the corresponding action on $\Prob(X)$ is affinely $\mu$-stiff.
\end{defn}

When there is no risk of confusion, e.g when a Polish $G$-space
$X$ does not admit any a priori affine structure, we shall simply
say \emph{$\mu$-stiff}. We shall also often say that a Polish space
$X$ is stiff as long as the action of $G$
as well as the probability measure $\mu\in \Prob(G)$ are implied by context.

We remark that in Definition \ref{def:affine-stiff} it could very well be the case that $C^\mu$, the set of $\mu$-stationary points in $C$, is  empty, in which case the action is $\mu$-stiff in a vacuous sense. 
This is never the case when $C$ is weak-* compact due to the standard Kakutani argument. 
By Lemma~\ref{lem:choquet-kakutani},
this is also not the case for $\Prob(X)$, where $X=\Ext(C)$ for some metrizable Choquet simplex $C$.

\begin{lem}
	\label{ergodic-decomposition}
	Let $\mu\in \Prob(G)$  admissible. The action of $G$ on a Polish space $X$ is $\mu$-stiff if and only if any ergodic $\nu\in \Prob(X)^\mu$ is invariant.  
\end{lem}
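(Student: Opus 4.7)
The plan is to prove the two implications separately. The forward direction is immediate from the definition: if every $\mu$-stationary probability measure on $X$ is $G$-invariant, then in particular every ergodic $\mu$-stationary probability measure is $G$-invariant, so nothing needs to be done there.

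For the reverse direction, the key tool is the ergodic decomposition of $\mu$-stationary measures that was established earlier in this subsection. Specifically, since $\mu$ is admissible, Lemma~\ref{lem:support-of-stationary-is-invariant and ergodic-iff-extremal} identifies $\Erg_\mu(X)$ with the set of extreme points of $\Prob(X)^\mu$, and the integration map $\Prob(\Erg_\mu(X)) \to \Prob(X)^\mu$ was shown to be a continuous affine bijection. So given an arbitrary $\nu \in \Prob(X)^\mu$, I would write
\[
\nu \;=\; \int_{\Erg_\mu(X)} \eta \,\dd\lambda(\eta)
\]
for a unique $\lambda \in \Prob(\Erg_\mu(X))$.

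Now I would use the assumption that each ergodic $\mu$-stationary $\eta$ is $G$-invariant, together with the fact that for every $g \in G$ the pushforward map $\nu \mapsto g_*\nu$ is continuous and affine on $\Prob(X)$, hence commutes with barycenters. Concretely, for any bounded continuous $f$ on $X$ and any $g \in G$, I compute
\[
\int_X f \,\dd(g_*\nu) \;=\; \int_X (f\circ g^{-1}) \,\dd\nu \;=\; \int_{\Erg_\mu(X)} \!\!\int_X (f\circ g^{-1}) \,\dd\eta \,\dd\lambda(\eta) \;=\; \int_{\Erg_\mu(X)} \!\!\int_X f \,\dd\eta \,\dd\lambda(\eta) \;=\; \int_X f \,\dd\nu,
\]
where the third equality uses the $G$-invariance of each $\eta$ in the support of $\lambda$. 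Thus $g_*\nu = \nu$ for all $g \in G$, proving stiffness.

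I do not anticipate a serious obstacle here: everything is already in place, and the argument is just the standard principle that an affine property which holds on extreme points of a simplex propagates to the whole simplex via the barycenter. The only point requiring mild care is invoking Fubini for the swap in the middle equality above, which is justified by boundedness of $f$ and the fact that $\lambda$ and each $\eta$ are probability measures.
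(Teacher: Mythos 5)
Your proof is correct and follows the same route as the paper: the paper's own proof is a one-line appeal to Lemma~\ref{lem:support-of-stationary-is-invariant and ergodic-iff-extremal} together with the fact that $\Prob(X)^\mu$ is a Polish simplex, which is exactly the ergodic decomposition and barycenter argument you spell out in detail. The only (harmless) quibble is the $f\circ g^{-1}$ versus $f\circ g$ convention for the pushforward, which does not affect the argument since it is quantified over all $g\in G$.
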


\begin{proof}
	This is an immediate application of Lemma~\ref{lem:support-of-stationary-is-invariant and ergodic-iff-extremal} and the fact that $\Prob(X)^\mu$ is a Polish simplex.
\end{proof}

We shall now state a few general and simple lemmas on stiffness.

\begin{lem} \label{lem:finite-is-stiff}
Assume $G$ is a finite group and $\mu$ is an admissible probability measure on $G$. 
Then every $G$-invariant convex subset $C\subset E^*$ is affinely $\mu$-stiff.
\end{lem}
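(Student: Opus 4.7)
The plan is to unpack the admissibility condition on a finite group and then iterate the stationarity equation, extracting $G$-invariance from a Cesaro average over the driven random walk. Since $G$ is finite, the Haar measure class is that of the counting measure, so absolute continuity of $\mu$ is automatic. Moreover, in a finite group every sub-semigroup is a subgroup (every element has finite order, so inverses are positive powers), so admissibility simply means that $\supp(\mu)$ generates $G$ as a group. Let $h_G$ denote the uniform probability measure on $G$.

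Given a $\mu$-stationary $c \in C$, that is $c = \mu * c$, the first step is to argue by induction that $c = \mu^{*n} * c$ for every $n \ge 0$, using the associativity identity $(\mu * \nu) * c = \mu * (\nu * c)$, which is a straightforward application of Fubini applied to the defining formula \eqref{convolution}. Averaging over $n$ then yields
\[
c \;=\; \left(\frac{1}{N}\sum_{n=0}^{N-1} \mu^{*n}\right) * c
\]
for every $N \ge 1$. Because $\supp(\mu)$ generates $G$, the random walk on $G$ driven by $\mu$ is an irreducible finite Markov chain, and the classical Cesaro ergodic theorem provides $\frac{1}{N}\sum_{n=0}^{N-1}\mu^{*n} \to h_G$ in total variation as $N \to \infty$. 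Since $\Prob(G)$ is finite-dimensional and the convolution map $\nu \mapsto \nu * c$ is weak-$*$ continuous into $E^{*}$, one may pass to the limit to obtain $c = h_G * c = \frac{1}{|G|}\sum_{g \in G} g.c$.

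The right-hand side is manifestly $G$-invariant: for any $k \in G$, left translation by $k$ merely permutes the summands. Hence $k.c = c$ for every $k \in G$, which is the desired $G$-invariance, and consequently $C$ is affinely $\mu$-stiff. There is no real obstacle here; the only point requiring a moment's thought is the initial reduction of the admissibility hypothesis to the usable statement ``$\supp(\mu)$ generates $G$'', after which the argument is standard finite Markov chain theory.
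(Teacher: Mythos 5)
Your proof is correct and follows essentially the same route as the paper: both arguments reduce admissibility to the statement that $\supp(\mu)$ generates $G$, observe that the Ces\`aro averages $\frac{1}{N}\sum_n \mu^{*n}$ converge to the uniform measure $\lambda$ on $G$, and conclude $C^\mu \subseteq C^\lambda = C^G$. Your write-up merely spells out the intermediate steps (iterating stationarity, continuity of $\nu \mapsto \nu * c$) that the paper leaves implicit.
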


\begin{proof}
Observe that $\Prob(G)^\mu=\{\lambda\}$, where $\lambda$ is the uniform measure on $G$. 
As a result $1/n\sum_{k=1}^n\mu^k$ converges to $\lambda$ and therefore $C^\mu = C^\lambda =C^G$.
\end{proof}

\begin{lem}
	\label{lem:stationarity-of-quotients} Let $C$ be a convex $\Prob(G)$-invariant subset of $E^*$. Let $\bar{G}$ be the quotient of $G$ by the kernel of $G\curvearrowright C$ and let  $\bar{\mu}$
	be the pushforward of $\mu$ under the quotient map $G\to\bar{G}$.
	Then $C$ is affinely $\mu$-stiff if and only if it is affinely $\bar{\mu}$-stiff. 
\end{lem}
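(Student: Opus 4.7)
The plan is to reduce the claim to two trivial observations: $\mu$-stationarity depends only on how $\mu$ pushes forward through the action, and $G$-invariance coincides with $\bar{G}$-invariance by the definition of $\bar{G}$. Nothing in the statement genuinely involves the Choquet-theoretic or functional-analytic machinery developed earlier in the section; the lemma is really a bookkeeping statement about convolution.

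First, I would record the structural setup. Let $N\leq G$ be the kernel of the action $G\curvearrowright C$, i.e.\ the set of $g\in G$ fixing every point of $C$. Since the action of $G$ on $E$, and hence on $E^*$ with the weak-$*$ topology, is continuous, the stabilizer of each $c\in C$ is closed; thus $N=\bigcap_{c\in C}\Stab_G(c)$ is a closed normal subgroup, and $\bar{G}=G/N$ is a locally compact second countable group acting continuously on $C$ through the quotient map $q\colon G\to\bar{G}$, so that $g.c=q(g).c$ for every $g\in G$ and $c\in C$. In particular a point $c\in C$ is $G$-invariant if and only if it is $\bar{G}$-invariant.

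Next, for any $c\in C$, using the definition \eqref{convolution} of convolution and the factorization of the action through $q$,
\[
\mu * c \;=\; \int_{G} g^{-1}.c\;\dd\mu(g) \;=\; \int_{G} q(g)^{-1}.c\;\dd\mu(g) \;=\; \int_{\bar{G}} \bar{g}^{-1}.c\;\dd\bar{\mu}(\bar{g}) \;=\; \bar{\mu} * c,
\]
where the third equality is the change-of-variables formula for the pushforward measure $\bar{\mu}=q_*\mu$. Hence $c$ is $\mu$-stationary in $C$ if and only if it is $\bar{\mu}$-stationary.

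Combining the two observations, $C$ is affinely $\mu$-stiff, i.e.\ every $\mu$-stationary point in $C$ is $G$-invariant, if and only if every $\bar{\mu}$-stationary point in $C$ is $\bar{G}$-invariant, i.e.\ $C$ is affinely $\bar{\mu}$-stiff. I do not anticipate any serious obstacle; the only point that warrants a sentence of justification is the fact that $N$ is closed (so that $\bar{G}$ is genuinely a locally compact group and $\bar{\mu}$ is a Borel probability measure on it), which as noted follows from the continuity of the $G$-action on $E^*$.
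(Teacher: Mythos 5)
Your argument is correct and is essentially the paper's own proof: both reduce the lemma to the two observations that the $\Prob(G)$-action on $C$ factors through $\Prob(\bar{G})$ (so that $C^{\mu}=C^{\bar{\mu}}$, via exactly the change-of-variables computation you write) and that $C^{G}=C^{\bar{G}}$. The only quibble is a harmless notational slip: with the paper's convention for the dual action one has $\mu*c=\int_{G}g.c\,\dd\mu(g)$ rather than $\int_{G}g^{-1}.c\,\dd\mu(g)$, which does not affect the argument.
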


\begin{proof}
	Since the $G$-action on $C$ factorizes through $\bar{G}$, it is easy to see that the corresponding $\Prob(G)$-action on $C$ factorizes through $\Prob(\bar{G})$. This shows that $C^\mu=C^{\bar{\mu}}$. As it is clear that $C^G=C^{\bar{G}}$, the statement follows. 
\end{proof}

\begin{lem}
	\label{lem:factor-of-stiff-is-stiff} Let $p:C_1\to C_2$ be a continuous affine surjective
	$G$-equivariant map between two $\Prob(G)$-invariant convex sets and assume that the fibers of $p$ are compact. If $C_1$ is affinely $\mu$-stiff then so is $C_2$. 
\end{lem}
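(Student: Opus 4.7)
The plan is to lift a given stationary point upstairs, use the hypothesis there, and push back down. So let $c_2 \in C_2^\mu$ be $\mu$-stationary, and set $K := p^{-1}(c_2) \subseteq C_1$. Since $p$ is continuous and affine, $K$ is closed and convex, and it is compact by the fiber-compactness assumption. The goal is to produce a $\mu$-stationary point $c_1 \in K$; once we have this, $c_1$ is $G$-invariant by affine $\mu$-stiffness of $C_1$, and hence $c_2 = p(c_1)$ is $G$-invariant by equivariance of $p$.

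To produce $c_1$, first I would verify that the convolution map $T : c \mapsto \mu * c$ restricts to a self-map of $K$. Indeed, using $\Prob(G)$-equivariance of $p$ (which follows from $G$-equivariance together with the functorial definition of $\mu * (-)$ via barycenters of push-forwards), for any $c \in K$ one has
\[ p(\mu * c) = \mu * p(c) = \mu * c_2 = c_2, \]
so $\mu * c \in K$. The map $T \colon K \to K$ is continuous (convolution is continuous in the weak-$*$ topology by construction of the $\Prob(G)$-action on $E^*$) and affine.

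Now I would apply the Markov--Kakutani fixed point theorem to the single continuous affine self-map $T$ of the nonempty compact convex set $K$ sitting inside the locally convex space $E^*$ (equipped with the weak-$*$ topology). This yields a fixed point $c_1 \in K$, i.e.\ a $\mu$-stationary element of $C_1$ projecting to $c_2$. Affine $\mu$-stiffness of $C_1$ then forces $c_1$ to be $G$-invariant, and applying $p$ gives $g.c_2 = g.p(c_1) = p(g.c_1) = p(c_1) = c_2$ for every $g \in G$, as desired.

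There is no real obstacle here; the only subtle point is the existence of the lift, which is handled by Markov--Kakutani applied to $T$ on the compact convex fiber $K$. The fiber-compactness hypothesis on $p$ is used in exactly one place, namely to ensure the fixed point theorem applies; everything else is a formal consequence of $G$-equivariance and affineness of $p$.
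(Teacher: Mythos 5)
Your proof is correct and follows essentially the same route as the paper: the paper likewise observes that the fiber $p^{-1}(c_2)$ is non-empty, compact, convex and closed under convolution by $\mu$, extracts a $\mu$-stationary point in it by a fixed-point argument, and then invokes stiffness of $C_1$. You have merely spelled out the details (the $\Prob(G)$-equivariance of $p$ and the application of Markov--Kakutani) that the paper leaves implicit.
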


\begin{proof}
	For  $c_2\in C_2^\mu$, the fiber $p^{-1}(c_2)$ is non-empty, compact, convex and closed under convolution by $\mu$. Thus we can choose a $\mu$-stationary element $c_1\in p^{-1}(c_2)$. Since $C_1$ is stiff, $c_1$ must be $G$-invariant, and as a result, $c_2=p(c_1)$ is $G$-invariant.
\end{proof}

\subsection{Relative stiffness}
We introduce a relative version of stiffness for actions of  a locally compact second countable group $G$ endowed with an admissible probability measure $\mu$. 
\begin{defn}
	Let $p:X\to Y$ be a $G$-equivariant continuous map between
Polish $G$-spaces. $X$ is said to be\emph{ $\mu$-stiff relative
to $Y$} if $X$ is measure preserving relative to $Y$ with respect
to any choice of an ergodic $\mu$-stationary probability measure $\nu$ on
$X$.
\end{defn}

\begin{lem}
\label{lem:relaltively-stiff} If $Y$ is
stiff and $X$ is stiff relative to $Y$ then $X$ is stiff. 
Conversely, if $X$ is stiff then it is stiff relative to $Y$ and,
assuming further that the fibers of $p$ are compact, $Y$ is stiff.
\end{lem}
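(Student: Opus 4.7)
The statement decomposes into three independent implications and I would tackle them in turn. For the forward direction, by Lemma~\ref{ergodic-decomposition} it suffices to show every ergodic $\nu\in\Prob(X)^\mu$ is $G$-invariant. Its pushforward $\eta=p_*\nu\in\Prob(Y)^\mu$ is $G$-invariant by stiffness of $Y$, and relative stiffness of $X$ over $Y$ gives that the disintegration $\nu=\int_Y\nu_y\, d\eta(y)$ is $G$-equivariant in the sense that $g_*\nu_y=\nu_{g.y}$ for $\eta$-a.e.~$y$. Combining these ingredients,
\[ g_*\nu=\int_Y g_*\nu_y\, d\eta(y)=\int_Y\nu_{g.y}\, d\eta(y)=\int_Y\nu_z\, d(g_*\eta)(z)=\int_Y\nu_z\, d\eta(z)=\nu, \]
where $G$-invariance of $\eta$ is used at the penultimate step.

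For the first half of the converse, assume $X$ is stiff and let $\nu$ be an ergodic $\mu$-stationary measure on $X$; stiffness forces $g_*\nu=\nu$, and hence $g_*\eta=\eta$ for $\eta:=p_*\nu$. The reparametrised family $\tilde\nu_z:=g_*\nu_{g^{-1}.z}$ is then a Lebesgue map $Y\to\Prob(X)$ with $\tilde\nu_z$ supported on $p^{-1}(z)$ and integrating to $g_*\nu=\nu$ against $g_*\eta=\eta$; by uniqueness of the disintegration $\tilde\nu_z=\nu_z$ for $\eta$-a.e.~$z$, equivalently $g_*\nu_y=\nu_{g.y}$ for $\eta$-a.e.~$y$. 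Since this holds for every ergodic $\mu$-stationary $\nu$ and every $g\in G$, $X$ is stiff relative to $Y$.

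For the remaining implication, assuming further that the fibers of $p$ are compact, I would apply Lemma~\ref{lem:factor-of-stiff-is-stiff} to the induced continuous affine $\Prob(G)$-equivariant surjection $p_*\colon\Prob(X)\to\Prob(Y)$. Since stiffness of $X$ is by definition affine stiffness of $\Prob(X)$, once the fibers of $p_*$ are shown to be compact, affine stiffness of $\Prob(Y)$---hence stiffness of $Y$---will follow. Fix $\eta\in\Prob(Y)$; the fiber $p_*^{-1}(\eta)\subset\Prob(X)$ is closed and convex, and by Prokhorov's theorem it suffices to establish uniform tightness. Given $\varepsilon>0$, use tightness of $\eta$ to pick a compact $L\subset Y$ with $\eta(Y\setminus L)<\varepsilon$; under the operative form of the compact-fiber hypothesis (namely, that $p$ is proper, so that preimages of compacts are compact) $p^{-1}(L)\subset X$ is compact and every $\nu\in p_*^{-1}(\eta)$ satisfies $\nu(p^{-1}(L))=\eta(L)\geq 1-\varepsilon$. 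The one genuinely non-routine step in the whole argument is precisely this passage from compactness of the set-theoretic fibers of $p$ to properness, and that is where I expect the main care to be required.
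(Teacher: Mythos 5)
Your first two implications are exactly the paper's argument: the forward direction is the same computation $g.\nu=\int_Y g.\nu_y\,d\eta=\int_Y\nu_{g.y}\,d\eta=\nu$ (reduced to ergodic $\nu$ via Lemma~\ref{ergodic-decomposition}), and the claim that stiffness of $X$ implies relative stiffness is obtained, as in the paper, from uniqueness of the disintegration applied to the reparametrised family $g.\nu_{g^{-1}y}$. Both are correct.

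For the last implication the paper simply cites Lemma~\ref{lem:factor-of-stiff-is-stiff}, and you have correctly isolated the step that citation leaves implicit: that lemma is applied to the affine map $p_*:\Prob(X)\to\Prob(Y)$, so what is actually needed is compactness of the fibers of $p_*$, not of $p$. Your closed-plus-uniformly-tight argument via Prokhorov is the right way to get this. However, your reduction of the hypothesis to properness of $p$ is not valid as stated: a continuous surjection between Polish spaces with compact (even singleton) fibers need not be proper --- e.g.\ remove a boundary point of a fiber's limit position, as in $X=\left((0,1]\times\{0\}\right)\cup\{(0,1)\}$ projecting onto $[0,1]$ --- so ``compact fibers'' cannot be upgraded to ``preimages of compacts are compact'' for free. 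The repair is a Lusin-type argument rather than properness: the map $y\mapsto p^{-1}(y)$, valued in the Polish space of compact subsets of $X$, is $\eta$-measurable, so for any $\varepsilon>0$ there is a compact $L\subseteq Y$ with $\eta(L)>1-\varepsilon$ on which this map is continuous; the union of a compact family of compact sets is compact, so $p^{-1}(L)$ is compact and carries mass $\eta(L)\geq 1-\varepsilon$ uniformly over the fiber $p_*^{-1}(\eta)$. With that substitution (or with properness verified directly in the intended applications, where it does hold) your argument is complete and is, in effect, a more careful version of the paper's.
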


\begin{proof}
Fix some $\nu\in \Prob(X)^{\mu}$ ergodic, and let $\nu=\int\nu_{y}dp_{*}\nu$
be the disintegration of $\nu$ along $p$. 

If $X$ is stiff relative to $Y$ and $Y$ is stiff then
$p_{*}\nu$ is $G$-invariant, and: 
\[
g.\nu=\int_{Y}g.\nu_{y}dp_{*}\nu=\int_{Y}\nu_{g.y}dp_{*}\nu=\int_{Y}\nu_{y}dp_{*}\nu=\nu\,\,\,\,\,(\forall g\in G)
\]
thus $\nu$ is $G$-invariant. This shows $X$ is stiff in light of Lemma \ref{ergodic-decomposition}.

The fact that if $X$ is stiff then it is stiff relative to $Y$ follows from the uniqueness of the disintegration, 
as $\nu=\int_{Y}g.\nu_{g^{-1}y}dp_{*}\nu$ implies $\nu_{y}=g.\nu_{g^{-1}y}$ a.e.
Further, if the fibers of $p$ are compact and $X$ is stiff then $Y$ is stiff by Lemma \ref{lem:factor-of-stiff-is-stiff}.
% \[ \nu=\gamma.\nu=\int_{Y}\gamma.\nu_{y}dp_{*}\nu=\int_{Y}\gamma.\nu_{y}d\left(\gamma.p_{*}\nu\right)=\int_{Y}\gamma.\nu_{\gamma^{-1}y}dp_{*}\nu \]
\end{proof}

We introduce two cases in which relative stiffness is implied. 

\begin{prop} \label{prop:countable-fibers-stiffness}
Let $p:X\to Y$ be a $G$-equivariant
continuous map between Polish spaces and assume all fibers are countable
(or finite).  Then $X$ is stiff relative to $Y$. 
\end{prop}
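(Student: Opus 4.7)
The plan is to use the countable-fiber hypothesis to view each conditional measure $\nu_{y}$ as an element of the Hilbert space $\ell^{2}(p^{-1}(y))$ with $\|\nu_{y}\|_{2}\le 1$, and to extract $G$-equivariance of the disintegration from a vector-valued Jensen inequality for the strictly convex functional $\|\cdot\|_{2}^{2}$.

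Starting from an ergodic $\mu$-stationary $\nu$ on $X$, I would set $\eta=p_{*}\nu$ (itself $\mu$-stationary on $Y$) and disintegrate $\nu=\int\nu_{y}\,d\eta(y)$. The first step is to translate stationarity into a fiberwise identity. Disintegrating each $g_{*}\nu$ over $\eta$ (rather than over $g_{*}\eta$) brings in the Radon--Nikodym density $\varrho_{g}(y)=d(g_{*}\eta)/d\eta(y)$, which exists by admissibility of $\mu$; stationarity of $\eta$ ensures $\int\varrho_{g}(y)\,d\mu(g)=1$ for $\eta$-a.e.\ $y$, so $d\mu_{y}(g):=\varrho_{g}(y)\,d\mu(g)$ is a probability measure on $G$. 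Uniqueness of disintegration then gives
\[ \nu_{y}=\int_{G} g_{*}\nu_{g^{-1}y}\,d\mu_{y}(g)\qquad(\eta\text{-a.e.\ }y). \]

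Next I would apply Jensen's inequality to the strictly convex map $v\mapsto\|v\|_{2}^{2}$. Because $g_{*}$ is an $\ell^{2}$-isometry between fibers, Jensen yields
\[ \|\nu_{y}\|_{2}^{2}\le\int_{G}\|\nu_{g^{-1}y}\|_{2}^{2}\,d\mu_{y}(g), \]
and integrating against $\eta$ collapses the right-hand side back to $\int\|\nu_{y}\|_{2}^{2}\,d\eta(y)$ using the definition of $\mu_{y}$ together with the stationarity of $\eta$. Equality in Jensen therefore holds $\eta$-a.e., and strict convexity forces $g\mapsto g_{*}\nu_{g^{-1}y}$ to be $\mu_{y}$-a.e.\ equal to its barycenter $\nu_{y}$. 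After a change of variables this becomes $g_{*}\nu_{y}=\nu_{g.y}$ for $\mu\otimes\eta$-almost every $(g,y)$.

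To conclude I would upgrade equivariance from $\mu$-a.e.\ $g$ to every $g\in G$: the set $E=\{g\in G:g_{*}\nu_{y}=\nu_{g.y}\text{ for }\eta\text{-a.e.\ }y\}$ is a subgroup by quasi-invariance of $\eta$, is $\mu$-conull and hence Haar-positive by admissibility, so Steinhaus's theorem makes $E$ contain an identity neighborhood and therefore be open and closed; the clause in admissibility forbidding $\mu$ to sit inside a proper closed sub-semigroup then forces $E=G$. The main obstacle I foresee is the bookkeeping in the first step with the Radon--Nikodym factors---once the fiber identity is in place, the Jensen argument and the Steinhaus-type upgrade are routine.
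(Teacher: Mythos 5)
Your proof is correct, but it takes a genuinely different route from the paper's. Both arguments start the same way: disintegrate $\nu=\int\nu_y\,d\eta$ and exploit that each $\nu_y$ is purely atomic on a countable fiber. From there the paper works with the set $M_y$ of atoms of \emph{maximal} mass in each fiber: it chooses a Borel section $s:Y\to M=\bigcup_y M_y$, compares $\nu(s(Y))$ with $\nu(s^g(Y))$ using stationarity to show $\nu(M)=1$, concludes that $\nu_y$ is uniform on the finite set $M_y$, and then extracts equivariance of $y\mapsto\nu_y$ from the essential $G$-invariance of $M$. You instead run a strict-convexity (variance) argument: the martingale identity $\nu_y=\int_G g_*\nu_{g^{-1}y}\,d\mu_y(g)$, Jensen for $\|\cdot\|_2^2$ on $\ell^2(p^{-1}(y))$ (where the countability of the fibers enters, via $\|\nu_y\|_2\le 1$), and the equality case to force $g_*\nu_{g^{-1}y}=\nu_y$ a.e. Your route avoids the measurable-selection step (the paper needs a Borel section of $p\!\restriction_M$ and Suslin's theorem) and the slightly delicate bookkeeping around maximal atoms, at the cost of the Radon--Nikodym computation establishing the fiberwise identity — which is fine here because admissibility of $\mu$ makes $\eta$ quasi-invariant, so the densities $\varrho_g$ exist for every $g$ and $\mu_y\sim\mu$. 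The final upgrade from $\mu$-a.e.\ $g$ to all $g$ also differs: the paper uses weak-* continuity of the $G$-action on the relevant function spaces, while you use that the conull set $E$ of good elements is a measurable subgroup, hence open-closed by Steinhaus, hence all of $G$ by the admissibility clause forbidding support in a proper closed subsemigroup; both work, and yours is arguably more robust since it does not require tracking continuity of $g\mapsto g_*\nu$.
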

This generalizes the well known fact that actions on countable spaces are stiff. 
\begin{proof} 
Let $\nu\in \Prob(X)^{\mu}$ ergodic and denote $\eta=p_*\nu$. 
Disintegrate $\nu$ along $p$, namely write $\nu=\int_{Y}\nu_{y}\dd \eta$
where $\nu_{y}\in \Prob(X)$ is defined for $\eta$-a.e $y$ and is supported
on the fiber $p^{-1}(y)$. For each $y\in Y$ for which $\nu_{y}$
is defined, let $M_{y}$ be the (finite) set of all $x\in p^{-1}(y)$
that attain a maximal $\nu_{y}$-measure (recall that $p^{-1}(y)$
is countable). Denote by $M$ the union of all such $M_{y}$, and suppose first that we have managed to show that $\nu(M)=1$. In that case,  $\nu_y$ is the uniform measure supported on $M_y$ for a.e $y$. The fact that $G$ preserves the measure class of $\nu$ implies that $M$ is essentially $G$-invariant. Thus writing $\nu$ according to its integration formula we get that for all $g\in G$,  $\nu_y(M_y\cap gM_{g^{-1}y})=1$ for a.e $y\in Y$, or equivalently $g\nu_{g^{-1}y}=\nu_y$.  By Fubini,  for a.e $y\in Y$, the equality $g\nu_{g^{-1}y}=\nu_y$ holds for a.e $g\in G$. This equality must hold in fact for all $g\in G$ due to the continuity of the action on the space of Lebesgue maps $L(Y,\Prob(X))$ in the weak-$*$ topology induced from its containment in $L^1(Y,C_b(X))^*$. Relative stiffness therefore follows.

In order to show that indeed $\nu(M)=1$, choose some Borel section $s:Y\to M$ of $p\res_{M}$. The image $s(Y)$ is a Borel subset of $X$ (Suslin's theorem), and its $\nu$-measure is positive (as its $\nu_y$ measure is positive for a.e $y\in Y$).  For any $g\in G$ denote by $s^{g}:Y\to X$ the section
defined by $s^{g}(y)=gs(g^{-1}y)$. By the way $M$ was constructed, it holds that
$\nu_y\left(s(y)\right)\geq\nu_y\left(s^{g}(y)\right)$, where equality holds if and only if $s^g(y)\in M$. In particular $\nu(s(Y))\geq\nu(s^{g}(Y))$, and equality holds if and only if $s^g(Y)\subseteq M$, up to null sets. The equality $\nu(s(Y))=\nu(s^{g}(Y))$   indeed holds for $\mu$-a.e $g\in G$ due to the stationarity of $\nu$:
\[
\nu(s(Y))=\int_G\nu\left(g.s(Y)\right)\dd\mu=\int_G\nu\left(s^{g}(Y)\right)\dd\mu
\]
We see that for Haar-a.e $g\in G$,  $s^g(Y)\subseteq M$ up to null sets, 
because $\mu$ is admissible. This must  hold for all $g\in G$ due to the continuity of the $G$-action on $L^\infty(X,\nu)$ with the weak-* topology, along with the observation that the set of all $f\in L^\infty(X,\nu)$ for which $\chi_M\cdot f=f$ is closed. 

Fix a countable dense subgroup $G_0\leq G$. 
The above shows that  the set $M_0:=\bigcup_{g\in G_0} s^g(Y)$ is contained in $M$, up to null sets. But $M_0$ has positive measure as it contains $s(Y)$, and it is clearly $G_0$-invariant.
By the ergodicity of $\nu$ we conclude that $M_0$ has full $\nu$-measure. In particular $M$ is of full measure, as desired. 
\end{proof}

The second setting where we show relative stiffness is for extension by compact groups:
\begin{prop} \label{prop:X->X/K is relatively stiff}Let $X$ be a Polish $G$-space
and let $K$ be a compact group acting continuously on $X$ and commuting
with the $G$-action. Then $X$ is stiff relatively to $X/K$.
\end{prop}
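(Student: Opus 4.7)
Plan:

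Let $\nu\in\Prob(X)$ be an ergodic $\mu$-stationary measure and set $\eta:=p_*\nu\in\Prob(Y)$; by $G$-equivariance of $p$, $\eta$ is itself ergodic $\mu$-stationary. Writing the disintegration $\nu=\int_Y\nu_y\,d\eta(y)$, the task is to show that the Lebesgue map $y\mapsto\nu_y$ is $G$-equivariant $\eta$-a.e.

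The central device will be Haar averaging over $K$. Let $\lambda$ be the normalized Haar measure on $K$ and set $\tilde\nu:=\lambda*\nu=\int_K k.\nu\,d\lambda(k)$. Because $K$ commutes with $G$, the measure $\tilde\nu$ is $\mu$-stationary; because $K$ acts trivially on $Y$, $\tilde\nu$ is $K$-invariant with $p_*\tilde\nu=\eta$. Consequently $\tilde\nu_y=\int_K k.\nu_y\,d\lambda(k)$ is $K$-invariant on the single $K$-orbit $p^{-1}(y)$, and so by uniqueness of a $K$-invariant probability on a transitive $K$-space it coincides with the canonical orbit measure $\lambda_y$. Since $g:p^{-1}(y)\to p^{-1}(g.y)$ is a $K$-equivariant homeomorphism (as $K$ and $G$ commute), one has $g.\lambda_y=\lambda_{g.y}$, so the family $y\mapsto\lambda_y$ is $G$-equivariant and $\tilde\nu$ is measure-preserving relative to $Y$.

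Transferring this equivariance from $\tilde\nu$ back to $\nu$ is the heart of the argument, and I would proceed along the lines of the concluding paragraph of the proof of Proposition~\ref{prop:countable-fibers-stiffness}. Choose a Borel section $s:Y\to X$ of $p$ (available because fibres are compact $K$-homogeneous spaces), use it to trivialise $X$ over $Y$, and rewrite the $G$-action as a skew product with a measurable cocycle $\alpha:G\times Y\to K$ defined by $g.s(y)=\alpha(g,y).s(g.y)$. Disintegrating the stationarity equation $\nu=\mu*\nu$ along $p$ and invoking uniqueness of disintegration produces a fibre-level identity expressing $\nu_y$ as a $\mu$-weighted average of the candidate measures $g.\nu_{g^{-1}.y}$ in $\Prob(p^{-1}(y))$. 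Admissibility of $\mu$ then upgrades ``$\mu$-a.e.\ $g$'' to ``Haar-a.e.\ $g$''; fixing a countable dense subgroup $G_0\le G$ and intersecting the resulting $\eta$-conull sets, one obtains the pointwise equality $g.\nu_y=\nu_{g.y}$ for all $g\in G_0$, and finally extends it to all $g\in G$ using weak-$*$ continuity of the $G$-action on the Polish space of Lebesgue maps $L(Y,\Prob(X))\subset L^1(Y,C_b(X))^*$. The main obstacle I foresee is precisely the step of passing from the averaged fibre identity to the pointwise equality $g.\nu_{g^{-1}.y}=\nu_y$; compactness of $K$ is indispensable here, since it pins down $\tilde\nu_y=\lambda_y$ as an explicit $G$-equivariant comparison family and confines both the cocycle and the fibre measures to compact Polish spaces, where the averaging identity can be ``inverted'' using ergodicity of $\nu$.
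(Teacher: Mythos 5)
Your first half is sound and matches the opening moves of the paper's proof: Haar-averaging $\nu$ over $K$ produces a stationary, $K$-invariant measure $\tilde\nu$ whose disintegration consists of the canonical $K$-orbit measures $\lambda_y$, and this family is $G$-equivariant because $G$ and $K$ commute. The gap is in the second half, and it is exactly the step you yourself flag as ``the main obstacle'': you never show how to pass from the relative measure-preservation of $\tilde\nu$ to that of $\nu$. The route you sketch --- disintegrating $\nu=\mu*\nu$ along $p$ to get a fibrewise averaging identity and then ``inverting'' it ``using ergodicity of $\nu$'' --- does not go through. In Proposition~\ref{prop:countable-fibers-stiffness} the inversion works because each fibre measure has a canonical finite set of atoms of maximal mass, and the measure of a section through those atoms can only decrease under the averaging, which forces equality. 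When $K$ is, say, a torus acting freely, the $\nu_y$ may be atomless and there is no analogous canonical finite invariant to exploit; a weighted average of probability measures on a compact fibre can equal a prescribed measure without the individual terms being equal to it, and ergodicity of $\nu$ by itself gives no control over this.

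What the paper does instead is prove the stronger statement that $\nu=\tilde\nu$, i.e.\ that $\nu$ is already $K$-invariant. After reducing to the case where $K$ acts freely (by quotienting out the essential stabiliser, which is normal in $K$ and a.e.\ constant by ergodicity), it writes $X\cong Y\times_\alpha K$ for a cocycle $\alpha:G\times Y\to K$ exactly as you propose, but then invokes Zimmer's minimal cocycle theorem (Theorem~\ref{thm:minimal-cocycle}) to replace $\alpha$ by a cohomologous minimal cocycle and to cut down to $K=K_\alpha$. Minimality guarantees that the skew product $Y\times_\alpha K$ equipped with $\tilde\nu=p_*\nu\times\lambda$ is $G$-\emph{ergodic}; hence $\tilde\nu$ is an extreme point of $\Prob(X)^\mu$ by Lemma~\ref{lem:support-of-stationary-is-invariant and ergodic-iff-extremal}. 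Since $\tilde\nu=\int_K k.\nu\,\dd\lambda(k)$ is a barycentre of the stationary measures $k.\nu$, extremality forces $\nu=\tilde\nu$, and the manifest $G$-equivariance of $y\mapsto\delta_y\times\lambda$ finishes the proof. The ingredient your plan is missing is therefore not a cleverer manipulation of the averaging identity but the ergodicity of the \emph{averaged} measure $\tilde\nu$, which is precisely what the minimal-cocycle reduction supplies.
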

The proof uses Zimmer's notion of minimal cocycles which we now introduce (see \cite{zimmer2020extensions} for further details). 
Let $\Omega$ be an ergodic Lebesgue $G$-space and let $K$ be a
second countable locally compact group. A \emph{cocycle of $G$
	over $\Omega$ taking values in $K$} is a Lebesgue map $\alpha:G\times \Omega\to K$
satisfying:
\[
\alpha(g_{1}g_{2},\omega)=\alpha(g_{1},g_{2}\omega)\alpha(g_{2},\omega);\,\,\,\,\,\left(g_{1},g_{2}\in G,\,\omega\in \Omega \right)
\]
Given $\alpha$ one can construct the Lebesgue $G$-space $\Omega\times_{\alpha}K$
defined as follows: as a Lebesgue space it is just $\Omega\times K$
with the product $\sigma$-algebra and with the product of the measure
class of $\Omega$ with the Haar measure class on $K$. The $G$-action
on this space is defined by: $g.\left(\omega,k\right)=(g.\omega,\alpha(g,\omega)k)$.
It is not hard to see that this turns $\Omega\times_{\alpha}K$ into a
Lebesgue $G$-space.
Two cocycles $\alpha$ and $\beta$ are said to be \emph{cohomologous}
if there exists an a.e defined measurable map $\varphi:\Omega\to K$ for
which:
\[
\beta(g,\omega)=\varphi(g.\omega)^{-1}\alpha(g,\omega)\varphi(\omega)
\]
for all $g\in G$ and for a.e $\omega\in \Omega$. In such a case $\Phi:(\omega,k)\mapsto(\omega,\varphi(\omega)k)$
defines an isomorphism of Lebesgue $G$-space between $\Omega\times_{\alpha}K$
and $\Omega\times_{\beta}K$.

 While $\alpha$ takes values in $K$, it
could very well be that $\alpha$, or perhaps some cocycle $\beta$
cohomologous to $\alpha$, takes values in a proper subgroup of $K$.
To that end define $K_{\alpha}:=\overline{\left\langle \IM(\alpha)\right\rangle }$,
that is the minimal closed subgroup of $K$ containing $\left\{ \alpha(g,\omega)\right\} _{g\in G,\omega \in \Omega}$.
$\alpha$ is called a \emph{minimal cocycle}, if there is no $\beta$
cohomologous to $\alpha$ for which $K_{\beta}\subsetneq K_{\alpha}$.
\begin{thm}[\protect{\cite[Corollary 3.8]{zimmer2020extensions}}]
	\label{thm:minimal-cocycle}Let $\Omega$ be an ergodic Lebesgue $G$-space,
	and let $K$ be a second countable \emph{compact} group. Then:
	\begin{enumerate}
		\item \label{enu:minimal-cocycle-existence}Any cocycle $\alpha:G\times \Omega\to K$
		is cohomologous to a minimal cocycle.
		\item If $\alpha$ and $\beta$ are cohomologous minimal
		cocycles then $K_{\alpha}$ and $K_{\beta}$ are conjugated in $K$.
		\item \label{enu:minimal-cocycle-ergodicity} $\alpha$ is a minimal cocycle
		with $K=K_{\alpha}$ if and only if $\Omega\times_{\alpha}K$ is ergodic.
	\end{enumerate}
\end{thm}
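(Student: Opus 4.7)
The plan is to leverage the dictionary between cocycles modulo cohomology and $G$-equivariant Lebesgue maps into homogeneous spaces of $K$: a cocycle $\alpha : G \times \Omega \to K$ is cohomologous to one taking values in a closed subgroup $H \leq K$ if and only if there is a $G$-equivariant Lebesgue map $\Omega \to K/H$, where $G$ acts on $K/H$ through $\alpha$ and left $K$-multiplication. I will call such $H$ a \emph{reduction subgroup} of $\alpha$ and write $\mathcal{R}(\alpha) \subseteq \Sub(K)$ for the family of all such. Minimality of $\alpha$ translates precisely into $K_\alpha$ being minimal (under inclusion) within $\mathcal{R}(\alpha)$.

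For (1), I would use the Chabauty compactness of $\Sub(K)$ (inherited from $K$ being second countable and compact) together with the following intersection lemma. If $H_1, H_2 \in \mathcal{R}(\alpha)$ are witnessed by equivariant maps $s_i : \Omega \to K/H_i$, then the product $(s_1, s_2) : \Omega \to K/H_1 \times K/H_2$ is $G$-equivariant. Since the $K$-orbits on $K/H_1 \times K/H_2$ are parameterized by the double cosets $H_1 \backslash K / H_2$ and are of the form $K/(H_1 \cap k H_2 k^{-1})$, ergodicity of $\Omega$ forces the image of $(s_1, s_2)$ to lie a.e.\ in one such orbit, yielding $H_1 \cap k H_2 k^{-1} \in \mathcal{R}(\alpha)$ for some $k \in K$. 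Combining this with Chabauty compactness and a Borel-selection argument showing that $\mathcal{R}(\alpha)$ is stable under suitable limits, one extracts a minimum, giving the existence of a minimal cocycle.

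For (2), apply the intersection lemma to $\alpha$ viewed simultaneously as reducing to $K_\alpha$ via the constant section and to a conjugate of $K_\beta$ via the cohomology $\varphi$ (explicitly, the map $\omega \mapsto \varphi(\omega) K_\beta$ is a $G$-equivariant section $\Omega \to K/K_\beta$). This places $K_\alpha \cap k K_\beta k^{-1}$ in $\mathcal{R}(\alpha)$ for some $k \in K$; minimality of $K_\alpha$ forces $K_\alpha \subseteq k K_\beta k^{-1}$, and the symmetric argument applied to $\beta$ yields the reverse containment, whence $K_\alpha$ and $K_\beta$ are conjugate in $K$.

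The heart of the theorem is (3), which I expect to be the main obstacle. The easy direction is clear: a reduction of $\alpha$ to $K_\beta \subsetneq K$ produces a $G$-equivariant projection $\Omega \times_\alpha K \cong \Omega \times_\beta K \to K_\beta \backslash K$ with nontrivial target, obstructing ergodicity. For the converse, assume $\Omega \times_\alpha K$ is not ergodic and choose $f \in L^\infty(\Omega \times K)^G$ not essentially constant. Setting $f_\omega := f(\omega, \cdot) \in L^\infty(K)$, the $G$-invariance of $f$ unpacks via the cocycle identity to $f_{g\omega} = L_{\alpha(g,\omega)} f_\omega$, where $L$ is the left regular representation. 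Projecting to the orbit space $L^\infty(K)/K$ gives a $G$-invariant Lebesgue map, which by ergodicity of $\Omega$ is a.e.\ constant; hence for a.e.\ $\omega$ one has $f_\omega = L_{k(\omega)} f_0$ for a fixed $f_0 \in L^\infty(K)$, with $k(\omega)$ well-defined modulo $H := \Stab_L(f_0)$. A standard Borel-selection theorem produces a measurable $\omega \mapsto k(\omega) H$, and unraveling the equivariance gives $k(g\omega) H = \alpha(g, \omega) k(\omega) H$, i.e.\ a $G$-equivariant Lebesgue map $\Omega \to K/H$. Nontriviality of $f$ forces $f_0$ to be nonconstant and hence $H \subsetneq K$, yielding a proper reduction of $\alpha$ that contradicts minimality together with the assumption $K_\alpha = K$. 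The delicate step is the measurable selection of $k(\omega)$, handled via standard Borel cross-section results for compact group actions on Polish spaces.
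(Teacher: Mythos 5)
The paper does not actually prove this statement: it is quoted verbatim from Zimmer (\cite[Corollary 3.8]{zimmer2020extensions}) and used as a black box, so there is no internal proof to compare against. Judged on its own, your reconstruction is a viable proof along standard lines, using the dictionary between cohomology classes of cocycles and $G$-equivariant maps $\Omega\to K/H$. Your argument for part (3) is complete and correct; in particular the ``delicate'' selection step is in fact unproblematic, since left translation acts continuously on the weak-* compact unit ball of $L^\infty(K)$ and, $K$ being compact, the orbit map $K/H\to K\cdot f_0$ is a homeomorphism. The intersection lemma is also correct and is the right engine for (1) and (2).

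Two steps, however, are asserted where real (if standard) arguments are needed. First, in (1), ``stable under suitable limits'' is the crux, not a footnote: given a decreasing chain $H_1\supseteq H_2\supseteq\cdots$ in $\mathcal{R}(\alpha)$ you must first rebuild it into a chain carrying \emph{compatible} equivariant sections $s_n:\Omega\to K/H_n$ (the intersection lemma only produces sections one pair at a time, into conjugate-intersected subgroups), and then use $K/\bigcap_n H_n\cong\varprojlim K/H_n$ to pass to the limit; only after that does Zorn apply (chains in $\Sub(K)$ have countable coinitial subsets by second countability). Second, in (2), the symmetric argument does not give ``the reverse containment'': it gives $K_\alpha\subseteq k_0K_\beta k_0^{-1}$ and $K_\beta\subseteq k_1K_\alpha k_1^{-1}$ with unrelated $k_0,k_1$, hence only $K_\alpha\subseteq cK_\alpha c^{-1}$ with $c=k_0k_1$. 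To conclude you need the compactness fact that a closed subgroup of a compact group cannot be properly contained in a conjugate of itself (pass to a subsequence with $c^{m_k}\to e$ and use closedness); then all inclusions collapse to equalities. Both gaps are fixable, but as written they are gaps. For what it is worth, Zimmer's own route to (1) avoids Zorn entirely: the right $K$-action on $\Omega\times_\alpha K$ permutes the $G$-ergodic components transitively, so the component space is $H\backslash K$ for a closed $H\leq K$, and the resulting $G$-invariant, $K$-equivariant map $\Omega\times K\to H\backslash K$ directly yields a cohomologous cocycle $\beta$ into $H$ with $\Omega\times_\beta H$ ergodic, which is then minimal by (3).
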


\begin{proof}[Proof of Proposition \ref{prop:X->X/K is relatively stiff}]
Let $\nu\in \Prob(X)^{\mu}$ ergodic.
We also assume as we may, upon restriction, that $X=K\cdot\supp{\nu}$.
We consider the Borel map $X\to \Sub(K)$ taking each $x$ to its
stabilizer $K_{x}$ in $K$. Since the action of $G$ commutes
with that of $K$, this map is $G$-invariant,
hence, as $\nu$ is ergodic, essentially constant on some subgroup
$K_{0}$. 
Moreover, this map is $K$-equivariant when $K$ acts on
$\Sub(K)$ by conjugation, so $K_{0}$ is normal
in $K$. 
By $X=K\cdot\supp{\nu}$ we get that $K_0$ acts trivially on $X$, thus we assume as we may that $K_0$ 
is trivial. Replacing $X$ with a conull subset which is $G\times K$ invariant,
we assume as we may that $K$ acts freely on $X$. 

Denote $Y=X/K$ and let $p:X\to Y$ be the quotient map
and endow $Y$ with the measure $p_*\nu$. Using a choice of an a.e defined  Borel
section $s:Y\to X$ of $p$ we may identify $X\cong Y\times K$
as Borel spaces, 
and endow $X$ with
the product measure:
\[ \tilde{\nu}=p_{*}\nu \times \lambda \in \Prob(Y\times K)\cong \Prob(X). \]
where $\lambda$ is the Haar measure  on $K$. This makes $X$ a Lebesgue $G$-space. Define $\alpha:G\times Y\to K$
by setting $\alpha(g,y)$ to be the unique element in $K$ for
which $g.s(y)=\alpha(g,y).s(g.y)$. Then it is easy to check that $\alpha$
is a cocycle and that $\rho:(y,k)\mapsto\sigma(y)k$ defines an
isomorphism between the Lebesgue $G$-spaces $Y\times_{\alpha}K$
and $X$.
Using Theorem~\ref{thm:minimal-cocycle},
we assume as we may that $\alpha$ is a minimal cocycle, whose image is denoted
by $K_{\alpha}\leq K$.
We get a Borel, $G$-invariant and $K$-equivariant map
$X \to K/K_\alpha$. By $\nu$-ergodicity, this map is essentially constant. 
Upon conjugating $\alpha$ by an element of $K$, we assume as we may that the essential image is the basic coset $eK_\alpha$.
It follows that $\nu$ is supported on the subset $Y\times_\alpha K_\alpha$ of $Y\times_\alpha K=X$.
We thus assume as we may that $K=K_\alpha$ and $\alpha$ is a minimal cocycle. 

Using Theorem~\ref{thm:minimal-cocycle} once again we see that
$\tilde{\nu}$ is $G$-ergodic.
Noticing that $\tilde{\nu}$ is $\mu$-stationary,
we get by Lemma~\ref{lem:support-of-stationary-is-invariant and ergodic-iff-extremal} that it is extremal as such.
Observing that $\tilde{\nu}=\int_K k.\nu d\lambda(k)$, we conclude that $\nu=\tilde{\nu}$. 
It follows that the disintegration map $Y\to \Prob(Y\times_\alpha K)$ is given by 
$y\mapsto \delta_y\times \lambda$,
which is $G$-equivariant.
Thus, indeed, $X$ is relatively measure preserving with respect to $Y=X/K$.
\end{proof}

\subsection{Furstenberg-Poisson boundaries}

The following result is the fundamental theorem of Furstenberg-Poisson theory,
see \cite[\S2]{bader2006factor}
for a detailed overview. In fact, it could be taken as the definition of Furstenberg-Poisson boundary.

\begin{thm}[Furstenberg] \label{thm:Poisson-boundary-universal-property}
Consider a locally compact second countable group $G$ and an admissible probability measure $\mu$
on $G$.
The Furstenberg-Poisson boundary of $(G,\mu)$ is a Lebesgue $G$-space $\Pi(G,\mu)$ endowed with
a $\mu$-stationary probability measure $\nu$
which satisfies the following property:
for every metrizable compact convex $G$-set $C$,
the correspondence that associates with a Lebesgue $G$-map $\theta:\Pi(G,\mu)\to C$
the barycenter $\bary{\theta_*\nu}\in C$ is a bijection between the collection of all such 
Lebesgue $G$-maps and the space $C^\mu$ of stationary points in $C$. 

Moreover, every Lebesgue $G$-space $\Omega$ endowed with a stationary measure $\eta$
which satisfies this property admits a unique morphism of Lebesgue $G$-spaces $\Omega \to \Pi(G,\mu)$
and this morphism pushes $\eta$ to $\nu$.
The above uniquely defines $\Pi(G,\mu)$ up to a unique isomorphism of Lebesgue $G$-spaces.
\end{thm}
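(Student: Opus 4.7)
The plan is to build the Furstenberg-Poisson boundary explicitly from the random walk driven by $\mu$ and then derive the universal property via bounded harmonic functions (or rather, harmonic maps into compact convex sets).

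First, I would construct $\Pi(G,\mu)$ concretely. Consider the path space $\Omega=G^\N$ with the step measure $\mu^{\otimes \N}$, and the associated trajectory process $\omega_n=g_1g_2\cdots g_n$. Define $\Pi(G,\mu)$ to be the space of ergodic components of the time-shift on the space of trajectories, equivalently, the Mackey point realization of the $G$-invariant sub-$\sigma$-algebra of tail events, and let $\nu$ be the pushforward of $\mu^{\otimes \N}$ under the map $\Omega\to \Pi(G,\mu)$. A standard verification using that $\mu$ is admissible (so that Haar-null sets are controlled by the random walk) shows $\nu$ is a $\mu$-stationary probability measure on a standard Borel $G$-space.

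Next, I would establish the universal property. Given any metrizable compact convex $G$-set $C$ and a stationary point $c\in C^\mu$, define $F_n:\Omega\to C$ by $F_n(\omega)=\omega_n\cdot c$. Since $c$ is $\mu$-stationary, the sequence $(F_n)$ is a bounded martingale with values in $C$ with respect to the filtration generated by the first $n$ coordinates (here one tests against continuous affine functionals on the ambient Fréchet space, to which the scalar martingale convergence theorem applies). Thus $F_\infty=\lim F_n$ exists almost surely in the weak topology of $C$; it is tail-measurable and hence descends to a Lebesgue $G$-map $\theta_c:\Pi(G,\mu)\to C$. A direct calculation using stationarity gives $\bary(\theta_{c,*}\nu)=c$. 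Conversely, for any Lebesgue $G$-map $\theta:\Pi(G,\mu)\to C$, the pushforward $\theta_*\nu$ is $\mu$-stationary on $C$, so $\bary(\theta_*\nu)\in C^\mu$; that the two procedures are mutual inverses follows from the fact that on $\Pi(G,\mu)$ every bounded harmonic function arises from its boundary values via the Poisson formula, which is the content of the classical identification of $L^\infty(\Pi,\nu)$ with bounded $\mu$-harmonic functions on $G$. Injectivity of $\theta\mapsto \bary(\theta_*\nu)$ is then the statement that a Lebesgue $G$-map into a compact convex set is determined by its barycenter, which is again the martingale identification.

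For the uniqueness clause, suppose $(\Omega,\eta)$ is any Lebesgue $G$-space with a stationary $\eta$ satisfying the same universal property. Apply the universal property of $\Omega$ to the compact convex $G$-set $C=\Prob(\Pi(G,\mu))^\mu$ (which carries a distinguished stationary point, namely $\nu$ itself, by Kakutani or by Proposition~\ref{prop:choquestat}): this produces a Lebesgue $G$-map $\Omega\to \Pi(G,\mu)$ pushing $\eta$ to $\nu$. Symmetrically one obtains a map in the other direction, and the universal property applied once on each side (to the singleton-generating stationary measures) forces these two maps to be mutually inverse a.e. The only delicate point is to check that the map produced by the universal property of $\Omega$ into $\Prob(\Pi(G,\mu))$ is in fact valued in $\Pi(G,\mu)$ (i.e.\ is Dirac-valued a.e.), which follows from considering the $\eta$-disintegration and the fact that the universal property forces the map to have singleton barycenter fibers after conditioning.

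The main obstacle is the martingale convergence step for maps into an arbitrary metrizable compact convex set together with the identification of bounded harmonic functions with $L^\infty(\Pi,\nu)$; these are exactly the technical heart of Furstenberg's theory, and verifying them rigorously requires some care with the admissibility hypothesis on $\mu$ to ensure that the tail $\sigma$-algebra is rich enough to detect all stationary behavior. Once these foundational facts are in place, the rest of the argument is a formal diagram chase using the barycenter map.
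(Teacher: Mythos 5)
The paper does not actually prove this statement: it is quoted as Furstenberg's theorem, with a pointer to \cite{bader2006factor} and the remark that it ``could be taken as the definition'' of the Furstenberg--Poisson boundary. So there is no in-paper argument to compare against; what you have written is a reconstruction of the classical proof, and in outline it is the standard one (path space, shift-invariant/tail $\sigma$-algebra, martingale convergence of $\omega_n\cdot c$, identification of $L^\infty(\Pi,\nu)$ with bounded $\mu$-harmonic functions). Your sketch is sound as a roadmap, and you correctly locate the technical heart in the martingale convergence and the Poisson-formula identification.

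Two soft spots are worth naming. First, the step showing that $\theta\mapsto\bary(\theta_*\nu)$ and $c\mapsto\theta_c$ are mutually inverse is not just the scalar Poisson formula: recovering $\theta$ from its barycenter requires the almost sure weak-$*$ convergence $\omega_n\nu\to\delta_{\mathrm{bnd}(\omega)}$ of the translated harmonic measures (equivalently, that the conditional measures of $\nu$ on the fibers of the boundary map are Dirac), and this is a genuine theorem, not a formal consequence of martingale convergence for scalar functionals; you gesture at it but it is the one place where admissibility of $\mu$ and the structure of the invariant $\sigma$-algebra really enter. Second, in the uniqueness clause you apply the universal property of $\Omega$ to $C=\Prob(\Pi(G,\mu))^\mu$, but $\Pi(G,\mu)$ is only a Lebesgue space, so $\Prob(\Pi(G,\mu))$ is not a metrizable compact convex set and the universal property as stated does not apply to it directly. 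The standard repair is to first fix a compact metric model: realize $L^\infty(\Pi,\nu)$ via a separable $G$-invariant $C^*$-subalgebra, take its Gelfand spectrum $\tilde\Pi$ (a compact metrizable $G$-space carrying $\nu$), and run your argument with $C=\Prob(\tilde\Pi)$; the Dirac-valuedness of the resulting map then again comes from the triviality of the conditional measures mentioned above. With those two points supplied, the argument is complete and agrees with the classical treatment the paper is citing.
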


An immediate consequence of the above result is the following useful criterion for invariance of a measure.

\begin{lem} \label{lem:invcrit}
With the notation above, a point $c\in C^\mu$ is $G$-fixed iff
the corresponding map $\Pi(G,\mu)\to C$ is essentially constant.
\end{lem}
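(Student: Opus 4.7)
The plan is to use the uniqueness part of the bijection provided by Theorem~\ref{thm:Poisson-boundary-universal-property}, which identifies Lebesgue $G$-maps $\theta:\Pi(G,\mu)\to C$ with stationary points $\bary(\theta_*\nu)\in C^\mu$.

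For the easy direction, suppose $c\in C^\mu$ is $G$-fixed. Then the constant map $\theta_0:\Pi(G,\mu)\to C$ sending every point to $c$ is a bona fide Lebesgue $G$-map (equivariance is automatic because $c$ is $G$-fixed), and clearly $\bary((\theta_0)_*\nu)=\bary(\delta_c)=c$. By the injectivity of the correspondence in Theorem~\ref{thm:Poisson-boundary-universal-property}, $\theta_0$ must be the unique Lebesgue $G$-map associated with $c$, so the associated map is essentially constant.

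For the converse, suppose the Lebesgue $G$-map $\theta:\Pi(G,\mu)\to C$ corresponding to $c$ is essentially constant, say $\theta(\omega)=c_0$ for $\nu$-a.e.\ $\omega\in\Pi(G,\mu)$. Then $\theta_*\nu=\delta_{c_0}$, so $c=\bary(\theta_*\nu)=c_0$. For each $g\in G$, $G$-equivariance gives $\theta(g\omega)=g\cdot\theta(\omega)$ for $\nu$-a.e.\ $\omega$; since $\nu$ has a $G$-invariant measure class (as $\mu$ is admissible and $\nu$ is $\mu$-stationary), both sides are essentially constant and equal to $c_0$ and $g\cdot c_0$ respectively. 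Hence $g\cdot c_0=c_0$ for every $g\in G$, showing that $c=c_0$ is $G$-fixed.

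I do not anticipate any real obstacle here: both implications follow immediately from the universal property together with the fact that $G$-equivariant Lebesgue maps with essentially constant value must have a $G$-fixed constant.
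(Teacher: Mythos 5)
Your proof is correct and follows exactly the route the paper intends: the paper gives no explicit argument, stating the lemma as an immediate consequence of Theorem~\ref{thm:Poisson-boundary-universal-property}, and your two directions (uniqueness of the map corresponding to a $G$-fixed point, and equivariance plus $G$-invariance of the measure class forcing the constant value to be fixed) are precisely the natural way to spell that out.
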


We next discuss passage to finite index subgroups.

\begin{defn} \label{def:the-hitting-measure}
Consider a locally compact second countable group $G$ and an open subgroup of finite index $G'<G$.
Let $\mu$ be an admissible probability measure on $G$.
The associated \emph{hitting time} is the function 
\[ \tau:G^\N\to \N\cup\infty, \quad (g_1,g_2,\ldots) \mapsto \inf\{ n\in\N\mid g_{n}\cdot...\cdot g_{1}\in G'\}. \]
By the law of large numbers, this function is $\mu^\N$-a.e finite, thus the \emph{hitting map} 
\[ h:G^\N\to G', \quad w=(g_1,g_2,\ldots) \mapsto g_{\tau(w)}\cdot...\cdot g_{1} \]
is $\mu^\N$-a.e defined.
The \emph{hitting measure} on $G'$ is defined to be $\mu'=h_*(\mu^\N)$.
\end{defn}

The hitting measure is discussed in a general context in \cite{hartman2014abramov}.
In particular, the following is proved.

\begin{prop} \label{prop:hitting-measure-Poisson-boundary} 
Consider a locally compact second countable group $G$ and an open subgroup of finite index $G'<G$.
Let $\mu$ be an admissible probability measure on $G$ and let $\mu'$ be the corresponding hitting measure on $G'$.
Then the restriction to $G'$ of the Furstenberg-Poisson boundary of $(G,\mu)$ is (uniquely isomorphic to) 
the Furstenberg-Poisson boundary of $(G',\mu')$.
Moreover, for every metrizable compact convex $G$-set $C$, $C^{\mu}\subset C^{\mu'}$.
\end{prop}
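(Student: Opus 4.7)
The plan is to verify that $(\Pi(G,\mu),\nu)$, viewed as a Lebesgue $G'$-space via restriction, satisfies the universal property defining the Furstenberg-Poisson boundary of $(G',\mu')$ in Theorem~\ref{thm:Poisson-boundary-universal-property}; the uniqueness clause will then give the isomorphism, and the ``Moreover'' clause falls out along the way.

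The crux is to show that $\nu$ is $\mu'$-stationary for the restricted action. Since $G'$ has finite index, the $\mu$-random walk induces a recurrent walk on the finite coset space $G/G'$, so the first-return time $\tau$ is $\mu^\N$-almost surely finite. For $f\in C_b(\Pi(G,\mu))$, the bounded process $M_n:=(S_n\nu)(f)$, where $S_n=g_n\cdots g_1$, coincides with the conditional expectation $\mathbb{E}[f\circ\beta\mid\mathcal F_n]$ (using the mean-convergence $S_n\nu\to\delta_{\beta(\omega)}$ $\mu^\N$-a.s.\ for the Poisson boundary map $\beta\colon G^\N\to\Pi(G,\mu)$) and is therefore a bounded martingale. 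Optional stopping at $\tau$ gives $\mathbb{E}[M_\tau]=\mathbb{E}[M_0]=\int f\dd\nu$, while $\mathbb{E}[M_\tau]=(\mu'*\nu)(f)$ by Fubini, whence $\mu'*\nu=\nu$. The ``Moreover'' clause is then immediate: given $c\in C^\mu$, Theorem~\ref{thm:Poisson-boundary-universal-property} supplies the unique Lebesgue $G$-map $\theta\colon\Pi(G,\mu)\to C$ with $c=\bary(\theta_*\nu)$; by $G$-equivariance of $\theta$ and $\bary$, $\mu'*c=\bary(\theta_*(\mu'*\nu))=\bary(\theta_*\nu)=c$, so $c\in C^{\mu'}$.

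To complete the universal property for an arbitrary metrizable compact convex $G'$-set $C'$, I would induce from $G'$ to $G$: form the closed convex $G$-subset $C:=\{f\colon G\to C'\mid f(xh)=h^{-1}f(x)\ \forall x\in G,\ h\in G'\}\subseteq(C')^{G/G'}$. Evaluation at $e$ gives Frobenius-style bijections between Lebesgue $G$-maps $\Pi(G,\mu)\to C$ and Lebesgue $G'$-maps $\Pi(G,\mu)\to C'$, as well as between $C^\mu$ and $(C')^{\mu'}$, both compatible with the barycenter assignment; combining with the universal property of $\Pi(G,\mu)$ applied to $C$ yields the universal property of $\Pi(G,\mu)|_{G'}$ for $C'$. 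The main technical obstacle lies in the bijection $C^\mu\leftrightarrow(C')^{\mu'}$: the forward direction (evaluation at $e$) is automatic from $\mu'$-stationarity of $\nu$, while lifting $c'\in(C')^{\mu'}$ to $c\in C^\mu$ with $c(e)=c'$ amounts to solving a finite linear fixed-point system on the coset space $G/G'$ driven by the $\mu$-induced transition kernel, for which existence and uniqueness follow from the irreducibility of the induced walk on $G/G'$.
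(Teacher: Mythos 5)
Your route is genuinely different from the paper's. The paper disposes of the first assertion by citing \cite[Corollary 2.14]{hartman2014abramov} (remarking only that the discreteness hypothesis there is superfluous) and then deduces $C^{\mu}\subseteq C^{\mu'}$ from the boundary identification via Theorem~\ref{thm:Poisson-boundary-universal-property}. You instead give a self-contained argument: optional stopping for a measure-valued martingale to get $\mu'$-stationarity of $\nu$, a barycenter computation for the ``Moreover'' clause (which, as your argument shows, needs only the stationarity of $\nu$ rather than the full boundary identification), and Frobenius reciprocity for induced compact convex $G$-spaces to transport the universal property down to $G'$. This outline is sound, and the lifting $(C')^{\mu'}\to C^{\mu}$ that you single out as the main obstacle is indeed the only step requiring real work; your plan for it --- the twisted harmonic extension determined by the first-hitting distribution of the induced walk on $G/G'$, which is irreducible because $\mu$ is admissible and $G'$ is open of finite index --- does go through. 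What you buy is an actual proof in place of a citation; what the paper buys is brevity and a statement already verified in the literature in the discrete case.

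One concrete correction. With the paper's convention $\mu*\nu=\int g.\nu\,\dd\mu(g)$, the process that is a bounded martingale is $(g_1g_2\cdots g_n).\nu$, with fresh increments multiplied on the \emph{right}, not $(g_n\cdots g_1).\nu$: indeed $\mathbb{E}\left[(g_{n+1}S_n).\nu\mid\mathcal{F}_n\right]=\mu*(S_n.\nu)$, which is not $S_n.\nu$ in general, whereas for $T_n=g_1\cdots g_n$ one has $\mathbb{E}\left[(T_ng_{n+1}).\nu\mid\mathcal{F}_n\right]=T_n.(\mu*\nu)=T_n.\nu$. Likewise it is $T_n.\nu$, not $S_n.\nu$, that converges to $\delta_{\beta(\omega)}$. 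So the stopping time and hitting map must be taken as $\tau=\inf\{n:\,g_1\cdots g_n\in G'\}$ and $w\mapsto g_1\cdots g_{\tau(w)}$; you have inherited the opposite ordering from the formula in Definition~\ref{def:the-hitting-measure}, but as written your $M_n$ is not a martingale and the optional-stopping step fails. With the corrected ordering the computation $\mathbb{E}[M_\tau]=\mathbb{E}[M_0]$ gives exactly $\mu'*\nu=\nu$, and note that you do not need the boundary map $\beta$ at all for this: the martingale property follows directly from $\mu*\nu=\nu$ and boundedness of $f$.
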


\begin{proof}
The first part of the proposition is proved in \cite[Corollary 2.14]{hartman2014abramov}
under the assumption that $G$ is discrete, however an examination of the proof clearly reveals that the discreteness assumption 
is superfluous.
The second part of the proposition follows from the first by Theorem~\ref{thm:Poisson-boundary-universal-property}.
\end{proof}

\begin{lem} \label{lem:stiffness-subgroup}
Consider a locally compact second countable group $G$ and an open normal subgroup of finite index $G'\lhd G$.
Let $\mu$ be an admissible probability measure on $G$ and let $\mu'$ be the corresponding hitting measure on $G'$.
 If a Polish $G$-space $X$ is $\mu'$-stiff
then it is also  $\mu$-stiff.
\end{lem}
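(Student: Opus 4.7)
Let $\nu \in \Prob(X)^\mu$; the plan is to deduce that $\nu$ is $G$-invariant in three steps.

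The main step is to show that $\nu$ is $\mu'$-stationary. Proposition~\ref{prop:hitting-measure-Poisson-boundary} supplies the inclusion $C^\mu \subseteq C^{\mu'}$ for metrizable compact convex $G$-sets $C$, but $\Prob(X)$ is typically non-compact, so this does not directly apply. This is the main technical obstacle, and I resolve it via a direct martingale argument. On $(G^\N,\mu^\N)$, form the right random walk $Y_n := g_1 g_2 \cdots g_n$, and for each $f \in C_b(X)$ set $M_n := ((Y_n)_*\nu)(f)$. A short computation,
\[
E[M_{n+1}\mid g_1,\ldots,g_n] = \int_G ((Y_n g)_*\nu)(f)\dd\mu(g) = ((Y_n)_*(\mu*\nu))(f) = M_n,
\]
uses only $\mu*\nu = \nu$ and shows that $(M_n)$ is a uniformly bounded martingale. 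The hitting time $\tau_Y := \inf\{n \geq 1 : Y_n \in G'\}$ is a.s.\ finite because the induced walk on the finite group $G/G'$ is irreducible (by admissibility of $\mu$), so the bounded optional stopping theorem gives $E[M_{\tau_Y}] = M_0 = \nu(f)$. A brief time-reversal computation, exploiting the invariance of $\mu^{\otimes n}$ under $(g_1,\ldots,g_n) \mapsto (g_n,\ldots,g_1)$, shows that $Y_{\tau_Y}$ has the same distribution as $X_\tau$, namely $\mu'$. Hence $(\mu'*\nu)(f) = \nu(f)$ for every $f \in C_b(X)$, so $\mu'*\nu = \nu$.

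By the assumed $\mu'$-stiffness of $X$, the measure $\nu$ is $G'$-invariant. Normality of $G'$ implies that $g_*\nu$ is $G'$-invariant for every $g \in G$ and depends only on $gG'$; hence the $G$-action on $\Prob(X)^{G'}$ factors through the finite quotient $\bar G := G/G'$, and the identity $\mu*\nu = \nu$ becomes $\bar\mu*\nu = \nu$, where $\bar\mu$ is the pushforward of $\mu$ to $\bar G$. The measure $\bar\mu$ is admissible on $\bar G$: were $\supp(\bar\mu)$ contained in a proper subgroup $\bar H \subsetneq \bar G$, then $\supp(\mu)$ would lie in the proper closed subgroup $\pi^{-1}(\bar H)$, contradicting admissibility of $\mu$. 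The Cesaro-averaging argument of Lemma~\ref{lem:finite-is-stiff}, applied to the $\bar G$-invariant convex set $\Prob(X)^{G'}$, then yields that $\nu$ is $\bar G$-fixed, hence $G$-invariant, completing the proof.
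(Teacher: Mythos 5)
Your proof is correct, and its overall skeleton coincides with the paper's: first upgrade $\mu$-stationarity of $\nu$ to $\mu'$-stationarity, then invoke $\mu'$-stiffness to get $G'$-invariance, and finally pass to the finite quotient $\bar G=G/G'$ and conclude via the Ces\`aro argument of Lemma~\ref{lem:finite-is-stiff} (the paper routes this last step through Lemma~\ref{lem:stationarity-of-quotients}, which is exactly your observation that the action on $\Prob(X)^{G'}$ factors through $\bar G$ and that $\bar\mu$ is admissible there). Where you genuinely diverge is in the first step. The paper simply cites Proposition~\ref{prop:hitting-measure-Poisson-boundary} to get $C^{\mu}\subseteq C^{\mu'}$ with $C=\Prob(X)$, even though that proposition is stated for metrizable \emph{compact} convex $G$-sets and $\Prob(X)$ need not be compact for Polish $X$; you correctly flag this and replace the citation with a self-contained martingale/optional-stopping argument. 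Your argument is sound: for each $f\in C_b(X)$ the process $M_n=((Y_n)_*\nu)(f)$ is a bounded martingale precisely because $\mu*\nu=\nu$, the hitting time of $G'$ is a.s.\ finite since the induced walk on the finite group $\bar G$ is irreducible by admissibility, and your time-reversal step is valid --- under $(g_1,\dots,g_n)\mapsto(g_n,\dots,g_1)$ the event $\{\tau=n\}$ for the left walk corresponds exactly to $\{\tau_Y=n\}$ for the right walk (using only that $G'$ is a subgroup, since $Y_n\in G'$ forces $Y_{n-k}^{-1}Y_n\in G'\Leftrightarrow Y_{n-k}\in G'$), with the stopped positions matching. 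In effect you have reproved, in the special case needed, the standard fact that bounded $\mu$-harmonic functions on $G$ restrict to $\mu'$-harmonic functions on $G'$, applied to $g\mapsto (g_*\nu)(f)$; this is the substance behind the cited proposition. What your route buys is a proof that does not depend on compactness of $\Prob(X)$ and thus closes a small imprecision in the paper's own argument; what the paper's route buys is brevity and uniformity with its general Poisson-boundary formalism.
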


\begin{proof}
Suppose $X$ is $\mu'$-stiff, denote $C=\Prob(X)$ and let $c\in C^\mu$.
By Theorem~\ref{prop:hitting-measure-Poisson-boundary} above,
$c\in C^{\mu}\subseteq C^{\mu'}=C^{G'}$. Let $\bar{\mu}$
be the pushforward measure under the quotient map $G\to\bar{G}:=G/G'$.
The set $C^{G'}$ is $G$-invariant, and the action of $G$ on it factorizes through $\bar{G}$. It follows that by Lemma \ref{lem:stationarity-of-quotients} that $c\in\left(C^{G'}\right)^{\bar{\mu}}$.
But $\bar{G}$ is finite so by Lemma~\ref{lem:finite-is-stiff},
$c\in\left(C^{G'}\right)^{\bar{G}}=C^{G}$.
\end{proof}

\begin{lem}
\label{lem:If-kernel-acts-stiffly-on-fibers-then-also-Gamma} Consider a locally compact second countable group $G$ with an admissible measure $\mu$ on it. Let $p:X\to Y$
be a $G$-equivariant continuous and surjective map between Polish
$G$-spaces. Assume that $G':=\ker\left(G\curvearrowright Y\right)$
is an open subgroup of finite index in $G$ and let $\mu'$ denote the hitting measure on it. If every fiber of $p$ is $\mu'$-stiff
then $X$ is $\mu$-stiff.
\end{lem}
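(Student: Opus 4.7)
The plan is to first invoke Lemma~\ref{lem:stiffness-subgroup}, which reduces the claim to showing that $X$ is $\mu'$-stiff. For this, I would take an arbitrary $\nu \in \Prob(X)^{\mu'}$ and argue directly that it is $G'$-invariant, bypassing any ergodic decomposition.

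The main tool would be disintegration along $p$. Writing $\nu = \int_Y \nu_y \,\dd p_*\nu(y)$, I observe that since $G' = \ker(G \acts Y)$ acts trivially on $Y$, every fiber $p^{-1}(y)$ is preserved by $G'$, and for each $g \in G'$ the measure $g_*\nu_y$ is again supported on $p^{-1}(y)$. Hence $y \mapsto g_*\nu_y$ is the disintegration of $g_*\nu$ along $p$; integrating this identity against $\mu'$ and using Fubini shows that $y \mapsto \mu' * \nu_y$ is the disintegration of $\mu' * \nu = \nu$. By the uniqueness part of the disintegration theorem, $\nu_y = \mu' * \nu_y$ for $p_*\nu$-a.e.\ $y$, so $\nu_y$ is a $\mu'$-stationary probability measure on the Polish $G'$-space $p^{-1}(y)$.

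The hypothesis that each fiber is $\mu'$-stiff then forces $\nu_y$ to be $G'$-invariant for $p_*\nu$-a.e.\ $y$, and reassembling gives $g_*\nu = \int_Y g_*\nu_y \,\dd p_*\nu(y) = \int_Y \nu_y \,\dd p_*\nu(y) = \nu$ for every $g \in G'$, as desired. The only step that demands some care is the disintegration manipulation, specifically verifying the measurability of $y \mapsto \mu' * \nu_y$ and applying uniqueness to push stationarity into the fibers; this is standard for Polish spaces with standard Borel disintegration, but it is the one nontrivial ingredient. Everything else is a direct appeal to Lemma~\ref{lem:stiffness-subgroup} and the stiffness hypothesis on the fibers.
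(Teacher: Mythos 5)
Your proposal is correct and follows essentially the same route as the paper: reduce to $\mu'$-stiffness via Lemma~\ref{lem:stiffness-subgroup}, disintegrate along $p$, use uniqueness of disintegration to transfer stationarity to the fiber measures $\nu_y$, and conclude by the fiberwise stiffness hypothesis. The paper's proof is word-for-word the same argument, including the observation that $\mu'*\nu_y$ remains supported on $p^{-1}(y)$ because $G'$ acts trivially on $Y$.
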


\begin{proof}
By Lemma \ref{lem:stiffness-subgroup} it suffices to show that
$X$ is $\mu'$-stiff. Given $\nu\in \Prob(X)$ we may disintegrate
it along $p$, that is write $\nu=\int_{Y}\nu_{y}\dd\left(p_{*}\nu\right)$
where $\nu_{y}\in \Prob(X)$ is defined for a.e $y\in Y$ and is supported
on the fiber $p^{-1}(y)$. Hence $\mu^{\prime}*\nu=\int_{Y}\mu'*\nu_{y}\dd(p_{*}\nu)$
and each $\mu'*\nu_{y}$ is again a probability measure on $X$ which
is supported on $p^{-1}(y)$. Therefore if $\nu$ is $\mu'$-stationary
then we have found two disintegration of $\nu$ along $p$, so by
uniqueness of the disintegration measures it must be that $\nu_{y}$
is $\mu'$-stationary for $p_{*}\nu$-a.e $y$. Since the action of
$G'$ on each fiber  is $\mu'$-stiff, we see that the measures $\nu_{y}$ are $G'$-invariant a.e. Thus
 $\nu$ is $G'$-invariant as desired. 
\end{proof}

%\begin{defn}
%Let $\Gamma$ be a coutable group
%and let $\mu$ be a probability measure on $\Gamma$.
%We define 
%\[ H(\mu)=-\sum _{\gamma\in \Gamma}\mu(\gamma)\cdot \log(\mu(\gamma)), \]
%where we use the convention $0\cdot \log(0)=0$ and we define the \emph{random walk entropy} of $\mu$ to be
%\[ h(\mu)=\lim_{n\to \infty} \frac{1}{n}H(\mu^n)=\inf_n \frac{1}{n}H(\mu^n),\]
%where $\mu^n$ denote the $n$th convolution power.
%Assuming $\Gamma$ is finitely generated and fixing a finite generating set,
%we define the \emph{logarithmic first mmoment} of $\Gamma$ to be 
%\[ \sum _{\gamma\in \Gamma}\mu(\gamma)\cdot \log|\gamma|. \]
%\end{defn}

The following result of Margulis is a generalization of \cite[Theorem 3]{furstenberg1967poisson}.

\begin{prop} \label{prop:hitlattice}
Let $G$ be a compactly generated locally compact group with left Haar measure $\lambda$, and let $\Gamma<G$ be a lattice. Let 
$\psi:G\to [0,\infty)$ be a continuous function with $\int_G \psi \dd\lambda=1$, which is  supported on a compact subset that generates $G$ as a semigroup, and consider the probability measure $\mu=\psi\cdot \lambda$ on $G$.
Then there exists a fully supported probability measure $\mu'$ on $\Gamma$ such that 
the restriction to $\Gamma$ of the Furstenberg-Poisson boundary of $(G,\mu)$ is
$\mu'$-stationary
and for every metrizable compact convex $G$-set $C$, $C^{\mu}\subset C^{\mu'}$.
Moreover, given any function  $\phi:\Gamma\to (0,1]$, we may choose $\mu'\leq \phi$.
\end{prop}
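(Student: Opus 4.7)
I would follow Furstenberg's classical induced-random-walk construction for lattices, extended to the compactly generated LCSC setting by Margulis.

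Fix a precompact Borel fundamental domain $F\subset G$ for $\Gamma\backslash G$ with $e\in F$, yielding the unique decomposition $g=\pi(g)s(g)$ with $\pi(g)\in\Gamma$, $s(g)\in F$. The right $\mu$-random walk $Z_n=X_1\cdots X_n$ on $G$ projects to a Markov chain $\bar Z_n=s(Z_n)$ on $F\cong\Gamma\backslash G$; by admissibility of $\mu$, this chain is Harris-recurrent on the compact base with the Haar probability $m$ as its unique invariant measure. Fix a compact $U\subset F$ of positive $m$-measure and set $\tau=\inf\{n\geq 1:\bar Z_n\in U\}$, which is a.s.\ finite by recurrence; by Kac's formula, the normalized restriction $\rho:=m|_U/m(U)$ is the stationary distribution of the induced return chain on $U$. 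Define the hitting measure $\mu_0'$ on $\Gamma$ to be the law of $\pi(Z_\tau)$ when $s(Z_0)\sim\rho$.

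The technical heart of the argument is the Furstenberg identity: for every bounded $\mu$-harmonic function $h$ on $G$ and every $g\in G$,
\[
\int_U h(gf)\,d\rho(f)=\sum_{\gamma\in\Gamma}\mu_0'(\gamma)\int_U h(g\gamma f)\,d\rho(f).
\]
This is obtained by applying optional stopping at $\tau$ to the walk started at $s(Z_0)\sim\rho$ (translated by $g$), and then exchanging the roles of the initial and the return states through a Palm-type argument using the $\rho$-invariance of the induced chain on $U$ and the left $\Gamma$-invariance of $m$; this is where the non-product nature of the joint law of $(\pi(Z_\tau),s(Z_\tau))$ is delicately handled. Applied to the Poisson integrals $h_\varphi(g)=\int\varphi(g\xi)\,d\nu_B(\xi)$ with $(B,\nu_B)=\Pi(G,\mu)$ and $\varphi\in L^\infty(B,\nu_B)$, the identity says that the smeared measure $\int_U f_*\nu_B\,d\rho(f)$ on $B$ is $\mu_0'$-stationary under the $\Gamma$-action; since $\rho$ is absolutely continuous and compactly supported, this smeared measure is equivalent to $\nu_B$, and appealing to the universal property of Theorem~\ref{thm:Poisson-boundary-universal-property} together with the uniqueness of the Furstenberg-Poisson boundary upgrades the statement to $\mu_0'$-stationarity of $\nu_B$ itself. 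The inclusion $C^\mu\subseteq C^{\mu_0'}$ is then formal: any $c\in C^\mu$ corresponds via Theorem~\ref{thm:Poisson-boundary-universal-property} to a Lebesgue $G$-map $B\to C$, which is $\Gamma$-equivariant, and pushes the $\mu_0'$-stationary $\nu_B$ to the $\mu_0'$-stationary point $c\in C$.

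To arrange $\mu'\leq\phi$ with full support, replace $\mu_0'$ by a convex combination $\mu'=\sum_{k\geq 1}c_k\,\mu_0'^{*k}$: each convolution power again makes $\nu_B$ stationary under $\Gamma$ and has $C^\mu\subseteq C^{\mu_0'^{*k}}$, as does every convex combination. Full support is inherited because $\supp(\mu_0')$ generates $\Gamma$ (by irreducibility of the induced chain), and the bound $\mu'\leq\phi$ is secured by a diagonal construction: the pointwise values $\mu_0'^{*k}(\gamma)$ tend to $0$ as $k\to\infty$ for every fixed $\gamma$, and since $\phi>0$ everywhere the coefficients $(c_k)$ can be chosen to respect $\phi$ on a growing exhaustion of $\Gamma$ while summing to one. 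The single non-formal step is the Furstenberg identity above, which is classical.
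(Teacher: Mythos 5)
Your overall architecture --- induce the walk to $\Gamma$ via return times to the quotient, obtain a hitting measure $\mu_0'$ making $\nu_B$ stationary, pass to a convex combination $\sum_k c_k\,\mu_0'^{*k}$ to secure full support and the bound $\mu'\le\phi$, and deduce $C^\mu\subseteq C^{\mu'}$ formally from Theorem~\ref{thm:Poisson-boundary-universal-property} --- is the right one, and the last two steps are essentially correct. For comparison, the paper does not reprove any of this: it simply invokes \cite[Proposition~VI.4.1]{margulis1991discrete} for the existence of a fully supported $\mu'$ with $\mu'*\nu_B=\nu_B$ and $\mu'\le\phi$, and then obtains $C^\mu\subseteq C^{\mu'}$ exactly as you do.

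The gap sits at what you yourself call the technical heart. The identity $\int_U h(gf)\,d\rho(f)=\sum_\gamma\mu_0'(\gamma)\int_U h(g\gamma f)\,d\rho(f)$ does not follow from optional stopping: that argument only gives $\int_U h(gf)\,d\rho(f)=\int h(g\gamma f')\,dQ(\gamma,f')$, where $Q$ is the joint law of $\left(\pi(Z_\tau),s(Z_\tau)\right)$; the marginals of $Q$ are $\mu_0'$ and $\rho$ (the latter by Kac), but $Q$ is not a product measure, and no ``Palm-type argument'' makes it one. This obstruction is exactly why Furstenberg's balayage (as carried out in Margulis VI.4) and the Lyons--Sullivan discretization resort to an infinite re-randomization scheme, with a Harnack-type inequality forcing the conditional law of the position, given the accumulated lattice element, to be a fixed reference measure at each partial stop; the naive first-return hitting measure does not in general make $\nu_B$ stationary, so the key step of your proof is not established. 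Two further problems: a precompact fundamental domain exists only for cocompact $\Gamma$ (for $\SL_2(\Z)<\SL_2(\R)$ your $F$ cannot be precompact and the base of your chain is not compact; recurrence of $\tau$ must instead be extracted from conservativity of the finite stationary measure $m$), and, even granting your identity, it yields $\mu_0'$-stationarity of the smeared measure $\int_U f_*\nu_B\,d\rho(f)$ rather than of $\nu_B$ --- equivalence of measures does not transfer stationarity, and the universal property of Theorem~\ref{thm:Poisson-boundary-universal-property} concerns $G$-maps and $\mu$-stationary points, so it cannot perform the claimed upgrade.
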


\begin{proof}
This is essentially the content of \cite[Proposition~VI.4.1]{margulis1991discrete}.
This reference deals with an $S$-arithmetic setting, but this is superfluous
and it serves as a complete proof that the $\mu$-stationary measure on $\Pi(G,\mu)$
is $\mu'$-stationary for some fully supported $\mu'\in\Prob(\Gamma)$.
Moreover, given any function $\phi:\Gamma\to (0,1]$
one can impose throughout the construction the condition $\mu'(\gamma)\leq \phi(\gamma)$.
The fact that $C^{\mu}\subset C^{\mu'}$ follows by Theorem~\ref{thm:Poisson-boundary-universal-property}.
\end{proof}

\subsection{Furstenberg measures} \label{sec:FM}

We have defined the notion of a ``Furstenberg measure'' in the introduction, see Definition~\ref{defn:Furstenberg-arithmetic}.
The following is a slight generalization of a classical result of Furstenberg, \cite[Theorem 3]{furstenberg1967poisson},
which regards the case in which $\Gamma$ is contained in the identity component of $\bG(\R)$.

\begin{prop} \label{prop:Fmeasures}
Let $\bG$ be an $\R$-algebraic group with a semisimple identity component and let $\Gamma<\bG(\R)$ be a lattice.
Then there exists a Furstenberg measure $\mu$ on $\Gamma$.
%
%Letting $\bG^o$ be the identity component of $\bG(\R)$
%and viewing its subgroup $\Gamma^o=\Gamma\cap \bG^o(\R)$ as a finite index subgroup of $\Gamma$,
%the associated hitting measure $\mu^o$ on $\Gamma^o$
%is a Furstenberg measure.
%Moreover, any lift of $\mu^o$ is a Furstenberg measure on $\tilde{\Gamma}^o$, the preimage of $\Gamma^o$
%in the simply connected cover associated with $\bG^o$.
\end{prop}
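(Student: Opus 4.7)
Write $G = \bG(\R)$ and $G^0$ for its identity component. Since real algebraic groups have finitely many connected components, $G^0$ is an open subgroup of finite index in $G$; by hypothesis it is a connected semisimple Lie group. Let $N = \bN(\R)$ and $P^0 = \bP(\R) \cap G^0$. Minimal parabolics in connected semisimple Lie groups are self-normalizing, so $N \cap G^0 = P^0$; since $G^0$ acts transitively on the connected $G$-variety $G/N$, this yields a $G^0$-equivariant identification $G/N \cong G^0/P^0$ compatible with the ambient $G$-action.

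My plan is to first construct an admissible probability measure $\eta$ on $G$ with $\Pi(G, \eta) \cong G/N$, and then invoke Proposition~\ref{prop:hitlattice} to descend to a measure $\mu$ on $\Gamma$. For the first step, Furstenberg's classical theorem yields an admissible compactly supported $\eta_0$ on $G^0$ with $\Pi(G^0, \eta_0) \cong G^0/P^0$. I would extend $\eta_0$ to an admissible compactly supported $\eta$ on $G$ by convolving with the uniform probability measure on a set of coset representatives for $G^0$ in $G$. Proposition~\ref{prop:hitting-measure-Poisson-boundary} then identifies the restriction to $G^0$ of $\Pi(G, \eta)$ with the Furstenberg-Poisson boundary of $(G^0, \eta')$ for the hitting measure $\eta'$, and Furstenberg's classical result identifies this in turn with $G^0/P^0$. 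Combined with the uniqueness clause of Theorem~\ref{thm:Poisson-boundary-universal-property} and the compatibility of the $G^0$- and $G$-actions on $G/N$, this upgrades to the identification $\Pi(G, \eta) \cong G/N$ as $G$-spaces.

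With $\eta$ in hand, Proposition~\ref{prop:hitlattice} (which applies since $G$ is compactly generated by $G^0$ together with a finite set of coset representatives) produces a fully supported probability measure $\mu$ on $\Gamma$ such that $G/N \cong \Pi(G, \eta)$ is $\mu$-stationary and $C^\eta \subseteq C^\mu$ for every metrizable compact convex $G$-space $C$. Full support of $\mu$ on $\Gamma$ ensures that its support generates $\Gamma$ as a sub-semigroup of $G$. The principal obstacle is to upgrade $\mu$-stationarity of $G/N$ to the stronger assertion that $G/N$ \emph{is} the Furstenberg-Poisson boundary of $(G, \mu)$ demanded by Definition~\ref{defn:Furstenberg-arithmetic}. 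I expect to resolve this by combining the inclusion $C^\eta \subseteq C^\mu$ with the amenability of $\bN(\R)$: the universal property (Theorem~\ref{thm:Poisson-boundary-universal-property}) supplies a canonical $G$-equivariant quotient map from $\Pi(G, \mu)$ onto $G/N$, and amenability of the stabilizer, in the spirit of Zimmer's maximal amenable boundary characterization, forces this quotient to be an isomorphism.
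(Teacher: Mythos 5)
The first two steps of your plan (building an admissible measure $\eta$ on $G$ and descending to a fully supported $\mu$ on $\Gamma$ via Proposition~\ref{prop:hitlattice}) run parallel to the paper's argument, though the paper avoids identifying $\Pi(G,\eta)$ altogether: it only needs \emph{some} $\bar{\mu}$-stationary measure $\nu$ on the compact space $G/N_G(P)$, which exists by Kakutani, and then the clause $C^{\bar{\mu}}\subset C^{\mu}$ of Proposition~\ref{prop:hitlattice} makes $\nu$ $\mu$-stationary. The genuine gap is in your final step. The principle you invoke --- that a $\Gamma$-equivariant quotient $\Pi(\Gamma,\mu)\to G/N$ onto a space with amenable stabilizers must be an isomorphism --- is false. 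For instance, for an irreducible lattice $\Gamma$ in $\SL_2(\R)\times\SL_2(\R)$ with a suitable $\mu$, the Poisson boundary is $\P^1\times\P^1$ with the stationary measure, and the projection onto the first factor is a \emph{proper} boundary quotient whose a.e.\ stabilizer in $\Gamma$ is finite (a noncentral element fixes at most two points of $\P^1$, so the union of fixed-point sets is countable, hence null for the non-atomic stationary measure). Amenability of stabilizers does not detect maximality: Zimmer's theorem says the Poisson boundary is an amenable $G$-space, not that every amenable boundary quotient exhausts the Poisson boundary.

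Identifying $\Pi(\Gamma,\mu)$ with $G/N$ is exactly the hard point, and it cannot be done for an arbitrary fully supported $\mu$ on the lattice; one must \emph{choose} $\mu$ well. This is why Proposition~\ref{prop:hitlattice} includes the clause that $\mu'$ can be taken $\leq\phi$ for any prescribed $\phi:\Gamma\to(0,1]$: the paper uses this freedom to force $\mu$ to have finite logarithmic first moment and finite random walk entropy, and then applies Kaimanovich's strip criterion \cite[Theorem~10.7]{Kaimanovich2000Poisson} (with the isometric action on the symmetric space $G^o/K$ used to handle the disconnectedness of $G$) to conclude that $(G/N_G(P),\nu)$ is the full Furstenberg--Poisson boundary of $(\Gamma,\mu)$. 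Your argument never invokes this freedom and never controls the moments of $\mu$, so the decisive identification is missing and cannot be recovered by the soft amenability argument you propose.
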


\begin{proof}
Denote $G=\bG(\R)$ and let $G^o$ be its identity component.
Let $P<G^o$ be a minimal parabolic subgroup and $K<G^o$ be a maximal compact subgroup of this connected semisimple Lie group.
Since all minimal parabolic subgroups, as well as all maximal compact subgroups are conjugated and $P$ and $K$ are self normalizers in $G^o$,
we may identify the spaces $G^o/P$ and $G^o/K$ with the corresponding sets of 
minimal parabolic and maximal compact subgroups in $G^o$.
Thus, the conjugation action of $G$ on $G^o$ induces transitive actions on $G^o/P$ and $G^o/K$
correspondingly and we may identify $G^o/P\simeq G/N_G(P)$ and $G^o/K\simeq G/N_G(K)$.

Endow $G$ with the left Haar measure $\lambda$, and let 
$\psi:G\to [0,\infty)$ be a continuous function with $\int_G \psi \dd\lambda=1$ whose support is compact and generates $G$ as a semigroup.
and consider the probability measure $\bar{\mu}=\psi\cdot \lambda$ on $G$.
Find a $\bar{\mu}$-stationary probability measure $\nu$ on the compact $G$-space $G/N_G(P)$.
By Proposition~\ref{prop:hitlattice}, there exists a fully supported probability measure $\mu$ on $\Gamma$ such that 
$\nu$ is $\mu$-stationary.
We choose $\bar{\mu} \leq \phi$ for $\phi:\Gamma\to (0,1]$ small enough so that $\mu$ has finite logarithmic first moment 
and random walk entropy.
We now use \cite[Theorem~10.7]{Kaimanovich2000Poisson} to deduce that $(G/N_G(P),\nu)$ is the 
Furstenberg-Poisson boundary of $(\Gamma,\mu)$.
Strictly speaking, it is assumed in this reference that $G$ is connected, but since $G$ acts by isometries on the symmetric space
$G^o/K$, the use of the strip approximation \cite[Theorem~6.5]{Kaimanovich2000Poisson} is intact.
Since $N_G(P)=\bN(\R)$ where $\bN$ is the normalizer of the Zariski closure of $P$, 
we get that $G/N_G(P)=\bG(\R)/\bN(\R)$,
as required to show that $\mu$ is a Furstenberg measure on $\Gamma$.
%
%By Proposition~\ref{prop:hitting-measure-Poisson-boundary},
%we get that $\mu^o$ is a Furstenberg measure on $\Gamma^o$.
%The last statement follows from the fact that $\tilde{\Gamma}^o \to \Gamma^o$ is a central extension.
\end{proof}

\begin{proof}[Proof of Proposition~\ref{prop:furstmeasures-arithmetic}]
Let $\bG$ be a $\Q$-algebraic group and $\Lambda$ an arithmetic subgroup.
Let $\bR\lhd \bG$ be the solvable radical of the identity component subgroup in $\bG$ and denote $\bar{\bG}=\bG/\bR$.
Let $\bar{\Lambda}$ be the image of $\Lambda$ in $\bar{\bG}(\Q)$.
By \cite[Proposition 4.1]{platonov-rapinchik-1993} $\bar{\Lambda}$ is an arithmetic subgroup of $\bar{\bG}$,
thus by a classical theorem of Borel and Harish-Chandra it is a lattice in $\bar{\bG}(\R)$.
Using Proposition~\ref{prop:Fmeasures}, we find a Furstenberg measure $\bar{\mu}$ on $\bar{\Lambda}$.
Then the Furstenberg-Poisson boundary of $(\bar{\Lambda},\bar{\mu})$ is realized by $\bar{\bG}(\R)/\bar{\bN}(\R)$,
where $\bar{\bN}<\bar{\bG}$ is the normalizer of some minimal $\R$-parabolic subgroup $\bar{\bP}$ of $\bar{\bG}$.
We denote by $\bP$ and $\bN$ the preimages of $\bar{\bP}$ and $\bar{\bN}$ in $\bG$ correspondingly 
and observe that $\bP$ is a minimal $\R$-parabolic subgroup and 
$\bN$ is its normalizer in $\bG$.
Since the kernel of $\Lambda\to \bar{\Lambda}$ is solvable, hence amenable,
we can use \cite[Theorem~2]{kaimanovich2002extensions} to find a probability measure $\mu$ on $\Lambda$
such that the Furstenberg-Poisson boundary of $(\Lambda,\mu)$ is the same as the Furstenberg-Poisson boundary of 
$(\bar{\Lambda},\bar{\mu})$, that is $\bar{\bG}(\R)/\bar{\bN}(\R)$.
Identifying the latter with $\bG(\R)/\bN(\R)$,
we conclude that $\mu$ is a Furstenberg measure on $\Lambda$.
\end{proof}

\begin{rem}
	We note that the same proof applies for S-arithmetic subgroups of $k$-algebraic groups where $k$ is a number field and $S$ is a finite set of places. 
\end{rem}

\section{Characters} \label{sec:Characters}

Throughout this section we fix a countable group $\Lambda$. All representations
are assumed to be unitary. For a comprehensive overview of the theory
of characters and more generally of dual spaces in harmonic analysis we refer to
\cite{bekka2019unitary}.

\subsection{Basics}  \label{subsec:Basics}

A function $\varphi:\Lambda\to\C$ is called \emph{positive-definite} if
for any $n\in\N$, and for any group elements $g_{1},...,g_{n}\in \Lambda$,
the matrix $\left(\varphi(g_{i}^{-1}g_{j})\right)_{i,j}$ is positive
semi-definite, that is, $\sum_{i,j}\bar{\alpha_{i}}\alpha_{j}\varphi(g_{i}^{-1}g_{j})\geq0$
for any choice of complex numbers $\alpha_{1},...,\alpha_{n}$.
The space of such functions is denoted by $\PD(\Lambda)$. It is not hard
to see that $\varphi(e)=\sup_{g\in \Lambda}|\varphi(g)|$ and in particular
$\PD(\Lambda)\subseteq l^{\infty}(\Lambda)$. We may therefore speak of the weak-{*}
topology on $\PD(\Lambda)$, which makes $\PD(\Lambda)$ a close convex set.
The space $\PD_{1}(\Lambda)$ obtained by adding the normalization condition $\varphi(e)=1$
is a then compact and convex. The weak-{*} topology
on $\PD_{1}(\Lambda)$ coincides with the topology of pointwise convergence. The restriction to $\Lambda$ of any state on   the (maximal) $C^*$-algebra $C^*(\Lambda)$ is positive-definite and conversely any $\varphi\in \PD_1(\Lambda)$ can be extended to $C^*(\Lambda)$.
This establishes a bijection between $\PD_{1}(\Lambda)$ and the space of states on $C^*(\Lambda)$.

An element $\varphi\in \PD_{1}(\Lambda)$
which is conjugation invariant is called a \emph{trace}. Traces on $\Lambda$ correspond to tracial states on $C^*(\Lambda)$. The collection of all traces $\Tr(\Lambda)$ is again compact and
convex, so it is natural to speak of extreme points in $\Tr(\Lambda)$,
that is traces that cannot be written as a proper convex sum of two
traces. Any extremal point of this set is called a \emph{character
}of $\Lambda$, and the collection of all characters is denoted by $\Ch(\Lambda)$\footnote{Note that in some texts, e.g \cite{bader2022charmenability},
	the term character refers to what we call here a trace.}.
Observe that $\Tr(\Lambda)$ is metrizable, as $\Lambda$ is countable, thus its $G_\delta$-subset $\Ch(\Lambda)$ is Polish 
(see \cite[Proposition 1.3]{phelps2001lectures}), but it is in general not compact, see e.g Example~\ref{exa:heisenberg}.
The space $\Ch(\Lambda)$ is sometimes referred to as the \emph{Thoma dual} of $\Lambda$ as it serves as a dual space of
$\Lambda$ for harmonic analytic purposes. 
The following theorem may be thought of as a Fourier transform on $\Lambda$.

\begin{thm}[\cite{thoma1964unitare}] \label{thm:fourier}
	$\Tr(\Lambda)$ is a Choquet simplex. That is, the barycenter map
	\[ \mathcal{F}_{\Lambda}:\Prob(\Ch(\Lambda))\to \Tr(\Lambda), \quad
	\nu\mapsto\int_{\Ch(\Lambda)}\chi \dd\nu
	\]
	is a continuous affine bijection. 
\end{thm}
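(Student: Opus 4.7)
The plan is to derive Theorem~\ref{thm:fourier} from the uniqueness of the central decomposition of a finite tracial von Neumann algebra, via the GNS construction. To each $\varphi \in \Tr(\Lambda)$ I associate its GNS triple $(\pi_\varphi, H_\varphi, \xi_\varphi)$: since $\varphi$ is a trace, $\xi_\varphi$ is a cyclic tracial (hence separating) vector for $M_\varphi := \pi_\varphi(\Lambda)''$, and $\tau_\varphi(x) := \langle x\xi_\varphi, \xi_\varphi\rangle$ is a faithful normal tracial state on $M_\varphi$. The classical correspondence then identifies convex decompositions of $\varphi$ inside $\Tr(\Lambda)$ with positive contractions $z \in Z(M_\varphi)$, via $\varphi_z(g) := \tau_\varphi(z\pi_\varphi(g))$; in particular, $\varphi \in \Ch(\Lambda)$ if and only if $M_\varphi$ is a factor.

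For surjectivity of $\mathcal{F}_\Lambda$, I would invoke the central direct-integral decomposition $(M_\varphi, \tau_\varphi) = \int_X^\oplus (M_x, \tau_x) \, d\mu_\varphi(x)$, where $Z(M_\varphi) \simeq L^\infty(X, \mu_\varphi)$ and each $M_x$ is a factor. A Borel selection of cyclic tracial vectors in the fibers yields a measurable field $x \mapsto \chi_x \in \Ch(\Lambda)$ satisfying $\varphi(g) = \int \chi_x(g) \, d\mu_\varphi(x)$, so that $\nu_\varphi := (x \mapsto \chi_x)_* \mu_\varphi \in \Prob(\Ch(\Lambda))$ gives $\mathcal{F}_\Lambda(\nu_\varphi) = \varphi$. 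Continuity of $\mathcal{F}_\Lambda$ is immediate from its definition in terms of the continuous coordinate evaluations $\chi \mapsto \chi(g)$.

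The main obstacle is injectivity, which is the substantive content of the simplex property. Given $\nu \in \Prob(\Ch(\Lambda))$ with $\mathcal{F}_\Lambda(\nu) = \varphi$, I would form the direct-integral representation $\pi_\nu := \int^\oplus \pi_\chi \, d\nu(\chi)$ with diagonal unit vector $\xi_\nu := \int^\oplus \xi_\chi \, d\nu(\chi)$; uniqueness of GNS for $\varphi$ yields a unitary equivalence to $(\pi_\varphi, H_\varphi, \xi_\varphi)$. Factoriality of each $M_\chi$ implies that $L^\infty(\Ch(\Lambda), \nu)$ is exactly the center of $\pi_\nu(\Lambda)''$, which, transported across the equivalence, becomes $Z(M_\varphi) \simeq L^\infty(X, \mu_\varphi)$; uniqueness of the central decomposition then pins down $\nu$ as $\nu_\varphi$. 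The technical work lies in ensuring Borel measurability of the selection $\chi \mapsto (\pi_\chi, H_\chi, \xi_\chi)$, handled by standard direct-integral theory. An alternative, and perhaps cleanest, route is to invoke the general theorem that the tracial-state space of any unital $C^*$-algebra is a Choquet simplex and apply it to $C^*(\Lambda)$, identifying $\Tr(\Lambda)$ with the tracial states of $C^*(\Lambda)$ via unique extension.
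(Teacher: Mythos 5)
The paper offers no proof of this statement at all: it is quoted as Thoma's theorem with a citation to \cite{thoma1964unitare}, so there is nothing internal to compare against line by line. Your sketch is essentially the standard modern proof of that cited result, and the approach is sound: the correspondence between convex decompositions of a trace $\varphi$ inside $\Tr(\Lambda)$ and positive contractions in $Z(M_\varphi)$ is correct (a subtrace of $\varphi$ corresponds to an operator in $\pi_\varphi(\Lambda)'\cap (J M_\varphi J)' = Z(M_\varphi)$, using the right action coming from traciality), surjectivity does follow from the central direct-integral decomposition, and continuity is immediate since each evaluation $\chi\mapsto\chi(g)$ is bounded continuous on the Polish space $\Ch(\Lambda)$. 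The one thin spot is the injectivity step: the assertion that factoriality of each $M_\chi$ forces $L^\infty(\Ch(\Lambda),\nu)$ to be exactly the center of $\pi_\nu(\Lambda)''$ is not automatic for a direct integral of factor representations --- it requires that the field $\chi\mapsto\pi_\chi$ be a.e.\ mutually disjoint, so that the decomposition is genuinely the central one. You should add the observation that distinct characters yield disjoint (non-quasi-equivalent) factor representations, because the character is recovered from the quasi-equivalence class as the unique normal tracial state of the generated factor composed with the representation; with that in hand, uniqueness of the central decomposition does pin down $\nu$. You should also note that $\xi_\nu$ is cyclic only for the algebra generated by $\pi_\nu(\Lambda)$ together with the diagonal algebra, so one must first cut down to $\overline{\pi_\nu(\Lambda)\xi_\nu}$ before invoking GNS uniqueness. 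Your alternative route --- the classical theorem that the tracial state space of a unital separable $C^*$-algebra is a metrizable Choquet simplex, applied to $C^*(\Lambda)$ via the affine homeomorphism $\Tr(\Lambda)\simeq T(C^*(\Lambda))$ that the paper itself records in \S3.1 --- is indeed the cleanest and entirely adequate; what the von Neumann algebraic route buys in exchange for the extra work is the explicit description of the representing measure in terms of the central decomposition, which is the form in which the theorem is actually used later (e.g.\ in identifying $\Ch_\Lambda(N)$ with ergodic invariant measures).
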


\begin{proof}[Proof of Proposition \ref{Char-kakutani}]
	The Proposition follows from Lemma \ref{lem:choquet-kakutani}	 in view of Theorem~\ref{thm:fourier} and the metrizability of $\Tr(\Lambda)$.
\end{proof}

\subsection{The GNS construction}
A \emph{GNS triplet} of $\Lambda$ is a triple $(\mathcal{H},\pi,\xi)$ where $\mathcal{H}$ is a Hilbert space, $\pi:\Lambda\to \mathcal{U}(\mathcal{H})$ is a representation, and $\xi$ is a cyclic vector. Given such, the function
\begin{align} \label{eq:pd}
	\varphi:g\mapsto \langle \pi(g)\xi,\xi\rangle, \;\;\;\;\; (g\in \Lambda)
\end{align} 
is in $\PD({\Lambda})$. Conversely, the GNS construction associates to any $\varphi\in \PD(\Lambda)$ a GNS triplet $(\mathcal{H},\pi,\xi)$ such that Eq. (\ref{eq:pd}) holds, and this triplet is unique up to an equivalence of representations that takes the respective cyclic vectors to one another \cite[Proposition 1.B.10]{bekka2019unitary}. Furthermore, $\varphi\in \PD_1(\Lambda)$ if and only if $\xi$ is a unit vector, and $\varphi$ is extremal if and only if $\pi$ is irreducible \cite[Proposition 1.B.14]{bekka2019unitary}.

\begin{lem}\label{lem:finite-decomposition-rep-pd}
	Let $\varphi\in \PD_1(\Lambda)$ with GNS triplet $(\mathcal{H},\pi,\xi)$ and let $d\in \N$. If
	$\pi$ admits a decomposition into $d$ irreducible subrepresentations then $\varphi$ is a convex combination of $d$ extreme points of $\PD_1(\Lambda)$.
\end{lem}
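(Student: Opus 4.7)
The plan is to diagonalize the GNS data along the given decomposition of $\pi$. Write $\mathcal{H}=\mathcal{H}_{1}\oplus\cdots\oplus\mathcal{H}_{d}$ with each $\mathcal{H}_{i}$ a $\pi$-invariant irreducible subspace, and decompose the cyclic vector as $\xi=\xi_{1}+\cdots+\xi_{d}$ with $\xi_{i}\in\mathcal{H}_{i}$.

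The first key observation I would make is that each $\xi_{i}$ is nonzero. Indeed, if some $\xi_{i}=0$, then the $\pi$-cyclic subspace generated by $\xi$ would be contained in $\bigoplus_{j\ne i}\mathcal{H}_{j}$, contradicting that $\xi$ is cyclic for $\mathcal{H}$. Let $\lambda_{i}=\lVert\xi_{i}\rVert^{2}>0$ and $\eta_{i}=\xi_{i}/\lVert\xi_{i}\rVert$. Because $\xi$ is a unit vector (since $\varphi\in\PD_{1}(\Lambda)$) and the $\mathcal{H}_{i}$'s are orthogonal, $\sum_{i=1}^{d}\lambda_{i}=1$.

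Next I would compute $\varphi$ via the decomposition. Using that each $\mathcal{H}_{i}$ is $\pi$-invariant and the $\mathcal{H}_{i}$'s are mutually orthogonal, the cross terms vanish and one gets
\[
\varphi(g)=\langle\pi(g)\xi,\xi\rangle=\sum_{i=1}^{d}\langle\pi(g)\xi_{i},\xi_{i}\rangle=\sum_{i=1}^{d}\lambda_{i}\varphi_{i}(g),\qquad g\in\Lambda,
\]
where $\varphi_{i}(g):=\langle\pi_{i}(g)\eta_{i},\eta_{i}\rangle$ and $\pi_{i}:=\pi\res\mathcal{H}_{i}$.

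Finally I would argue that each $\varphi_{i}$ is an extreme point of $\PD_{1}(\Lambda)$. Since $\pi_{i}$ is irreducible, every nonzero vector in $\mathcal{H}_{i}$ is cyclic; in particular $\eta_{i}$ is cyclic for $\pi_{i}$. Hence $(\mathcal{H}_{i},\pi_{i},\eta_{i})$ is, up to equivalence, the GNS triple of $\varphi_{i}$, and the uniqueness of the GNS construction together with the cited characterization of extreme points (\cite[Proposition 1.B.14]{bekka2019unitary}) gives that $\varphi_{i}\in\ext(\PD_{1}(\Lambda))$. The displayed formula then exhibits $\varphi$ as the desired convex combination of $d$ extreme points. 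No real obstacle is expected; the only point requiring a moment of attention is the cyclicity argument ensuring all coefficients $\lambda_{i}$ are strictly positive so that the normalization $\eta_{i}$ makes sense and exactly $d$ terms appear.
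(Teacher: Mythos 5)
Your proof is correct and takes essentially the same approach as the paper's: decompose the cyclic vector along the orthogonal invariant subspaces, note that the cross terms vanish so that $\varphi=\sum_i\lambda_i\varphi_i$, and use irreducibility of each $\pi_i$ to conclude extremality of each normalized summand. You in fact supply one detail the paper leaves implicit, namely the cyclicity argument showing each component $\xi_i$ is nonzero, which is what justifies the normalization (the paper divides by $\varphi_i(1)$ without comment).
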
 

\begin{proof}
	
	Suppose $\pi=\bigoplus_{i=1}^d \pi_i$ is a decomposition into $d$ irreducible subrepresentations. For each $1\leq i \leq d$ consider the GNS triplet $(\mathcal{H}_i,\pi_i,\xi_i)$ where $\mathcal{H}_i$ is the space of $\pi_i$, and $\xi_i$ is the projection of $\xi$ onto $\mathcal{H}_i$. Denote the corresponding positive-definite functions to these triplets by $\varphi_i$ and note that $\varphi=\sum_{i=1}^{d} \varphi_i $. In particular, $\varphi$ is a convex combination of the normalized positive-definite functions $\frac{1}{\varphi_i(1)}\varphi_i$ (with respective coefficients $\varphi_i(1)$). Since each $\pi_i$ are irreducible, each $\frac{1}{\varphi_i(1)}\varphi_i$ is extremal.	 
\end{proof}

\begin{defn}
	Let $\varphi\in \PD_1(\Lambda)$. The \emph{support} of $\varphi$
	is $\supp(\varphi)=\Lambda\backslash\varphi^{-1}(0)$. 
	The \emph{kernel} of $\varphi$ is
	$\ker\varphi:=\varphi^{-1}(1)$.  $\varphi$ is called
	\emph{faithful} if $\ker\varphi=\{e\}$
\end{defn}

\begin{lem}\label{lem:kernel-of-pd}
	For any $\varphi\in\PD_1(\Lambda)$, $\ker\varphi$ is a subgroup of
	$\Lambda$, and $\varphi(k_{1}gk_{2})=\varphi(g)$ for all $k_{1},k_{2}\in\ker\varphi$
	and $g\in\Lambda$. In particular, if $\varphi$ is a trace then $\ker\varphi$
	is a normal subgroup and $\varphi$ descends to a faithful trace of
	$\Lambda/\ker\varphi$. 
\end{lem}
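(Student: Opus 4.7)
The plan is to use the GNS construction (already introduced in \S\ref{subsec:Basics}) together with the equality case of Cauchy--Schwarz to identify $\ker\varphi$ with the stabilizer of the cyclic vector, from which all claims follow readily.

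First I would fix a GNS triplet $(\mathcal{H},\pi,\xi)$ for $\varphi$, so that $\varphi(g)=\langle \pi(g)\xi,\xi\rangle$ and $\|\xi\|^{2}=\varphi(e)=1$. For $k\in\Lambda$ one has $\varphi(k)=\langle \pi(k)\xi,\xi\rangle$, and $\pi(k)$ is unitary so $\|\pi(k)\xi\|=1$. The condition $\varphi(k)=1=\|\pi(k)\xi\|\,\|\xi\|$ is the equality case of Cauchy--Schwarz with a positive real inner product, which forces $\pi(k)\xi=\xi$. Conversely, if $\pi(k)\xi=\xi$ then obviously $\varphi(k)=1$. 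Hence
\[
\ker\varphi=\{k\in\Lambda:\pi(k)\xi=\xi\}=\Stab_{\pi}(\xi),
\]
which is patently a subgroup of $\Lambda$.

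Next, for the bi-invariance, take $k_{1},k_{2}\in\ker\varphi$ and $g\in\Lambda$. Since $k_{1}^{-1}\in\ker\varphi$ as well, I get $\pi(k_{2})\xi=\xi$ and $\pi(k_{1})^{*}\xi=\pi(k_{1}^{-1})\xi=\xi$. Then
\[
\varphi(k_{1}gk_{2})=\langle \pi(g)\pi(k_{2})\xi,\pi(k_{1})^{*}\xi\rangle=\langle \pi(g)\xi,\xi\rangle=\varphi(g).
\]

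Finally, assume $\varphi$ is a trace, i.e.\ conjugation invariant. Then for any $k\in\ker\varphi$ and $g\in\Lambda$ we have $\varphi(gkg^{-1})=\varphi(k)=1$, so $gkg^{-1}\in\ker\varphi$; thus $\ker\varphi\triangleleft\Lambda$. The bi-invariance $\varphi(k_{1}gk_{2})=\varphi(g)$ established above shows that $\varphi$ is constant on cosets of $\ker\varphi$, so it descends to a well-defined function $\bar\varphi$ on $\Lambda/\ker\varphi$. Positive-definiteness and the normalization $\bar\varphi(\bar e)=1$ pass to the quotient trivially from the definitions, and conjugation-invariance descends since the quotient map is a surjective homomorphism. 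For faithfulness, if $\bar\varphi(\bar g)=1$ then $\varphi(g)=1$, so $g\in\ker\varphi$ and $\bar g=\bar e$.

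There is essentially no obstacle here; the one point worth being careful about is the Cauchy--Schwarz equality case, which is what turns the numerical identity $\varphi(k)=1$ into the operator-theoretic identity $\pi(k)\xi=\xi$ and thereby makes $\ker\varphi$ a subgroup rather than just a subset.
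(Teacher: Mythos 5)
Your proof is correct and takes essentially the same route as the paper's: a GNS triplet identifies $\ker\varphi$ with the stabilizer of the cyclic vector $\xi$, from which the subgroup property, the bi-invariance computation, and the descent to a faithful trace on the quotient all follow exactly as you write them. The only difference is that you make explicit the Cauchy--Schwarz equality argument behind $\ker\varphi=\{g:\pi(g)\xi=\xi\}$, which the paper asserts without comment.
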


\begin{proof}
	Let $\left(\mathcal{H},\pi,\xi\right)$ be a GNS triplet such that
	$\varphi(g)=\left\langle \pi(g)\xi,\xi\right\rangle $. Then $\ker\varphi=\left\{ g\in\Lambda\mid\pi(g)\xi=\xi\right\} $
	and in particular it is a subgroup. Given $k_{1},k_{2}\in\ker\varphi$ we see that:
	\[
	\varphi(k_{1}gk_{2})=\left\langle \pi(k_{1}gk_{2})\xi,\xi\right\rangle =\left\langle \pi(g)\pi(k_{2})\xi,\pi(k_{1}^{-1})\xi\right\rangle =\left\langle \pi(g)\xi,\xi\right\rangle =\varphi(g)
	\]
	Now if $\varphi$ is a trace, $\ker\varphi$ is a normal subgroup, and by the above it is constant on $\ker\varphi$-cosets. It therefore descends to a well defined function $\bar{\varphi}$ of
	$\Lambda/\ker\varphi$. It is clear that $\bar{\varphi}$ is a trace and that it is faithful. 
\end{proof}

\begin{defn}
	Let $(\mathcal{H},\pi,\xi)$ be a GNS triplet of a subgroup $\Gamma\leq \Lambda$. The \emph{induced GNS triplet} (from $\Gamma$ to $\Lambda$) is $(\tilde{\mathcal{H}},\tilde{\pi},\tilde{\xi})$ where $\tilde{\mathcal{H}}=l^2(\Lambda/\Gamma)\otimes \mathcal{H}$ is the Hilbertian tensor product, $\tilde{\pi}=\Ind_{\Gamma}^{\Lambda}\pi$ is the induced representation, and $\tilde{\xi}=\delta_{e\Gamma}\otimes \xi$
\end{defn}

Let $\Gamma$ be a subgroup of $\Lambda$. Given $\varphi\in \PD_1(\Gamma)$ we define its \emph{trivial extension} $\tilde{\varphi}:\Lambda\to \C$ by:
\[
\tilde{\varphi}(g)=\begin{cases}
	\varphi(g) & g\in \Gamma\\
	0 & g\notin \Gamma
\end{cases}
\]
The fact that $\tilde{\varphi}$ is positive-definite follows by the following Lemma:
%(see \cite[Prop. 1.F.10]{bekka2019unitary}). 

\begin{lem} 
	\label{lem:induction-trivial-extension} Let $\Gamma\leq \Lambda$, and let $\varphi\in \PD_{1}(\Gamma)$ with GNS triplet $(\mathcal{H},\pi,\xi)$. Then the positive-definite function corresponding to the induced GNS triplet $(\tilde{\mathcal{H}},\tilde{\pi},\tilde{\xi})$ is	$\tilde{\varphi}$.
\end{lem}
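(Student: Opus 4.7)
The plan is to unwind the definition of the induced representation on $\tilde{\mathcal{H}} = \ell^{2}(\Lambda/\Gamma) \otimes \mathcal{H}$ and compute the diagonal matrix coefficient $\langle \tilde{\pi}(g)\tilde{\xi},\tilde{\xi}\rangle$ directly for each $g \in \Lambda$. First I would fix a set-theoretic section $s : \Lambda/\Gamma \to \Lambda$ of the quotient map with $s(e\Gamma)=e$. This section identifies $\ell^{2}(\Lambda/\Gamma)\otimes \mathcal{H}$ with $\ell^{2}(\Lambda/\Gamma, \mathcal{H})$, under which the induced representation takes the standard cocycle form
\[
(\tilde{\pi}(g)f)(x\Gamma) \;=\; \pi\bigl(s(x\Gamma)^{-1}\,g\,s(g^{-1}x\Gamma)\bigr)\, f(g^{-1}x\Gamma),
\]
where the argument of $\pi$ indeed lies in $\Gamma$ by construction.

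Under this identification $\tilde{\xi}=\delta_{e\Gamma}\otimes \xi$ is the $\mathcal{H}$-valued function supported on the single coset $e\Gamma$ with value $\xi$ there. Hence
\[
\langle \tilde{\pi}(g)\tilde{\xi},\tilde{\xi}\rangle \;=\; \sum_{x\Gamma \in \Lambda/\Gamma}\bigl\langle (\tilde{\pi}(g)\tilde{\xi})(x\Gamma),\, \tilde{\xi}(x\Gamma)\bigr\rangle
\]
collapses to the single summand at $x\Gamma = e\Gamma$. Evaluating there, $(\tilde{\pi}(g)\tilde{\xi})(e\Gamma) = \pi\bigl(g\, s(g^{-1}\Gamma)\bigr)\,\tilde{\xi}(g^{-1}\Gamma)$, and $\tilde{\xi}(g^{-1}\Gamma)$ vanishes unless $g^{-1}\in\Gamma$, i.e.\ unless $g\in\Gamma$. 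When $g \notin \Gamma$ the inner product is therefore $0$. When $g \in \Gamma$, we have $g^{-1}\Gamma = e\Gamma$, so $s(g^{-1}\Gamma)=e$ and the expression reduces to $\pi(g)\xi$, yielding $\langle \pi(g)\xi,\xi\rangle = \varphi(g)$.

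Combining the two cases gives precisely $\tilde{\varphi}(g)$ as defined. I do not expect any real obstacle here: the only thing to watch is the bookkeeping of the cocycle and the normalization $s(e\Gamma)=e$, which makes the one surviving term come out as $\pi(g)$ with no extra conjugation. The result is independent of the choice of section, as is automatic from the uniqueness (up to equivalence carrying cyclic vectors to cyclic vectors) of the GNS triplet associated to a positive-definite function.
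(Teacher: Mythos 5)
Your proof is correct and follows essentially the same route as the paper: both arguments compute the diagonal matrix coefficient $\langle\tilde{\pi}(g)\tilde{\xi},\tilde{\xi}\rangle$ directly, observing that it vanishes unless $\tilde{\pi}(g)$ preserves the summand $\delta_{e\Gamma}\otimes\mathcal{H}$ (i.e.\ unless $g\in\Gamma$), and that the restriction to that summand is $\pi$. The paper phrases this via the permutation of the orthogonal subspaces $\delta_{x\Gamma}\otimes\mathcal{H}$ rather than an explicit section and cocycle, but that is only a difference in bookkeeping.
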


\begin{proof}
	By definition of the induced representation, each $\tilde{\pi}(g)$ permutes the pairwise orthogonal subspaces $\delta_{x\Gamma}\otimes \mathcal{H}$ ($x\Gamma \in \Lambda/\Gamma$),  and stabilizes the subspace $\delta_{e\Gamma}\otimes \mathcal{H}$ if and only if $g\in \Gamma$. Moreover, the subrepresentation of the restricted representation $\Res_{\Gamma}^\Lambda \tilde{\pi}$ corresponding to the $\Gamma$-invariant subspace $\delta_{e\Gamma}\otimes \mathcal{H}$ is simply $\pi$. It follows that:
	\[
		\langle \tilde{\pi}(g)\tilde{\xi},\tilde{\xi} \rangle = 
		\langle \Res^{\Lambda}_{\Gamma}\tilde{\pi}(g)\tilde{\xi},\tilde{\xi} \rangle 
		=
			\langle \pi(g)\xi,\xi \rangle
		=\varphi(g)
	\]
	for $g\in \Gamma$, whereas $\langle \tilde{\pi}(g)\tilde{\xi},\tilde{\xi} \rangle = 0 $ for 
$g\notin\Gamma$. Altogether $\tilde{\varphi}(g)=\langle \tilde{\pi}(g)\tilde{\xi},\tilde{\xi} \rangle$ for all $g\in \Lambda$ 
\end{proof}

\begin{lem}\label{lem:decompose-trivial-extension-pd}
	Let $N$ be a normal subgroup of index $d<\infty$ in $\Lambda$, and let
	$\varphi$ be an extreme point of $\PD_1(N)$. Then the trivial extension
	$\tilde{\varphi}\in\PD_1({\Lambda})$ is a convex combination of $d$ (or less) extreme points of $\PD_1({\Lambda})$.
\end{lem}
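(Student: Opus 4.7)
The plan is to combine the identification of GNS triplets under induction (Lemma~\ref{lem:induction-trivial-extension}) with the decomposition criterion (Lemma~\ref{lem:finite-decomposition-rep-pd}). First, I would let $(\mathcal{H},\pi,\xi)$ be the GNS triplet of $\varphi$; since $\varphi$ is extremal in $\PD_{1}(N)$, the representation $\pi$ is irreducible. Lemma~\ref{lem:induction-trivial-extension} identifies the GNS triplet of $\tilde\varphi$ with the induced triplet $(\tilde{\mathcal H},\tilde\pi,\tilde\xi)$, where $\tilde\pi=\Ind_{N}^{\Lambda}\pi$ acts on $\tilde{\mathcal H}=\ell^{2}(\Lambda/N)\otimes\mathcal H$. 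By Lemma~\ref{lem:finite-decomposition-rep-pd} it therefore suffices to show that $\tilde\pi$ decomposes as an orthogonal direct sum of at most $d$ irreducible $\Lambda$-subrepresentations.

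For this, I would exploit Clifford's observation. Choose coset representatives $g_{1},\dots,g_{d}$ of $N$ in $\Lambda$. The orthogonal decomposition $\tilde{\mathcal H}=\bigoplus_{i=1}^{d}\delta_{g_{i}N}\otimes\mathcal H$ is preserved by $\tilde\pi(N)$ because $N\triangleleft\Lambda$, and the restricted action on the $i$-th summand is unitarily equivalent to the conjugate representation $\pi^{g_{i}}:n\mapsto\pi(g_{i}^{-1}ng_{i})$. Each $\pi^{g_{i}}$ is irreducible since $\pi$ is and $n\mapsto g_{i}^{-1}ng_{i}$ is an automorphism of $N$. Hence $\Res_{N}\tilde\pi$ has finite length exactly $d$ as a unitary $N$-representation.

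From this I would deduce the desired $\Lambda$-decomposition. The commutant $\tilde\pi(\Lambda)'$ is a von Neumann subalgebra of $\tilde\pi(N)'$, and the latter is finite dimensional because $\Res_{N}\tilde\pi$ is a finite direct sum of irreducibles (it is isomorphic to a product of matrix algebras, one for each isomorphism class appearing among the $\pi^{g_{i}}$, of size given by the multiplicity). Since $\tilde\pi(\Lambda)'$ is finite dimensional, its minimal central projections give an orthogonal $\Lambda$-decomposition $\tilde\pi=\bigoplus_{j=1}^{r}\sigma_{j}$ into finitely many irreducible $\Lambda$-representations (with multiplicity). For each $j$, $\Res_{N}\sigma_{j}$ is a nonzero $N$-sub\-representation of $\Res_{N}\tilde\pi$, hence contains at least one of the $\pi^{g_{i}}$, so has $N$-length $\geq 1$; summing over $j$ yields $r\leq d$. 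Lemma~\ref{lem:finite-decomposition-rep-pd} then presents $\tilde\varphi$ as a convex combination of at most $d$ extreme points of $\PD_{1}(\Lambda)$.

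The only step that requires any care is the passage from the finite length of $\Res_{N}\tilde\pi$ to the actual existence of an orthogonal decomposition of $\tilde\pi$ into $\Lambda$-irreducibles; once one notes that $\tilde\pi(\Lambda)'$ is finite dimensional this is standard type~I theory, so I do not anticipate a substantive obstacle.
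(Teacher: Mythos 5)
Your proposal is correct and follows the paper's proof essentially verbatim: GNS triplet, the induction lemma identifying the GNS triplet of $\tilde\varphi$, the Clifford-type decomposition $\Res_N\tilde\pi=\bigoplus_{i=1}^d\pi^{g_i}$ into irreducibles, and then Lemma~\ref{lem:finite-decomposition-rep-pd}. The only difference is that you spell out, via finite-dimensionality of the commutant $\tilde\pi(\Lambda)'\subseteq\tilde\pi(N)'$, the step the paper compresses into ``this is true even when restricting to $N$'' (note only that it is the minimal projections of $\tilde\pi(\Lambda)'$, not the minimal \emph{central} ones, that cut out the irreducible summands, as your parenthetical ``with multiplicity'' already suggests).
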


\begin{proof}
	Let $(\mathcal{H},\pi,\xi)$ be a GNS triplet associated with $\varphi$.
	Then by Lemma \ref{lem:induction-trivial-extension}, the induced GNS triplet $(\tilde{\mathcal{H}},\tilde{\pi},\tilde{\xi})$ is a GNS triplet for $\tilde{\varphi}$. As $N$ is normal $\Res^{\Lambda}_N\Ind^{\Lambda}_N\pi=\bigoplus_{i=1}^{d}\pi^{t_{i}}$ where $t_1,...,t_d$ are coset representatives for $N$ in $\Lambda$, and $\pi^{t_i}$ is the representation of $N$ defined by $n\mapsto \pi(t_i^{-1}nt_i)$.
	Since $\varphi$ is extremal, $\pi$
	is irreducible and thus also each $\pi^{t_{i}}$. It follows that $\Ind^{\Lambda}_N\pi$ is a direct sum
	of at most $d$ irreducible representations because this is true even when
	restricting to $N$. Thus by Lemma \ref{lem:finite-decomposition-rep-pd} $\tilde{\varphi}$ is a convex combination
	of $d$ extreme points of $\PD_1({\Lambda})$. 
\end{proof}

\subsection{Tracial representations  \label{subsec:Tracial-representations}}

A \emph{tracial von Neumann algebra} is a pair $(M,\tau)$,
where $M$ is a von Neumann algebra and $\tau$ is a normal faithful trace on $M$. A \emph{finite factor} is a tracial von Neumann algebra whose center consists only of scalar multiples of the identity. 
A \emph{tracial representation} of a group $\Lambda$ is a triple $(M,\tau,\pi)$ where $(M,\tau)$ is a tracial von Neumann algebra and 
$\pi:\Lambda\to \mathcal{U}(\mathcal{M})$ is a representation of $\Lambda$ into the group of
unitary elements of $M$ such that $\pi(\Lambda)$ generates $M$ as a von Neumann algebra. Two tracial representations are said to be equivalent if there is an isomorphism between the two von Neumann algebras which intertwines the traces and the representations. 
The following is a fundamental result in the theory of characters.

\begin{thm}[Thoma]	\label{thm:thoma correspondence}   
	There is a one to one correspondence	between:
	\begin{enumerate}
		\item Traces $\varphi:\Lambda\to\C$, and 
		\item Isomorphism classes of tracial representations of $\Lambda$.
	\end{enumerate}
	Moreover $\varphi$ is a character if and only if the corresponding
	von Neumann algebra is a factor. 
\end{thm}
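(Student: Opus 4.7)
The plan is to set up the correspondence in both directions via the GNS construction and Thoma's extension of it to von Neumann algebras, and then to identify extremality with the factor property.

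First, in the easy direction, given a tracial representation $(M,\tau,\pi)$, I would define $\varphi_\pi:\Lambda\to\C$ by $\varphi_\pi(g)=\tau(\pi(g))$. Positive-definiteness follows from positivity of $\tau$ applied to elements of the form $\left(\sum_i\alpha_i\pi(g_i)\right)^*\left(\sum_j\alpha_j\pi(g_j)\right)$, and conjugation invariance is immediate from the trace property $\tau(ab)=\tau(ba)$. Two equivalent tracial representations give the same $\varphi$, so this direction is well-defined on isomorphism classes.

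For the nontrivial direction, given a trace $\varphi\in\Tr(\Lambda)$, I would apply the GNS construction of \S\ref{subsec:Basics} to obtain a triplet $(\mathcal{H}_\varphi,\pi_\varphi,\xi_\varphi)$ with $\xi_\varphi$ cyclic. Set $M_\varphi=\pi_\varphi(\Lambda)''$ and $\tau_\varphi(x)=\langle x\xi_\varphi,\xi_\varphi\rangle$. The main points to verify are: (i) $\tau_\varphi$ is a trace on $M_\varphi$; (ii) $\xi_\varphi$ is separating for $M_\varphi$, so $\tau_\varphi$ is faithful; (iii) $\tau_\varphi$ is normal. For (i), by ultraweak density of $\spn(\pi_\varphi(\Lambda))$ in $M_\varphi$ and joint ultraweak continuity of multiplication on bounded sets, it is enough to check $\tau_\varphi(\pi_\varphi(g)\pi_\varphi(h))=\tau_\varphi(\pi_\varphi(h)\pi_\varphi(g))$, which amounts to $\varphi(gh)=\varphi(hg)$. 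For (ii), since $\xi_\varphi$ is cyclic and $\tau_\varphi$ is tracial, a standard argument shows $\xi_\varphi$ is also cyclic for the commutant $M_\varphi'$, hence separating for $M_\varphi$. Normality of $\tau_\varphi$ is automatic since it is a vector state. Uniqueness of the tracial representation up to equivalence follows from the uniqueness of the GNS triplet and the fact that any tracial representation $(M,\tau,\pi)$ with $\varphi_\pi=\varphi$ has the cyclic tracial vector $\hat 1\in L^2(M,\tau)$ implementing $\tau$, giving an identification with $(M_\varphi,\tau_\varphi,\pi_\varphi)$ intertwining all data.

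For the character/factor equivalence, I would argue via the center $Z(M_\varphi)$. If $M_\varphi$ is not a factor, pick a nontrivial projection $p\in Z(M_\varphi)$ with $0<\tau_\varphi(p)<1$, and write $\tau_\varphi=\tau_\varphi(p)\cdot\tfrac{1}{\tau_\varphi(p)}\tau_\varphi(p\,\cdot\,)+\tau_\varphi(1-p)\cdot\tfrac{1}{\tau_\varphi(1-p)}\tau_\varphi((1-p)\,\cdot\,)$; restricting to $\pi_\varphi(\Lambda)$ this writes $\varphi$ as a proper convex combination of two distinct traces, so $\varphi\notin\Ch(\Lambda)$. Conversely, if $\varphi=t\varphi_1+(1-t)\varphi_2$ properly with $\varphi_i\in\Tr(\Lambda)$, then $\varphi_1\leq t^{-1}\varphi$ as positive-definite functions, and by a Radon--Nikodym type argument in the GNS picture there exists a positive operator $T\in\pi_\varphi(\Lambda)'\cap\pi_\varphi(\Lambda)''=Z(M_\varphi)$ with $\varphi_1(g)=\langle T\pi_\varphi(g)\xi_\varphi,\xi_\varphi\rangle$, and $T$ cannot be a scalar since $\varphi_1\neq\varphi_2$, so $Z(M_\varphi)\neq\C\cdot 1$ and $M_\varphi$ is not a factor.

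The main obstacle, and the part that requires the most care, is the Radon--Nikodym step in the last paragraph: producing the central operator $T$ and verifying it is central (rather than merely in the commutant) uses tracialness of $\varphi$ in an essential way. The other technical point worth double-checking is the ultraweak density argument establishing the trace property on all of $M_\varphi$, which relies on $\tau_\varphi$ being ultraweakly continuous and the trace identity being preserved under taking limits of products.
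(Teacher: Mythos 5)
Your proposal is correct and takes essentially the same route as the paper, which does not prove this theorem in detail but cites \cite[Lemmas 11.C.3 and 11.C.4]{bekka2019unitary} and sketches precisely the correspondence you construct: $\varphi=\tau\circ\pi$ in one direction and the GNS construction (with $M_\varphi=\pi_\varphi(\Lambda)''$ and the vector trace) in the other, together with the standard identification of extremality of $\varphi$ with factoriality of $M_\varphi$ via central projections and the Radon--Nikodym operator $T\in\pi_\varphi(\Lambda)'$. The one small imprecision is your appeal to ``joint ultraweak continuity of multiplication on bounded sets'' (multiplication is only \emph{separately} ultraweakly continuous; jointly it is strongly continuous on bounded sets), but fixing one variable at a time and using normality of $\tau_\varphi$ yields the trace identity on all of $M_\varphi$ just as well, so nothing breaks.
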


For the proof see \cite[Lemmas 11.C.3 and 4]{bekka2019unitary},
which are formulated in a different, yet equivalent language, in view of \cite[Proposition 6.B.5]{bekka2019unitary}.
Briefly, given a tracial representation $(M,\tau,\pi)$,
$\phi=\tau\circ\pi$ is a trace on $\Lambda$ and we can retrieve 
$(M,\tau,\pi)$ from $\phi$ by the GNS construction. 

\begin{cor}\label{cor:abelian-thoma=pontryagin}
	For an abelian group $\Lambda$, the space $\Ch(\Lambda)$ coincides with the Pontryagin dual $\hat{\Lambda}$. 
\end{cor}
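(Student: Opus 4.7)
The plan is to obtain the identification via Thoma's correspondence (Theorem~\ref{thm:thoma correspondence}), using the fact that an abelian factor must be one-dimensional. First I would verify the easy inclusion $\hat{\Lambda}\subseteq\Ch(\Lambda)$. Given a unitary homomorphism $\chi:\Lambda\to\T$, viewed as a function on $\Lambda$, it is positive-definite (its GNS triple is the one-dimensional representation $\chi$ itself with cyclic vector $1\in\C$) and normalized, and it is conjugation-invariant because $\Lambda$ is abelian, so $\chi\in\Tr(\Lambda)$. Since the associated representation is one-dimensional and hence irreducible, $\chi$ is an extreme point of $\PD_1(\Lambda)$, and in particular an extreme point of the face $\Tr(\Lambda)$, so $\chi\in\Ch(\Lambda)$.

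For the reverse inclusion $\Ch(\Lambda)\subseteq\hat{\Lambda}$, let $\varphi\in\Ch(\Lambda)$ and let $(M,\tau,\pi)$ be the corresponding tracial representation supplied by Theorem~\ref{thm:thoma correspondence}. Because $\varphi$ is a character, $M$ is a finite factor. Since $\Lambda$ is abelian, the set $\pi(\Lambda)$ is commutative; as $\pi(\Lambda)$ generates $M$ as a von Neumann algebra, $M$ itself is abelian. An abelian factor equals its center, which consists only of scalars, so $M=\C$. Therefore $\pi$ takes values in $\mathcal{U}(\C)=\T$, meaning $\pi$ is a continuous unitary character of $\Lambda$, and $\varphi=\tau\circ\pi=\pi\in\hat{\Lambda}$.

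Finally I would note that these two maps are continuous inverses of each other (the topology on $\Ch(\Lambda)$ is pointwise convergence, which is exactly the topology on $\hat{\Lambda}$), so $\Ch(\Lambda)=\hat{\Lambda}$ as topological spaces. There is no real obstacle here: the only nontrivial input is the Thoma correspondence together with the elementary observation that an abelian factor is trivial; everything else is bookkeeping.
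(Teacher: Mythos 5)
Your proof is correct and takes essentially the same route as the paper: the key step in both is to apply Thoma's correspondence (Theorem~\ref{thm:thoma correspondence}) and observe that an abelian factor is $\C$, forcing $\pi$ to be a homomorphism into the unit circle. The paper leaves the easy inclusion $\hat{\Lambda}\subseteq\Ch(\Lambda)$ and the topological identification implicit, which you spell out, but this is only bookkeeping.
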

\begin{proof}
	Given $\varphi\in \Ch(\Lambda)$ let $(M,\tau,\pi)$ be its associated tracial representation. Then $M$ is a factor, and it is commutative, thus isomorphic to $\C$. It follows that $\pi$ is a homomorphism into the unit circle. 
\end{proof}

\begin{defn} \label{def:vNamenable}
	A von Neumann algebra is called \emph{amenable} if for some (equivalently, any) embedding $M\subseteq \mathcal{B}(\mathcal{H})$, there exists a conditional expectation $\mathcal{B}(\mathcal{H})\to M$. A trace $\varphi\in \Tr(\Lambda)$, with corresponding tracial representation $(M,\tau,\pi)$, is called \emph{von Neumann amenable} if $M$ is amenable.
\end{defn}

Any finite dimensional von Neumann algebra is amenable. It is well known that all separable infinite-dimensional amenable  finite factors are isomorphic to a single von Neumann algebra $\mathcal{R}$, often referred to as the hyperfinite  $\twoone$-factor \cite{murray1936rings,connes1976classification}. We refer to \cite[Chapter 11]{anantharaman2017introduction} for more on this manner.

\subsection{Relative characters and stiffness}\label{subsubsec:relative-characters}

Traces and characters generalize to a relative setting. Let $\Gamma$ be a group acting on $\Lambda$ by group automorphisms.
Then $\Gamma$ acts on $\PD_1(\Lambda)$  by $\varphi\mapsto \varphi^\gamma$, where $\varphi^\gamma(g)=\varphi(\gamma^{-1}(g))$, and $\Tr(\Lambda)$ is an invariant subset. 
We define the space of \emph{$\Gamma$-traces} of $\Lambda$, $\Tr_{\Gamma}(\Lambda)$, to be the $\Gamma$-fixed points of $\Tr(\Lambda)$
and the space of \emph{$\Gamma$-characters} of $\Lambda$, $\Ch_\Gamma(\Lambda)$, to be extreme points of $\Tr_\Gamma(\Lambda)$.
%We denote by $\Erg_\Gamma(\Ch(\Lambda))$ the space of $\Gamma$-ergodic measures on $\Ch(\Lambda)$. 

\begin{thm} \label{thm:fourierinvariants}
The compact convex space $\Tr_{\Gamma}(\Lambda)$ is a Choquet simplex,
that is 
\[ \Prob(\Ch_\Gamma(\Lambda)) \to \Tr_{\Gamma}(\Lambda) \]
is a continuous affine bijection.
%
%	There is a continuous bijection $\Erg_\Gamma(\Ch(\Lambda)) \to \Ch_\Gamma(\Lambda)$.
%	Its inverse is a Borel bijection which induces the first in the following sequence of affine bijections
%	\[ \Prob(\Ch_\Gamma(\Lambda))\to \Prob(\Erg_\Gamma(\Ch(\Lambda))) \to
%	\Prob(\Ch(\Lambda))^{\Gamma} \to \Tr_{\Gamma}(\Lambda), \]
%	where the second is given by ergodic decomposition and the third by the barycenter map.
%	In particular, $\Tr_\Gamma(\Lambda)$ is a Choquet simplex.
\end{thm}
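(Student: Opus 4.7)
The proof will essentially be a direct application of Proposition~\ref{prop:choqueinv}, combined with Thoma's theorem (Theorem~\ref{thm:fourier}). My plan is as follows.

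First, I would observe that by Theorem~\ref{thm:fourier}, the space $\Tr(\Lambda)$ is a Choquet simplex, and it is metrizable because $\Lambda$ is countable (so $\PD_1(\Lambda) \subseteq \ell^\infty(\Lambda)$ carries the topology of pointwise convergence, which is metrizable on the weak-$*$ compact set $\PD_1(\Lambda)$). Next, I would record that the $\Gamma$-action on $\PD_1(\Lambda)$ defined by $\varphi \mapsto \varphi \circ \gamma^{-1}$ is affine (indeed linear) and continuous for the topology of pointwise convergence, since a net $\varphi_i \to \varphi$ pointwise yields $\varphi_i \circ \gamma^{-1} \to \varphi \circ \gamma^{-1}$ pointwise for each fixed $\gamma \in \Gamma$. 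Viewing $\Gamma$ as a discrete group, this gives a continuous affine action on the metrizable Choquet simplex $\Tr(\Lambda)$.

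With these inputs in place, Proposition~\ref{prop:choqueinv} applies directly and tells us that $\Tr(\Lambda)^\Gamma = \Tr_\Gamma(\Lambda)$ is itself a metrizable Choquet simplex. By definition of a Choquet simplex, this means the barycenter map
\[
\Prob(\Ext(\Tr_\Gamma(\Lambda))) \to \Tr_\Gamma(\Lambda)
\]
is a continuous affine bijection. Since $\Ch_\Gamma(\Lambda)$ is defined as $\Ext(\Tr_\Gamma(\Lambda))$, this is exactly the map in the statement.

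I do not expect a real obstacle here: the work has already been done in proving Theorem~\ref{thm:fourier} and Proposition~\ref{prop:choqueinv}, and the present theorem is the formal consequence of combining them, once one verifies that the affine $\Gamma$-action on the ambient Choquet simplex $\Tr(\Lambda)$ is continuous — which is immediate from the explicit formula for the action. The only mild subtlety worth flagging explicitly in the write-up is that $\Gamma$ carries no prescribed topology, so one should clarify that it is being treated as discrete, and that continuity of the action reduces to continuity of each individual $\gamma \in \Gamma$.
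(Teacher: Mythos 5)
Your proof is correct and is exactly the paper's argument: the paper's own proof is the one-line observation that $\Tr(\Lambda)$ is a Choquet simplex by Theorem~\ref{thm:fourier}, hence so is its $\Gamma$-fixed-point set by Proposition~\ref{prop:choqueinv}. Your additional verifications (metrizability of $\Tr(\Lambda)$, continuity and affineness of the $\Gamma$-action, treating $\Gamma$ as discrete) are sensible details that the paper leaves implicit.
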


\begin{proof}
By Theorem~\ref{thm:fourier}
$\Tr(\Lambda)$ is a Choquet simplex, thus by Proposition~\ref{prop:choqueinv} so is 
its subspace of $\Gamma$-fixed points.
%	\[ \Prob(\Ch(\Lambda))^{\Gamma} \to \Tr(\Lambda)^{\Gamma} =  \Tr_{\Gamma}(\Lambda) \]
%	and we get $\Erg_\Gamma(\Ch(\Lambda)) \to \Ch_\Gamma(\Lambda)$ by restricting it to the spaces of extreme points.
%	Since these spaces are Polish, we get by Suslin's Theorem that the inverse map of this continuous bijection is Borel.
%	The rest follows.
\end{proof}

%The following analogue of Theorem~\ref{thm:fourier} follows from it at once by the general result \cite[Corollary 12.13]{kennedy2021noncommutative}.
%
%
%\begin{thm} \label{thm:fourierinvariants}
%	$\Tr_\Lambda(N)$ is a Choquet simplex. That is, the map
%	\[ \Prob(\Ch_\Lambda(N))\to \Tr_\Lambda(N), \quad
%	\nu\mapsto\int_{\Ch_\Lambda(N)}\chi \dd\nu
%	\]
%	is a continuous affine bijection. 
%\end{thm}

We now restrict to the case where $N$ is a normal subgroup of $\Lambda$ so that $\Lambda$ acts on it by conjugation. 
For any $\varphi\in \Tr(\Lambda)$ the restriction
$\varphi\res_{N}$ is easily seen to be in $\Tr_{\Lambda}(N)$. We call the mapping $r:\varphi\to\varphi\res_{N}$
the \emph{restriction map}. Obviously it is continuous, and it
surjects onto $\Tr_{\Lambda}(N)$. Indeed, the trivial extension $\tilde{\varphi}$ is a trace of $\Lambda$ whose restriction to $N$ is $\varphi$.

\begin{lem}
	\label{lem:restriction-of-characters}The restriction map $r:\Tr(\Lambda)\to \Tr_{\Lambda}(N)$
	maps $\Ch(\Lambda)$ surjectively onto $\Ch_{\Lambda}(N)$. In particular if $N$
	is in the center of $\Lambda$, then $r$ maps $\Ch(\Lambda)$ onto the Pontryagin dual
	$\hat{N}$ of $N$.
\end{lem}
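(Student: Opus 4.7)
The plan is to prove $r(\Ch(\Lambda)) = \Ch_\Lambda(N)$ by establishing both inclusions; the ``in particular'' clause will then be immediate from Corollary~\ref{cor:abelian-thoma=pontryagin}, since if $N \leq Z(\Lambda)$ then $N$ is abelian and $\Lambda$ acts trivially on it, so $\Tr_\Lambda(N) = \Tr(N) = \PD_1(N)$ and $\Ch_\Lambda(N) = \Ch(N) = \hat N$.

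For $\Ch_\Lambda(N) \subseteq r(\Ch(\Lambda))$, fix $\psi \in \Ch_\Lambda(N)$ and set $F := r^{-1}(\psi) \subseteq \Tr(\Lambda)$. The trivial extension $\tilde\psi$ lies in $F$: it is positive-definite by Lemma~\ref{lem:induction-trivial-extension}, and the trace property $\tilde\psi(ab)=\tilde\psi(ba)$ follows from the $\Lambda$-invariance of $\psi$ (if $ab\in N$ then $ba = a^{-1}(ab)a \in N$ and $\psi(ba)=\psi(ab)$; otherwise both sides vanish). Being closed and convex, $F$ admits an extreme point $\varphi$ by Krein--Milman. Any convex decomposition $\varphi = \alpha\varphi_1 + (1-\alpha)\varphi_2$ with $\varphi_i \in \Tr(\Lambda)$ restricts to $\psi = \alpha\varphi_1\res_N + (1-\alpha)\varphi_2\res_N$ inside $\Tr_\Lambda(N)$, so extremality of $\psi$ forces $\varphi_i\res_N = \psi$, placing $\varphi_i \in F$; extremality of $\varphi$ in $F$ then gives $\varphi_1=\varphi_2=\varphi$. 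Hence $\varphi \in \Ch(\Lambda)$ and $r(\varphi)=\psi$.

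For the reverse inclusion $r(\Ch(\Lambda)) \subseteq \Ch_\Lambda(N)$, I would work through the tracial-representation framework of Theorem~\ref{thm:thoma correspondence}. Let $\varphi \in \Ch(\Lambda)$ have tracial representation $(M,\tau,\pi)$, so that $M$ is a factor, and set $M_0 := \pi(N)''$. Normality of $N$ implies that each $\pi(\lambda)$ normalizes $M_0$ and that conjugation by $\pi(\lambda)$ is a $\tau$-preserving automorphism of $M_0$ extending the conjugation action of $\lambda$ on $N$. The triple $(M_0, \tau\res_{M_0}, \pi\res_N)$ is then the tracial representation of $\psi := \varphi\res_N$, equipped with this canonical $\tau$-preserving $\Lambda$-action. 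By the $\Lambda$-equivariant enhancement of Theorem~\ref{thm:thoma correspondence}, $\psi$ is extremal in $\Tr_\Lambda(N)$ if and only if $Z(M_0)^\Lambda = \C\cdot 1$. But every $z \in Z(M_0)^\Lambda$ commutes with $\pi(\Lambda)$ and hence with $M = \pi(\Lambda)''$; since $z \in M_0 \subseteq M$, it lies in $Z(M) = \C\cdot 1$ as $M$ is a factor, so $\psi \in \Ch_\Lambda(N)$.

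The main obstacle is the appeal to the $\Lambda$-equivariant form of Thoma's correspondence characterizing extremality of $\Lambda$-invariant traces by triviality of $Z(M_0)^\Lambda$; this is standard in operator algebras but is not among the tools explicitly stated in the excerpt. Once granted, the rest of the proof reduces to the Krein--Milman/extreme-point argument of the second paragraph together with the factor condition on $M$.
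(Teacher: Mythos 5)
Your argument is correct in substance, but it takes a genuinely different route from the paper: the paper's proof of this lemma is essentially a pair of citations to Thoma's original paper (Lemma 14 for $r(\Ch(\Lambda))\subseteq\Ch_\Lambda(N)$ and Lemma 16 for surjectivity), plus the same reduction of the central case to Corollary~\ref{cor:abelian-thoma=pontryagin} that you give. Your surjectivity argument (trivial extension to populate the fiber $F=r^{-1}(\psi)$, Krein--Milman to extract an extreme point of the compact convex set $F$, and the observation that extremality of $\psi$ in $\Tr_\Lambda(N)$ promotes extremality in $F$ to extremality in $\Tr(\Lambda)$) is clean, self-contained, and arguably preferable to a bare citation; note that the paper has already recorded, just before the lemma, that $\tilde\psi$ is a trace restricting to $\psi$, so that part of your work is available for free. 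For the forward inclusion, the one ingredient you invoke without proof --- that $\psi=\varphi\res_N$ is extreme in $\Tr_\Lambda(N)$ iff $Z(M_0)^\Lambda=\C 1$ where $M_0=\pi(N)''$ --- is indeed standard and true, but you only need its easy direction: if $\psi=t\psi_1+(1-t)\psi_2$ nontrivially in $\Tr_\Lambda(N)$, then $t\psi_1\leq\psi$ and the tracial Radon--Nikodym theorem produces a unique $h\in M_0$ with $\psi_1=\tau(h\,\pi(\cdot))/t$; the trace property and $\Lambda$-invariance of $\psi_1$ together with uniqueness of $h$ force $h\in Z(M_0)^\Lambda$, and $h$ is non-scalar since $\psi_1\neq\psi$, contradicting your (correct) observation that $Z(M_0)^\Lambda\subseteq Z(M)=\C 1$. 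With that two-line supplement your proof is complete and independent of Thoma's Lemmas 14 and 16; what it costs is the von Neumann algebraic machinery of Theorem~\ref{thm:thoma correspondence}, whereas the paper keeps the lemma purely at the level of positive-definite functions by outsourcing to the literature.
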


\begin{proof}
	The fact that $r$ takes $\Ch(\Lambda)$ to $\Ch_{\Lambda}(N)$ is shown in \cite[Lemma 14]{thoma1964unitare}.
	The fact that $r$ is surjective is shown in \cite[Lemma 16]{thoma1964unitare}.
	Finally, if $N$ is central then $\Lambda$ acts trivially on $\Tr(N)$ and
	therefore $\Ch_{\Lambda}(N)$ is equal to $\Ch(N)$ which is simply $\hat{N}$
	by Corollary \ref{cor:abelian-thoma=pontryagin}.
\end{proof}

%We remark that $\varphi\in \Ch_{\Lambda}(N)$ doesn't imply that $\tilde{\varphi}\in \Ch(\Lambda)$.
%This is clear, for example, when $N$ is a proper subgroup of an abelian group $\Lambda$.

\begin{prop} \label{prop:finite-index-finite-fibers}
	Suppose $N$ is a normal subgroup of finite index in $\Lambda$.  
	Then each fiber of the restriction map $\Ch(\Lambda)\to \Ch_{\Lambda}(N)$ is of size at most $[\Lambda:N]$. 
\end{prop}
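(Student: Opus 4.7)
The plan is to identify the fiber $r^{-1}(\varphi)$ with the atoms of the Thoma decomposition of the trivial extension $\tilde\varphi\in \Tr(\Lambda)$, and then to bound the number of these atoms by analyzing the centre of the von Neumann algebra $M:=\pi_{\tilde\varphi}(\Lambda)''$.

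Set $d=[\Lambda:N]$ and fix $\varphi\in \Ch_\Lambda(N)$. First I would observe that $\tilde\varphi$ is a trace on $\Lambda$ (using that $\varphi$ is $\Lambda$-invariant and $N$ is normal), so by Lemma~\ref{lem:induction-trivial-extension} its GNS triplet is the induction of $(\mathcal{H}_\varphi,\pi_\varphi,\xi_\varphi)$. Set $A=\pi_{\tilde\varphi}(N)''$. By Theorem~\ref{thm:fourier} there is a unique decomposition $\tilde\varphi=\int_{\Ch(\Lambda)}\chi\,\dd\mu(\chi)$; restricting to $N$ and using the extremality of $\varphi$ in the simplex $\Tr_\Lambda(N)$ (Theorem~\ref{thm:fourierinvariants}) shows that $\mu$ is supported on $r^{-1}(\varphi)$. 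Conversely, for any $\psi\in r^{-1}(\varphi)$ the canonical isometric $N$-equivariant embedding $\mathcal{H}_\varphi\hookrightarrow \mathcal{H}_\psi$ sending $\xi_\varphi$ to $\xi_\psi$ induces, by Frobenius reciprocity, a surjective $\Lambda$-equivariant map $\pi_{\tilde\varphi}\twoheadrightarrow \pi_\psi$; this realizes $\pi_\psi(\Lambda)''$ as a factor quotient of $M$, corresponding to an atom of $\mu$ (equivalently, to a minimal central projection of $M$). Different $\psi$'s give non-isomorphic quotients by Theorem~\ref{thm:thoma correspondence}, so $|r^{-1}(\varphi)|$ equals the number of atoms of $\mu$, and is thus bounded by $\dim_{\C}Z(M)$.

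The main work is then to bound $\dim_{\C}Z(M)\leq d$. The induced representation structure gives a direct sum decomposition $M=\bigoplus_{i=1}^d A\,u_i$ where $u_i=\pi_{\tilde\varphi}(t_i)$ for coset representatives $\{t_i\}_{i=1}^d$ of $N$ in $\Lambda$. Writing $z\in Z(M)\subseteq A'\cap M$ as $z=\sum_i a_i u_i$ with $a_i\in A$, commutation with $A$ forces $a\cdot a_i = a_i\cdot \alpha_i(a)$ for every $a\in A$, where $\alpha_i=\Ad(u_i)$. The hypothesis $\varphi\in \Ch_\Lambda(N)$, via the Thoma decomposition of $\varphi$ inside $\Tr(N)$, translates to: the Choquet measure of $\varphi$ is uniform on a single finite $\Lambda$-orbit $O\subseteq \Ch(N)$ of some size $m\leq d$, so $A=\bigoplus_{j=1}^m A_j$ is a direct sum of factors permuted transitively by $\Lambda/N$. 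A block-by-block analysis of the intertwiner condition shows that the $j$-th component of $a_i$ vanishes unless $\alpha_i$ fixes $j$, and each surviving component contributes at most one complex dimension (since $A_j$ is a factor). Summing and swapping orders gives $\dim(A'\cap M)\leq \sum_j |\Stab_{\Lambda/N}(j)|=m\cdot(d/m)=d$.

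The hardest step is this final bookkeeping; in particular, when $\varphi$ is not already extremal in $\Tr(N)$ one crucially needs the transitivity of the $\Lambda/N$-action on the blocks $A_j$, which is exactly what the extremality hypothesis $\varphi\in \Ch_\Lambda(N)$ provides.
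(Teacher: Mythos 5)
Your proposal is correct, but it takes a genuinely different route from the paper. The paper also starts from the trivial extension $\tilde{\varphi}$, but then invokes Lemma~\ref{lem:decompose-trivial-extension-tr}: via the embedding $\Tr_{\Lambda}(N)\hookrightarrow\PD_1(\Lambda\ltimes N)$ of Proposition~\ref{prop:traces-vs-pd} it converts $\varphi$ into an extreme point of $\PD_1(\Lambda\ltimes N)$, counts the (at most $d$) irreducible summands of the representation induced up to $\Lambda\ltimes\Lambda$, and then cites Thoma's classical Lemma~15 for the fact that every $\psi\in r^{-1}(\varphi)$ occurs in the resulting decomposition of $\tilde{\varphi}$. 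You instead stay entirely inside the tracial von Neumann algebra picture of \S\ref{subsec:Tracial-representations}: you bound $\dim_{\C}Z(M)$ for $M=\pi_{\tilde{\varphi}}(\Lambda)''$ by $d$ via the Fourier decomposition $M=\bigoplus_i Au_i$ and a Burnside-type count of intertwiners between the central blocks of $A$, and you replace the citation of Thoma's Lemma~15 by a Frobenius reciprocity argument producing, for each $\psi$ in the fiber, a minimal central projection $z_\psi$ of $M$. Your route is more self-contained and makes the role of extremality (transitivity of $\Lambda/N$ on the blocks, hence $\sum_j|\Stab(j)|=d$) completely explicit; the paper's route is shorter but outsources a key step and detours through the semidirect product. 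Two asserted steps in your write-up deserve a line of justification each: (i) the equality $M=\bigoplus_i Au_i$ requires the trace-preserving conditional expectation $E:M\to A$ together with the orthogonality $E(u_iu_j^*)=\delta_{ij}$, which is where the vanishing of $\tilde{\varphi}$ off $N$ enters; (ii) the identification of $A=\pi_{\tilde{\varphi}}(N)''$ with the tracial von Neumann algebra of $\varphi$ (hence $Z(A)\cong\C^m$ with the blocks permuted transitively) follows from the faithfulness of $\tau\res_A$ and the uniqueness in Theorem~\ref{thm:thoma correspondence}. Also, the injectivity of $\psi\mapsto z_\psi$ is cleanest not via ``non-isomorphic quotients'' but via the uniqueness of the tracial state on the finite factor $Mz_\psi$, which shows that the character attached to $z_\psi$ is $\psi$ itself; this is all you need, since then $|r^{-1}(\varphi)|$ is at most the number of minimal central projections of $M$, which is at most $\dim_{\C}Z(M)\leq\dim_{\C}(A'\cap M)\leq d$.
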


The proof of Proposition~\ref{prop:finite-index-finite-fibers}
will be given in \S\ref{subsec:pd-vs-tr}. The following is an important conclusion regarding stiffness of actions on spaces of characters, and will be used often. 

\begin{prop}
	\label{prop:Ch(N) stiff then also Ch_H(N)}
	Fix a group $\Gamma$ and an admissible measure $\mu$ on $\Gamma$.
	Assume that $\Gamma$ acts by automorphisms
	on a group $\Lambda$ and let $N$ be a $\Gamma$-invariant normal subgroup
	of $\Lambda$. 
	Consider the following statements regarding the corresponding $\Gamma$-actions on $\Ch(\Lambda)$,
	$\Ch(N)$ and $\Ch_{\Lambda}(N)$.
	\begin{enumerate}
		\item $\Gamma\curvearrowright \Ch(N)$ is stiff.
		\item $\Gamma\curvearrowright \Ch_{\Lambda}(N)$ is stiff.
		\item $\Gamma\curvearrowright \Ch(\Lambda)$ is stiff.
	\end{enumerate}
	Then 1 implies 2,  3 implies 2 if the fibers of $\Ch(\Lambda)\to \Ch_\Lambda(N)$ are compact, and all three
	are equivalent if  $[\Lambda:N]<\infty$. 
\end{prop}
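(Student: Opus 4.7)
The plan is to handle the three implications separately, exploiting the measure-theoretic representations of traces and characters from Theorems~\ref{thm:fourier} and \ref{thm:fourierinvariants} together with the stiffness machinery of \S\ref{sec:Stationary}.

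For $1\Rightarrow 2$, I would combine the two Fourier transforms to obtain a $\Gamma$- and $\mu$-equivariant affine bijection
$\Prob(\Ch_\Lambda(N))\cong\Tr_\Lambda(N)\hookrightarrow\Tr(N)\cong\Prob(\Ch(N))$
that identifies $\Prob(\Ch_\Lambda(N))$ with the $\Lambda$-invariant convex subset $\Prob(\Ch(N))^\Lambda$. Hypothesis~(1) says that every $\mu$-stationary element of $\Prob(\Ch(N))$ is $\Gamma$-fixed, and this property passes to every $\Gamma$-invariant subset, in particular to $\Prob(\Ch(N))^\Lambda$. Transferring along the bijection shows that $\Ch_\Lambda(N)$ is $\mu$-stiff.

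For $3\Rightarrow 2$ under the compact-fiber hypothesis, the continuous $\Gamma$-equivariant surjection $r\colon\Ch(\Lambda)\to\Ch_\Lambda(N)$ provided by Lemma~\ref{lem:restriction-of-characters} induces the affine $\Gamma$-equivariant surjection $r_*\colon\Prob(\Ch(\Lambda))\to\Prob(\Ch_\Lambda(N))$. Given $\eta\in\Prob(\Ch_\Lambda(N))^\mu$, its preimage $r_*^{-1}(\eta)$ is non-empty (a Borel section of $r$ exists by Kuratowski--Ryll-Nardzewski, using the compact fibers), convex, and stable under $\mu$-convolution. Kakutani's fixed-point theorem then yields a $\mu$-stationary $\tilde\eta\in r_*^{-1}(\eta)$; by (3), $\tilde\eta$ is $\Gamma$-fixed, hence so is $\eta=r_*\tilde\eta$. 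Equivalently, Lemma~\ref{lem:factor-of-stiff-is-stiff} applies directly to $p=r_*$. The main obstacle is the weak-$*$ compactness of $r_*^{-1}(\eta)$, which is not automatic from compact point-fibers since $\Ch(\Lambda)$ itself is not compact. I would establish it by a Prokhorov tightness argument: given $\epsilon>0$, choose a compact $K'\subset\Ch_\Lambda(N)$ with $\eta(K')\geq 1-\epsilon$, disintegrate each $\tilde\eta\in r_*^{-1}(\eta)$ along $r$ into a family of probabilities supported on the compact fibers $r^{-1}(\chi)$ for $\chi\in K'$, and use this to exhibit a single compact $K\subset\Ch(\Lambda)$ carrying mass at least $1-\epsilon$ of every $\tilde\eta$ in the preimage simultaneously.

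Finally, assume $[\Lambda:N]<\infty$. Proposition~\ref{prop:finite-index-finite-fibers} tells us that the fibers of $r$ are finite, in particular compact, so $3\Rightarrow 2$ is available, and $1\Rightarrow 2$ always holds. For the two missing converses $2\Rightarrow 3$ and $2\Rightarrow 1$, I would use the same recipe: exhibit a continuous $\Gamma$-equivariant map with finite fibers onto $\Ch_\Lambda(N)$, apply Proposition~\ref{prop:countable-fibers-stiffness} to obtain relative stiffness, and conclude with Lemma~\ref{lem:relaltively-stiff}. For $2\Rightarrow 3$, the required map is $r$ itself. For $2\Rightarrow 1$, I would use the orbit-average map $q\colon\Ch(N)\to\Ch_\Lambda(N)$, $\chi\mapsto\tfrac{1}{|\Lambda/N|}\sum_{h\in\Lambda/N}h\cdot\chi$; its image indeed lies in $\Ch_\Lambda(N)$ because the uniform measure on a finite orbit is the unique $\Lambda$-invariant probability supported there and hence ergodic, continuity and $\Gamma$-equivariance are direct consequences of $\Gamma$ acting on $\Lambda$ by automorphisms (permuting the cosets of $N$), and the fibers of $q$ are exactly the $\Lambda/N$-orbits in $\Ch(N)$, which are finite.
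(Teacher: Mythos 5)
Your argument follows the paper's proof essentially step for step: the same chain of identifications $\Prob(\Ch_{\Lambda}(N))\simeq\Tr_{\Lambda}(N)=\Tr(N)^{\Lambda}\simeq\Prob(\Ch(N))^{\Lambda}$ for $1\Rightarrow 2$, Lemma~\ref{lem:factor-of-stiff-is-stiff} for $3\Rightarrow 2$, and Propositions~\ref{prop:finite-index-finite-fibers} and \ref{prop:countable-fibers-stiffness} together with Lemma~\ref{lem:relaltively-stiff} for the two converses in the finite-index case; your explicit orbit-average map $q$ is just the paper's identification of $\Ch_{\Lambda}(N)$ with the orbit space $\Ch(N)/\Lambda$ written out concretely. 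The one point where you go beyond the paper is in flagging that compactness of the fibers of $r:\Ch(\Lambda)\to\Ch_{\Lambda}(N)$ does not obviously yield compactness of the fibers of $r_*$ (equivalently, tightness of $r_*^{-1}(\eta)$); the paper silently absorbs this into its invocation of Lemma~\ref{lem:factor-of-stiff-is-stiff}, and your proposed fix is not complete as sketched, since the union of the compact fibers over a compact $K'\subseteq\Ch_{\Lambda}(N)$ need not itself be compact (a map with compact fibers need not be proper --- compare $\Ch(H)\to\Ch(Z)$ for the Heisenberg group, whose total space is non-compact over a compact base). This is a genuine subtlety of the statement itself rather than a defect of your argument relative to the paper's, and it does not affect the finite-index case, where the fibers are finite and Proposition~\ref{prop:countable-fibers-stiffness} is used instead.
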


\begin{proof}
	By Theorem~\ref{thm:fourierinvariants}, $\Tr(N)$ and $\Tr_\Lambda(N)$ are Choquet simplices thus 
	we have $\Gamma$-equivariant bijections 
	\begin{equation} \label{eq:relchar}
		\Prob(\Ch_{\Lambda}(N)) \simeq \Tr_{\Lambda}(N) =  \Tr(N)^{\Lambda}  \simeq  \Prob(\Ch(N))^{\Lambda}.
	\end{equation}
	Note, however that these are in general not homeomorphisms.
Nevertheless, the stiffness of all these spaces are equivalent by Lemma~\ref{lem:choquet-kakutani}.
	
	Stiffness of the Polish space $\Ch(N)$ is by definition equivalent to stiffness of the convex space $\Prob(\Ch(N))$
	which implies the stiffness of its $\Gamma$-invariant convex subspace $\Prob(\Ch(N))^\Lambda$,
	hence by Equation~\eqref{eq:relchar}, of $\Prob(\Ch_{\Lambda}(N))$ which is equivalent (by definition) to the stiffness of the Polish space $\Ch_{\Lambda}(N)$.
	This shows that $1$ implies $2$. 
	
	By Lemma~\ref{lem:restriction-of-characters}, the map $\Ch(\Lambda)\to \Ch_\Lambda(N)$ is surjective, thus by 
	Lemma~\ref{lem:factor-of-stiff-is-stiff}, if its fibers are compact then $3$ implies $2$.
	
	Assume that $[\Lambda:N]<\infty$. 
	Then $2$ implies $3$ by Proposition
	\ref{prop:finite-index-finite-fibers} and Proposition~\ref{prop:countable-fibers-stiffness}.
	Finally to see that $2$ implies $1$ note that Equation~\eqref{eq:relchar}
	identifies $\Ch_{\Lambda}(N)$ with ergodic $\Lambda$-invariant probability measures
	on $\Ch(N)$. All such measures are uniform measures on finite orbits
	thus $\Ch_{\Lambda}(N)$ is identified
	with the space of orbits $\Ch(N)/\Lambda$. The statement then follows from
	Proposition~\ref{prop:countable-fibers-stiffness}. 
\end{proof}

\begin{cor} 	\label{cor:stiff commen}
	Fix a group $\Gamma$ and an admissible measure $\mu$ on $\Gamma$.
	Assume that $\Gamma$ acts by automorphisms
	on a group $\Lambda$.
If $\Lambda$ is finitely generated and 
$\Lambda_0<\Lambda$ is a $\Gamma$-invariant subgroup of finite index
then 
$\Gamma\curvearrowright \Ch(\Lambda)$ is stiff iff
$\Gamma\curvearrowright \Ch(\Lambda_0)$ is stiff.
If, further, $\Lambda$ is residually finite and 
$F\lhd \Lambda$ is a $\Gamma$-invariant finite normal subgroup
then 
$\Gamma\curvearrowright \Ch(\Lambda)$ is stiff iff
$\Gamma\curvearrowright \Ch(\Lambda/F)$ is stiff.
\end{cor}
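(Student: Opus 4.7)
The plan is to derive both equivalences from Proposition~\ref{prop:Ch(N) stiff then also Ch_H(N)}, which already handles the case of a normal $\Gamma$-invariant subgroup of finite index. In each part I will manufacture such a subgroup from the given data, exploiting the finite generation of $\Lambda$ to keep auxiliary intersections finite.

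For the first statement, I would set $N:=\bigcap_{\lambda\in\Lambda}\lambda\Lambda_{0}\lambda^{-1}$, the normal core of $\Lambda_{0}$ in $\Lambda$. Since $\Lambda$ is finitely generated, it admits only finitely many subgroups of the given finite index $[\Lambda:\Lambda_{0}]$, so this intersection is finite and $N$ has finite index in $\Lambda$. Because $\Gamma$ acts by automorphisms and $\Lambda_{0}$ is $\Gamma$-invariant, each $\gamma\in\Gamma$ permutes the $\Lambda$-conjugates of $\Lambda_{0}$, whence $N$ is $\Gamma$-invariant. Thus $N$ is normal, $\Gamma$-invariant, and of finite index in both $\Lambda$ and $\Lambda_{0}$. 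Two applications of Proposition~\ref{prop:Ch(N) stiff then also Ch_H(N)}, to the pairs $(\Lambda,N)$ and $(\Lambda_{0},N)$, show that the stiffness of each of $\Gamma\acts\Ch(\Lambda)$ and $\Gamma\acts\Ch(\Lambda_{0})$ is equivalent to that of $\Gamma\acts\Ch(N)$, hence to each other.

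For the second statement, I would first construct a normal $\Gamma$-invariant finite-index $N\lhd\Lambda$ with $N\cap F=\{e\}$, for then the quotient map would identify $N$ with a normal $\Gamma$-invariant finite-index subgroup of $\Lambda/F$. Residual finiteness supplies, for each non-trivial $f\in F$, a finite-index normal subgroup that avoids $f$; intersecting over the finite set $F\setminus\{e\}$ produces a finite-index normal $N_{1}\lhd\Lambda$ with $N_{1}\cap F=\{e\}$. Finite generation then ensures that the $\Gamma$-orbit of $N_{1}$ inside the finite set of subgroups of index $[\Lambda:N_{1}]$ is finite, so $N:=\bigcap_{\gamma\in\Gamma}\gamma.N_{1}$ is a finite intersection of normal finite-index subgroups, hence itself normal, $\Gamma$-invariant, and of finite index in $\Lambda$; moreover $N\cap F\subseteq N_{1}\cap F=\{e\}$. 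The image $\bar N:=NF/F$ is then isomorphic to $N$ as a $\Gamma$-group and is a normal $\Gamma$-invariant subgroup of finite index in $\Lambda/F$, so two further applications of Proposition~\ref{prop:Ch(N) stiff then also Ch_H(N)}---to $(\Lambda,N)$ and to $(\Lambda/F,\bar N)$---reduce both $\Gamma\acts\Ch(\Lambda)$ and $\Gamma\acts\Ch(\Lambda/F)$ to $\Gamma\acts\Ch(N)$.

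The only step that requires any care, and the one I expect to be the main obstacle, is verifying that all these intersections are actually finite: this hinges on the standard fact that a finitely generated group has only finitely many subgroups of each finite index, used twice (once to bound the $\Lambda$-conjugates of $\Lambda_{0}$ and once to bound the $\Gamma$-translates of $N_{1}$). Everything else is bookkeeping, with the $\Gamma$-equivariant identification $\Ch(\bar N)=\Ch(N)$ in part~2 following at once from $N\cap F=\{e\}$.
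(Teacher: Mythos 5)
Your proposal is correct, and its first half is essentially the paper's own argument: both manufacture a normal, $\Gamma$-invariant, finite-index $N\leq\Lambda_0$ and apply the equivalence $1\Leftrightarrow 3$ of Proposition~\ref{prop:Ch(N) stiff then also Ch_H(N)} twice. (The paper intersects \emph{all} subgroups of index $[\Lambda:\Lambda_0]$ to get a characteristic subgroup, whereas you take the normal core and check $\Gamma$-invariance by hand; a micro-remark is that the core already has finite index as the kernel of the coset action, so finite generation is not strictly needed there.)

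For the second statement your route genuinely differs from the paper's. The paper picks a characteristic finite-index $N\lhd\Lambda$ into whose quotient $F$ injects, embeds $\Lambda$ as a finite-index $\Gamma$-invariant subgroup of $\Lambda/F\times\Lambda/N$, and then invokes the already-proved first part of the corollary twice to pass from $\Ch(\Lambda)$ to $\Ch(\Lambda/F\times\Lambda/N)$ to $\Ch(\Lambda/F\times\{e\})=\Ch(\Lambda/F)$. You instead produce a normal, $\Gamma$-invariant, finite-index $N\lhd\Lambda$ with $N\cap F=\{e\}$ and observe that $N\cong NF/F$ $\Gamma$-equivariantly, so that Proposition~\ref{prop:Ch(N) stiff then also Ch_H(N)} applied to the pairs $(\Lambda,N)$ and $(\Lambda/F,NF/F)$ reduces both sides to $\Gamma\curvearrowright\Ch(N)$. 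Both arguments rest on the same key lemma; yours avoids the product construction and the recursive use of part one, which makes it slightly more direct, while the paper's version has the small advantage of reusing the first statement as a black box rather than re-verifying normality, $\Gamma$-invariance and triviality of the intersection for a bespoke $N$. All the finiteness checks you flag (finitely many subgroups of a given index in a finitely generated group, finiteness of the $\Gamma$-orbit of $N_1$) are exactly the right points to verify and go through as you say.
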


\begin{proof}
Assume $\Lambda$ is finitely generated and 
let $\Lambda_0<\Lambda$ be a $\Gamma$-invariant subgroup.
Using the fact that $\Lambda$ is finitely generated, 
it has only finitely many subgroups of index $[\Lambda:\Lambda_0]$,
thus their intersection $N$ is a characteristic normal subgroup of finite index in $\Lambda$ which is contained in $\Lambda_0$.
In particular, $N$ is $\Gamma$-invariant.
By $1\Leftrightarrow 3$ in Proposition~\ref{prop:Ch(N) stiff then also Ch_H(N)}
we get that 
$\Gamma\curvearrowright \Ch(\Lambda)$ is stiff iff
$\Gamma\curvearrowright \Ch(N)$ is stiff iff
$\Gamma\curvearrowright \Ch(\Lambda_0)$ is stiff.

Assume further that $\Lambda$ is residually finite
and let $F\lhd \Lambda$ be a $\Gamma$-invariant finite normal subgroup.
Using the fact that $\Lambda$ is residually finite,
we find a normal subgroup of finite index $N\lhd \Lambda$ such that $F$ injects into $\Lambda/N$.
Arguing as above, we assume as we may that $N$ is characteristic, thus $\Gamma$-invariant.
Therefore, the $\Gamma$-action descents to an action on the finite group $\Lambda/N$.
We consider the $\Gamma$-equivariant injection $\Lambda \to \Lambda/F \times \Lambda/N$ 
and identify $\Lambda$ with its image, which is of finite index.
Applying twice the first part of the corollary, 
we get that 
$\Gamma\curvearrowright \Ch(\Lambda)$ is stiff iff
$\Gamma\curvearrowright \Ch(\Lambda/F \times \Lambda/N)$ is stiff iff
$\Gamma\curvearrowright \Ch(\Lambda/F \times \{e\})$ is stiff.
We conclude that $\Gamma\curvearrowright \Ch(\Lambda)$ is stiff iff $\Gamma\curvearrowright \Ch(\Lambda/F)$ is stiff.
\end{proof}

\subsection{Relative traces as positive-definite functions with kernels}\label{subsec:pd-vs-tr}
If $\varphi\in \Tr(\Lambda)$ is an extreme point of $\PD_1({\Lambda})$, then it is also an extreme point of $\Tr({\Lambda})$, but the converse
does not hold in general. To this matter we introduce a perspective
used to bridge over this gap, and in turn to complete the proof of Proposition \ref{prop:finite-index-finite-fibers}. We start with the following general statement:
\begin{prop} \label{prop:traces-vs-pd}
	Consider a group $\Gamma$ acting on $\Lambda$ by group automorphisms
and the associated semidirect product $\Gamma\ltimes\Lambda$.
Denote by $p:\Gamma\ltimes\Lambda\to\Lambda$ the (non-homomorphic) map $(\gamma,g)\mapsto g$.
	Then $\varphi\mapsto\varphi\circ p$ defines an injective affine
	continuous map $\PD_1({\Lambda})^{\Gamma}\to \PD_1({\Gamma\ltimes\Lambda})$. The image is the set of all $\psi\in \PD_1({\Gamma\ltimes\Lambda})$ with $\Gamma\subseteq \ker\psi$, and in particular it is a face of $\PD_1({\Gamma\ltimes\Lambda})$.
\end{prop}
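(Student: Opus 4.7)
The plan is to prove the four assertions (well-definedness, injectivity, characterization of the image, and face property) essentially by unwinding the semidirect product multiplication and invoking Lemma~\ref{lem:kernel-of-pd}. Continuity and affineness of $\varphi \mapsto \varphi \circ p$ are immediate from the definition of the weak-$*$ topology on $\PD_1$ (pointwise convergence), so I will not dwell on them.

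First I would check that $\varphi \circ p \in \PD_1(\Gamma\ltimes\Lambda)$ whenever $\varphi \in \PD_1(\Lambda)^\Gamma$. Writing the multiplication in $\Gamma\ltimes\Lambda$ as $(\gamma_1,g_1)(\gamma_2,g_2) = (\gamma_1\gamma_2,\,g_1\gamma_1(g_2))$, a direct computation gives
\[ p\bigl((\gamma_i,g_i)^{-1}(\gamma_j,g_j)\bigr) = \gamma_i^{-1}(g_i^{-1}g_j). \]
Since $\varphi$ is $\Gamma$-invariant, $\varphi \circ p((\gamma_i,g_i)^{-1}(\gamma_j,g_j)) = \varphi(g_i^{-1}g_j)$, and positive-definiteness of $\varphi$ on $\Lambda$ yields the required inequality $\sum_{i,j}\bar{\alpha_i}\alpha_j \varphi \circ p((\gamma_i,g_i)^{-1}(\gamma_j,g_j)) \ge 0$. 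Normalization is obvious. Injectivity is then immediate, because if $i:\Lambda \hookrightarrow \Gamma\ltimes\Lambda$ denotes the inclusion $g\mapsto(e,g)$ then $p\circ i = \id_\Lambda$, so $\varphi \mapsto \varphi \circ p$ admits a left-inverse $\psi\mapsto \psi\circ i$.

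Next I would identify the image. Inclusion in one direction: if $\psi = \varphi\circ p$ then $\psi(\gamma, e) = \varphi(e) = 1$ for every $\gamma\in\Gamma$, so $\Gamma \subseteq \ker\psi$. For the reverse direction, take $\psi \in \PD_1(\Gamma\ltimes\Lambda)$ with $\Gamma \subseteq \ker\psi$. By Lemma~\ref{lem:kernel-of-pd}, $\psi(k_1 h k_2) = \psi(h)$ for all $k_1,k_2 \in \ker\psi$ and $h \in \Gamma\ltimes\Lambda$. Specializing to $k_1=(\gamma_1,e)$, $k_2=(\gamma_2,e)$ and $h=(e,g)$ yields
\[ \psi\bigl((\gamma_1\gamma_2,\,\gamma_1(g))\bigr) = \psi\bigl((e,g)\bigr). \]
Setting $\gamma_2=\gamma_1^{-1}$ shows that $g \mapsto \psi((e,g))$ is $\Gamma$-invariant on $\Lambda$, while setting $\gamma_1=e$ shows that $\psi$ depends only on the $\Lambda$-coordinate, i.e.\ $\psi = \varphi\circ p$ for $\varphi := \psi\circ i$, which lies in $\PD_1(\Lambda)^\Gamma$.

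Finally, for the face property, suppose $\varphi\circ p = t\psi_1 + (1-t)\psi_2$ with $\psi_1,\psi_2\in \PD_1(\Gamma\ltimes\Lambda)$ and $t\in(0,1)$. Evaluating at $(\gamma,e)$ for $\gamma\in\Gamma$ gives $t\psi_1(\gamma,e) + (1-t)\psi_2(\gamma,e) = 1$. Since $|\psi_i(\gamma,e)| \le \psi_i(e) = 1$, taking real parts forces $\mathrm{Re}(\psi_i(\gamma,e)) = 1$, and combined with the modulus bound this forces $\psi_i(\gamma,e) = 1$, i.e.\ $\Gamma \subseteq \ker\psi_i$ for $i=1,2$. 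By the previous paragraph both $\psi_i$ lie in the image, so the image is indeed a face. I expect no serious obstacle here; the only mildly delicate step is the kernel-absorption computation from Lemma~\ref{lem:kernel-of-pd} applied to the non-normal subgroup $\Gamma$ of $\Gamma\ltimes\Lambda$, but Lemma~\ref{lem:kernel-of-pd} does not require normality.
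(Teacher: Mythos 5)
Your proof is correct and follows essentially the same route as the paper: the same positive-definiteness computation via $p\bigl((\gamma_i,g_i)^{-1}(\gamma_j,g_j)\bigr)=\gamma_i^{-1}(g_i^{-1}g_j)$, the same use of Lemma~\ref{lem:kernel-of-pd} to show that $\Gamma\subseteq\ker\psi$ forces $\psi$ to factor through $p$ with $\Gamma$-invariant restriction, and the same extreme-value argument (values in the unit disk) for the face property. The only differences are presentational, e.g.\ making the left inverse $\psi\mapsto\psi\circ i$ explicit for injectivity.
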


\begin{proof}
	Let $\varphi\in\PD_1({\Lambda})^{\Gamma}$. Given $n\in\N$, $\gamma_{1},...,\gamma_{n}\in\Gamma$,
	$g_{1},...,g_{n}\in\Lambda$ and $\alpha_{1},...,\alpha_{n}\in \C$ write:
	\begin{align*}
	\sum_{i,j}\overline{\alpha_{i}}\alpha_{j}\left(\varphi\circ p\right)\left((\gamma_{i},g_{i})^{-1}\cdot(\gamma_{j},g_{j})\right)=
	%&\sum_{i,j}\overline{\alpha_{i}}\alpha_{j}\left(\varphi\circ p\right)\left(\gamma_{i}^{-1}\gamma_{j},\gamma_{i}^{-1}\left(g_{i}^{-1}g_{j}\right)\right)\\
	\sum_{i,j}\overline{\alpha_{i}}\alpha_{j}\varphi\left(\gamma_{i}^{-1}\left(g_{i}^{-1}g_{j}\right)\right)=\sum_{i,j}\overline{\alpha_{i}}\alpha_{j}\varphi\left(g_{i}^{-1}g_{j}\right)\geq0
	\end{align*}
	Hence $\varphi\circ p \in \PD_1(\Gamma \ltimes \Lambda)$ (a more conceptual argument
	is obtained via the associated GNS representation).
	The fact that the mapping $\varphi\mapsto\varphi\circ p$ is injective,
	affine and continuous is straightforward. 
	
	Let $F$ denote the set of all $\psi\in\PD_1({\Gamma\ltimes\Lambda})$
	with $\Gamma\subseteq \ker\psi$. As $\psi$ attains values in the unit disk
	in $\C$, it follows that $F$ is a face of $\PD_1({\Gamma\ltimes\Lambda})$. Moreover, $\varphi\circ p\in F$ for any $\varphi\in \PD_1(\Lambda)^\Gamma$, so it is left to show surjectivity onto $F$. If $\psi\in \PD_1(\Gamma\ltimes \Lambda)$ then clearly $\psi\res_{\Lambda}\in\PD_1({\Lambda})$. Assuming moreover that $\psi\in F$ we deduce by Lemma \ref{lem:kernel-of-pd} that for all $\gamma\in\Gamma$ and $g\in\Lambda$:
	\[
	\psi\res_{\Lambda}(\gamma(g))=\psi\left(e_{\Gamma},\gamma(g)\right)=\psi\left((\gamma,e_{\Lambda})(e_{\Gamma},g)(\gamma^{-1},e_\Lambda)\right)=\psi(e_{\Gamma},g)=\psi\res_{\Lambda}(g)
	\]
 showing that $\psi\res_\Lambda\in \PD_1(\Lambda)^\Gamma$, and
	\[	\psi(\gamma,g)=\psi((e_\Gamma,g)(\gamma,e_\Lambda))=\psi(e_\Gamma,g)=\psi\res_\Lambda \circ p(\gamma,g) 
	\]
 showing that  $\psi=\psi\res_{\Lambda}\circ p$. 
\end{proof}

\begin{example}\label{example:traces-pd}
Fix a group $\Lambda$ and let $\Gamma=\Lambda$ act on it by inner automorphisms. 
We get the identification $\PD_1({\Lambda})^{\Gamma}=\Tr({\Lambda})$.
Note that we may also identify in this case $\Gamma\ltimes\Lambda\simeq \Lambda\times \Lambda$,
by $(\gamma,\lambda) \mapsto (\gamma,\lambda\gamma)$.
Under these identifications, the map 
$\PD_1({\Lambda})^{\Gamma}\to \PD_1({\Gamma\ltimes\Lambda})$
considered in Proposition~\ref{prop:traces-vs-pd},
identifies $\Tr({\Lambda})$ with the subset of elements of $\PD_1(\Lambda\times \Lambda)$
which are trivial on the diagonal subgroup.
This is a well known identification.
\end{example}  

The following is a generalization of the previous example.

\begin{example}
Consider a group $\Gamma$ acting on $\Lambda$ by group automorphisms
and assume that the image of $\Gamma$ in $\Aut(\Lambda)$ contains all inner automorphisms.
Then $\PD_1({\Lambda})^{\Gamma}=\Tr_{\Gamma}(\Lambda)$, so that $\Tr_\Gamma(\Lambda)$ embeds as a face of $\PD_1(\Gamma\ltimes \Lambda)$. In particular $\varphi\in \Tr_\Gamma(\Lambda)$ belongs to $\Ch_\Gamma(\Lambda)$ if and only if $\varphi\circ p$ is an extreme point of $\PD_1(\Gamma\ltimes \Lambda)$. 
\end{example}  

We will use the last example in the course of the proof of the following lemma.

\begin{lem}\label{lem:decompose-trivial-extension-tr}
	Let $N$ be a normal subgroup of index $d<\infty$ in $\Lambda$, and let
	$\varphi\in \Ch_{\Lambda}(N)$. Then the trivial extension $\tilde{\varphi}\in\Tr({\Lambda})$
	decomposes as a convex combination of $d$ (or less) characters of $\Lambda$. 
\end{lem}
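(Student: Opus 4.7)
The plan is to adapt the GNS-based argument of Lemma \ref{lem:decompose-trivial-extension-pd} to the tracial-representation setting, using Thoma's correspondence (Theorem \ref{thm:thoma correspondence}) which identifies traces with tracial von Neumann algebras and characters with tracial factor representations. Let $(M,\tau,\pi)$ be the tracial representation of $\varphi$ viewed as a trace of $N$. The $\Lambda$-invariance of $\varphi$ lifts to a trace-preserving action of $\Lambda$ on $(M,\tau)$ extending conjugation on $N$; since $N\subseteq\Lambda$ acts on $M$ by the inner automorphisms implemented by $\pi$, the induced action on $Z(M)$ descends to the finite group $\Lambda/N$ of order $d$. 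Extremality of $\varphi$ in $\Tr_\Lambda(N)$ amounts to ergodicity of this $\Lambda/N$-action on $Z(M)$, which forces $Z(M)\cong\C^{k}$ with $k\mid d$, so $M=\bigoplus_{i=1}^{k}M_{i}$ is a direct sum of $k$ factors permuted transitively by $\Lambda/N$.

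By Lemma \ref{lem:induction-trivial-extension}, the GNS triplet of $\tilde\varphi$ is the induced triplet $(\tilde{\mathcal{H}},\tilde\pi,\tilde\xi)$ obtained from the GNS triplet of $\varphi$. A direct check using the $\Lambda$-invariance of $\varphi$ confirms that $\tilde\varphi$ is actually a trace on $\Lambda$, so $\tilde\xi$ is separating for $\tilde M:=\tilde\pi(\Lambda)''$, making $\tilde M$ the tracial representation of $\tilde\varphi$. Via Theorem \ref{thm:thoma correspondence}, decomposing $\tilde\varphi$ as a convex combination of at most $d$ characters of $\Lambda$ is equivalent to decomposing $\tilde M$ as a direct sum of at most $d$ factors, i.e.\ to showing $\dim_\C Z(\tilde M)\leq d$.

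To bound $\dim_\C Z(\tilde M)$, choose coset representatives $t_{1},\dots,t_{d}$ of $N$ in $\Lambda$. The unitaries $\tilde\pi(t_{j})$ normalize $\tilde\pi(N)''\cong M$ inside $\tilde M$, so $\tilde M$ is a (possibly twisted) crossed product of $M$ by $\Lambda/N$ and admits the expansion $\tilde M=\sum_{j=1}^{d}M\cdot\tilde\pi(t_{j})$ as a right $M$-module. Writing $c\in Z(\tilde M)\subseteq M'\cap\tilde M$ in the form $c=\sum_{j}m_{j}\tilde\pi(t_{j})$ with $m_{j}\in M$, the requirement that $c$ commute with $M$ forces the intertwiner relation $m_{j}\,\alpha_{t_{j}}(m)=m\,m_{j}$ for all $m\in M$, where $\alpha_{t_{j}}=\Ad(\tilde\pi(t_{j}))\res_{M}$. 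Because $M=\bigoplus_{i}M_{i}$ is a sum of factors with $\Lambda/N$ acting transitively on $\{M_{1},\dots,M_{k}\}$, such an $m_{j}$ is non-zero only on summands $M_{i}$ that $\alpha_{t_{j}}$ fixes and acts innerly upon, contributing at most one $\C$-dimension per such pair $(i,j)$. An orbit-stabilizer count then yields
\[
\dim_\C(M'\cap\tilde M)\;\leq\;\sum_{j=1}^{d}|\{i:\alpha_{t_{j}}(M_{i})=M_{i}\}|\;=\;\sum_{i=1}^{k}|\Stab_{\Lambda/N}(M_{i})|\;=\;k\cdot(d/k)\;=\;d,
\]
and hence $\dim_\C Z(\tilde M)\leq d$, completing the proof.

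The main obstacle is making rigorous the crossed-product basis expansion $c=\sum_{j}m_{j}\tilde\pi(t_{j})$ and the corresponding identification of $\tilde M$ with a twisted crossed product $M\rtimes^{\omega}\Lambda/N$. Subtlety arises because the natural lifts $\tilde\pi(t_{j})$ multiply only up to unitaries in $\pi(N)\subseteq M$, producing a $\mathcal{U}(Z(M))$-valued 2-cocycle $\omega$; once this structural description is set up carefully, the orbit-stabilizer count above is insensitive to the twist and yields the desired dimension bound.
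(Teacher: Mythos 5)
Your argument is correct in substance but takes a genuinely different route from the paper's. The paper never opens up the von Neumann algebra of $\tilde\varphi$: it uses Proposition \ref{prop:traces-vs-pd} to realize $\Tr_\Lambda(N)$ and $\Tr(\Lambda)$ as faces of $\PD_1(\Lambda\ltimes N)$ and $\PD_1(\Lambda\ltimes\Lambda)$, observes the identity $\tilde\varphi\circ p_\Lambda=\widetilde{\varphi\circ p_N}$, and then quotes Lemma \ref{lem:decompose-trivial-extension-pd}, whose proof decomposes the restriction to $\Lambda\ltimes N$ of the induced representation into at most $d$ irreducibles; everything stays at the level of positive-definite functions and irreducible GNS triples. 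You instead go through Thoma's correspondence, identify the tracial algebra $\tilde M$ of $\tilde\varphi$ as $\sum_j M u_j$ with $u_j=\tilde\pi(t_j)$, and bound $\dim Z(\tilde M)$ by the Burnside-type count $\sum_j\absfix{\Fix(\sigma_j)}=d$. This is heavier machinery but buys more: it identifies the exact number of characters in the decomposition as $\dim Z(\tilde M)$ and makes the stabilizer structure behind the bound visible. Two points need care before this is complete. First, the equivalence between extremality of $\varphi$ in $\Tr_\Lambda(N)$ and ergodicity of $\Lambda/N\curvearrowright Z(M)$ is load-bearing --- transitivity on the minimal central summands is exactly what makes the orbit--stabilizer count come out to $d$ rather than $d$ times the number of orbits --- and it is not in the paper; it requires the Radon--Nikodym argument for traces showing that convex decompositions of $\varphi$ into $\Lambda$-invariant traces correspond to positive elements of $Z(M)^\Lambda$. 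Second, the ``$2$-cocycle'' is valued in $\tilde\pi(N)\subseteq\mathcal{U}(\tilde\pi(N)'')$, not in $\mathcal{U}(Z(M))$; as you note the twist is irrelevant, and the cleanest way to justify the expansion $c=\sum_j m_ju_j$ is the trace-preserving conditional expectation $E:\tilde M\to\tilde\pi(N)''$, which annihilates $\tilde\pi(\lambda)$ for $\lambda\notin N$ (since $\tilde\xi$ lives in the block indexed by the trivial coset) and yields $m_j=E(cu_j^*)$ together with exactness of the Fourier expansion by normality.
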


\begin{proof}
	Consider the inner action of $\Lambda$ on itself, as well as on the
	invariant subset $N$. With respect to these actions form the semidirect
	products $\Lambda\ltimes\Lambda$ and $\Lambda\ltimes N$ and let
	$p_{\Lambda}:\Lambda\ltimes\Lambda\to\Lambda$ and $p_{N}:\Lambda\ltimes N\to N$
	denote the (set-theoretic) projections onto the second coordinates. By Proposition \ref{prop:traces-vs-pd}, showing that 
	$\tilde{\varphi}$ is a convex combination
	of $d$ characters is the same as showing that $\tilde{\varphi}\circ p_{\Lambda}$
	is a convex combination of $d$ extreme points of $\PD_1({\Lambda\ltimes\Lambda})$. 
	
	Now $\varphi$ is an extreme point of $\Tr_{\Lambda}(N)$ and therefore
	by Proposition \ref{prop:traces-vs-pd} $\varphi\circ p_{N}$ is an extreme point of $\PD_1({\Lambda\ltimes N})$.
	Hence, by Lemma \ref{lem:decompose-trivial-extension-pd} $\widetilde{\varphi\circ p_{N}}$, the trivial extension of $\varphi\circ p_N$ from $\Lambda\ltimes N$ to $\Lambda\ltimes\Lambda$, is a convex
	combination of $d$ extreme points of $\PD_1({\Lambda\ltimes\Lambda})$. But
	clearly $\tilde{\varphi}\circ p_{\Lambda}=\widetilde{\varphi\circ p_{N}}$,
	which completes the proof. 
\end{proof}

\begin{proof}[Proof of Proposition \ref{prop:finite-index-finite-fibers}]
	Let $\varphi \in \Ch_{\Lambda}(N)$ and let $\tilde{\varphi}$ be its trivial extension.
	By Lemma \ref{lem:decompose-trivial-extension-tr}, $\tilde{\varphi}$
	is a convex combination of $[\Lambda:N]$ characters of $\Lambda$. But according
	to \cite[Lemma 15]{thoma1964unitare}, each $\psi\in \Ch(\Lambda)$
	with $\psi\res_N = \varphi$ appears as one of those $[\Lambda:N]$ many characters in the decomposition of $\tilde{\varphi}$. It follows that the fiber of $\varphi$ under the restriction map 
	$\Ch(\Lambda)\to \Ch_{\Lambda}(N)$ is of size at most $[\Lambda:N]$. 
\end{proof}

\section{Stiffness of arithmetic groups \label{sec:Stiffness results}}

The goal of this section is to prove Theorem \ref{thm:main-stiffness} bellow and to deduce from it Theorems \ref{thm:stiffness of algebraic groups} and  \ref{thm:higher-rank-stiffness}.
\begin{thm}
\label{thm:main-stiffness} Let $U$ be a connected, simply connected nilpotent
real Lie group and $H\leq U$ a lattice. 
Let $S$ be a reductive group with compact center acting continuously by automorphisms on $U$ and let $\Gamma<S$ be a lattice preserving $H$.
Let $\mu$ be a Furstenberg measure on $\Gamma$. Then the action of $\Gamma$
on $\Ch(H)$ is $\mu$-stiff. 
\end{thm}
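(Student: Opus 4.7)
The plan is to argue by induction on the nilpotency step of $U$, reducing via the central character map to a base case concerning stiffness of algebraic actions on tori.

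First I would translate using Furstenberg-Poisson theory: by Theorem~\ref{thm:Poisson-boundary-universal-property}, any $\mu$-stationary $\nu\in\Prob(\Ch(H))$ corresponds bijectively to a $\Gamma$-equivariant Lebesgue map $\phi:B\to\Prob(\Ch(H))$ from the boundary $B=\Pi(\Gamma,\mu)$, and by Lemma~\ref{lem:invcrit}, showing $\nu$ is $\Gamma$-invariant amounts to showing $\phi$ is essentially constant. The existence of at least one such $\nu$ is ensured by Proposition~\ref{Char-kakutani}.

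For the base case, when $U=\R^n$ and $H\cong\Z^n$, we have $\Ch(H)\cong\widehat{H}$ (Corollary~\ref{cor:abelian-thoma=pontryagin}), a compact torus on which $\Gamma$ acts through the linear algebraic representation inherited from $S$. Since $\mu$ is a Furstenberg measure, the Poisson boundary of $(\Gamma,\mu)$ inherits its structure from that of $(S,\mu_S)$ for an appropriate $\mu_S$ (via the constructions of \S\ref{sec:FM}), and one invokes stiffness for algebraic actions of lattices in reductive groups on tori in the spirit of Furstenberg and Benoist--Quint to conclude that $\nu$ is $\Gamma$-invariant in this case.

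For the inductive step, set $Z:=H\cap Z(U)$, a $\Gamma$-invariant central subgroup of $H$ which is of finite index in $Z(H)$. The central character map $\rho:\Ch(H)\to\widehat{Z}$ is continuous, $\Gamma$-equivariant, and surjective by Lemma~\ref{lem:restriction-of-characters}. The pushforward $\rho_*\nu$ is $\mu$-stationary on $\widehat{Z}$, hence $\Gamma$-invariant by the abelian base case. Disintegrating $\nu$ along $\rho$, one uses the central induction description (as encoded in Theorem~\ref{thm:central-induction}) to identify fibers: for a ``generic'' $\chi\in\widehat{Z}$ the fiber is a singleton, so invariance of $\rho_*\nu$ lifts automatically; for $\chi$ factoring through some finite quotient $Z/qZ$, the fiber is the space of characters of a virtually nilpotent group whose underlying Lie structure is of strictly lower nilpotency step (the ``central'' directions are collapsed by $q$). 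Applying the inductive hypothesis on these lower-step fibers and combining with Lemma~\ref{lem:relaltively-stiff} on relative stiffness (and, where compact orbits appear in fibers, Proposition~\ref{prop:X->X/K is relatively stiff}) yields full stiffness of $\nu$.

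The main obstacle will be the base case: establishing stiffness on $\widehat{H}$ for a lattice $\Gamma$ in an arbitrary reductive $S$ with compact center, without any higher-rank hypothesis on $S$. The key levers there are the algebraicity of the action and the specific structure of the Furstenberg measure, which pin down the equivariant map $B\to\Prob(\widehat{H})$ sufficiently to rule out nontrivial stationary measures. A secondary technical difficulty is that at torsion central characters the fiber involves the quotient $H/qZ$, which carries torsion and is not itself a lattice in a simply connected nilpotent Lie group; handling this cleanly requires passing through virtually nilpotent groups and invoking Corollary~\ref{cor:stiff commen} (together with Lemma~\ref{lem:stiffness-subgroup}) to reduce back to the torsion-free situation covered by the induction hypothesis.
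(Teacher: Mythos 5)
Your overall architecture (abelian base case plus an induction that reduces to the center via Lemma~\ref{lem:restriction-of-characters} and Theorem~\ref{thm:central-induction}) matches the paper's strategy, and you correctly identify the base case as the crux. But the base case is precisely where your proposal has a genuine gap: in this setting $S$ is an arbitrary reductive group with compact center --- it may have compact factors and rank-one factors, and no irreducibility of $\Gamma$ is assumed --- so neither Furstenberg's original theorem nor the Benoist--Quint stiffness results (which require the Zariski closure of the acting group to be semisimple without compact factors, among other hypotheses) can simply be ``invoked.'' The paper proves this case from scratch: given a stationary $\nu$ on the torus $X=\widehat H$, it takes the boundary map $\Theta:S/P\to\Prob(X)$, integrates it into a $P$-invariant probability measure on $S\times_\Gamma X\simeq (S\ltimes U)/(\Gamma\ltimes H)$, and applies Mozes's measure-rigidity theorem for epimorphic subgroups (the minimal parabolic $P_{\mathrm{nc}}$ is epimorphic in $S_{\mathrm{nc}}$) to upgrade $P$-invariance to $S_{\mathrm{nc}}$-invariance, whence $\Theta$ is essentially constant. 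This construction is the key idea of the whole proof and is absent from your proposal; ``the algebraicity of the action and the structure of the Furstenberg measure pin down the equivariant map'' is a restatement of the goal, not an argument.

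The inductive step also has a structural flaw. Your case analysis over $\widehat Z$ distinguishes only ``generic'' central characters (singleton fibers) from those ``factoring through a finite quotient $Z/qZ$,'' but when $\mathrm{rank}(Z)>1$ there are central characters $\chi$ whose kernel $K=\ker\chi\cap Z$ is non-trivial yet of infinite index (e.g.\ $\chi$ kills one $\Z$-factor and is irrational on another); the fiber over such $\chi$ is neither a point nor the character space of a finite central extension. The correct invariant is $K$ itself: the paper pushes $\nu$ forward under $k:\Ch(H)\to\Sub(Z)$, $\varphi\mapsto\ker\varphi\cap Z$, uses countability of $\Sub(Z)$ to get a finite $\Gamma$-orbit $\mathcal D$ of kernels, and when $K\in\mathcal D$ is non-trivial passes to $U/\tilde K$ and $H/(H\cap\tilde K)$ where $\tilde K$ is the connected hull of $\log K$. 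This is also why the induction must be on $\dim U$ rather than on the nilpotency step: quotienting by a proper central subgroup (e.g.\ $H/qZ$ for the Heisenberg group) need not lower the step, so your induction does not obviously terminate. Your appeal to Corollary~\ref{cor:stiff commen} and Lemma~\ref{lem:stiffness-subgroup} to handle torsion quotients is the right instinct, but it does not by itself repair either the missing intermediate-kernel case or the non-decreasing induction parameter.
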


Due to Lemma \ref{lem:stiffness-subgroup}, there is no loss of generality in assuming $S$ to be connected.
We denote by $S_{\mathrm{nc}}$ the subgroup of $S$ generated by unipotents.
This is a semisimple group with no compact simple factors.
We denote by $S_{\mathrm{c}}$ the maximal compact normal subgroup of $S$.
We obtain a finite central extension $S_{\mathrm{nc}}\times S_{\mathrm{c}} \to S$.
Upon replacing $S$ with this central extension, 
replacing $\Gamma$ with its preimage and replacing $\mu$ with an arbitrary lift, we assume as we may that 
$S=S_{\mathrm{nc}}\times S_{\mathrm{c}}$.
Note that indeed, $\mu$ is still a Furstenberg measure on $\Gamma$.

We note that $S$ has a canonical real algebraic structure and every continuous linear representation of it is automatically real algebraic.
By the Borel Density Theorem,
the image of $\Gamma$ is Zariski dense in $S_{\mathrm{nc}}$.
Upon replacing $S_{\mathrm{c}}$ with the closure of the image of $\Gamma$,
we assume as we may that this image is dense.
We thus have that $\Gamma$ is Zariski dense in $S$. 

We let $P<S$ be a minimal parabolic and note that $P=P_{\mathrm{nc}} \times S_{\mathrm{c}}$,
where $P_{\mathrm{nc}}$ is a minimal parabolic in $S_{\mathrm{nc}}$.
Thus $S/P\simeq S_{\mathrm{nc}}/P_{\mathrm{nc}}$.
Endowing this space with the Haar measure class, it is $\Gamma$-isomorphic as a Lebesgue space to $\Pi(\Gamma,\mu)$,
by the assumption that $\mu$ is a Furstenberg measure.

\subsection{The abelian case }

We start by proving the following special case.  

\begin{prop}[{cf. \cite{furstenberg1998stiffness}}]
\label{prop:abelian-stiffness} 
Theorem \ref{thm:main-stiffness}
holds when $U$ is abelian.
\end{prop}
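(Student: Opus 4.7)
The plan is to identify $\Ch(H)$ with a torus and then invoke Furstenberg's classical stiffness theorem for linear actions on tori \cite{furstenberg1998stiffness}.

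Since $U$ is a connected, simply connected, abelian real Lie group of some dimension $d$, we identify $U\cong \R^{d}$ as Lie groups and $H\cong \Z^{d}$. By Corollary~\ref{cor:abelian-thoma=pontryagin}, $\Ch(H)$ coincides with the Pontryagin dual $\hat{H}$, which is canonically homeomorphic to the torus $\T^{d}=\R^{d}/\Z^{d}$. The action of $S$ on $U$ defines a continuous linear representation $\rho: S\to GL_{d}(\R)$, and since $\Gamma$ preserves $H$, its image $\rho(\Gamma)$ lies in $GL_{d}(\Z)$. The induced $\Gamma$-action on $\Ch(H)=\hat{H}$ is the contragredient toral action. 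The proposition thus reduces to: every $\mu$-stationary probability measure $\nu$ on $\T^{d}$ is $\Gamma$-invariant.

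To establish this, I would follow Furstenberg's Fourier-theoretic approach. Under Fourier transform, stationarity of $\nu$ translates into the statement that the coefficient function $\hat{\nu}:\Z^{d}\to \C$ is bounded by $1$ and $\mu$-harmonic along each $\Gamma$-orbit in $\Z^{d}$, where $\Gamma$ acts on $\Z^{d}$ by the transpose of $\rho$. Since $\mu$ is a Furstenberg measure on $\Gamma$, the setup before the proposition identifies the Furstenberg-Poisson boundary $\Pi(\Gamma,\mu)$ with $S/P\simeq S_{\mathrm{nc}}/P_{\mathrm{nc}}$ as a $\Gamma$-Lebesgue space. Consequently every bounded $\mu$-harmonic function on $\Gamma$ admits a Poisson integral representation against a measurable boundary function, giving a uniform such representation of $\hat{\nu}$ restricted to any single $\Gamma$-orbit. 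Using the proximal dynamics of $S_{\mathrm{nc}}$ on $\R^{d}$ (inherited by the Zariski-dense subgroup $\Gamma$), together with a boundary/martingale argument, one forces $\hat{\nu}(v)=0$ for every $v\in \Z^{d}$ whose $\Gamma$-orbit is infinite; the remaining contributions correspond to finite $\Gamma$-orbits on $\Z^{d}$, whose duals are Haar measures on $\Gamma$-invariant rational subtori, which are $\Gamma$-invariant. This yields the desired invariance of $\nu$.

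The main obstacle is that the $S$-representation $\rho$ may well be reducible, and the real irreducible decomposition of $\R^{d}$ need not be rational, so the proximality hypothesis does not a priori apply globally. I would address this by decomposing $\Q^{d}=H\otimes_{\Z}\Q$ into $\Gamma$-irreducible rational subrepresentations; correspondingly the torus $\T^{d}$ fits into a tower of $\Gamma$-equivariant extensions by rational subtori. Stiffness is then checked piece by piece: on each such rational irreducible summand either $\rho(\Gamma)$ is finite, in which case stiffness is automatic by Lemma~\ref{lem:finite-is-stiff}, or the Zariski closure of $\rho(\Gamma)$ is a non-trivial semisimple-with-compact-center quotient of $S$ to which the Furstenberg argument just sketched applies. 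Stiffness of the entire torus then follows by combining these via Lemma~\ref{lem:relaltively-stiff} applied iteratively up the tower.
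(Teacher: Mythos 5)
There is a genuine gap, and it sits exactly where the whole difficulty of the proposition lies. Your reduction to a toral action and the observation that stationarity of $\nu$ makes $\hat{\nu}$ a bounded $\mu$-harmonic function along $\Gamma$-orbits in $H\cong\Z^{d}$ are fine, as is the remark that harmonicity on a finite orbit forces constancy there. But the sentence ``one forces $\hat{\nu}(v)=0$ for every $v$ whose $\Gamma$-orbit is infinite'' is the entire theorem, and no mechanism is actually supplied: a bounded harmonic function for a transient walk on an infinite orbit need not vanish, there is no maximum principle available, and the Poisson representation of $\hat{\nu}$ on each orbit by itself gives nothing. Worse, the claim is false as stated: take $\nu=\delta_{\chi_{0}}$ for a $\Gamma$-fixed character $\chi_{0}$ (e.g.\ the trivial one, or average $\delta$-masses over any finite orbit of rational points). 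This is stationary, yet $\absfix{\hat{\nu}(v)}$ does not vanish for $v$ with infinite orbit --- indeed $\hat{\nu}=\chi_{0}$ has modulus one everywhere. The correct conclusion of Furstenberg's theorem is that $\nu$ is a combination of Haar measures on invariant rational subtori \emph{and} atomic measures on finite orbits, and the atomic part is invisible to the test ``$\hat{\nu}$ vanishes off finite orbits.'' Separating out the atomic part and then running a contraction argument on the rest is possible but is precisely the nontrivial content one would have to write out; in addition, your appeal to ``proximal dynamics of $S_{\mathrm{nc}}$ on $\R^{d}$'' is unjustified, since the representation of $S$ on $U$, even on a $\Q$-irreducible summand, need not be proximal, and the reductive group $S$ may have compact factors through which part of the action routes.

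The paper avoids Fourier analysis entirely. Given ergodic $\nu\in\Prob(X)^{\mu}$ it takes the boundary map $\Theta:S/P\simeq\Pi(\Gamma,\mu)\to\Prob(X)$, builds from it a $P$-invariant probability measure $\sigma$ on the homogeneous space $G/\Lambda$ with $G=S\ltimes U$ and $\Lambda=\Gamma\ltimes H$, and invokes Mozes's theorem on epimorphic subgroups (Theorem~\ref{thm:Mozes}): since $P_{\mathrm{nc}}$ is epimorphic in the unipotently generated group $S_{\mathrm{nc}}$, the measure $\sigma$ is automatically $S_{\mathrm{nc}}$-invariant, whence $\Theta$ is essentially constant and $\nu$ is $\Gamma$-invariant by Lemma~\ref{lem:invcrit}. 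This handles reducible, non-proximal, and compact-factor situations uniformly, which is exactly what your tower-of-rational-subtori patch is trying, incompletely, to do by hand. If you want to keep a Fourier-theoretic route you would essentially have to reprove a qualitative version of \cite{BFLM2011stiffness} in this generality; as written, the proposal does not constitute a proof.
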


This formulation is merely a slight generalization of Furstenberg's original result, 
yet this generalization is essential to our discussion, so we 
give here its full proof.
We will need the following definition.

\begin{defn}
\label{def:epimorphic}Let $k$ be a field and let $\bL$ be a $k$-algebraic group. An algebraic subgroup
$\bH$ defined over $k$ is said
to be \emph{epimorphic} in $\bL$ if for any $k$-representation
$V$ of $\bL$, a vector $v\in V$ is fixed by $\bL$ as soon as it
is fixed by $\bH$.
\end{defn}

Any $k$-parabolic subgroup $\bP$ of a connected $k$-algebraic group $\bL$ is epimorphic.
To see that, suppose $\rho:\bL\to\mathbf{\GL}(V)$ is a representation
and that $v\in V$ is $\bP$-fixed. Thus the orbit map $\bL\to V$
given by $g\mapsto\rho(g)v$ factorizes to a map $\bL/\bP\to V$.
But this map must be constant as $\bL/\bP$ is a projective variety,
meaning that $v$ is $\bL$-fixed. 

The proof of Proposition~\ref{prop:abelian-stiffness}  is based on
the following measure rigidity theorem due to Mozes.

\begin{thm}[{\cite[Theorem 2]{mozes1995epimorphic}}]
\label{thm:Mozes}Let $\bG$ be an $\R$-algebraic group, let $\bL\leq\bG$
be an $\mathbb{R}$-algebraic subgroup generated by connected unipotent
$\R$-subgroups and let $\bH<\bL$ be a connected epimorphic $\mathbb{R}$-algebraic subgroup subgroup
of $\bL$. Consider a discrete subgroup $\Gamma$ of $\boldsymbol{\bG}(\R)$.
Then any probability measure on $\bG(\R)/\Gamma$ is $\bL(\R)$-invariant
as soon as it is $\bH(\R)$-invariant.
\end{thm}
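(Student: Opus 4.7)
The plan is to prove Mozes's measure rigidity theorem by reducing it to Ratner's classification of invariant measures for unipotent flows and then invoking the algebraic content of the epimorphic hypothesis. First, by the standard ergodic decomposition of probability measures on $\bG(\R)/\Gamma$ with respect to the $\bH(\R)$-action, it suffices to treat the case where $\mu$ is $\bH(\R)$-ergodic, since $\bL(\R)$-invariance propagates through integration of the ergodic pieces.

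Next, since $\bL$ is generated by connected unipotent $\R$-subgroups, $\bL$ has no nontrivial algebraic characters, and a short character-extension argument (or Bien--Borel's structural results on epimorphic subgroups) shows that $\bH$ itself must then be generated by unipotents, and in particular contains a nontrivial connected unipotent $\R$-subgroup $\mathbf{V}$. Applying Ratner's measure classification to $\mathbf{V}(\R)$, the $\mathbf{V}(\R)$-ergodic decomposition of $\mu$ consists of homogeneous algebraic measures, each supported on a closed orbit of a subgroup with algebraic identity component. Exploiting the $\bH(\R)$-ergodicity of $\mu$ together with a Dani--Margulis-style patching across the $\mathbf{V}(\R)$-components and the quasi-algebraic continuity of Ratner stabilizers, one concludes that $\mu$ itself is supported on a single closed orbit $F \cdot x_0 \Gamma$ and is the $F$-invariant measure there, for some closed $F \leq \bG(\R)$ with algebraic identity component. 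In particular, the Zariski closure in $\bL$ of the measure-theoretic stabilizer $\{g \in \bL(\R) : g_{*}\mu = \mu\}$ is a real algebraic subgroup $\mathbf{M} \leq \bL$ containing $\bH$, whose $\R$-points coincide with this stabilizer up to finite index.

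The final step leverages the epimorphic property. By Chevalley's theorem there exist a finite-dimensional representation $V$ of $\bL$ and a line $[v] \subseteq V$ whose stabilizer in $\bL$ is exactly $\mathbf{M}$. Since $\bL$ has no nontrivial characters, a tensor-power construction converts the semi-invariant $v$ into a genuine fixed vector $w$ in some $\bL$-representation $W$, with $\mathbf{M}$ still exactly its stabilizer. Then $\bH$ fixes $w$, so by epimorphicity $\bL$ fixes $w$, forcing $\bL \subseteq \mathbf{M}$ and hence $\bL(\R)$-invariance of $\mu$, as required.

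The main obstacle I anticipate lies in the patching step: establishing that the Ratner homogeneous measures attached to the various $\mathbf{V}(\R)$-ergodic components of $\mu$ truly combine, under the ambient $\bH(\R)$-ergodicity, into a single homogeneous measure with a coherent algebraic stabilizer in $\bL$. This requires careful control of how the algebraic subgroups produced by Ratner's theorem depend on the base point, and full exploitation of the $\bH(\R)$-ergodicity (rather than merely $\mathbf{V}(\R)$-invariance) to enforce the needed uniformity.
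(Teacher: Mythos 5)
This theorem is not proved in the paper at all: it is imported verbatim as \cite[Theorem 2]{mozes1995epimorphic} and used as a black box in the proof of Proposition~\ref{prop:abelian-stiffness}. So there is no internal argument to compare yours against; what follows measures your sketch against Mozes's actual proof, whose overall shape (ergodic decomposition, Ratner's measure classification applied to a unipotent subgroup of $\bH$, a countability/patching argument to get homogeneity, and an algebraic endgame exploiting epimorphicity) you have correctly identified.

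Two steps in your outline are genuinely broken as written. First, your claim that $\bH$ ``must be generated by unipotents'' is false: the Borel subgroup $\bB=\bT\bU$ of $\SL_2$ is epimorphic but solvable, and the subgroup it generates from its unipotents is only $\bU$. What is true, and what actually feeds Ratner, is weaker: a \emph{proper} connected epimorphic subgroup cannot be reductive (a reductive subgroup is observable, and a proper observable subgroup is never epimorphic), hence has a nontrivial unipotent radical, which supplies the one-parameter unipotent subgroup $\mathbf{V}$ you need. Second, the Chevalley endgame fails at the tensor-power step. Passing from a line $[v]$ with $\Stab_{\bL}([v])=\mathbf{M}$ to a genuine vector $w$ with $\Stab_{\bL}(w)=\mathbf{M}$ is precisely the assertion that $\mathbf{M}$ is \emph{observable} in $\bL$; this is not implied by $\bL$ having no characters, since the obstruction is a character of $\mathbf{M}$, not of $\bL$. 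The Borel of $\SL_2$ is again a counterexample: it is the stabilizer of a line but of no vector in any representation ($\SL_2/\bB=\P^1$ is not quasi-affine). The correct route is to show that the stabilizer $\mathbf{M}$ of the homogeneous probability measure is observable in $\bL$ (stabilizers of probability measures under algebraic actions are observable --- this is one of Mozes's lemmas), and then epimorphicity of $\bH\subseteq\mathbf{M}$ forces $\mathbf{M}=\bL$. Finally, the patching step you flag as the main obstacle is indeed the technical heart (countability of the class of subgroups intersecting $\Gamma$ in a lattice, equivariance of the component-to-stabilizer map, and $\bH$-ergodicity); as presented it is a statement of intent rather than a proof.
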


\begin{proof}[Proof of Proposition~\ref{prop:abelian-stiffness}]
In this case $U\cong\R^{m}$ and $\Ch(H)\cong U/H$ is a torus which
we denote by $X$. 
Let $\nu\in \Prob(X)^{\mu}$ and consider the
corresponding boundary map given by Theorem~\ref{thm:Poisson-boundary-universal-property},
\[
\Theta:S/P\simeq S_{\mathrm{nc}}/P_{\mathrm{nc}} \simeq \Pi(\Gamma,\mu)\to \Prob(X).
\]
We will show that $\Theta$ is essentially constant, thus $\nu$ is $\Gamma$-invariant (Lemma \ref{lem:invcrit}).

We let $\dd s$ be the Haar measure on $S$, 
normalized so that it induces a probability measure on $S/\Gamma$.
We consider the quotient map $q:S \to S/P \simeq S_{\mathrm{nc}}/P_{\mathrm{nc}}$ and
set $\theta:=\Theta\circ q$.
This is an a.e defined $\Gamma$-equivariant
measurable map.

We consider the space $S\times_{\Gamma}X$, defined as $S\times X$
quotiented by the $\Gamma$-action $\gamma.\left(s,x\right)=\left(s\gamma^{-1},\gamma x\right)$,
and we let $S$ act on this space by left multiplication on the first
coordinate. 
We define the
measure $\tilde{\sigma}$ on $S\times X$ by:
\[
\tilde{\sigma}=\int_{S}\delta_{s}\times\theta_{s^{-1}}\dd s.
\]
It is easy to see that $\tilde{\sigma}$ is both $\Gamma$-invariant,
for the action considered above,
and $P$-invariant, for the restriction of the left $S$-action. Hence $\tilde{\sigma}$
descends to a $P$-invariant probability measure $\sigma$ on $S\times_{\Gamma}X$. 

We define $G=S\ltimes U$.
By the fact that every linear representation of $S$ is real algebraic, we have that $G$ is a real algebraic group.
We set $\Lambda=\Gamma\ltimes H$ and notice that this is a lattice in $G$.
In fact, we may identify
\[ G/\Lambda=(S\ltimes U)/\Lambda\simeq  
((S\ltimes U)/H)/\Gamma\simeq (S \times X)/\Gamma= S\times_{\Gamma}X.\]
Using this identification, we endow $G/\Lambda$ with the measure $\sigma$ constructed above.

We are now in the setting of theorem \ref{thm:Mozes}:
$G$ is a real algebraic group, $\Lambda\leq G$ is discrete,
$S_{\mathrm{nc}}$ is a connected real algebraic subgroup of $G$ generated
by unipotents, and $P_{\mathrm{nc}}$ is an epimorphic subgroup of $S_{\mathrm{nc}}$ that fixes
$\sigma$. We conclude that $\sigma$ is $S_{\mathrm{nc}}$-invariant. 
In turn, we get that $\tilde{\sigma}$
is $S_{\mathrm{nc}}$-invariant.
Since for $t\in S_{\mathrm{nc}}$ we have
\[
t.\tilde{\sigma}=\int_{S}\delta_{ts}\times\theta_{s^{-1}}\dd s
=\int_{S}\delta_{s}\times\theta_{s^{-1}t} \dd s.
\]
The $S_{\mathrm{nc}}$-invariance of  $\tilde{\sigma}$  implies that
$\theta_{s^{-1}t}=\theta_{s^{-1}}$ almost surely, meaning that
$\theta$, and therefore also $\Theta$, is $S_{\mathrm{nc}}$-invariant.
It follows that $\Theta:S_{\mathrm{nc}}/P_{\mathrm{nc}}\to \Prob(X)$ is essentially constant,
as desired. 
\end{proof}

\subsection{\label{subsec:Proof-of-Theorem main-stiffness}Proof of Theorem \ref{thm:main-stiffness}}

The path from the abelian case to the nilpotent case is
based on central induction of characters. 
\begin{defn}\label{def:FC}
The\emph{ FC-center }of a group $H$ is the union of all of its finite
conjugacy classes, and is denoted by $\FC(H)$. $H$ is called \emph{centrally
inductive} if every character $\varphi$, viewed as a faithful characters of $H/\ker\varphi$, is supported on $\FC\left(H/\ker\varphi\right)$.
\end{defn}

The FC-center is a characteristic subgroup that plays an important role in the character theory of countable
groups (see for example the recent result in \cite{bekka2021plancherel}),
and in greater extent for nilpotent groups:
\begin{thm}[\cite{howe1977representations,carey-moran1984nil-char}]
\label{thm:central-induction}Any finitely generated nilpotent group
is centrally inductive.
\end{thm}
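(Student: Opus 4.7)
My plan is to use Thoma's correspondence (Theorem~\ref{thm:thoma correspondence}) to translate the claim into a statement about tracial factor representations $(M,\tau,\pi)$, and then reduce to \emph{faithful} characters by passing to $H/\ker\varphi$, which remains finitely generated nilpotent. The theorem then becomes: for a faithful character $\varphi$ of a finitely generated nilpotent group $H$, $\varphi(g) = 0$ whenever $g\notin \FC(H)$.

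The driving observation is that $M$ being a factor forces $Z(M) = \mathbb{C}\cdot 1$, so $\pi(Z(H)) \subseteq S^{1}\cdot 1$, yielding a unitary character $\chi := \varphi\res_{Z(H)}:Z(H)\to S^{1}$. By Lemma~\ref{lem:kernel-of-pd}, $\ker\chi \subseteq \ker\varphi = \{e\}$, so $\chi$ is \emph{injective} on $Z(H)$.

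I would proceed by induction on the nilpotency class $c$ of $H$. For $c\leq 1$ (abelian) there is nothing to prove since $\FC(H)=H$. For $c = 2$: given $g\notin \FC(H)$ one has $g\notin Z(H)$, so some $h\in H$ satisfies $[h,g]\neq e$, and $[h,g] \in [H,H]\subseteq Z(H)$. Writing $\pi(hgh^{-1}) = \pi([h,g])\pi(g) = \chi([h,g])\,\pi(g)$, trace-invariance under conjugation yields
\[
\varphi(g) = \tau(\pi(hgh^{-1})) = \chi([h,g])\,\varphi(g),
\]
and injectivity of $\chi$ gives $\chi([h,g])\neq 1$, forcing $\varphi(g) = 0$.

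For $c \geq 3$, the plan is to iterate this construction along the lower central series $H\supseteq H^{2}\supseteq \cdots \supseteq H^{c}\supseteq \{e\}$: given $g\notin \FC(H)$, one aims to produce, via a sequence of well-chosen commutators with elements of $H$, a nontrivial element of $H^{c}\subseteq Z(H)$ which, through an analogous trace identity, kills $\varphi(g)$. The main obstacle is precisely here: an infinite $H$-conjugacy class of $g$ does not directly furnish such a nontrivial terminating iterated commutator. One must exploit the fine structure of finitely generated nilpotent groups---in particular the alternative characterization of $\FC(H)$ as the set of elements with finite-index centralizer, together with the finite rank of each successive quotient $H^{i}/H^{i+1}$---to guarantee the iteration does not degenerate. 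This bookkeeping (essentially an orbit-method analysis of characters of nilpotent groups) is the technical heart of the proofs in \cite{howe1977representations, carey-moran1984nil-char}, to which the statement here appeals.
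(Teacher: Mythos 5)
First, a point of reference: the paper offers no proof of this statement at all. Theorem~\ref{thm:central-induction} is imported verbatim from \cite{howe1977representations} and \cite{carey-moran1984nil-char} and used as a black box (in Case~b of the proof of Theorem~\ref{thm:main-stiffness}). So there is no internal argument to compare yours against; the only question is whether your proposal stands on its own as a proof.

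It does not, and you have correctly identified where it breaks. The reduction to faithful characters via Lemma~\ref{lem:kernel-of-pd}, the observation that factoriality forces $\pi(Z(H))\subseteq S^{1}\cdot 1$ so that $\chi=\varphi\res_{Z(H)}$ is an injective unitary character, and the computation $\varphi(g)=\chi([h,g])\varphi(g)$ are all correct; indeed the same computation handles any $g\in Z_{2}(H)\setminus\FC(H)$ in a group of arbitrary class, since for such $g$ the map $h\mapsto[h,g]$ is a homomorphism into $Z(H)$ with kernel $C_{H}(g)$ of infinite index, hence with image on which the injective $\chi$ is nontrivial. But for $g\notin Z_{2}(H)$ the trick genuinely fails rather than merely requiring bookkeeping: $[h,g]$ is no longer central, so $\pi([h,g])$ is not a scalar, and trace-invariance only gives $\varphi(g)=\tau\left(\pi([h,g])\pi(g)\right)$ --- a constraint relating traces of \emph{different} elements, not a scalar multiple of $\varphi(g)$. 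Iterating commutators along the lower central series does not repair this, because each iteration changes the group element whose trace appears, so no equation of the form $\varphi(g)=\lambda\varphi(g)$ with $\lambda\neq 1$ is produced. Closing this gap is exactly the content of \cite{howe1977representations,carey-moran1984nil-char} (an induction on Hirsch length together with a restriction--induction analysis pushing the vanishing from $Z_{2}(H)$ out to all of $H\setminus\FC(H)$), and since that is precisely the part you defer back to the references, your proposal amounts to a correct treatment of a special case plus a citation, not a proof of the theorem.
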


We will use the following simple Lemma:
\begin{lem}\label{lem:nilpotent-normal-subgroups}
Let $H$ be a nilpotent group. Then any non-trivial normal subgroup
intersects the center non-trivially.
\end{lem}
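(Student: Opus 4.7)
The plan is to use the upper central series of $H$ and pick a minimal index at which the series first meets $N$ non-trivially. Concretely, let $\{e\}=Z_0\lhd Z_1\lhd Z_2\lhd\cdots\lhd Z_c=H$ be the upper central series, defined by $Z_{i+1}/Z_i=Z(H/Z_i)$; the series terminates at $H$ because $H$ is nilpotent. Since $N$ is non-trivial and $N\cap Z_c=N$, there is a smallest $i\geq 1$ with $N\cap Z_i\neq\{e\}$. The aim is to show $i=1$, which gives the desired conclusion $N\cap Z(H)=N\cap Z_1\neq\{e\}$.

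Suppose for contradiction that $i>1$, and pick a non-trivial $x\in N\cap Z_i$. For any $h\in H$ consider the commutator $[h,x]=hxh^{-1}x^{-1}$. First, by normality of $N$ one has $hxh^{-1}\in N$, hence $[h,x]\in N$. Second, the image of $x$ in $H/Z_{i-1}$ lies in the center $Z_i/Z_{i-1}=Z(H/Z_{i-1})$, so $[h,x]\in Z_{i-1}$. Combining, $[h,x]\in N\cap Z_{i-1}=\{e\}$ for every $h\in H$, which means $x\in Z(H)=Z_1$. But then $x\in N\cap Z_1$ is non-trivial, contradicting the minimality of $i>1$.

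There is no real obstacle here; this is a standard application of nilpotency and the argument is just a bookkeeping exercise with the upper central series. The only minor point to be careful about is ensuring that the upper central series actually exhausts $H$, which is exactly the defining property of nilpotence used in the paper.
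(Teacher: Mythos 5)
Your proof is correct and is essentially the same argument as the paper's: both take the upper central series, pick the minimal index $i$ at which $N$ meets $Z_i$ non-trivially, and use that for $x\in N\cap Z_i$ the commutators $[h,x]$ land in $N\cap Z_{i-1}$ (by normality of $N$ and the defining property of the series) to force a contradiction with minimality. The only cosmetic difference is where the contradiction lands — the paper produces a non-trivial element of $N\cap Z_{i-1}$, while you show $x$ must already be central — but this is the same bookkeeping.
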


\begin{proof}
Let $Z=Z_{1}\leq...\leq Z_{m}=H$ be
the central ascending sequence of $H$
and assume by contradiction that 
$N\lhd H$ is a non-trivial normal subgroup intersecting $Z$ trivially. 
Choose $x\in N$ which belongs
to $Z_{i}$ for a minimal $i$. 
Since $x\notin Z$, we can find $y\in H$ with $[x,y]\in N$ non-trivial,
contradicting the minimality of $i$.
\end{proof}

The proof of Theorem \ref{thm:main-stiffness} is by induction on
the dimension of $U$. Let $\nu\in \Prob(\Ch(H))^{\mu}$ ergodic.
Consider the map $k:\Ch(H)\to \Sub(Z)$ that takes each
character to the intersection of its kernel with the center $Z$ of $H$. Then $k_{*}\nu$
is an ergodic $\mu$-stationary probability measure on $\Sub(Z)$.
But $\Sub(Z)$ is countable and therefore by Lemma \ref{prop:countable-fibers-stiffness} (applied for $X=\Sub(Z)$ and $Y$ being a singleton)
$k_{*}\nu$ must be $\Gamma$-invariant, thus by ergodicity, a uniform measure on some
finite orbit $\mathcal{D}\subseteq \Sub(Z)$.

\paragraph*{Case a: $\mathcal{D}$ contains a non-trivial subgroup.}
Fix a non-trivial subgroup  $K\in \mathcal{D}$. By the assumptions on $U$, the  exponential map $\exp:\Lie(U)\to U$ is bijective. Let $\tilde{K}$ denote the Lie subgroup of $U$ corresponding to the $\R$-span of $\log(K)$. The group $Z$ is finitely generated free abelian \cite[Theorem 2.18]{raghunathan1972discrete} so that $K$ is infinite. It follows that $\tilde{K}$ is a Lie subgroup of the center of $U$ of positive dimension. Moreover $K$ is a finite index subgroup of $H\cap \tilde{K}$ and is a lattice in $\tilde{K}$.  

Let $\Gamma'$ be the kernel of the action of $\Gamma$ on $\mathcal{D}$,
endowed with the hitting measure $\mu'$. 
By Proposition~\ref{prop:hitting-measure-Poisson-boundary} $\mu'$ is a Furstenberg measure.
As $K$ is $\Gamma'$-invariant so is $\tilde{K}$ \cite[Theorem 2.11]{raghunathan1972discrete}. 
By the Zariski density of $\Gamma'$ in $S$, $\tilde{K}$ is also $S$-invariant and therefore $S$ acts on
$U_0:=U/\tilde{K}$ by continuous automorphisms. Now $H_0:=H/\left(H\cap\tilde{K}\right)$
is discrete in $U_0$ and moreover a lattice \cite[Theorem 2.1]{raghunathan1972discrete}.
By induction hypothesis, $\Ch(H_0)$ is $\mu'$-stiff. Since this
holds for each $K\in\mathcal{D}$, me may apply Proposition \ref{lem:If-kernel-acts-stiffly-on-fibers-then-also-Gamma}
to the map $k:\supp(\nu)\to\mathcal{D}$ to conclude that the action
of $\Gamma$ on $\supp(\nu)$ is $\mu$-stiff. It follows that $\nu$
is $\Gamma$-invariant. 

\paragraph*{Case b: $\mathcal{D}$ consists only of the trivial subgroup.}

By Lemma \ref{lem:nilpotent-normal-subgroups} we get in this case that $\nu$-a.e character of $H$ is faithful.
Since $H$
is a lattice in $U$, it is nilpotent, finitely generated and torsion-free
\cite[Theorem 2.18]{raghunathan1972discrete}. Therefore it is centrally
inductive, and its FC-center is its center, $Z$ (see \cite[Theorem 4.37]{finiteness-soluble-groups}).
Thus $\nu$-a.e $\chi\in \Ch(H)$ is supported on $Z$.
Writing $\varphi=\mathcal{F}_{H}(\nu)$ for the corresponding trace (Theorem~\ref{thm:fourier}),
we get that $\varphi$ is supported on $Z$:
\begin{align}\label{eq:h not in Z}
	\forall h\in H\backslash Z:\,\,\varphi(h)=\int_{\Ch(H)}\chi(h)d\nu(\chi)=0
\end{align}
The pushforward measure $r_{*}\nu$ under the $\Gamma$-equivariant
restriction map $r:\Ch(H)\to\Ch(Z) \simeq \hat{Z}$ is a $\mu$-stationary probability measure.
$Z$ is a lattice in the center of $U$ \cite[Proposition 2.17]{raghunathan1972discrete}.
Since $Z$ is $\Gamma$-invariant and the center of $U$ is $S$-invariant,
we get by Proposition \ref{prop:abelian-stiffness} that 
$r_{*}\nu$ is $\Gamma$-invariant. The trace corresponding to $r_{*}\nu$
via the Fourier transform is $\varphi\res_{Z}$  and so we see that 
\begin{align}\label{eq:h in Z}
	\forall\gamma\in \Gamma,\;\; h\in Z:  \varphi^{\gamma}(h)=\varphi(h)
\end{align}
From (\ref{eq:h not in Z}) and (\ref{eq:h in Z}) we see that $\varphi$ and therefore $\nu$ is $\Gamma$-invariant.

\subsection{Proof of Theorem \ref{thm:higher-rank-stiffness}}

By pulling back via a central extension, 
replacing $\Lambda$ with its preimage and $\mu$ with an arbitrary lift 
(which is necessarily still a Furstenberg measure),
we assume as we may that $G$ is algebraically simply connected. We may also assume that $G$ is connected due to Lemma \ref{lem:stiffness-subgroup}.
We consider an action of $\Lambda$ on a finitely generated virtually nilpotent group $\Gamma$. 
By Corollary~\ref{cor:stiff commen} we assume as we may that $\Gamma$ is actually nilpotent.
Recall that for a finitely generated nilpotent group the set of torsion elements
is a characteristic finite subgroup.
Applying again Corollary~\ref{cor:stiff commen},
we assume as we may that $\Gamma$ is torsion free.

There exists a simply connected nilpotent Lie
group $U$ that has $\Gamma$ embedded as a lattice \cite[Theorem 2.18]{raghunathan1972discrete}.
The group of continuous automorphisms $\Aut(U)$ is real algebraic
subgroup of $\GL\left(\Lie(U)\right)$. 

If the action is finite, then the statement holds by Lemma \ref{lem:finite-is-stiff}.
Otherwise, the image of the action map $\rho:\Lambda\to \Aut(H)$ is
unbounded in $\Aut(U)$ because $\Aut(H)$ is discrete. Margulis's super-rigidity
theorem \cite[Theorem 16.1.4]{morris2001introduction} then guarantees
a finite index subgroup $\Lambda'\leq\Lambda$ and a continuous representation
$\hat{\rho}:G\to \Aut(U)$ such that $\hat{\rho}\res_{\Lambda'}=\rho$. 
By Theorem \ref{thm:main-stiffness},
$\Lambda'$ acts $\mu'$-stiffly on $\Ch\left(\Gamma\right)$ where $\mu'$
is the hitting measure coming from $\mu$. It follows by Lemma \ref{lem:stiffness-subgroup} that the
action of $\Lambda$ on $\Ch(\Gamma)$ is $\mu$-stiff.

\subsection{\label{subsec:The-amenable-radical}The amenable radical of arithmetic
groups}

Let $\Lambda$ be an arithmetic subgroup of a  $\Q$-algebraic group  $\bG$. For a $\Q$-algebraic subgroup $\bH$ of $\bG$ denote $\Lambda_\bH:=\Lambda\cap \bH(\Q)$. Then $\Lambda_\bH$ is clearly an arithmetic subgroup of $\bH$. If $\bar{\bG}$ is a quotient of $\bG$ by a $\Q$-algebraic normal subgroup, denote $\Lambda_{\bar{\bG}}$ for the image of $\Lambda$ under the quotient map $\bG \to \bar{\bG}$. Then $\Lambda_{\bar{\bG}}$ is an arithmetic subgroup of $\bar{\bG}$ \cite[Theorem 4.1]{platonov-rapinchik-1993}.

\begin{lem}
\label{lem:arithmetic-radical-finite-center}Let $\Lambda$ be an
arithmetic subgroup of a $\Q$-algebraic semisimple group $\bG$.
Then $\Rad(\Lambda)$ is finite, and it is moreover trivial assuming
that $\bG$ has no finite normal subgroups, and no $\Q$-simple $\R$-anisotropic factors. 
\end{lem}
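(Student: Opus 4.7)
The plan is to reduce to the case where $\bG$ has only $\R$-isotropic $\Q$-simple factors, and then to combine the Borel density theorem with the Tits alternative to pin down the Zariski closure of $\Rad(\Lambda)$ as a finite normal algebraic subgroup of $\bG$, which must lie in the (finite) center.

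First, by Borel--Harish-Chandra, $\Lambda$ is a lattice in $\bG(\R)$ and in particular is finitely generated and linear over $\Q$. Decompose $\bG$ as an almost direct product $\bG = \bG_{\mathrm{nc}} \cdot \bG_{\mathrm{c}}$, where $\bG_{\mathrm{c}}$ is the product of the $\R$-anisotropic $\Q$-simple factors (so $\bG_{\mathrm{c}}(\R)$ is compact) and $\bG_{\mathrm{nc}}$ is the product of the $\R$-isotropic ones. The discrete subgroup $\Lambda \cap \bG_{\mathrm{c}}(\Q)$ is then finite. Let $\pi \colon \bG \to \bar\bG := \bG/\bG_{\mathrm{c}}$ be the quotient; by \cite[Proposition~4.1]{platonov-rapinchik-1993}, the image $\pi(\Lambda)$ is an arithmetic subgroup of the $\Q$-algebraic semisimple group $\bar\bG$, and the kernel of $\pi\res_\Lambda$ is finite.

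Set $N := \Rad(\Lambda)$ and $\bar N := \pi(N) \normal \pi(\Lambda)$ (still amenable), and let $\bN$ denote the Zariski closure of $\bar N$ in $\bar\bG$, a $\Q$-algebraic subgroup. Since $\bar N$ is amenable and sits in a characteristic-zero linear group, the Tits alternative forces $\bar N$ to be virtually solvable; consequently $\bN^\circ$ is a connected solvable algebraic subgroup of $\bar\bG$. Since $\pi(\Lambda)$ normalizes $\bar N$, it also normalizes $\bN$, and since $\pi(\Lambda)$ is Zariski dense in $\bar\bG$ by the Borel density theorem (applicable because $\bar\bG$ has no $\R$-anisotropic $\Q$-simple factors), $\bN$ is normal in $\bar\bG$. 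A connected solvable normal subgroup of a semisimple algebraic group is trivial, so $\bN^\circ = 1$ and $\bN$ is finite. Hence $\bar N$ is finite, and combined with the finiteness of $\ker(\pi\res_\Lambda)$, $N$ is finite as well. For the second assertion, the additional hypotheses give $\bG_{\mathrm{c}} = 1$ (so $\pi$ is essentially the identity) and $\bG$ has no finite normal algebraic subgroups, whence the finite normal subgroup $\bN$ must be trivial, yielding $N = 1$.

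The step requiring the most care is the justification of Borel density for $\bar\bG$ in the situation where a $\Q$-simple factor of $\bar\bG$ is $\R$-isotropic but $\bar\bG(\R)$ still contains compact simple factors (as arises for restrictions of scalars). One handles this by first decomposing $\bar\bG$ into its $\Q$-simple factors; the projection of $\pi(\Lambda)$ to each such factor is infinite and its $\Q$-Zariski closure, being a $\Q$-algebraic subgroup of the $\Q$-simple ambient, must equal the whole factor. Since the Zariski closure of $\pi(\Lambda)$ is $\Q$-defined and surjects onto each factor, it equals $\bar\bG$.
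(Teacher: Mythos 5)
Your argument is in essence the same as the paper's: both pass to $\bar{\bG}=\bG/\bG_{\mathrm{c}}$ using \cite[Proposition~4.1]{platonov-rapinchik-1993}, note that $\ker(\pi\res_\Lambda)$ is finite because it is discrete in the compact group $\bG_{\mathrm{c}}(\R)$, and then kill the Zariski closure of the image of $\Rad(\Lambda)$ by combining Zariski density of the arithmetic group with normality. The one substantive variation is the final step: the paper takes the $\R$-Zariski closure of the radical and quotes the Lie-theoretic fact that an amenable normal subgroup of the semisimple Lie group $\bar{\bG}(\R)$ must be compact (so the discrete radical is finite), whereas you use the Tits alternative to see that the closure $\bN$ is virtually solvable, hence has solvable identity component, which vanishes because a semisimple algebraic group has trivial radical. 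Both routes are sound, and yours has the mild advantage of staying entirely on the algebraic side.

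The one place your write-up actually goes wrong is the last paragraph, where you try to justify Zariski density of $\pi(\Lambda)$. The claim that an infinite $\Q$-algebraic subgroup of a $\Q$-simple group ``must equal the whole factor'' is false: $\Q$-simplicity only excludes proper connected \emph{normal} $\Q$-subgroups, and a $\Q$-simple group has many proper infinite $\Q$-subgroups (maximal $\Q$-tori, parabolic subgroups in the isotropic case, centralizers, etc.). Since the Zariski closure of the projection of $\pi(\Lambda)$ is not a priori normal, this argument does not close. What you need is exactly the Borel density theorem for arithmetic groups, e.g.\ \cite[Theorem~4.10]{platonov-rapinchik-1993}: an arithmetic subgroup of a semisimple $\Q$-group with no $\R$-anisotropic $\Q$-simple factors is Zariski dense; this is precisely what the paper cites. (If you want a proof rather than a citation, the correct route is: the measure-theoretic Borel density theorem shows that the Zariski closure $\bL$ of the lattice $\pi(\Lambda)$ contains every $\R$-isotropic almost simple factor of $\bar{\bG}$; since $\bL$ is defined over $\Q$ it is Galois-stable, and $\Q$-simplicity means the Galois group permutes the geometric simple factors of each $\Q$-simple factor transitively, so $\bL$ contains all of them.) With that citation replacing your last paragraph, the proof is complete.
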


\begin{proof} 

Assume first that $\bG$ has no $\Q$-simple $\R$-anisotropic factors, in which case $\Lambda$ is Zariski dense in $\bG(\R)$ by Borel's density theorem. Let $\bH$ denote the $\R$-Zariski closure of $\Rad(\Lambda)$ in $\bG$. Then $\bH(\R)$ is an amenable normal subgroup of $\bG(\R)$ and thus must be compact \cite[\S4.1]{zimmer2013ergodic}. But $\Rad(\Lambda)$ is discrete and so it must be finite, making $\bH$ finite as well. Hence, if $\bG$ is in addition assumed to have no  finite normal subgroups, then  $\Rad(\Lambda)$ is trivial. 

Now allow $\bG$ to have $\Q$-simple $\R$-anisotropic factors. Let $\bG_c$ denote the product of all $\R$-anisotropic $\Q$-simple factors and denote $\bar{\bG}:=\bG/\bG_{c}$. Then the image $\bar{\Lambda}$ of $\Lambda$ under this quotient is an arithmetic subgroup of $\bar{\bG}$. By the previous paragraph, $\Rad(\bar{\Lambda})$ is finite and therefore also $\overline{\Rad(\Lambda)}$ as it is clearly contained in $\Rad(\bar{\Lambda})$. But $\Lambda_{\bG_c}$ is finite as it is a discrete subgroup of the compact group  $\bG_c(\R)$, so it follows that $\Rad(\Lambda)$ is finite. 
\end{proof}

We now consider a general arithmetic subgroup $\Lambda$ of some $\Q$-algebraic group  $\bG$. Let $\bR$ be the solvable radical of $\bG$, and consider the semisimple group $\bS=\bG/\bR$. Observe that the size of any finite normal subgroup of $\bS$ is bounded from above by $|\bZ|\cdot [\bS:\bS^\circ]$ where $\bS^\circ$ is the identity connected component of $\bS$ and $\bZ$ is the (finite) center of $\bS^\circ$. It follows that $\bS$ admits a unique maximal finite normal subgroup, which coincides with the center when $\bS$ is connected.  Let
$\bar{\bS}$ be the quotient of $\bS$ by all $\R$-anisotropic $\Q$-simple
factors, as well as by the remaining maximal finite normal subgroup. We consider the arithmetic subgroup $\Lambda_\bR$  of $\bR$ and $\Lambda_{\bar{\bS}}$  of  $\bar{\bS}$. 
 
\begin{prop}
\label{prop:amenable radical of arithmetic groups}Let $\Lambda$
be an arithmetic subgroup of $\bG$. Then:
\begin{enumerate}
\item $\Lambda_{\bR}$ is a finite index subgroup of $\Rad(\Lambda)$ 
\item $\Lambda_{\bar{\bS}}$ is equal to $\Lambda/\Rad(\Lambda)$. 
\end{enumerate}
\end{prop}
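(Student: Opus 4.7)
The plan is to prove the single equality $\Rad(\Lambda) = \Lambda_{\bG'}$, where $\bG' \lhd \bG$ is the $\Q$-algebraic normal subgroup defined as the preimage under $\bG \to \bS$ of $\ker(\bS \to \bar{\bS})$; this equality immediately yields both (1) and (2). Note that $\bG'$ contains $\bR$ and that $\bG'/\bR \simeq \ker(\bS \to \bar{\bS})$ is built (up to isogeny) out of the $\R$-anisotropic $\Q$-simple factors of $\bS$ together with the maximal finite normal subgroup of $\bS$. In particular $(\bG'/\bR)(\R)$ is a compact group.

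For the inclusion $\Lambda_{\bG'} \subseteq \Rad(\Lambda)$ I would proceed as follows. The subgroup $\Lambda_{\bG'}$ is normal in $\Lambda$ because $\bG' \lhd \bG$. By \cite[Theorem 4.1]{platonov-rapinchik-1993}, the image of $\Lambda_{\bG'}$ in $(\bG'/\bR)(\Q)$ is an arithmetic subgroup of $\bG'/\bR$; since arithmetic subgroups embed discretely into the real points, this image is a discrete subgroup of the compact group $(\bG'/\bR)(\R)$, hence finite. The kernel of that map is exactly $\Lambda_{\bG'} \cap \bR(\Q) = \Lambda_\bR$, so $\Lambda_\bR$ has finite index in $\Lambda_{\bG'}$. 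As $\Lambda_\bR$ is solvable (being contained in $\bR(\Q)$), $\Lambda_{\bG'}$ is virtually solvable, in particular amenable, and so $\Lambda_{\bG'} \subseteq \Rad(\Lambda)$.

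For the reverse inclusion I would push $\Rad(\Lambda)$ forward to $\Lambda_{\bar{\bS}}$, which is an arithmetic subgroup of $\bar{\bS}$ by \cite[Theorem 4.1]{platonov-rapinchik-1993}. Its image there is a normal amenable subgroup, so it is contained in $\Rad(\Lambda_{\bar{\bS}})$. By construction, $\bar{\bS}$ is a semisimple $\Q$-algebraic group with no non-trivial finite normal subgroup and no $\R$-anisotropic $\Q$-simple factor, so the second clause of Lemma \ref{lem:arithmetic-radical-finite-center} gives $\Rad(\Lambda_{\bar{\bS}}) = \{e\}$. Hence $\Rad(\Lambda)$ lies in the kernel of $\Lambda \to \Lambda_{\bar{\bS}}$, which is $\Lambda \cap \bG'(\Q) = \Lambda_{\bG'}$. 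Combining the two inclusions gives $\Rad(\Lambda) = \Lambda_{\bG'}$; assertion (1) then follows because the previous step showed $\Lambda_\bR$ has finite index in $\Lambda_{\bG'}$, and assertion (2) because $\Lambda/\Lambda_{\bG'}$ is canonically the image $\Lambda_{\bar{\bS}}$ of $\Lambda$ under $\bG \to \bar{\bS}$.

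The one point that requires genuine care is the bookkeeping that sets up $\bar{\bS}$: one must excise from $\bS$ both the $\R$-anisotropic $\Q$-simple factors and the maximal finite normal subgroup in order to apply Lemma \ref{lem:arithmetic-radical-finite-center} in its strong (triviality) form, while simultaneously ensuring that the complementary piece $\ker(\bS \to \bar{\bS})$ still has compact real points so that its arithmetic subgroups are automatically finite. Both requirements are built into the definition of $\bar{\bS}$ in the paragraph preceding the statement, so the hard part is really only conceptual packaging rather than any serious computation.
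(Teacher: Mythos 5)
Your proposal is correct and follows essentially the same route as the paper: the paper's proof of item (2) is exactly your equality $\Rad(\Lambda)=K$ with $K=\Lambda_{\bG'}=\ker(\Lambda\to\Lambda_{\bar{\bS}})$, established via the triviality clause of Lemma~\ref{lem:arithmetic-radical-finite-center} for one inclusion and the normality and (virtual solvability, hence) amenability of $K$ for the other. The only cosmetic difference is that the paper derives item (1) separately, by noting that $\Rad(\Lambda)/\Lambda_{\bR}$ sits inside the finite group $\Rad(\Lambda_{\bS})$, whereas you deduce it from the same compactness of $(\bG'/\bR)(\R)$ that the paper leaves implicit in its ``$K$ is clearly normal and amenable''; both variants rest on the same lemma and the same facts about images of arithmetic groups.
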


\begin{proof}
It is clear that $\Lambda_{\bR}$ is contained in $\Rad(\Lambda)$.  $\Lambda_\bS=\Lambda/\Lambda_{\bR}$
is an arithmetic subgroup of the semisimple group $\bS$ so that $\Rad\left(\Lambda_{\bS}\right)$ is finite by Lemma \ref{lem:arithmetic-radical-finite-center}.
Item 1 follows as 
$\Rad(\Lambda)/\Lambda_{\bR}$ is contained in the finite group $\Rad\left(\Lambda/\Lambda_{\bR}\right)=\Rad\left(\Lambda_{\bS}\right)$.

For 2, consider the quotient map $\sigma:\bG\to\bar{\bS}$, and
let $K\leq\Lambda$ be the kernel of $\sigma\res_{\Lambda}$. $\sigma(\Rad(\Lambda))$
is a normal amenable subgroup of the arithmetic subgroup $\Lambda_{\bar{\bS}}=\sigma(\Lambda)$
of $\bar{\bS}$. By Lemma \ref{lem:arithmetic-radical-finite-center},  $\sigma(\Rad(\Lambda))$ is trivial meaning that $\Rad(\Lambda)\subseteq K$.
But $K$ is clearly normal and amenable and so we actually have $\Rad(\Lambda)=K$.
It follows that $\Lambda/\Rad(\Lambda)=\Lambda_{\bar{\bS}}$. 
\end{proof}

As any solvable arithmetic group is polycyclic \cite{mal1951certain} we get the following as a consequence:
\begin{cor}\label{cor:amenable radical is virtually polycyclic}
	The amenable radical of an arithmetic group is virtually polycyclic.
\end{cor}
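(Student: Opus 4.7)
The plan is to combine Proposition~\ref{prop:amenable radical of arithmetic groups} with the cited theorem of Mal'cev in a direct way. Let $\Lambda$ be an arithmetic subgroup of a $\Q$-algebraic group $\bG$, and let $\bR$ be the solvable radical of $\bG$. Consider $\Lambda_\bR = \Lambda \cap \bR(\Q)$, which, by Definition~\ref{def:arithmeticity} together with the remarks immediately preceding Lemma~\ref{lem:arithmetic-radical-finite-center}, is an arithmetic subgroup of $\bR$. Since $\bR$ is solvable as an algebraic group, $\Lambda_\bR$ is a solvable arithmetic group, so Mal'cev's theorem \cite{mal1951certain} gives that $\Lambda_\bR$ is polycyclic.

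Now apply part (1) of Proposition~\ref{prop:amenable radical of arithmetic groups}: $\Lambda_\bR$ is of finite index in $\Rad(\Lambda)$. Hence $\Rad(\Lambda)$ is virtually polycyclic, as claimed.

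This really is a one-line deduction; the only thing to watch is that the two inputs are applied in the correct order, namely that $\Lambda_\bR$ is \emph{both} a solvable arithmetic group (so Mal'cev applies) \emph{and} sits as a finite-index subgroup of $\Rad(\Lambda)$ (so that polycyclicity passes to the full radical virtually). I do not anticipate any obstacle, since the hard content has already been absorbed into Lemma~\ref{lem:arithmetic-radical-finite-center} and Proposition~\ref{prop:amenable radical of arithmetic groups}.
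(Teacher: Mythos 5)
Your proof is correct and is precisely the argument the paper intends: the sentence preceding the corollary invokes Mal'cev's theorem that solvable arithmetic groups are polycyclic, and combines it with Proposition~\ref{prop:amenable radical of arithmetic groups}(1) exactly as you do. Nothing to add.
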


\subsection{\label{subsec:Proof-of-Theorem-arithmetic-stiffness}Proof of Theorem
\ref{thm:stiffness of algebraic groups}}

The following lemma and its proof can be found in a more
general form in \cite{carey-moran1984nil-char}:
\begin{lem}[{\cite[Lemma 2.2]{carey-moran1984nil-char}}]
\label{lem:H/N dual acts transitively on fibers}Let $H$ be a discrete
group, $N\leq H$ a normal subgroup, and assume that $A:=H/N$ is
abelian. Consider the continuous action of $\hat{A}$ on $\Ch(H)$ given by pointwise multiplication: $(\chi,\varphi)\mapsto \chi\cdot\varphi$. Then this action preserves the fibers of the restriction
map $\Ch(H)\to \Ch_{H}(N)$ and acts on each of them transitively. 
\end{lem}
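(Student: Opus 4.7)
I would prove the lemma in three parts: (i) well-definedness of the $\hat A$-action on $\Ch(H)$ (with continuity being immediate, since pointwise multiplication is jointly continuous in the topology of pointwise convergence), (ii) preservation of restriction fibers, and (iii) transitivity on each fiber. The first two are straightforward; the third is the main content.

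For (i) and (ii), let $\chi\in\hat A$ be regarded as a unitary character $H\to\T$ via the projection $H\twoheadrightarrow A$. For $\varphi\in\Ch(H)$ with tracial representation $(M_\varphi,\tau_\varphi,\pi_\varphi)$ (Theorem~\ref{thm:thoma correspondence}), define $\pi_\varphi^\chi(h):=\chi(h)\pi_\varphi(h)$. Since $\chi$ is a group homomorphism, $\pi_\varphi^\chi$ is a unitary representation of $H$ into $M_\varphi$ whose image generates the same von Neumann algebra, and $\tau_\varphi\circ\pi_\varphi^\chi=\chi\cdot\varphi$. Hence $(M_\varphi,\tau_\varphi,\pi_\varphi^\chi)$ is a factor tracial representation of $H$, so $\chi\cdot\varphi\in\Ch(H)$. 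Since $\chi\res_N\equiv 1$, we have $(\chi\cdot\varphi)\res_N=\varphi\res_N$, so the restriction fibers are preserved.

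For transitivity, fix $\varphi_1,\varphi_2\in\Ch(H)$ with common restriction $\psi\in\Ch_H(N)$. Let $(M_i,\tau_i,\pi_i)$ be their tracial representations and set $N_i:=\pi_i(N)''\subseteq M_i$. Then $(N_i,\tau_i\res_{N_i},\pi_i\res_N)$ is a tracial representation of $N$ associated with $\psi$, so by the uniqueness part of Theorem~\ref{thm:thoma correspondence} there is a trace-preserving $*$-isomorphism $\theta:N_1\to N_2$ with $\theta\circ\pi_1\res_N=\pi_2\res_N$. Since $N\lhd H$, each unitary $\pi_i(h)$ normalizes $N_i$ and, upon identifying $N_1\cong N_2$ via $\theta$, both $\pi_1(h)$ and $\pi_2(h)$ implement the \emph{same} automorphism $\alpha_h$ of $N_\psi:=N_i$, namely the one induced by $h$-conjugation on $N$. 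Hence $M_1$ and $M_2$ present themselves as two factor completions of the same ``twisted crossed product data'' $N_\psi\rtimes_\alpha A$, differing only in the choice of unitary lifts of the $\alpha_h$'s. The plan is to extract from this ambiguity a $\T$-valued homomorphism $H\to\T$ that is trivial on $N$, i.e.\ a character $\chi\in\hat A$, with the property that $\varphi_2=\chi\cdot\varphi_1$.

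The main technical obstacle is that, a priori, two unitary implementers of the same automorphism $\alpha_h$ in their respective $M_i$ only differ modulo the relative commutant $N_i'\cap M_i$, which contains $Z(N_\psi)$. In the special case $\psi\in\Ch(N)$, the algebra $N_\psi$ is a factor, $Z(N_\psi)=\C$, and the comparison produces scalar phase factors that assemble, by the homomorphism property of the $\pi_i$, into a character of $H$ trivial on $N$. In the general case $\psi\in\Ch_H(N)\setminus\Ch(N)$, one exploits that extremality of $\psi$ in $\Tr_H(N)$ is equivalent to ergodicity of the induced $H$-action on $Z(N_\psi)$; combined with the factor property of each $M_i$ this ergodicity forces the central obstruction to collapse to scalars in $\T$, producing the desired $\chi\in\hat A$ and completing the transitivity argument.
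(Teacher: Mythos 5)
The paper itself offers no proof of this lemma---it is quoted verbatim from Carey--Moran---so I am judging your argument on its own terms. Parts (i) and (ii) are fine (for (i) it is even quicker to note that $\varphi\mapsto\chi\cdot\varphi$ is an affine self-bijection of $\Tr(H)$ with inverse $\varphi\mapsto\bar\chi\cdot\varphi$, hence preserves extreme points). The gap is in (iii), and it sits exactly where you flag the ``main technical obstacle'': you never actually produce the character $\chi$. First, the sentence ``two unitary implementers of the same automorphism $\alpha_h$ in their respective $M_i$ only differ modulo the relative commutant'' does not parse: $\pi_1(h)$ and $\pi_2(h)$ live in \emph{different} algebras, and the only identification you have built is $\theta:N_1\to N_2$; constructing an isomorphism $M_1\to M_2$ that extends $\theta$ and carries $\pi_1(h)$ to a scalar multiple of $\pi_2(h)$ \emph{is} the content of the lemma, not a starting point. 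Second, even granting some comparison, the obstruction lives in the relative commutant $N_\psi'\cap M_i$, which is in general much larger than $Z(N_\psi)$ (for the Heisenberg group with $N$ its center and $\psi$ irrational, $N_\psi=\C$ while $M_i$ is the hyperfinite $\twoone$ factor), so ``factoriality of $N_\psi$ gives scalar phases'' is not justified; nor is the multiplicativity of the putative $h\mapsto\chi(h)$. The final appeal to ergodicity of $H\acts Z(N_\psi)$ is an assertion, not an argument. The von Neumann algebraic route can be completed, but only with a genuine Mackey-type analysis of the extension that your sketch does not supply.

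There is a short proof using only tools already in the paper, which I would recommend instead. Fix $\psi\in\Ch_H(N)$ and let $T_\psi=\{\varphi\in\Tr(H):\varphi\res_N=\psi\}$. Since restriction lands in $\Tr_H(N)$ and $\psi$ is extreme there, $T_\psi$ is a closed face of the metrizable Choquet simplex $\Tr(H)$ (Theorem~\ref{thm:fourier}), hence itself a Choquet simplex with $\ext(T_\psi)$ equal to the fiber of $\Ch(H)\to\Ch_H(N)$ over $\psi$. For any $\varphi$ in the fiber, Fourier orthogonality on the discrete group $A$ gives
\[
\int_{\hat A}(\chi\cdot\varphi)(h)\,\dd\chi=\varphi(h)\,\boldsymbol{1}_N(h)=\tilde\psi(h),
\]
so the pushforward $m_\varphi$ of the Haar measure of $\hat A$ under the orbit map $\chi\mapsto\chi\cdot\varphi$ is a probability measure carried by $\ext(T_\psi)$ with barycenter the trivial extension $\tilde\psi\in T_\psi$. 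By uniqueness of the maximal representing measure in a metrizable simplex, $m_{\varphi_1}=m_{\varphi_2}$ for any two points of the fiber; since $\hat A$ is compact and Haar measure has full support, $\supp(m_{\varphi_i})=\hat A\cdot\varphi_i$, whence $\hat A\cdot\varphi_1=\hat A\cdot\varphi_2$. This yields transitivity with no von Neumann algebraic input beyond Thoma's theorem.
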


We first note that it is enough to show the statement of Theorem \ref{thm:stiffness of algebraic groups} for when $\bG$ is connected. The reason is that it is not hard to see that the amenable radical of two commensurable subgroups is commensurable. Thus the non-connected case is reduced to the connected case, using Lemma \ref{lem:stiffness-subgroup} and Corollary~\ref{cor:stiff commen}.

Let $\bG$ be a connected $\Q$-algebraic group with solvable radical $\bR$,
unipotent radical $\bU$ and semisimple part $\bS$.  By general theory $\bR/\bU$ is abelian and is centralized by $\bS$.
Let $\Lambda$ be an arithmetic subgroup of $\bG$ and let $\mu$ be a Furstenberg measure on $\Lambda$. 
We use the notations introduced in \S\ref{subsec:The-amenable-radical}.
By the classical theorem of Borel and Harish-Chandra $\Lambda_{\bS}$ is a lattice
in the real semisimple Lie group $S=\bS(\R)$.
We denote by $\bar{\mu}$ the measure on $\Lambda_{\bS}$ pushed from $\mu$.
Clearly, this is a Furstenberg measure.
By Mal'cev theory,
$\Lambda_{\bU}$ is a lattice in the simply connected nilpotent Lie
group $U=\bU(\R)$. 
Thus, by Theorem \ref{thm:main-stiffness}, the action of $\Lambda_\bS$ on $\Ch(\Lambda_{\bU})$
is stiff and by Proposition \ref{prop:Ch(N) stiff then also Ch_H(N)} so
is its action on $\Ch_{\Lambda_{\bR}}(\Lambda_{\bU})$. Consider the restricting
map $r:\Ch(\Lambda_{\bR})\to \Ch_{\Lambda_{\bR}}(\Lambda_{\bU})$. The
group $A:=\Lambda_{\bR}/\Lambda_{\bU}$ is abelian and it is centralized
by $\Lambda_{\bS}$. Hence by Lemma \ref{lem:H/N dual acts transitively on fibers}, the compact group $\widehat{A}$
acts transitively on each fiber of $r$, commuting with the action
of $\Lambda_{\bS}$. 
Using Proposition~\ref{prop:X->X/K is relatively stiff} with $X=\Ch(\Lambda_{\bR})$, 
$Y=\Ch_{\Lambda_{\bR}}(\Lambda_{\bU})$ and $K=\hat{A}$, we get that $r$ is relatively stiff with respect to the $\Lambda_{\bS}$-action.
We deduce by Lemma \ref{lem:relaltively-stiff} that the $\Lambda_{\bS}$-action on $\Ch(\Lambda_{\bR})$ is stiff
and, using Lemma~\ref{lem:stationarity-of-quotients}, we get that the $\Lambda$-action on $\Ch(\Lambda_{\bR})$ is stiff.
Finally, using Proposition \ref{prop:amenable radical of arithmetic groups},  
we get by Corollary~\ref{cor:stiff commen} that 
the $\Lambda$-action on $\Ch(\Rad(\Lambda))$ is stiff.

\section{Charmenability of arithmetic groups \label{sec:Charmenability}}

\subsection{A criterion for charmenability}

\begin{defn}
	Let $\Lambda$ be a countable group and let $\Omega$ be a Lebesgue $\Lambda$-space. 
	\begin{enumerate}
		\item $\Omega$ is called \emph{metrically ergodic}, if for any action
		of $\Lambda$ by isometries on a separable metric space $Z$, any
		Lebesgue $\Lambda$-map $\Omega\to Z$ is essentially constant. 
		\item $\Omega$ is called \emph{Zimmer amenable} if for any Borel $\Lambda$-space $W$,
		with an essentially surjective Borel $\Lambda$-map $p:W\to \Omega$ whose
		fibers admit compact convex structures on which $\Lambda$ acts affinely,
		there exists a Lebesgue $\Lambda$-map which is a section of $p$. 
		\item $\Omega$ is called a \emph{$\Lambda$-boundary} if it is both metrically ergodic and Zimmer amenable.
	\end{enumerate}
\end{defn}

The following is a strengthening of a theorem by Kaimanovich \cite{kaimanovich2003double}.
\begin{thm}[{\cite[Theorem~2.7]{bader2014boundaries}}]
	\label{fact:kaimanovich} Let $\Lambda$ be a countable group and
	let $\mu$ be an admissible probability measure on $\Lambda$. Then
	the Furstenberg-Poisson boundary $\Pi(\Lambda,\mu)$ is a $\Lambda$-boundary. 
\end{thm}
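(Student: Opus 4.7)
The statement splits into two independent tasks: verifying Zimmer amenability and verifying metric ergodicity of $\Pi=\Pi(\Lambda,\mu)$, equipped with the $\mu$-stationary measure $\nu$ provided by Theorem~\ref{thm:Poisson-boundary-universal-property}. Each follows a distinct strategy, and together they reprove Kaimanovich's original result in the strengthened form that allows separable (rather than merely compact) metric targets for the metric ergodicity clause.

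For Zimmer amenability, I would adapt Zimmer's classical martingale argument. Given a Borel $\Lambda$-space $W$ with a surjective Borel $\Lambda$-map $p:W\to\Pi$ whose fibers $W_\pi=p^{-1}(\pi)$ carry coherent compact convex structures on which $\Lambda$ acts affinely, the goal is to produce a $\Lambda$-equivariant Lebesgue section. Start from any Borel section $s_0:\Pi\to W$ and form averaged sections $s_n$ along length-$n$ trajectories of the $\mu$-random walk, using the affine fiberwise convex structure to make sense of these averages. Weak-$*$ compactness of the fibers, combined with martingale convergence applied $\mu^{\N}$-almost surely, yields a limit section; the $\mu$-stationarity of $\nu$ together with the Markov property of the walk then upgrades this limit from being $\mu$-harmonic to being genuinely $\Lambda$-equivariant.

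For metric ergodicity, let $(Z,d)$ be a separable metric $\Lambda$-space with an isometric action and $\varphi:\Pi\to Z$ a Lebesgue $\Lambda$-map. The main input is Kaimanovich's double ergodicity: the diagonal $\Lambda$-action on $\Pi(\Lambda,\mu)\times\Pi(\Lambda,\check\mu)$, where $\check\mu(g)=\mu(g^{-1})$, is ergodic for the product stationary measure. Pair $\varphi$ with its analogue $\check\varphi:\Pi(\Lambda,\check\mu)\to Z$, constructed by the same strategy after isometrically embedding $Z$ into a separable Banach space so that the isometric action extends to a linear isometry. The Borel $\Lambda$-invariant function
\[
\Phi:(\pi,\check\pi)\mapsto d(\varphi(\pi),\check\varphi(\check\pi))
\]
is then essentially constant by double ergodicity. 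Testing against a countable distance-separating family of $1$-Lipschitz real-valued functions on $Z$---each of which, after centering, yields a $\Lambda$-invariant Borel function on $\Pi\times\Pi(\Lambda,\check\mu)$---forces the constant to vanish and $\varphi$ itself to be essentially constant on $\Pi$.

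The principal obstacle is the separability upgrade in the metric ergodicity step. In the compact case, $\Prob(Z)$ is itself a compact convex $\Lambda$-space, and the universal property of $\Pi$ applies directly to rule out nonconstant equivariant maps; for separable $Z$, $\Prob(Z)$ fails to be compact and one must either embed $Z$ into the unit ball of a dual Banach space with its weak-$*$ topology to recover compactness, or argue directly via the distance function as sketched. Either route requires careful verification that the Zimmer-amenability step in constructing $\check\varphi$ remains valid under the enlargement, and this is the technical heart of the strengthening over Kaimanovich's original formulation.
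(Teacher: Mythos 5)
First, a point of comparison: the paper does not prove this statement at all --- it is quoted verbatim from \cite[Theorem~2.7]{bader2014boundaries}, so there is no internal proof to measure your argument against. Judged on its own terms, your sketch of Zimmer amenability is the standard Zimmer martingale argument (average an initial Borel section against the conditional distributions of the walk given the boundary point, use fiberwise compactness and martingale convergence, and use the identification of the boundary with the exit $\sigma$-algebra to upgrade harmonicity to equivariance); that half is fine in outline.

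The metric ergodicity half has a genuine gap. The map $\varphi:\Pi(\Lambda,\mu)\to Z$ is \emph{given}, not constructed, so there is no ``same strategy'' producing a companion $\check\varphi:\Pi(\Lambda,\check\mu)\to Z$; and an auxiliary equivariant map obtained by embedding $Z$ in a Banach space and applying compactness lands only in a weak-$*$ compact convex superset of $Z$, where the metric $d$ gives you no leverage. Moreover, the claim that each $1$-Lipschitz $h:Z\to\R$ ``after centering, yields a $\Lambda$-invariant Borel function'' on the double boundary is false: $h\circ\varphi$ is $\Lambda$-invariant only if $h$ itself is $\Lambda$-invariant on $Z$, which generic Lipschitz functions are not; the only naturally invariant functions are symmetric expressions in \emph{two} equivariant maps, which is exactly what you lack. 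Finally, even granting $\Phi\equiv c$, the usual argument forcing $c=0$ (cover $Z$ by balls of radius $c/3$, find one of positive measure, take two independent points in it) requires both coordinates to carry the \emph{same} map, and $\Pi(\mu)\times\Pi(\mu)$ is not ergodic in general --- only $\Pi(\check\mu)\times\Pi(\mu)$ is. The actual content of \cite[Theorem~2.7]{bader2014boundaries} is the \emph{relative} isometric ergodicity of the projection $\Pi(\check\mu)\times\Pi(\mu)\to\Pi(\check\mu)$: one views $(b_-,b_+)\mapsto(b_-,\varphi(b_+))$ as an equivariant section of the constant metric bundle $\Pi(\check\mu)\times Z$ and concludes that $\varphi(b_+)$ depends only on $b_-$, hence is essentially constant by Fubini. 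Alternatively, a direct route avoiding the double boundary: $\varphi_*\nu$ is a $\mu$-stationary measure for an isometric action on a separable metric space, hence $\Lambda$-invariant, and then for each $h$ in a countable separating family the bounded martingale $E[h\circ\varphi\circ\mathrm{bnd}\mid\mathcal F_n]=\int h\,d\bigl((g_1\cdots g_n)_*\varphi_*\nu\bigr)=\int h\,d(\varphi_*\nu)$ is constant yet converges a.s.\ to $h(\varphi(\mathrm{bnd}(\omega)))$, forcing $\varphi$ to be essentially constant. Either repair replaces the step that currently fails.
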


We will make use of the following simple lemma:

\begin{lem}
	\label{lem:amenable-metrically-ergodic}Let $\Lambda$ be a countable group, 
	let $N\lhd \Lambda$ be a normal amenable subgroup and set $\Gamma=\Lambda/N$. If $B$ is a $\Gamma$-boundary then it is also a $\Lambda$-boundary.
\end{lem}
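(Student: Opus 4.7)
The plan is to verify the two defining properties of a $\Lambda$-boundary separately, exploiting the fact that $N$ acts trivially on $B$ (since the $\Lambda$-action on $B$ factors through $\Gamma$) together with the amenability of $N$.

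For metric ergodicity, suppose $\Lambda$ acts on a separable metric space $Z$ by isometries and let $f\colon B\to Z$ be a Lebesgue $\Lambda$-map. Since $N$ acts trivially on $B$ and $N$ is countable, for almost every $b\in B$ and every $n\in N$ we have $f(b)=f(n.b)=n.f(b)$, so $f$ essentially takes values in $Z^N$. Normality of $N$ implies $Z^N$ is $\Lambda$-invariant and the $\Lambda$-action on it factors through $\Gamma$, making $f$ a Lebesgue $\Gamma$-map into a separable isometric $\Gamma$-space. Metric ergodicity of $B$ as a $\Gamma$-space then forces $f$ to be essentially constant.

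For Zimmer amenability, consider a Borel $\Lambda$-space $W$ with an essentially surjective Borel $\Lambda$-map $p\colon W\to B$ whose fibers $F_b=p^{-1}(b)$ carry compact convex structures on which $\Lambda$ acts affinely. Because $N$ acts trivially on $B$, each $n\in N$ preserves every fiber $F_b$ and acts on it by an affine homeomorphism. Since $N$ is amenable, Day's fixed point theorem provides $F_b^N\neq\varnothing$ for every $b$, and $F_b^N$ is again compact and convex. Setting $W^N=\bigcap_{n\in N}\{w\in W:n.w=w\}$ (a Borel set, as $N$ is countable), the restricted map $p\colon W^N\to B$ is essentially surjective with fibers $F_b^N$. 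Normality of $N$ makes $W^N$ a $\Lambda$-invariant Borel subset on which $N$ acts trivially, so $W^N$ is naturally a Borel $\Gamma$-space and $p\colon W^N\to B$ becomes a Borel $\Gamma$-map with non-empty compact convex fibers on which $\Gamma$ acts affinely.

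Zimmer amenability of $B$ as a $\Gamma$-space now yields a Lebesgue $\Gamma$-section $s\colon B\to W^N$, and pulling back along $\Lambda\twoheadrightarrow \Gamma$ this section is automatically $\Lambda$-equivariant into $W$, completing the verification. The only step that requires any care is the Borel measurability of $W^N$ and the $N$-fixed-point selection, but both are immediate from countability of $N$ and the amenability hypothesis, so no serious obstacle arises.
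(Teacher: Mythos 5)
Your proof is correct and follows essentially the same route as the paper's: for metric ergodicity you observe that the essential image of a Lebesgue $\Lambda$-map lands in $Z^N$ and then invoke metric ergodicity over $\Gamma$, and for Zimmer amenability you restrict to $W^N$ (whose fibers are non-empty by amenability of $N$ acting affinely on the compact convex fibers) and apply Zimmer amenability over $\Gamma$. The extra details you supply — normality of $N$ making $Z^N$ and $W^N$ invariant, Day's fixed point theorem, Borel measurability of $W^N$ via countability of $N$ — are all implicit in the paper's argument and correctly filled in.
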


\begin{proof}
	We show that $B$ is metrically ergodic as a $\Lambda$-space assuming it is such as a $\Gamma$-space (in fact regardless of whether $N$ is amenable), and that $B$ is Zimmer amenable as a $\Lambda$-space assuming it is such as a $\Gamma$-space. 
	
	Suppose $\Lambda$ acts by isometries on a separable metric space
	$Z$, and let $f:B\to Z$ be a Lebesgue $\Lambda$-map. Since $N$ acts
	trivially on $B$, the essential image of $f$ is supported on $N$-fixed
	points of $Z$. We may therefore view $f$ as a Lebesgue $\Gamma$-map
	$B\to Z^{N}$. By metric ergodicity of $B$ as a $\Gamma$-space, $f$ must be essentially constant. This shows that $B$ is metrically ergodic as $\Lambda$-space
	
	Consider now a Borel $\Lambda$-space $W$ with an essentially
	surjective Borel $\Lambda$-map $p:W\to B$, whose fibers admit compact
	convex structures on which $\Lambda$ acts affinely. Each fiber of $p$ is $N$-invariant, and therefore admits $N$-fixed points. It follows that the map $p\res_{W^N}:W^N\to B$ is an essentially surjective Borel $\Gamma$-map with compact convex fibers on which $\Gamma$ acts affinely.  By Zimmer amenability of $B$
	as a Lebesgue $\Gamma$-space, there exists a Lebesgue $\Gamma$-map which is a section of $p\res_{W^N}$, and this map is obviously also a Lebesgue $\Lambda$-map and a section of $p$.
\end{proof}

The proof of Theorem \ref{thm:Main-charmenability} is based on a criterion for charmenability established in \cite{bader2022charmenability}. We shall first state it, and then explain the notions involved.

\begin{criterion}[{\cite[Proposition 4.20]{bader2022charmenability}}]
	\label{crit:criterion for charmenability}Let $\Lambda$ be a countable
	group and denote $\Gamma:=\Lambda/\Rad(\Lambda)$. Assume that $\Gamma$ admits a boundary $B$ satisfying the following two conditions:
	\begin{enumerate}
		\item Every separable, ergodic, faithful $(\Gamma,L^{\infty}(B))$-von Neumann
		algebra $(M,E)$ is either invariant or $\Gamma$-singular.
		\item Every Lebesgue $\Lambda$-map $B\to \PD_{1}(\Rad(\Lambda))$ is essentially
		constant. 
	\end{enumerate}
	Then $\Lambda$ is charmenable.
\end{criterion}

Note that when $\Lambda$ is arithmetic, the first condition of the criterion deals only with the semisimple part. The fact that it holds in the higher rank setting is essentially established in \cite{boutonnet2021stationary} and \cite{bader2022charmenability}, see Proposition \ref{prop:first-condition-criterion} bellow. As the main focus of this work is in cases where the amenable radical is non-trivial, it is the second condition that is of our main interest. We shall nevertheless  briefly clarify the first condition, but emphasize that the involved notions do not play an important conceptual role in this work. See \cite{bader2022charmenability} for further details.

\begin{defn}
	Let $A$ be a C*-algebra, and let  $\mathcal{S}(A)$ denote the space of states on $A$. $\phi_1,\phi_2\in \mathcal{S}(A)$ are called \emph{singular} if $\|\phi_1-\phi_2\|=2$. 
	
	Given a Lebesgue space $\Omega$, two Lebesgue maps $\theta_1,\theta_2:\Omega \to \mathcal{S}(A) $ are called \emph{singular} if $\theta_1(\omega)$ and $\theta_2(\omega)$ are singular for a.e $\omega\in \Omega$.  
	
	Two unital completely-positive maps $E_1,E_2:A \to L^\infty(\Omega)$ are called \emph{singular} if the corresponding dual maps $\theta_1,\theta_2:\Omega\to \mathcal{S}(A)$ (given by $\theta_i(\omega)(a)=E_i(a)(\omega)$) are singular.
	 
	Given a separable von Neumann algebra  $M$, two normal unital completely-positive maps $E_1,E_2:M\to L^\infty(\Omega)$ are called \emph{singular} if, $M$ admits a separable unital C*-subalgebra $A$ such that the restrictions $E_1\res_{A},E_2\res_{A}$ are singular.
\end{defn}

\begin{defn}
	Let $\Lambda$ be a countable group. 
	
	A \emph{$\Lambda$-von Neumann algebra} is a von Neumann algebra $N$
	endowed with a ultraweak continuous action of $\Lambda$ by automorphisms. $N$ is said to be \emph{ergodic} if the subalgebra of fixed point $N^\Lambda$ is trivial (i.e  $=\C$). 	
	
	Given a $\Lambda$-von Neumann algebra $N$,
	a \emph{$(\Lambda,N)$-von Neumann algebra} is a pair $\left(M,E\right)$
	where $M$ is a $\Lambda$-von Neumann algebra, and $E$ is a unital completely-positive
	$\Lambda$-equivariant map $E:M\to N$. $(M,E)$ is said \emph{faithful} or \emph{extremal}
	if $E$ satisfies the corresponding properties. If $E(M)\subset N^\Lambda$,
	we say that $(M,E)$ is\emph{ $\Lambda$-invariant}. 	
	 
	Assume $N=L^\infty(\Omega)$ where $\Omega$ is a Lebesgue $\Lambda$-space. Then $E$ is said to be \emph{$\Lambda$-singular} if the normal unital completely-positive maps $E_g:x\in M\mapsto E(gx)\in L^\infty(\Omega)$ are pairwise singular.
\end{defn}

The only important example for our concern is when $N$ is the commutative
von Neumann algebra of $L^{\infty}$-functions on the Furstenberg-Poisson boundary
of $\Lambda$ w.r.t some measure $\mu$. In such case $(\Lambda,N)$-von Neumann algebras may be seen as a non-commutative version of the boundary maps appearing in Theorem \ref{thm:Poisson-boundary-universal-property}.

\subsection{Proof of Theorem~\ref{thm:Main-charmenability}} \label{subsec:thm:main-charmenability}
 
The first condition of Criterion \ref{crit:criterion for charmenability} was essentially shown to hold for arithmetic subgroups of semisimple groups of higher rank:

\begin{prop}\label{prop:first-condition-criterion}
	Let $\Gamma$ be an arithmetic subgroup of a connected  
$\Q$-algebraic group $\bG$, and let $B$ be its Poisson boundary w.r.t a Furstenberg measure $\mu$. Assume that $\bG$ is $\Q$-simple, center-free, without $\R$-anisotropic $\Q$-simple factors, and with $\R$-rank at least $2$. Then the first condition of Criterion \ref{crit:criterion for charmenability} holds true.
\end{prop}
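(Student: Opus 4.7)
The plan is to reduce the statement to the non-commutative Nevo--Zimmer-type dichotomy for higher rank semisimple Lie groups, established in \cite{boutonnet2021stationary} and reformulated in \cite{bader2022charmenability}. Denote $G=\bG(\R)$. The hypotheses on $\bG$ (connected, $\Q$-simple, center-free, no $\R$-anisotropic $\Q$-simple factor, $\R$-rank $\geq 2$) ensure that $G$ is a connected center-free real semisimple Lie group of real rank at least two with no compact factor, while $\Q$-simplicity combined with Borel--Harish-Chandra makes $\Gamma$ an irreducible lattice in $G$. The construction carried out in the proof of Proposition~\ref{prop:Fmeasures} moreover identifies the boundary $B$ with the homogeneous space $G/P$, where $P$ is a minimal $\R$-parabolic subgroup of $G$, equipped with its $K$-invariant probability measure.

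The first step is to induce the given separable, ergodic, faithful $(\Gamma, L^\infty(B))$-von Neumann algebra $(M, E)$ to a separable, ergodic, faithful $(G, L^\infty(G/P))$-von Neumann algebra $(\tilde M, \tilde E)$ via the standard $\Gamma$-to-$G$ induction functor. The precise construction, together with the verification that separability, ergodicity and faithfulness are preserved, and that $E$ is $\Gamma$-invariant (resp. $\Gamma$-singular) if and only if $\tilde E$ is $G$-invariant (resp. $G$-singular), is carried out in detail in Section~4 of \cite{bader2022charmenability}.

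The second step is to apply the main dichotomy theorem to $(\tilde M, \tilde E)$: when $G$ is a connected center-free real semisimple Lie group of real rank at least two with no compact factor, every ergodic faithful $(G, L^\infty(G/P))$-von Neumann algebra is either $G$-invariant or $G$-singular. This is the main content of \cite{boutonnet2021stationary}, whose hypotheses are exactly those made available here. Coupled with the correspondence of Step~1, this forces the original $E$ to be $\Gamma$-invariant or $\Gamma$-singular, which is the desired conclusion.

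The main obstacle, and essentially the only point beyond direct citation, is establishing the compatibility between the induction functor on von Neumann algebras and the singularity notion, in particular showing that singularity of the family $\{\tilde E_g\}_{g\in G}$ descends to singularity of $\{E_\gamma\}_{\gamma\in\Gamma}$ under restriction. This bookkeeping, together with the delicate issue that $(\tilde M,\tilde E)$ need not be faithful a priori unless one takes the ``essential'' quotient before applying the dichotomy, is precisely what \cite{bader2022charmenability} streamlines, and it is this streamlining that one invokes to complete the proof.
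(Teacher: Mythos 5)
Your overall strategy---induce the $(\Gamma,L^{\infty}(B))$-von Neumann algebra up to a $(G,L^{\infty}(G/P))$-von Neumann algebra and apply a $G$-level dichotomy---is not what the paper does, and as written it has genuine gaps. First, the result you attribute to \cite{boutonnet2021stationary} is not the one it proves: the relevant theorem there (Theorem B, as used in the paper) is a \emph{structure} theorem stated directly for a lattice $\Gamma$ in a \emph{simple} higher-rank group, producing a proper parabolic $Q$ and a $\Gamma$-equivariant normal embedding $\iota:L^{\infty}(G/Q)\to M$ with $E\circ\iota=p_Q^*$. The ``invariant or singular'' dichotomy is not its statement; singularity has to be \emph{derived} from the embedded copy of $L^{\infty}(G/Q)$, which the paper does by viewing $(M_0,E\res_{M_0})$ with $M_0=\iota(L^{\infty}(G/Q))$ as a $(G,L^{\infty}(G/P))$-algebra and invoking the singularity criterion of \cite[Proposition 4.15]{bader2022charmenability} together with \cite[Example 2.14, Lemma 6.2]{bader2022charmenability}, then passing from the subalgebra $M_0$ to $M$. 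Your ``main obstacle'' paragraph correctly identifies that the compatibility of induction with singularity is the crux, but the claim that this is ``carried out in detail in Section~4 of \cite{bader2022charmenability}'' is not accurate---no such induction functor with the required preservation properties is established there, and since the available theorems are already formulated for lattices, the induction detour is both unsupported and unnecessary.

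Two further concrete gaps. (i) You assert that the hypotheses force $G=\bG(\R)$ to have no compact factors; this is false ($\Q$-simplicity does not prevent $\bG$ from having compact $\R$-simple factors, e.g.\ via restriction of scalars). The paper needs the preliminary reduction that $\Rad(\Gamma)$ is trivial (Lemma~\ref{lem:arithmetic-radical-finite-center}) and that $\Gamma$ meets any compact simple factor in a central, hence trivial, subgroup, so that one may pass to the quotient by the compact factors. (ii) You treat semisimple $G$ uniformly, but \cite{boutonnet2021stationary} covers only the case where $G$ is simple; when $G$ has at least two non-compact simple factors (an irreducible lattice in a product), the paper must switch to a different input, namely \cite[Proposition 6.1]{bader2022charmenability} on arithmetic groups of product type, using center-freeness of $\Gamma$ to extract $\Gamma$-singularity of $E$ from that proof. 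Your plan has no mechanism for this case, so even granting the induction step it would not close the argument.
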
 
 
\begin{proof}
	Let $G=\bG(\R)$. By Lemma \ref{lem:arithmetic-radical-finite-center}, $\Rad(\Gamma)$ is trivial. The intersection of  $\Gamma$ with any compact simple factor of $G$ is central, hence trivial. We may therefore assume that $G$ has no compact simple factors. Let $(M,E)$ be a separable, ergodic, faithful $(\Gamma,L^\infty(B))$-von Neumann algebra which is not $\Gamma$-invariant. We explain how to deduce singularity of $E$ from the work of \cite{boutonnet2021stationary} or of \cite{bader2022charmenability}, depending on whether $G$ is simple or a product of a few simple factors.

	Assume first $G$ is simple. Let  $P$ be a minimal parabolic subgroup of $G$ so that $B=G/P$.
	Then by \cite[Theorem B]{boutonnet2021stationary} there is a proper closed subgroup $P\leq Q \lneq G$ and a unital normal $\Gamma$-equivariant $*$-embedding $\iota:L^\infty(G/Q)\to M$ such that $E\circ\iota$ is equal to the natural embedding $p_Q^*: L^\infty(G/Q)\to L^\infty(G/P)$. Note that the theorem in this reference is formalized in a somewhat dual way, though it is equivalent (see \cite[Theorem E]{houdayer2021noncommutative}). Let $M_0=\iota(L^\infty(G/Q))$ and consider $(M_0,E\res_{M_0})$ as a $(G,L^\infty(G/P))$-von Neumann algebra. It is not $G$-invariant as $Q$ is proper, and it is extremal because $M_0$ is ergodic (see \cite[Lemma 4.16]{bader2022charmenability}). Due to \cite[Example 2.14, Lemma 6.2]{bader2022charmenability}, the singularity criterion of \cite[Proposition 4.15]{bader2022charmenability} applies so that $(M_0,E\res_{M_0})$ is $G$-singular. It is therefore in particular $\Gamma$-singular. Since $M$ contains $M_0$ as a subalgebra, it follows that $(M,E)$ is singular as well (see \cite[Remark 4.12]{bader2022charmenability}). 
	
	Now assume $G$ has at least two (non-compact) simple factors. Then, in the terminology of \cite{bader2022charmenability}, $\Gamma$ is an arithmetic group of product type and is therefore charmenable by \cite[Proposition 6.1]{bader2022charmenability}. More specifically, it is transparent that the proof of this proposition all boils down to showing that $E$ and $E_\gamma$ are singular for any non-central element $\gamma\in \Gamma$. But in our case  $\Gamma$ is center-free, and so $E$ is $\Gamma$-singular. 
\end{proof}

The dynamical study of \S\ref{sec:Stiffness results}, which eventually lead to the proof of Theorem \ref{thm:stiffness of algebraic groups}, was aimed towards the establishment of the second condition of Criterion \ref{crit:criterion for charmenability} for arbitrary arithmetic groups:
 
\begin{prop}\label{prop:second-condition-criterion}
	Let $\Lambda$ be an arithmetic group and let $(B,\nu_B)$ be its Poisson boundary w.r.t a Furstenberg measure $\mu$. Then the second condition of Criterion \ref{crit:criterion for charmenability} holds true.
\end{prop}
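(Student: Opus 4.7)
The plan is to verify that any Lebesgue $\Lambda$-equivariant map $f\colon B\to\PD_1(R)$, where $R=\Rad(\Lambda)$, is essentially constant. Since $\Lambda$ is countable, $\PD_1(R)$ is a metrizable compact convex $\Lambda$-set, so by the universal property of the Furstenberg--Poisson boundary (Theorem~\ref{thm:Poisson-boundary-universal-property}) together with Lemma~\ref{lem:invcrit}, this reduces to showing $\mu$-stiffness of $\Lambda\curvearrowright\PD_1(R)$, namely that every $\mu$-stationary element of $\PD_1(R)$ is $\Lambda$-invariant.

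The crux of the argument, and the step I expect to be the main obstacle, is to show that $R$ acts trivially on $B$. By the construction of Furstenberg measures in Proposition~\ref{prop:furstmeasures-arithmetic}, $B$ is realized as $\bar\bG(\R)/\bar\bN(\R)$ with $\bar\bG=\bG/\bR$, and the $\Lambda$-action factors through $\bar\Lambda=\Lambda/\Lambda_\bR$; hence $\Lambda_\bR$ acts trivially. For the remaining quotient, Proposition~\ref{prop:amenable radical of arithmetic groups} identifies $R/\Lambda_\bR$ with the kernel of the natural map $\bar\Lambda\to\Lambda_{\bar\bS}$, where $\bar\bS$ is the quotient of $\bar\bG$ by its $\R$-anisotropic $\Q$-simple factors and its (finite) maximal normal subgroup. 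Both constituents of $\ker(\bar\bG\to\bar\bS)$ lie in $\bar\bN$: compact semisimple groups coincide with their own minimal $\R$-parabolic subgroups, while the center of a connected semisimple group lies in every parabolic. Consequently $R/\Lambda_\bR\subseteq\bar\bN(\R)$ and acts trivially on $\bar\bG(\R)/\bar\bN(\R)=B$.

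With $R$ acting trivially on $B$, the $\Lambda$-equivariance of $f$ forces $f(b)=r.f(b)$ for every $r\in R$ and a.e.\ $b$, so $f$ takes values in $\PD_1(R)^R=\Tr(R)$ almost surely. Thoma's theorem~\ref{thm:fourier} identifies $\Tr(R)$ $\Lambda$-equivariantly with the Choquet simplex $\Prob(\Ch(R))$. By Theorem~\ref{thm:stiffness of algebraic groups}, $\Lambda\curvearrowright\Ch(R)$ is $\mu$-stiff, which by definition means $\Prob(\Ch(R))^\mu=\Prob(\Ch(R))^\Lambda$; combined with Lemma~\ref{lem:choquet-kakutani}, this yields $\Tr(R)^\mu=\Tr_\Lambda(R)$. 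Hence the barycenter $\bary(f_*\nu_B)\in\Tr(R)^\mu$ is $\Lambda$-invariant, and a final application of Lemma~\ref{lem:invcrit} concludes that $f$ is essentially constant, as required.
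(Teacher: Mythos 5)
Your proof is correct and follows essentially the same route as the paper: reduce via the Poisson-boundary universal property and Lemma~\ref{lem:invcrit} to showing that the barycenter of the pushed-forward measure is a $\mu$-stationary trace of $\Rad(\Lambda)$, then invoke Theorem~\ref{thm:stiffness of algebraic groups} through Thoma's identification $\Tr(\Rad(\Lambda))\simeq\Prob(\Ch(\Rad(\Lambda)))$. The one point the paper asserts without proof and you spell out is that $\Rad(\Lambda)$ acts trivially on $B$; your justification is sound, except that the last step should note that $\ker(\bar\bG\to\bar\bS)$ is \emph{normal} and contained in $\bar\bN$, hence lies in every conjugate of $\bar\bN$ --- mere containment in $\bar\bN(\R)$ would not by itself force a trivial action on $\bar\bG(\R)/\bar\bN(\R)$.
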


\begin{proof}
	Let $\theta:B\to \PD_{1}(\Rad(\Lambda))$ be a Lebesgue $\Lambda$-map.
	Since $\Rad(\Lambda)$ acts trivially on $B$, the essential image
	of $\theta$ consists of traces, namely, we may view $\theta$ as a Lebesgue
	$\Lambda$-map $\theta:B\to \Tr(\Rad(\Lambda))$. 
	By pushing forward $\nu_B$ through $\theta$ and taking its barycenter, we get a $\mu$-stationary trace $\varphi\in \Tr(\Rad(\Lambda))^\mu$. The $\mu$-stationary probability measure on $\Ch(\Rad(\Lambda))$ corresponding to $\varphi$ (Theorem \ref{thm:fourier}) is $\Lambda$-invariant by Theorem~\ref{thm:stiffness of algebraic groups}. 
	It follows that $\varphi$ is $\Lambda$-invariant, which means that $\theta$
	is essentially constant by Lemma~\ref{lem:invcrit}.
\end{proof}

\begin{proof}[Proof of Theorem \ref{thm:Main-charmenability}]
Let 
$\bG$ be a $\Q$-algebraic group with solvable radical $\bR$ and $\bS=\bG/\bR$, and let $\Lambda_{\bS}$ be the image of  the arithmetic subgroup $\Lambda$ under the quotient map $\bG\to \bS$.  
We first observe that if $\bS$ has more than one $\R$-isotropic $\Q$-simple 
factor then $\Lambda_\bS$ has a normal subgroup which is neither amenable nor co-amenable.
Also, if the real rank of $\bS$ is $1$ then $\Lambda_\bS$ has such a normal subgroup, as it is relatively hyperbolic. In both cases we see that $\Lambda$ admits a normal subgroup which is neither amenable nor co-amenable which implies that $\Lambda$ is not charmenable \cite[Propositions 3.3]{bader2022charmenability}.
This shows that the conditions imposed on $\bS$ in the statement of the theorem are necessary for charmenability of $\Lambda$. 
We now turn to show that they are sufficient as well. Note that as $\Lambda$ is an arbitrary arithmetic subgroup of $\bG$, this will in particular show that either all arithmetic subgroups of $\bG$ are charmenable, or else none of them are. 

If the real rank of $\bS$ is
$0$ then $\bS(\R)$ is compact in which case $\bG(\R)$
is amenable, hence $\Lambda$ is amenable and a fortiori charmenable.
We thus assume from now on that $\bS$ has at most one $\R$-isotropic $\Q$-simple  factor
and that the real rank of $\bS$ is at least
$2$, and we argue to show that $\Lambda$ is charmenable.
We will do so by showing that it satisfies the first condition of Criterion~\ref{crit:criterion for charmenability} (the second condition holds in general by Proposition \ref{prop:second-condition-criterion})

Let $\bar{\bS}$ denote the quotient of $\bS$ by all $\R$-anisotropic $\Q$-simple factors, as well as by the remaining center. Let $\Lambda_{\bar{\bS}}$ denote the image of $\Lambda$ under the quotient map $\bG\to \bar{\bS}$.  Fix a Furstenberg measure
$\mu$ on $\Lambda$ and let $\bar{\mu}$ be the pushforward measure
onto $\Lambda_{\bar{\bS}}$, which is a Furstenberg measure as well. 
The Furstenberg-Poisson boundary of $\Lambda$ w.r.t $\mu$
is the same as the Furstenberg-Poisson boundary $\Lambda_{\bar{\bS}}$ w.r.t $\bar{\mu}$
and we denote it by $B$. By Theorem~\ref{fact:kaimanovich}, $B$ is
a $\Lambda_{\bar{\bS}}$-boundary and therefore
also a $\Lambda$-boundary by Lemma \ref{lem:amenable-metrically-ergodic}.
By Proposition \ref{prop:amenable radical of arithmetic groups}, we have that 
$\Lambda/\Rad(\Lambda)=\Lambda_{\bar{\bS}}$. In particular, this is an arithmetic subgroup of $\bar{\bS}$. This first condition of Criterion \ref{crit:criterion for charmenability} therefore follow from  Proposition \ref{prop:first-condition-criterion}. 
\end{proof}

\section{Properties of charmenable groups}\label{sec:cor-of-charm}

This section is dedicated for proving Propositions \ref{prop:charmenable dichotomy} and \ref{prop:charmenable dichotomy prop (T)} from the introduction. Here too, $\Lambda$ denotes a countable group.

\subsection{Amenable traces}
Let $\pi:\Lambda\to \mathcal{U}(\mathcal{H})$ be a representation. A sequence of vectors $v_n\in \mathcal{H}$ of norm $1$ is said to be \emph{almost $\Lambda$-invariant} if:
\[
\forall g\in \Lambda:\quad \|\pi(g)v_n-v_n\|\to 0
\]
If such a sequence exists, we say that $\pi$ has \emph{almost invariant vectors}.
The representation $\pi$ is called \emph{amenable} if $\pi\otimes \bar{\pi}$ has almost invariant vectors, where $\bar{\pi}$ is the dual representation.  See \cite{bekka1990amenable} for more on this notion, including properties and equivalent definitions. A trace $\varphi$ on $\Lambda$ is called \emph{amenable} if its GNS representation is amenable. This should not be confused with the stronger notion of a von Neumann amenable trace given in Definition \ref{def:vNamenable}. The second condition in the definition of charmenability implies the dichotomy stated on traces:

\begin{prop}[\protect{\cite[Proposition 3.2]{bader2022charmenability}}]\label{prop:trace-dichotomy}
	Assume that every character of $\Lambda$ is either von Neumann amenable, or supported on $\Rad{(\Lambda)}$. Then any trace of $\Lambda$ is either amenable or supported on $\Rad({\Lambda})$.
\end{prop}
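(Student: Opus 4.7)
The plan is to decompose the given trace via Choquet theory and treat each piece separately. By Theorem~\ref{thm:fourier}, write $\varphi = \int_{\Ch(\Lambda)} \chi \, d\nu(\chi)$ for the unique probability measure $\nu$. By the hypothesis, $\Ch(\Lambda) = A \sqcup B$ can be partitioned (up to $\nu$-null sets) with $A$ consisting of von Neumann amenable characters and $B$ of characters supported on $\Rad(\Lambda)$. Setting $\varphi_A = \int_A \chi \, d\nu$ and $\varphi_B = \int_B \chi \, d\nu$, one has $\varphi = \varphi_A + \varphi_B$, and $\varphi_B$ automatically vanishes outside $\Rad(\Lambda)$. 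If $\nu(A) = 0$ then $\varphi = \varphi_B$ is supported on $\Rad(\Lambda)$ and the proof is complete; otherwise the remaining task is to show that $\varphi$ is amenable.

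The first main step is to show that the normalized trace $\varphi_A' := \nu(A)^{-1}\varphi_A$ is (in fact von Neumann) amenable. Via the Thoma correspondence (Theorem~\ref{thm:thoma correspondence}), the Choquet decomposition of $\varphi_A'$ matches the central decomposition of the associated tracial von Neumann algebra $M_{\varphi_A'}$: one has $M_{\varphi_A'} \simeq \int^{\oplus}_A M_\chi \, d\nu$ with each factor $M_\chi$ amenable by hypothesis. Since direct integrals of amenable tracial von Neumann algebras are amenable, $M_{\varphi_A'}$ is amenable. Composing any conditional expectation $\mathcal{B}(\mathcal{H}_{\varphi_A'}) \to M_{\varphi_A'}$ with the tracial state then produces a $\pi_{\varphi_A'}(\Lambda)$-invariant state on $\mathcal{B}(\mathcal{H}_{\varphi_A'})$, so $\pi_{\varphi_A'}$ is an amenable representation in the sense of \cite{bekka1990amenable}.

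The second main step is to promote amenability from $\varphi_A'$ to $\varphi$. The splitting $\varphi = \nu(A)\varphi_A' + \nu(B)\varphi_B'$ corresponds on the von Neumann algebraic side to a central projection $p \in Z(M_\varphi)$ with $p\mathcal{H}_\varphi$ carrying (a copy of) the GNS data of $\varphi_A'$. Given any $\pi_{\varphi_A'}(\Lambda)$-invariant state $\phi$ on $\mathcal{B}(p\mathcal{H}_\varphi)$, the formula $\tilde\phi(T) := \phi(pTp)$ defines a state on $\mathcal{B}(\mathcal{H}_\varphi)$; centrality of $p$ in $M_\varphi$ forces $p$ to commute with $\pi_\varphi(g)$ for every $g \in \Lambda$, and this immediately makes $\tilde\phi$ invariant under the conjugation action of $\pi_\varphi(\Lambda)$. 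Hence $\pi_\varphi$ is an amenable representation, i.e.\ $\varphi$ is an amenable trace.

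The main obstacle will be invoking correctly the fact that a direct integral of amenable (equivalently, injective) tracial von Neumann algebras is itself amenable: this is a standard but non-elementary input ultimately resting on Connes's characterization of injectivity. A secondary subtlety is the rigorous identification of the Choquet decomposition of $\varphi$ with the central decomposition of $M_\varphi$, which follows from the uniqueness clause in Theorem~\ref{thm:fourier} combined with Theorem~\ref{thm:thoma correspondence}, together with the disintegration theorem applied to the center $Z(M_\varphi)$.
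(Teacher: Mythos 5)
The paper does not prove this proposition; it imports it verbatim from \cite[Proposition~3.2]{bader2022charmenability}, so there is no in-paper argument to compare against. Your proposal is correct and is essentially the standard argument one would expect (and, as far as I can tell, the one in the cited reference): disintegrate the trace over $\Ch(\Lambda)$, observe that the part concentrated on radical-supported characters vanishes off $\Rad(\Lambda)$, and if the complementary part has positive mass, upgrade fibrewise von Neumann amenability to amenability of the whole GNS representation via injectivity of the direct integral, a hypertrace, and Bekka's invariant-state characterization, then pass from the central corner $p\mathcal{H}_\varphi$ to all of $\mathcal{H}_\varphi$ by $T\mapsto\phi(pTp)$. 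Two small points worth making explicit: take $B$ to be the (closed, hence Borel) set of characters vanishing off $\Rad(\Lambda)$ and $A$ its complement, so that the measurability of the partition is immediate and every $\chi\in A$ is von Neumann amenable by hypothesis; and note that the corner argument uses amenability passing \emph{up} from a subrepresentation to the ambient representation, which is the direction that actually holds (the converse fails in general). The inputs you flag --- that a direct integral of injective tracial von Neumann algebras is injective, and the matching of the Choquet decomposition of $\varphi$ with the central decomposition of $M_\varphi$ --- are genuinely needed and are standard, so the proof is complete modulo those citations.
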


Note that a representation $\pi$ of a product of groups $\Gamma_{1}\times\Gamma_{2}$
may have no almost invariant vectors even though both  restrictions
$\pi\res_{\Gamma_{1}}$, $\pi\res_{\Gamma_{2}}$ have. Indeed,
if $\pi_{i}$ is a representation of $\Gamma_{i}$ without almost
invariant vectors then we may extend each $\pi_{i}$ to $\Gamma_{1}\times\Gamma_{2}$
trivially, and consider $\pi=\pi_{1}\oplus\pi_{2}$. The following
proposition shows that this is the only obstruction. 
\begin{prop}
	\label{prop:inv vectors of commuting subgroups}
	Suppose that  $\Gamma_{1},\Gamma_{2}$
	are two subgroups of $\Lambda$ which commute with each other and satisfy $\Lambda=\Gamma_1\Gamma_2$. Let $\pi:\Lambda\to\mathcal{U}(\mathcal{H})$ be a representation of $\Lambda$.    Assume
	that the restrictions $\pi\res_{\Gamma_{1}}$ and $\pi\res_{\Gamma_{2}}$
	admit almost invariant vectors, whereas $\pi$ does not. Then $\pi$
	decomposes as a direct sum $\pi=\pi_{1}\oplus\pi_{2}$ such that $\pi_{1}\res_{\Gamma_{1}}$
	and $\pi_{2}\res_{\Gamma_{2}}$ admit almost invariant vectors but
	$\pi_{1}\res_{\Gamma_{2}}$ and $\pi_{2}\res_{\Gamma_{1}}$ do not. 
\end{prop}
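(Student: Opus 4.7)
The plan is to build a $\Lambda$-invariant orthogonal decomposition $\mathcal{H}=\mathcal{H}_1\oplus \mathcal{H}_2$ in which the almost-invariance of $\pi\res_{\Gamma_2}$ is concentrated in $\mathcal{H}_2$ and that of $\pi\res_{\Gamma_1}$ in $\mathcal{H}_1$. Let $\mathcal{F}$ denote the collection of closed $\Lambda$-invariant subspaces $V\subseteq \mathcal{H}$ for which $\pi\res_{\Gamma_2, V}$ admits no almost invariant vectors. I would extract a maximal element $\mathcal{H}_1\in \mathcal{F}$ via Zorn's lemma; the technical point is to verify that if $\{V_\alpha\}$ is an increasing chain in $\mathcal{F}$, then $V:=\overline{\bigcup_\alpha V_\alpha}$ is still in $\mathcal{F}$. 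This uses the countability of $\Gamma_2$ in a diagonal argument: an almost invariant sequence in $V$ would be approximable by vectors sitting in chain members, and a careful diagonalization would eventually produce almost invariant vectors inside a single $V_\alpha$, contradicting $V_\alpha \in \mathcal{F}$. Set $\mathcal{H}_2:=\mathcal{H}_1^\perp$ (which is $\Lambda$-invariant) and $\pi_i:=\pi\res_{\mathcal{H}_i}$.

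I will then use the elementary fact that a direct sum $\sigma_1\oplus \sigma_2$ of representations admits almost invariant vectors if and only if at least one summand does: a unit $(F,\epsilon)$-invariant vector $v=v_1+v_2$ has at least one summand of norm $\geq 1/\sqrt{2}$, whose normalization is $(F,\sqrt{2}\epsilon)$-invariant. Applied to $\pi\res_{\Gamma_2}=\pi_1\res_{\Gamma_2}\oplus \pi_2\res_{\Gamma_2}$: since $\pi_1\res_{\Gamma_2}$ has no AI by construction and $\pi\res_{\Gamma_2}$ does by hypothesis, $\pi_2\res_{\Gamma_2}$ must have AI. Applied to $\pi\res_{\Gamma_1}$, at least one of $\pi_i\res_{\Gamma_1}$ has AI, so it suffices to rule out that $\pi_2\res_{\Gamma_1}$ has AI; then $\pi_1\res_{\Gamma_1}$ is forced to have AI.

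Suppose, for contradiction, that $\pi_2\res_{\Gamma_1}$ has AI. The maximality of $\mathcal{H}_1$ in $\mathcal{F}$ implies that every non-zero $\Lambda$-invariant closed subspace $W\subseteq \mathcal{H}_2$ has $\pi\res_{\Gamma_2,W}$ with AI (else $\mathcal{H}_1\oplus W\in \mathcal{F}$ would strictly contain $\mathcal{H}_1$). Pick a unit $\Gamma_1$-AI vector $v\in \mathcal{H}_2$ and let $W(v)=\overline{\pi(\Lambda)v}$. Then $W(v)$ carries $\Gamma_2$-AI vectors. Because $\Gamma_1$ and $\Gamma_2$ commute and $\Lambda=\Gamma_1\Gamma_2$, every $\Gamma_2$-translate $\pi(\gamma_2)v$ inherits the same $\Gamma_1$-AI error from $v$ via $\pi(\gamma_1)\pi(\gamma_2)v=\pi(\gamma_2)\pi(\gamma_1)v$. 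The objective is to extract, from $W(v)$, a single unit vector that is simultaneously almost invariant for $\Gamma_1$ and $\Gamma_2$; such a vector is automatically $\Lambda$-AI (by the triangle inequality applied to $\pi(\gamma_1\gamma_2)v-v$), contradicting the hypothesis that $\pi$ has no AI.

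The main obstacle will be this final combining step. Although the $\Gamma_1$-almost invariance propagates freely along the $\Gamma_2$-orbit of $v$, controlling the $\ell^1$-masses of coefficients in approximations $\sum c_i \pi(\lambda_i)v$ of $\Gamma_2$-AI vectors in $W(v)$ is not automatic. I would handle this by passing to the dual picture: the vector states $\phi_n(T)=\langle T v_n,v_n\rangle$ on $\mathcal{B}(\mathcal{H}_2)$ associated to a $\Gamma_1$-AI sequence $v_n$ accumulate (weak-$\ast$) to a state $\phi$ with $\phi(\pi(\gamma_1))=1$ for all $\gamma_1\in \Gamma_1$, and the $\Gamma_2$-AI of the cyclic subspaces then allows one to refine $\phi$ to be trivial also on $\pi(\Gamma_2)$, from which one reconstructs simultaneously AI vectors by a GNS-type argument using that $\pi(\Gamma_1)''$ and $\pi(\Gamma_2)''$ commute.
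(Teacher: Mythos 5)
There are two genuine gaps here, and both are fatal as written. The first is the Zorn's lemma step. The family $\mathcal{F}$ of closed $\Lambda$-invariant subspaces on which the restriction of $\pi$ to $\Gamma_2$ has no almost invariant vectors is \emph{not} closed under closures of increasing unions, so the chain condition fails and a maximal element need not exist. Concretely, let $\mathbb{Z}$ act on $\bigoplus_{n\geq 1}\mathbb{C}$ by the character $e^{2\pi i/n}$ on the $n$-th summand: every finite partial sum has no almost invariant vectors, the closure of their union does, and the collection of invariant subspaces without almost invariant vectors (exactly those supported on finitely many summands) has no maximal element whatsoever. This does not satisfy all the hypotheses of the proposition, but it refutes the chain-closure principle you invoke: your diagonalization only produces an almost invariant sequence whose $n$-th term lies in $V_{\alpha_n}$ with $\alpha_n$ strictly increasing, and nothing pushes these terms into a single member of the chain. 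The point is that ``no almost invariant vectors'' is a quantitative property, and any decomposition must fix the relevant $\epsilon$ once and for all. That is what the paper's proof does: it chooses symmetric admissible measures on $\Gamma_1,\Gamma_2$ with Markov operators $A_1,A_2$, uses the absence of almost invariant vectors for $\pi$ to get a uniform $\epsilon>0$ with $\max_i\|A_iv-v\|>\epsilon$ for all unit $v$, and takes $\mathcal{K}_i$ to be the range of the spectral projection $p_i$ of $A_i$ on $[1-\epsilon,1]$; then $\mathcal{K}_1\perp\mathcal{K}_2$, each $p_i$ commutes with $\pi$ of the other subgroup since it lies in $\{A_i\}''$, and $\mathcal{H}_2:=\bigcap_{\gamma_1\in\Gamma_1}\pi(\gamma_1)\mathcal{K}_1^{\bot}$ is the desired $\Lambda$-invariant summand.

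The second gap is the concluding step, which you yourself flag as the main obstacle. Having found a $\Gamma_1$-almost-invariant sequence and a $\Gamma_2$-almost-invariant sequence inside the same cyclic subspace $W(v)$, you need a single sequence that is simultaneously almost invariant for both; but producing such a sequence from two separate ones is essentially the content of the proposition, and it is false in general --- the remark preceding the statement exhibits $\pi_1\oplus\pi_2$ in which each $\Gamma_i$ separately has almost invariant vectors while $\Lambda$ does not, and passing to a cyclic subspace does not by itself remove this obstruction. The proposed repair via states is not justified: a weak-$\ast$ limit $\phi$ of the vector states of a $\Gamma_1$-almost-invariant sequence does satisfy $\phi(\pi(\gamma_1))=1$, but there is no averaging procedure available (since $\Gamma_2$ is not assumed amenable) and no compatibility between the two weak-$\ast$ limits that would force $\phi(\pi(\gamma_2))=1$ as well. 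Until this step is made precise the argument does not close, and the spectral-projection route above is the natural way to make it quantitative.
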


\begin{proof}
	We first recall a general fact about almost invariant vectors. Consider
	a countable group $\Gamma$ and fix an admissible probability measure
	$\mu$ on $\Gamma$ satisfying $\mu(\gamma^{-1})=\mu(\gamma)$ for
	all $\gamma\in\Gamma$. Given a unitary representation $\rho:\Gamma\to\mathcal{U}(\mathcal{H})$,
	there is the associated Markov operator:
	\[
	\mathcal{B}(\mathcal{H})\ni A_{\mu}:\,\,\,\,\,\,\,v\mapsto\sum_{\gamma\in\Gamma}\mu(\gamma)\rho(\gamma)v
	\]
	Then $A_{\mu}$ is a positive contractive operator, and its norm is
	$1$ if and only if $\rho$ has almost invariant vectors \cite[Proposition 7.1.4]{peterson2013notes}.
	We apply this reasoning to $\Gamma_{1}$ and $\Gamma_{2}$, that is
	we fix generating probability measures on each of those subgroups,
	and consider the corresponding Markov operators denoted by
	$A_{1}$ and $A_{2}$. Then $1$ is in the spectrum of both $A_{1}$
	and $A_{2}$, and yet, there is some $\epsilon>0$ such that:
	\begin{equation}
		\max_{i=1,2}\|A_{i}v-v\|>\epsilon,\quad \forall v\in\mathcal{H}\,\text{ with }\,||v||=1\label{eq:corner is missing}
	\end{equation}
	
	For $i=1,2$, let $p_{i}\in\mathcal{B}(\mathcal{H})$ denote the
	spectral projection of $A_{i}$ associated with the interval $[1-\epsilon,\epsilon]$,
	and denote by $\mathcal{K}_{i}:=p_{i}\mathcal{H}\subseteq\mathcal{H}$
	the corresponding subspace. Then $\mathcal{K}_{i}$ is an $A_{i}$-invariant
	subspace which contains almost $\Gamma_{i}$-invariant vectors, whereas
	its orthogonal complement $\mathcal{K}_{i}^{\bot}$ does not. Moreover,
	it follows from Eq. (\ref{eq:corner is missing}) that $\mathcal{K}_{1}$
	and $\mathcal{K}_{2}$ are orthogonal subspaces i.e $\mathcal{K}_{2}\subseteq\mathcal{K}_{1}^{\bot}$.
	Finally note that $\mathcal{K}_{1}$ is $\Gamma_{2}$-invariant (and
	vice versa). Indeed, this can be seen as a consequence of the fact
	that von Neumann algebras are closed under taking spectral projections, so that $p_{1}$ is in the double commutant of $A_{1}$ and thus commutes with $\pi(\Gamma_2)$. 
	
 	Thus for each $\gamma_{1}\in\Gamma_{1}$, $\pi(\gamma_{1})\mathcal{K}_{1}^{\bot}$
	is a $\Gamma_{2}$-invariant subspace that contains $\mathcal{K}_{2}$.
	The intersection $\mathcal{H}_{2}:=\bigcap_{\gamma_{1}\in\Gamma_{1}}\pi(\gamma_{1})\mathcal{K}_{1}^{\bot}$
	is therefore a $\Gamma_{1}\times\Gamma_{2}$-invariant subspace which
	contains $\mathcal{K}_{2}$, and as a result admits almost $\Gamma_{2}$-invariant
	vectors. $\mathcal{H}_{2}$ however does not admit almost $\Gamma_{1}$-invariant
	vectors because it is contained in $\mathcal{K}_{1}^{\bot}$. The
	orthogonal complement $\mathcal{H}_{1}:=\mathcal{H}_{2}^{\bot}$ on
	the other hand has almost $\Gamma_{1}$-invariant vectors because it contains
	$\mathcal{K}_{1}$, but does not have almost $\Gamma_{2}$-invariant
	vectors because it is contained in $\mathcal{K}_{2}^{\bot}$. The
	subrepresentations $\pi_{1},\pi_{2}$ corresponding to the invariant
	subspaces $\mathcal{K}_{1}$,$\mathcal{K}_{2}$ therefore give the
	desired decomposition. 
\end{proof}

Recall that to any positive-definite function $\varphi$ corresponds a GNS triplet $(\mathcal{H},\pi,\xi)$. If  $\varphi$ is moreover a trace then $\pi$ extends to a representation $\pi\times \rho:\Lambda\times \Lambda \to \mathcal{U}(\mathcal{H})$ such that $\pi(g)\xi=\rho(g^{-1})\xi$ for all $g\in G$, see Example \ref{example:traces-pd}. In particular $\varphi(g)=\langle \pi(g)\xi,\xi \rangle=\langle \xi,\rho(g)\xi \rangle$. We call $\rho$ the \emph{right GNS representation of $\varphi$}. To avoid confusion, we shall call $\pi$ the \emph{left GNS representation}.

\begin{prop}
	\label{prop:almost-inv-vectors-of-traces}Let $\varphi$ be a trace
	of $\Lambda$, and let $\pi$ and $\rho$ be the left and right GNS
	representations of $\varphi$. If $\pi$ admits almost invariant vectors
	then so does $\pi\times\rho:\Lambda\times\Lambda\to\mathcal{U}(\mathcal{H})$. 
\end{prop}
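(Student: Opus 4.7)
The plan is to apply Proposition \ref{prop:inv vectors of commuting subgroups} to the representation $\pi\times\rho$ of $\Lambda\times\Lambda$ viewed as the product of the commuting subgroups $\Gamma_1=\Lambda\times\{e\}$ and $\Gamma_2=\{e\}\times\Lambda$, whose restrictions are $\pi$ and $\rho$ respectively. The main tool will be the modular conjugation $J:\mathcal{H}\to\mathcal{H}$, the conjugate-linear isometric involution determined by $J\pi(g)\xi=\pi(g^{-1})\xi$; the trace property of $\varphi$ makes this well-defined on the dense subspace $\pi(\Lambda)\xi$ and isometric, and a direct calculation gives the intertwining relation $J\pi(g)J=\rho(g)$ for every $g\in\Lambda$.

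The first step is to observe that $\rho$ automatically admits almost invariant vectors: if $(v_n)$ is a sequence of almost $\pi$-invariant unit vectors, then $(Jv_n)$ is almost $\rho$-invariant, since
\[ \|\rho(g)Jv_n-Jv_n\|=\|J(\pi(g^{-1})v_n-v_n)\|=\|\pi(g^{-1})v_n-v_n\|\to 0. \]
Thus both $\pi\times\rho|_{\Gamma_1}$ and $\pi\times\rho|_{\Gamma_2}$ admit almost invariant vectors, which brings us into the setting of Proposition \ref{prop:inv vectors of commuting subgroups}.

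I then argue by contradiction: if $\pi\times\rho$ does not admit almost invariant vectors, the proposition yields an orthogonal $(\Lambda\times\Lambda)$-invariant decomposition $\mathcal{H}=\mathcal{H}_1\oplus\mathcal{H}_2$ in which $\pi|_{\mathcal{H}_1}$ admits almost invariant vectors while $\rho|_{\mathcal{H}_1}$ does not (and symmetrically on $\mathcal{H}_2$). The orthogonal projection $P$ onto $\mathcal{H}_1$ commutes with $\pi(\Lambda)$ and with $\rho(\Lambda)$. Setting $M:=\pi(\Lambda)''$, the relation $J\pi(g)J=\rho(g)$ gives $\rho(\Lambda)''=JMJ$, and after reducing to the faithful case via Lemma \ref{lem:kernel-of-pd} (which does not change any of $\mathcal{H},\pi,\rho,J$), the standard form of a tracial von Neumann algebra gives $JMJ=M'$. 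Consequently $P\in M'\cap M=Z(M)$ is a central projection.

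The crucial step is to verify that $P$ is fixed by conjugation by $J$, i.e. $JPJ=P$. A short computation on the dense subspace $M\xi$, using $JPJ(y\xi)=(Py^*)^*\xi=yP\xi=Py\xi$ together with $P=P^*$ and the centrality of $P$, confirms this. Thus $J\mathcal{H}_1=\mathcal{H}_1$; then the almost $\rho$-invariant vectors $Jv_n$ produced above from almost $\pi$-invariant vectors $v_n\in\mathcal{H}_1$ lie in $\mathcal{H}_1$, contradicting the defining property of the decomposition. The main obstacle is precisely this last identification: recognising $P$ as a central projection of $M$ and establishing its $J$-invariance requires the standard-form dictionary $JMJ=M'$ for tracial algebras, and hence a careful treatment of the possibly non-faithful case.
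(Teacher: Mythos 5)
Your proof is correct, and it shares the paper's overall skeleton --- both arguments first transport almost invariant vectors from $\pi$ to $\rho$, then argue by contradiction via Proposition \ref{prop:inv vectors of commuting subgroups} to produce the invariant subspace $\mathcal{H}_1$ on which $\pi$ has almost invariant vectors but $\rho$ does not --- but the two proofs close the contradiction differently. The paper stays entirely inside GNS theory: it projects the cyclic vector $\xi$ to $\xi_1=P\xi\in\mathcal{H}_1$, notes that $\xi_1$ is cyclic for both $\pi_1$ and $\rho_1$, and uses uniqueness of the GNS triplet to conclude $\pi_1\simeq\bar\rho_1$, whence $\rho_1$ inherits almost invariant vectors. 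You instead invoke the commutation theorem $JMJ=M'$ for the tracial algebra $M=\pi(\Lambda)''$ in standard form, deduce that $P\in M'\cap\rho(\Lambda)'=M'\cap M$ is central, verify $JPJ=P$, and transport the almost invariant vectors of $\pi_1$ into $\mathcal{H}_1$ by $J$ itself. Both are sound; the paper's version is more elementary and self-contained, while yours makes the $\pi\leftrightarrow\rho$ symmetry an honest operator ($J$) and isolates the real content --- that the decomposition is necessarily $J$-invariant. One small simplification: the detour through Lemma \ref{lem:kernel-of-pd} is unnecessary, and in fact addresses the wrong notion of faithfulness (that lemma concerns $\varphi^{-1}(1)\subseteq\Lambda$, not faithfulness of the vector state on $M$). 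The relevant fact is automatic: for a cyclic trace vector $\xi$, if $x\in M_+$ satisfies $\langle x\xi,\xi\rangle=0$ then $\|x^{1/2}y\xi\|^2=\langle y^*xy\,\xi,\xi\rangle=\langle x^{1/2}yy^*x^{1/2}\xi,\xi\rangle\le\|y\|^2\langle x\xi,\xi\rangle=0$ for all $y\in M$, so $x=0$ by cyclicity; hence $\xi$ is always separating for $M$ and the standard-form identity $JMJ=M'$ applies with no reduction. (There is also a harmless index slip in your almost-invariance computation: $\rho(g)Jv_n-Jv_n=J(\pi(g)v_n-v_n)$, not $J(\pi(g^{-1})v_n-v_n)$; since one quantifies over all $g$, nothing changes.)
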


\begin{proof}
	Let $(\mathcal{H},\pi,\xi)$ be a GNS triplet associated with $\varphi$.
	Observe that:
	
	\begin{equation}
		\varphi(g)=\left\langle \xi,\rho(g)\xi\right\rangle =\left\langle \rho(g)^{*}\xi,\xi\right\rangle \label{eq:trace-rho-pi}, \quad g\in \Lambda
	\end{equation}
	Thus $\pi$ is equivalent to the adjoint representation $\bar{\rho}$,
	by the uniqueness of the GNS. Since $\pi$ admits almost invariant
	vectors, then so does $\rho$. Assuming by contradiction that $\pi\times\rho$
	does not admit almost invariant vectors implies, by Proposition \ref{prop:inv vectors of commuting subgroups},
	that there is a subrepresentation $\pi_{1}\times\rho_{1}$ of $\pi\times\rho$
	such that $\pi_{1}$ has almost invariant vectors but $\rho_{1}$
	does not. Let $\mathcal{H}_{1}$ denote the subspace corresponding
	to this subrepresentation. Since $\xi$ is cyclic with respect to
	$\pi$ as well as to $\rho$, its projection onto $\mathcal{H}_{1}$
	is non-zero and we denote its normalization by $\xi_{1}\in\mathcal{H}_{1}$.
	It is not hard to see that $\xi_{1}$ must be cyclic with respect
	to the representations $\pi_{1}$ and $\rho_{1}$. 
	
	Let $\varphi_{1}$ denote the positive-definite function corresponding
	to the GNS triplet $(\mathcal{H}_{1},\pi_{1},\xi_{1})$. Then a similar
	computation to Eq. (\ref{eq:trace-rho-pi}) shows that $(\mathcal{H}_{1},\bar{\rho}_{1},\xi_{1})$
	is a GNS triplet associated with $\varphi_{1}$. By uniqueness of
	the GNS we conclude that $\pi_{1}$ and $\bar{\rho}_{1}$ are equivalent,
	and in particular $\rho_{1}$ admits almost invariant vectors. This
	is a contradiction. 
\end{proof}
\begin{prop}
	\label{prop:amenable traces are bi-amenable}Let $\varphi$ be a trace
	of $\Lambda$, and let $\pi$ and $\rho$ be the left and right GNS
	representations corresponding to $\varphi$. If $\varphi$ is amenable
	then $\pi\times\rho:\Lambda\times\Lambda\to\mathcal{U}(\mathcal{H})$
	is amenable. 
\end{prop}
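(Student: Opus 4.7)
The starting point is the identification of $\Lambda \times \Lambda$-representations on $\mathcal{H} \otimes \bar{\mathcal{H}}$,
\[
(\pi \times \rho) \otimes \overline{\pi \times \rho} \;=\; (\pi \otimes \bar\pi) \times (\rho \otimes \bar\rho),
\]
where the first copy of $\Lambda$ acts through $\pi \otimes \bar\pi$ and the second, commuting with it, through $\rho \otimes \bar\rho$. Amenability of $\pi \times \rho$ is therefore equivalent to the existence of almost $\Lambda \times \Lambda$-invariant vectors for this commuting product on $\mathcal{H} \otimes \bar{\mathcal{H}}$. By hypothesis, $\pi \otimes \bar\pi$ admits almost $\Lambda$-invariant vectors, and since $\rho \cong \bar\pi$ (as shown in the proof of Proposition~\ref{prop:almost-inv-vectors-of-traces}), the same holds for $\rho \otimes \bar\rho$; consequently both restrictions to the individual copies of $\Lambda$ admit almost invariant vectors, which puts us in the setting of Proposition~\ref{prop:inv vectors of commuting subgroups}.

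Mirroring the proof of Proposition~\ref{prop:almost-inv-vectors-of-traces}, I would argue by contradiction: assume the joint representation has no almost invariant vectors, and apply Proposition~\ref{prop:inv vectors of commuting subgroups} to obtain an orthogonal decomposition $\mathcal{H} \otimes \bar{\mathcal{H}} = \mathcal{K}_1 \oplus \mathcal{K}_2$ with the usual imbalance between the two $\Lambda$-factors. The vector $\eta := \xi \otimes \bar\xi$ satisfies the GNS-type intertwining $(\pi \otimes \bar\pi)(g)\,\eta = (\rho \otimes \bar\rho)(g^{-1})\,\eta$, and projecting $\eta$ into each piece yields a vector whose cyclic subspace carries two cyclic $\Lambda$-representations which are complex conjugates of each other. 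By the GNS uniqueness argument of the preceding proof, these admit almost invariant vectors simultaneously on the cyclic subspace, contradicting the imbalance there.

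The principal obstacle is that $\eta$ need not be cyclic for $\pi \otimes \bar\pi$ on all of $\mathcal{H} \otimes \bar{\mathcal{H}}$ --- it is cyclic only on the GNS subspace of the trace $|\varphi|^2$ --- so the cyclic-subspace argument above rules out the imbalance only on that smaller subspace rather than on the full summand $\mathcal{K}_i$. To close this gap I would pass to the tracial von Neumann algebra $M := \pi(\Lambda)''$: amenability of $\varphi$ is equivalent to injectivity of $M$, and in the standard form realized by the GNS the commutant $M' = \rho(\Lambda)''$ is likewise injective, so the commuting join $M \vee M' \subseteq \mathcal{B}(\mathcal{H})$ is injective as well. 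A hypertrace of $M \vee M'$ on $\mathcal{B}(\mathcal{H})$ is then conjugation-invariant under both $\mathcal{U}(M) \supseteq \pi(\Lambda)$ and $\mathcal{U}(M') \supseteq \rho(\Lambda)$, hence under $(\pi \times \rho)(\Lambda \times \Lambda)$, which yields amenability of $\pi \times \rho$ directly.
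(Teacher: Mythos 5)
Your first paragraph sets up exactly the reduction the paper uses, namely the identification $(\pi\times\rho)\otimes\overline{(\pi\times\rho)}\cong(\pi\otimes\bar\pi)\times(\rho\otimes\bar\rho)$, and the obstacle you flag (that $\xi\otimes\bar\xi$ is cyclic only on the GNS subspace of $|\varphi|^{2}$ rather than on all of $\mathcal{H}\otimes\bar{\mathcal{H}}$) is a legitimate concern. The paper's resolution is simply to observe that $|\varphi|^{2}=\varphi\cdot\bar{\varphi}$ is again a \emph{trace} on $\Lambda$, whose left and right GNS representations are $\pi\otimes\bar\pi$ and $\rho\otimes\bar\rho$ (on the cyclic subspace generated by $\xi\otimes\bar\xi$), and then to apply Proposition~\ref{prop:almost-inv-vectors-of-traces} to this new trace; no von Neumann algebraic input is needed. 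Had you taken that one step, your argument would have coincided with the paper's.

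The von Neumann algebraic fallback in your last paragraph does not work, for two separate reasons. First, the equivalence ``$\varphi$ amenable $\iff M=\pi(\Lambda)''$ injective'' is unavailable in the direction you need. Amenability of $\varphi$ produces a state on $\mathcal{B}(\mathcal{H})$ invariant under conjugation by the \emph{group} $\pi(\Lambda)$ (equivalently, by the unitaries of the norm-closed algebra $C^{*}_{\pi}(\Lambda)$), whereas injectivity of the finite algebra $M$ requires, by Connes' theorem, a state invariant under \emph{all} unitaries of $M$; since the state is not normal, invariance does not pass from a weakly dense subgroup to its weak closure. This is precisely the gap between ``amenable'' and ``von Neumann amenable'' traces (Definition~\ref{def:vNamenable}) that the paper is careful to maintain: Proposition~\ref{prop:trace-dichotomy} uses only the implication in the opposite direction. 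Second, and more decisively, the step ``$M\vee M'$ injective $\Rightarrow$ hypertrace'' is false. Note that $M\vee M'=Z(M)'$ is the commutant of an abelian, hence injective, algebra, so it is injective for \emph{every} trace $\varphi$, amenable or not; and when $\varphi$ is a character, $M$ is a factor, so $M\vee M'=\mathcal{B}(\mathcal{H})$, which for infinite-dimensional $\mathcal{H}$ is properly infinite and admits no state invariant under all of its unitaries. If your chain of implications from ``$M\vee M'$ is injective'' onward were valid, it would show that $\pi\times\rho$ is amenable for every trace; this already fails for $\varphi=\delta_{e}$ on the free group $F_{2}$, where the restriction of $\pi\times\rho=\lambda\times\rho$ to $F_{2}\times\{e\}$ is the regular representation, which is not amenable.
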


\begin{proof}
	The function $|\varphi|^{2}=\varphi\cdot\bar{\varphi}$ is a trace
	on $\Lambda$. Indeed, $\left(\mathcal{H}\otimes\mathcal{H},\pi\otimes\bar{\pi},\xi\otimes\xi\right)$ is the corresponding GNS triplet, and $\rho\otimes\bar{\rho}$ is the right GNS. As $\varphi$ is amenable,
	$\pi\otimes\bar{\pi}$ has almost invariant vectors, and so by Proposition \ref{prop:almost-inv-vectors-of-traces}
	so does $\left(\pi\otimes\bar{\pi}\right)\times\left(\rho\otimes\bar{\rho}\right)$.
	But the latter is canonically equivalent to $\left(\pi\times\rho\right)\otimes\overline{\left(\pi\times\rho\right)}$
	so that $\pi\times\rho$ is amenable. 
\end{proof}

\subsection{Invariant random subgroups and p.m.p actions } \label{subsec:IRS}

For a group $\Lambda$ denote by $\Sub({\Lambda})$ the space of all subgroups of $\Lambda$ endowed with the Chabauty topology. This topology is the one induced by the natural embedding $\Sub({\Lambda})\subseteq\{0,1\}^\Lambda$ with the product topology, and in particular it is compact. We say that $H\in \Sub{(\Lambda)}$ is \emph{co-amenable} if $\Lambda/H$ admits a $\Lambda$-invariant mean (see \cite{hartman2016stabilizer}).

\begin{lem}
	\label{lem:co-amenable-subgroups-are-Borel}The set of co-amenable
	subgroups of a countable group $\Lambda$ is Borel.
\end{lem}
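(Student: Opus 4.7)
The plan is to use the standard Reiter-style characterization of co-amenability for coset actions: $H<\Lambda$ is co-amenable if and only if for every finite $F\subset\Lambda$ and every $\epsilon>0$, there exists a finitely supported probability measure $\nu$ on $\Lambda/H$ with $\|f.\nu-\nu\|_{1}<\epsilon$ for all $f\in F$. (This is the standard translation of the existence of an invariant mean on $\Lambda/H$ via weak-$\ast$/norm approximation in $\ell^{1}(\Lambda/H)$.) Any such $\nu$ can be approximated by a pushforward $\pi_{H}(\mu)$ of a rationally weighted, finitely supported probability measure $\mu$ on $\Lambda$ under the projection $\pi_{H}:\Lambda\to\Lambda/H$, so we can replace the above by a countable quantification.

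Fix a cofinal sequence $F_{1}\subset F_{2}\subset\cdots$ of finite subsets exhausting $\Lambda$ and enumerate the countable set $\mathcal{M}$ of rationally weighted finitely supported probability measures on $\Lambda$. By the preceding remark,
\[
\{H\in\Sub(\Lambda):H\text{ co-amenable}\}=\bigcap_{n\in\N}\bigcup_{\mu\in\mathcal{M}}\bigcap_{f\in F_{n}}\bigl\{H:\|f.\pi_{H}(\mu)-\pi_{H}(\mu)\|_{1}<1/n\bigr\}.
\]
So it suffices, for each fixed $\mu=\sum_{i=1}^{k}c_{i}\delta_{g_{i}}\in\mathcal{M}$ and each $f\in\Lambda$, to check that the map $H\mapsto\|f.\pi_{H}(\mu)-\pi_{H}(\mu)\|_{1}$ is Borel on $\Sub(\Lambda)$.

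This is the only real step, and it is immediate from the Chabauty topology. The pushforward $\pi_{H}(\mu)$ assigns to a coset $xH$ the mass $\sum_{i:g_{i}\in xH}c_{i}$, and similarly for $f.\pi_{H}(\mu)$ with $g_{i}$ replaced by $fg_{i}$. Consequently $\|f.\pi_{H}(\mu)-\pi_{H}(\mu)\|_{1}$ is determined by the finite list of binary data $\mathbf{1}[w\in H]$ for $w$ running over the finite set $S_{f,\mu}=\{g_{j}^{-1}g_{i},\,g_{j}^{-1}fg_{i}:1\leq i,j\leq k\}\subset\Lambda$; indeed, all that matters is the equivalence relation on $\{g_{1},\dots,g_{k},fg_{1},\dots,fg_{k}\}$ given by $a\sim_{H}b\iff a^{-1}b\in H$. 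Each condition ``$w\in H$'' is clopen in $\Sub(\Lambda)$, so our map is a finite-valued step function whose level sets are clopen, in particular Borel. Assembling the display above, the set of co-amenable subgroups is a $G_{\delta}$-set (an $\exists$ of clopen sets inside an $\forall$), hence Borel.

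The main ``obstacle''---really the only non-bookkeeping point---is invoking the Reiter-type coset characterization of co-amenability to recast the existence of an invariant mean as a countable quantification over a countable dense family of test measures; once that is in hand, the Chabauty topology handles the rest automatically because everything reduces to finitely many membership queries in $H$.
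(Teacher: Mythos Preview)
Your proof is correct and follows essentially the same approach as the paper's. The only difference is that the paper uses the F{\o}lner-set characterization of co-amenability (quantifying over finite subsets $F_k\subset\Lambda$ and measuring $|(F_k\triangle gF_k)/H|/|F_k/H|$) while you use the equivalent Reiter characterization via finitely supported rational probability measures; in both cases the relevant function of $H$ depends on only finitely many membership queries, hence is locally constant on $\Sub(\Lambda)$, and the set of co-amenable subgroups comes out $G_\delta$.
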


\begin{proof}
	Let $F_{k}$ be an enumeration of all finite subsets of $\Lambda$.
	For a subgroup $H\leq\Lambda$ and a set $S\subseteq\Lambda$ let
	$S/H$ denote the image of $S$ under the quotient map $\Lambda\to\Lambda/H$.
	Clearly, $\left\{ F_{k}/H\right\} _{k\in\mathbb{N}}$ exhausts all
	possible finite subsets of $\Lambda/H$. For $g\in\Lambda$ and $k\in\N$
	define the map $f_{k,g}:\Sub({\Lambda})\to\Q$ defined by:
	\[
	f_{k,g}\left(H\right)=\frac{\left|\left(F_{k}\bigtriangleup g.F_{k}\right)/H\right|}{\left|F_{k}/H\right|}
	\]
	Suppose a sequence of subgroups $H_{n}$ converges to $H$. Since $\Lambda$
	is discrete, this just means that a given $x\in\Lambda$ is in $H$
	if and only if it is eventually in $H_{n}$. Therefore, $\left|S/H_{n}\right|$
	eventually coincides with $|S/H|$ for any given finite set $S\subseteq\Lambda$. 
	This shows that $f_{k,g}$ is locally constant and in particular
	continuous. It follows that the set
	\[
	\bigcap_{i,j\in\N}\bigcup_{k\in\N}\bigcap_{g\in F_{i}}f_{k,g}^{-1}\left([0,2^{-j})\right)
	\]
	is Borel. It is left to note that this set is precisely the set of
	all co-amenable subgroups, see the F\o lner condition for co-amenability \cite[Theorem 2.9]{glasner2007amenable}.
\end{proof}

We consider the continuous action of  $\Lambda$ on $\Sub(\Lambda)$ by conjugation. An \emph{invariant random subgroup} (IRS in short) of $\Lambda$ is a $\Lambda$-invariant Borel probability measure on $\Sub(\Lambda)$. A \emph{uniformly recurrent subgroup} (URS in short) is a minimal (non-empty) $\Lambda$-invariant closed subset of 
$\Sub({\Lambda})$. We refer to \cite{abert2014kesten,glasner2015uniformly} for more on these notions. 

By a \emph{p.m.p action} we mean a probability measure space $(\Omega,\nu)$ endowed with a measure preserving action of $\Lambda$. We say that such an action has a \emph{spectral gap} if the associated Koopman representation $\Lambda\to \mathcal{U}(L^2_0(\Omega,\nu))$ (where $L^2_0$ is the orthogonal complement of the constants) has no almost invariant vectors. We note that having a spectral gap implies ergodicity. The pushforward of $\nu$ under the stabilizer map $\omega\mapsto \mathrm{stab}_\Lambda(\omega)$ is an IRS on $\Lambda$. In fact, any IRS arises in this manner, and any ergodic IRS arises from an ergodic p.m.p action \cite{abert2014kesten}. 

\begin{thm}[\protect{Compare with \cite[Theorem 3.2]{peterson2016character} and with \cite[Theorem 2.12]{dudko2014finite}}] \label{thm:charm-dichotomy-IRS}
	Let $\Lambda$ be a countable group, and suppose  that any trace of $\Lambda$ is either amenable or supported on the amenable radical. Then for any p.m.p action $(\Omega,\nu)$ of $\Lambda$ with spectral gap, the stabilizer of a random point $\omega\in \Omega$ is either contained in $\Rad(\Lambda)$ almost surely, or it is co-amenable almost surely.  Thus, denoting by $\mu$ the corresponding IRS, either the set of all subgroups of $\Rad(\Lambda)$ is of full $\mu$-measure, or the set of all co-amenable subgroups is of full $\mu$-measure.  
\end{thm}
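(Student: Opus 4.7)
The plan is to encode $(\Omega,\nu)$ as a trace on $\Lambda$ and apply the hypothesis dichotomy.

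Define $\varphi:\Lambda\to[0,1]$ by $\varphi(g):=\nu(\{\omega\in\Omega : g\omega=\omega\})$. Writing $\varphi=\int_\Omega\mathbf{1}_{\Stab(\omega)}\,d\nu(\omega)$ expresses $\varphi$ as a $\nu$-average of the positive-definite functions $\mathbf{1}_H$ (each being a matrix coefficient of the quasi-regular representation $\lambda_{\Lambda/H}$), so $\varphi$ is positive-definite; conjugation invariance of $\varphi$ follows from $\Lambda$-invariance of $\nu$. Thus $\varphi\in\Tr(\Lambda)$, and by hypothesis $\varphi$ is amenable or supported on $\Rad(\Lambda)$.

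If $\varphi$ is supported on $\Rad(\Lambda)$ then $\nu(\Fix(g))=\varphi(g)=0$ for every $g\in\Lambda\setminus\Rad(\Lambda)$, so by countability of $\Lambda\setminus\Rad(\Lambda)$ we get $\Stab(\omega)\subseteq\Rad(\Lambda)$ for $\nu$-a.e.\ $\omega$, which is the first alternative of the theorem.

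Suppose now $\varphi$ is amenable. The set $A:=\{\omega:\Stab(\omega)\text{ is co-amenable in }\Lambda\}$ is Borel by Lemma~\ref{lem:co-amenable-subgroups-are-Borel}, and it is $\Lambda$-invariant since conjugation preserves co-amenability. As spectral gap implies ergodicity, $\nu(A)\in\{0,1\}$; the task reduces to excluding $\nu(A)=0$. Assume for contradiction $\nu(A)=0$, so that the quasi-regular representation $\lambda_{\Lambda/\Stab(\omega)}$ is non-amenable---equivalently, there is no $\Lambda$-invariant mean on $\Lambda/\Stab(\omega)$---for $\nu$-a.e.\ $\omega$. By uniqueness of the GNS construction, $\varphi=\int\mathbf{1}_{\Stab(\omega)}\,d\nu$ gives the direct integral
\[\pi_\varphi\cong\int^{\oplus}_{\Omega}\lambda_{\Lambda/\Stab(\omega)}\,d\nu(\omega).\]
Amenability of $\varphi$ upgrades via Proposition~\ref{prop:amenable traces are bi-amenable} to amenability of the $\Lambda\times\Lambda$-representation $\pi_\varphi\times\rho_\varphi$, which by Bekka's criterion provides a $(\Lambda\times\Lambda)$-invariant state $\psi$ on $\mathcal{B}(\mathcal{H}_\varphi)$. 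Restricting $\psi$ to the subalgebra $\ell^\infty(Y)\subseteq\mathcal{B}(\mathcal{H}_\varphi)$ of decomposable multiplication operators---where $Y=\bigsqcup_\omega\Lambda/\Stab(\omega)$ is the coset bundle with its natural $\Lambda$-action---gives a $\Lambda$-invariant mean $m$ on $Y$, whose pushforward along $Y\to\Omega$ is a $\Lambda$-invariant mean on $L^\infty(\Omega,\nu)$.

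The main obstacle is to leverage the spectral gap to force $m$ to disintegrate, along this pushforward, into a $\nu$-measurable family of fiberwise $\Lambda$-invariant means $m_\omega$ on $\Lambda/\Stab(\omega)$. Strong ergodicity coming from the spectral gap is precisely what rules out the pathological scenario where $m$ is compatible with global amenability yet ``scatters'' across fibers in a manner that prevents fiberwise invariance. The bilateral invariance of $\psi$ supplied by Proposition~\ref{prop:amenable traces are bi-amenable} is expected to drive the averaging argument that produces the $m_\omega$. Once this disintegration-and-averaging step is carried out, the existence of $m_\omega$ for $\nu$-a.e.\ $\omega$ contradicts the non-co-amenability of $\Stab(\omega)$, completing the proof. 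The combined disintegration-averaging step, which hinges crucially on the spectral gap, is where the bulk of the technical work lies.
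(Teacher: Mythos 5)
Your setup is correct and coincides with the paper's: you form the trace $\varphi(g)=\nu(\Fix(g))$, invoke the hypothesis, and dispose of the case where $\varphi$ is supported on $\Rad(\Lambda)$ by countability exactly as the paper does. But in the amenable case you have only reduced the theorem to the statement you call "the main obstacle," and that reduction is where essentially all of the content lives. You explicitly do not prove that the invariant mean $m$ on the coset bundle $Y=\bigsqcup_\omega \Lambda/\Stab(\omega)$ disintegrates into fiberwise invariant means $m_\omega$; you only assert that the spectral gap and the bilateral invariance "are expected to drive" such an argument. As written, this is a genuine gap, not an omitted routine verification. Worse, the formulation you chose makes the remaining step problematic: an invariant mean is a finitely additive object and does not admit a disintegration along $Y\to\Omega$ in any standard sense, so there is no obvious way to "average" $m$ into measurable fiberwise means $m_\omega$, and it is not clear the strategy can be completed in this form.

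The paper avoids this precisely by never passing to the limiting mean. In Lemma~\ref{lem:Bekka-amemable-co-amenable} it works on the orbit equivalence relation $(\mathcal{R},\tilde\nu)$ (the measured version of your bundle $Y$) and keeps a sequence of honest, countably additive, almost invariant probability measures: amenability of $\pi\times\rho$ gives almost invariant unit vectors $f_n\in L^2(\mathcal{R}^2)$; the Mazur-map lemma (Lemma~\ref{lem:almost-inv-vectors}) converts these into probability measures $\eta_n$ on $\mathcal{R}$ that are almost invariant in total variation; the spectral gap is used at exactly one point, to force the pushforwards $(p_1)_*\eta_n$ to converge to $\nu$ in total variation, so that after replacing the base measure by $\nu$ the modified measures $\eta_n'=\int\eta_n^\omega\,d\nu$ remain almost invariant; and finally a quantitative Markov-inequality/subsequence argument extracts, on a positive-measure set of $\omega$, sequences $\eta_n^\omega$ of almost invariant measures on $\Lambda/\Lambda_\omega$, which yields co-amenability fiberwise; ergodicity upgrades positive measure to full measure. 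Because genuine measures (unlike means) disintegrate, and because one retains quantitative control before passing to any limit, the step you left open becomes tractable. To complete your proof you would need to replace the invariant-mean argument by this (or an equivalent) almost-invariant-measure argument; as it stands the proposal establishes the easy half of the theorem and only a plausible outline of the hard half.
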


We shall first need some preparation.

\begin{lem}\label{lem:almost-inv-vectors}
	Consider a measure space $(\Omega,\nu)$ and a measure preserving action of a group $\Lambda$ on it, and fix $1\leq p<\infty$. Let $\mathcal{F}_p$ denote the set of functions $\Omega\to \R_{\geq 0}$ in the unit sphere of $L^p(\Omega,\nu)$.  Let $\mathcal{P}$ denote the set of probability measure on $\Omega$ which are absolutely continuous w.r.t $\nu$, endowed with the total variation metric.  Then the mapping:
	\[
		\phi_p:\mathcal{F}_p \to \mathcal{P}:  f\mapsto f^p\nu 
	\] 
	is a $\Lambda$-equivariant bijection, and both $\phi_p$ and $\phi^{-1}_p$ are uniformly continuous. In particular, for a sequence $f_n\in \mathcal{F}_p$ the following are equivalent:
	\begin{enumerate}
		\item $f_n$ are almost invariant in the $L^p$-norm: 
		\[
			\forall g\in \Lambda: \|g.f_n-f_n\|_p\to 0
		\]
		\item $\nu_n=\phi_p(f_n)$ are almost invariant in the total variation norm: 
		\[
			\forall g\in \Lambda: \|g.\nu_n-\nu_n\|_{\mathrm{tot}}\to 0
		\]
	\end{enumerate} 
\end{lem}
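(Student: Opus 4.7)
The plan is to establish the four claims in sequence: bijectivity, $\Lambda$-equivariance, uniform continuity of $\phi_p$, and uniform continuity of $\phi_p^{-1}$. The implication ``almost invariance $\Leftrightarrow$ almost invariance'' then follows immediately since a uniformly continuous $\Lambda$-equivariant map preserves almost invariance in both directions. The entire argument is a couple of elementary real-variable inequalities plus Hölder and Radon–Nikodym.

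For bijectivity, by the Radon–Nikodym theorem any $\mu\in\mathcal{P}$ has a unique (up to $\nu$-null sets) non-negative density $h\in L^1(\Omega,\nu)_{\geq 0}$ with $\int h\,\mathrm d\nu=1$, and writing $h=f^p$ with $f=h^{1/p}\geq 0$ gives the unique preimage in $\mathcal{F}_p$. $\Lambda$-equivariance is a one-line check: since the action preserves $\nu$, the pushforward $g_\ast(f^p\nu)$ integrates a test function $\varphi$ to $\int \varphi\circ g\cdot f^p\,\mathrm d\nu = \int \varphi\cdot(f\circ g^{-1})^p\,\mathrm d\nu = \int\varphi\cdot(g.f)^p\,\mathrm d\nu$, so $g.\phi_p(f)=\phi_p(g.f)$.

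For uniform continuity of $\phi_p$, I would use the mean-value estimate $|a^p-b^p|\leq p|a-b|\bigl(a^{p-1}+b^{p-1}\bigr)$ for $a,b\geq 0$ and $p\geq 1$ (for $p=1$ this is just $|a-b|\leq|a-b|$, trivially). Then, writing $q=p/(p-1)$ when $p>1$ and applying Hölder,
\[
\|\phi_p(f)-\phi_p(g)\|_{\mathrm{tot}}=\int|f^p-g^p|\,\mathrm d\nu \leq p\|f-g\|_p\bigl(\|f^{p-1}\|_q+\|g^{p-1}\|_q\bigr)=2p\|f-g\|_p,
\]
using $\|f\|_p=\|g\|_p=1$. (The case $p=1$ is immediate: $\|f\nu-g\nu\|_{\mathrm{tot}}=\|f-g\|_1$.) So $\phi_p$ is in fact Lipschitz. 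For the inverse, I would verify the elementary inequality $|a-b|^p\leq|a^p-b^p|$ for $a,b\geq 0$ and $p\geq 1$: assuming $a\geq b$, the function $v\mapsto a^p-v^p-(a-v)^p$ vanishes at $v=0$ and $v=a$, so by a single derivative check it is non-negative on $[0,a]$. Integrating gives $\|f-g\|_p^p\leq\int|f^p-g^p|\,\mathrm d\nu=\|\phi_p(f)-\phi_p(g)\|_{\mathrm{tot}}$, i.e.\ $\|f-g\|_p\leq\|\phi_p(f)-\phi_p(g)\|_{\mathrm{tot}}^{1/p}$, so $\phi_p^{-1}$ is Hölder continuous with exponent $1/p$, in particular uniformly continuous.

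Finally, the equivalence of the two almost-invariance conditions follows formally: if $\|g.f_n-f_n\|_p\to 0$ then by the first bound $\|g.\nu_n-\nu_n\|_{\mathrm{tot}}=\|\phi_p(g.f_n)-\phi_p(f_n)\|_{\mathrm{tot}}\leq 2p\|g.f_n-f_n\|_p\to 0$, and conversely the second bound yields $\|g.f_n-f_n\|_p\leq\|g.\nu_n-\nu_n\|_{\mathrm{tot}}^{1/p}\to 0$. The only non-routine point is remembering the correct direction of the inequality $|a-b|^p\leq|a^p-b^p|$ for $p\geq 1$; everything else is bookkeeping.
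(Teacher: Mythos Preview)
Your proof is correct. The paper takes a slightly different route: it factors $\phi_p$ through the Mazur map $\mathcal{F}_p\to\mathcal{F}_1$, $f\mapsto f^p$, citing the known fact (from \cite{benyamini1998geometric}) that the Mazur map and its inverse are uniformly continuous between the unit spheres, and then observes that $\phi_1$ is essentially an isometry since total variation distance equals (half) the $L^1$-distance of densities. Your argument is more self-contained: you establish the two pointwise inequalities $|a^p-b^p|\leq p|a-b|(a^{p-1}+b^{p-1})$ and $|a-b|^p\leq|a^p-b^p|$ directly and combine them with H\"older, obtaining explicit constants (Lipschitz constant $2p$ for $\phi_p$, H\"older exponent $1/p$ for $\phi_p^{-1}$). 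The paper's version is shorter on the page but outsources the key estimate; yours is longer but requires no external reference and in fact recovers the quantitative content of the Mazur map result in this special case.
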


\begin{proof} 
	Consider first the map $\mathcal{F}_p \to \mathcal{F}_1: f\mapsto f^p$. This map is referred to in the literature as the Mazur map, and it is well known that both this map, as well as its inverse, are uniformly continuous \cite[Chapter 9.1]{benyamini1998geometric}. It is also clearly a $\Lambda$-equivariant bijection. We may therefore assume that $p=1$ and consider the map $\phi_1$.
	
	$\phi_1$ is of course bijective and it is evidently $\Lambda$-equivariant. It is a standard fact that the total variation distance between two absolutely continuous probability measures is precisely half of the $L^1$-distance of the corresponding probability density functions, so in particular,  $\phi_1$ and $\phi_1^{-1}$ are uniformly continuous.
	
	The equivalence between almost invariance of the $L^p$-functions and the corresponding measures follows from equivariance and uniform continuity.

\end{proof}

Let $\Lambda\curvearrowright\left(\Omega,\nu\right)$ be a p.m.p action.
We define $\varphi:\Lambda\to \C$
by $\varphi(g)=\nu\left(\Fix({g})\right)$ where $\Fix(g)=\left\{ \omega\in\Omega\mid g.\omega=\omega\right\}$,
and note that this is a trace on $\Lambda$. 
We recall a construction of a representation associated with this
p.m.p action that utilizes the GNS representation of $\varphi$.
Consider the orbit equivalence relation:
\[
\mathcal{R}:=\left\{ \left(g.\omega,\omega\right)\mid\omega\in\Omega,g\in\Lambda\right\} \subseteq\Omega\times\Omega
\]
endowed with the subspace $\sigma$-algebra of the product. Denote
by $p_{1}$ the projection of $\mathcal{R}$ onto the
first coordinate, and define the 
measure $\tilde{\nu}$ on $\mathcal{R}$ by setting for any measurable
$E\subseteq\mathcal{R}$:
\[
\tilde{\nu}\left(E\right)=\int_{\Omega}\left|p_{1}^{-1}(\omega)\cap E\right|d\nu
\]
In other words, $\tilde{\nu}$ is obtained by integrating the counting
measures on the fibers of $p_{1}$ along $\nu$. 
%We note that ${p_2}_*\tilde{\nu}$ is absolutely continuous w.r.t $\nu$. Indeed, given a measurable set $A\subseteq \Omega$ with $\nu(A)=0$, we see that the set $p_1^{-1}(\omega)\cap p_2^{-1}(A)$ is empty for any $\omega\notin \bigcup_{g\in \Lambda}g.A$, and as a result $\tilde{\nu}(p_2^{-1}(A))={p_2}_*\tilde{\nu}(A)=0$.

$\Lambda$ acts on $\mathcal{R}$ via the left coordinate, as well as via the right coordinate, and consider the corresponding Koopman representations
$\pi, \rho:\Lambda\to\mathcal{U}\left(L^{2}(\mathcal{R},\tilde{\nu})\right)$. $\pi$ and $\rho$ obviously commute with each other and satisfy $\pi(g)\rho(g)\boldsymbol{1}_{\Delta}=\boldsymbol{1}_{\Delta}$ where $\boldsymbol{1}_{\Delta}$ is the characteristic function of the diagonal of $\Omega\times\Omega$. 
Moreover $\varphi(g)=\left\langle \pi(g)\boldsymbol{1}_{\Delta},\boldsymbol{1}_{\Delta}\right\rangle $ where $\boldsymbol{1}_{\Delta}$ is the characteristic function of the diagonal of $\Omega\times\Omega$. This means the left and right GNS representations of $\varphi$ are the cyclic
subrepresentation of $\pi$ generated by $\boldsymbol{1}_{\Delta}$. See  \cite[Theorem 15.F.4]{bekka2019unitary} for further details.

\begin{lem}
	\label{lem:Bekka-amemable-co-amenable}Let $(\Omega,\nu)$ be a p.m.p action and let $\varphi:g\mapsto \nu(\Fix(g))$ be the corresponding trace of $\Lambda$. If $(\Omega,\nu)$  has a spectral gap
	and $\varphi$ is amenable then the stabilizer of $\nu$-a.e point in
	$\Omega$ is co-amenable. 
\end{lem}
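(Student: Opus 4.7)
The plan is to exploit the bi-amenability from Proposition~\ref{prop:amenable traces are bi-amenable}, then localize it fiberwise over the orbit equivalence relation, with the spectral gap controlling the macroscopic (marginal) behavior.

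First, by Proposition~\ref{prop:amenable traces are bi-amenable}, amenability of $\varphi$ implies that the combined representation $\pi\times\rho:\Lambda\times\Lambda\to\mathcal{U}(\mathcal{H})$ is amenable, so there exist almost $\Lambda\times\Lambda$-invariant unit vectors $T_n\in\mathcal{H}\otimes\bar{\mathcal{H}}\subseteq L^{2}(\mathcal{R}\times\mathcal{R},\tilde\nu\otimes\tilde\nu)$. By Lemma~\ref{lem:almost-inv-vectors}, the associated probability measures $\mu_n:=|T_n|^{2}(\tilde\nu\otimes\tilde\nu)$ on $\mathcal{R}\times\mathcal{R}$ are almost invariant in total variation under two commuting actions: the diagonal left $\Lambda$-action (via $\pi\otimes\bar\pi$, acting on the first coordinates $\omega_1,\omega_3$) and the diagonal right $\Lambda$-action (via $\rho\otimes\bar\rho$, acting on the second coordinates $\omega_2,\omega_4$).

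The first use of spectral gap goes through the projection $q_{\mathrm{prod}}:((\omega_{1},\omega_{2}),(\omega_{3},\omega_{4}))\mapsto(\omega_{1},\omega_{4})$, which is equivariant for the \emph{product} $\Lambda\times\Lambda$-action on $\Omega\times\Omega$. A short three-summand analysis (separating $L^{2}_{0}\otimes\C$, $\C\otimes L^{2}_{0}$, and $L^{2}_{0}\otimes L^{2}_{0}$, the last handled by a Markov-plus-diagonal extraction) shows that the spectral gap of $\pi_\Omega$ on $L^2_0(\Omega)$ transfers to a spectral gap for the product action on $L^{2}(\Omega\times\Omega)\ominus\C$. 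Therefore $(q_{\mathrm{prod}})_{*}\mu_{n}\to\nu\otimes\nu$ in total variation; in particular, the first marginal of $\mu_n$ on $\Omega$ converges to $\nu$.

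Next, consider the projection $q_{\mathrm{diag}}:((\omega_{1},\omega_{2}),(\omega_{3},\omega_{4}))\mapsto(\omega_{1},\omega_{3})$, whose fibers $\Lambda\omega_{1}\times\Lambda\omega_{3}$ are preserved by the diagonal right $\Lambda$-action. Disintegrating $\mu_{n}=\int\mu_{n}^{(a,b)}\,d\eta_{n}(a,b)$ along $q_{\mathrm{diag}}$, the almost right-invariance of $\mu_{n}$ yields $\int\|h\cdot\mu_{n}^{(a,b)}-\mu_{n}^{(a,b)}\|_{\mathrm{TV}}\,d\eta_{n}\to 0$ for each $h\in\Lambda$. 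Projecting conditional measures to the first factor gives probability measures $\sigma_n^{(a,b)}$ on $\Lambda a\cong\Lambda/\Stab(a)$ with the same integrated almost invariance.

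Finally, Markov's inequality and diagonalization over a countable generating set of $\Lambda$ produce, for each $n$, a large-$\eta_n$-measure set $F_n\subseteq\Omega\times\Omega$ on which $\sigma_n^{(a,b)}$ is $(1/n)$-invariant under the first $n$ generators. Because the first marginal of $\eta_n$ converges to $\nu$, a Borel--Cantelli argument shows that $\nu$-a.e.\ $\omega\in\Omega$ admits a sequence $b_n$ with $(\omega,b_n)\in F_n$ eventually, which produces almost $\Lambda$-invariant probability measures on $\Lambda\omega$ and hence co-amenability of $\Stab(\omega)$. The main obstacle is this final measurable selection: one must carefully convert the $L^1$-integrated fiberwise almost invariance into a genuine sequence of almost invariant measures for each $\nu$-typical fixed $\omega$, using the total-variation convergence of the marginal to run the Borel--Cantelli extraction.
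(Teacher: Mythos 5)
Your proposal is correct and follows essentially the same route as the paper's proof: bi-amenability of $\pi\times\rho$ via Proposition~\ref{prop:amenable traces are bi-amenable}, conversion of almost invariant vectors into almost invariant probability measures on $\mathcal{R}\times\mathcal{R}$ via Lemma~\ref{lem:almost-inv-vectors}, the spectral gap to force the $\Omega$-marginal to converge to $\nu$ in total variation, and a fiberwise disintegration plus Markov-inequality extraction (using that distinct fibers carry mutually singular conditionals) to produce almost invariant measures on a.e.\ orbit, hence co-amenable stabilizers. The only differences are bookkeeping: the paper first pushes $\mathcal{R}\times\mathcal{R}$ down to $\mathcal{R}$ and disintegrates over $\Omega$ (so no selection of a second base point $b_n$ is needed), uses the spectral gap only for the single action on $\Omega$ rather than the product action on $\Omega\times\Omega$, and concludes via ergodicity together with the Borelness of the set of co-amenable subgroups instead of your Borel--Cantelli argument.
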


\begin{proof}
	Consider the representation $\pi\times\rho:\Lambda\times\Lambda\to \mathcal{U}(L^2(\mathcal{R},\tilde{\nu}))$ where $(\mathcal{R},\tilde{\nu})$ is the orbit equivalence relation space associated with $(\Omega,\nu)$, as constructed above. By Proposition \ref{prop:amenable traces are bi-amenable}, the combined left and right GNS representation $\pi_\varphi\times\rho_\varphi$ of $\varphi$ is amenable. Hence  $\pi\times\rho$ is amenable as it contains $\pi_\varphi\times \rho_\varphi$. This means that there are functions $f_n\in L^2(\mathcal{R})^{\otimes 2}\subseteq L^2(\mathcal{R}^2)$ of norm $1$ which are almost invariant for the $\Lambda\times\Lambda$-action.   In particular $|f_n|$ are almost invariant and so by Lemma \ref{lem:almost-inv-vectors}, the probability measures $|f_n|^2 \tilde{\nu}^2$ on $\mathcal{R}^2$ are almost invariant (in the total variation norm). Thus so are the pushforward probability measure $\eta_n$ under the projection $\mathcal{R}^2\to \mathcal{R}$ (onto say, the first coordinate).

	The projection $p_1:\mathcal{R}\to \Omega$ onto the first coordinate is $\Lambda\times\{e\}$-equivariant. Thus the pushforward probability measures $\nu_{n}:={p_1}_*\eta_n$ on $\Omega$ are absolutely continuous w.r.t $\nu$ and are almost invariant. By slightly changing $\nu_n$ (e.g taking a convex combination with $\nu$) we may assume that they are all in the same measure class of $\nu$.  Using Lemma \ref{lem:almost-inv-vectors} once again, we deduce that the $L^2$-functions $\sqrt{{d\nu_n}/{d\nu}}$ are almost invariant.  
	By assumption $\Lambda\curvearrowright\left(\Omega,\nu\right)$
	has a spectral gap so it must be that $\sqrt{{d\nu_n}/{d\nu}}$ converge to $1$ in $L^2$,  which means that (again by Lemma \ref{lem:almost-inv-vectors}) $\nu_{n}$ converge to $\nu$ in the total variation norm. 
	
	We shall from now on consider only the action of $\Lambda$ on $\mathcal{R}$ via the second coordinate, that is the action of $\{e\}\times\Lambda$. This action preserves the fibers of $p_1$.  Let $\eta_{n}=\int_{\Omega}\eta_{n}^{\omega}d\nu_{n}$ be
	the disintegration of $\eta_{n}$ along $p_1$, and
	set $\eta_{n}'=\int_{\Omega}\eta_{n}^{\omega}d\nu$. Then
	$\|\eta_{n}'-\eta_{n}\|_\mathrm{tot} \to 0$ and therefore by the triangle inequality $\eta'_{n}$
	are almost invariant. 
	
	Choose an enumeration $\Lambda=\left\{ g_{i}\right\} _{i\in\N}$
	and let $F_{n}=\left\{ g_{1},...,g_{n}\right\} $. By perhaps replacing
	$\eta_{n}'$ with a subsequence, we may assume that $\max_{g\in F_{n}}\|g.\eta_{n}'-\eta_{n}'\|<5^{-n}$.
	For each $n\in \N$, the measures $\eta_n^\omega$ are supported on pairwise distinct fibers of $p_1$ and as a result: 
	\[
	\int_\Omega\|g.\eta_{n}^{\omega}-\eta_{n}^{\omega}||d\nu(\omega)=
	||\int_\Omega \left(g.\eta_{n}^{\omega}-\eta_{n}^{\omega}\right)d\nu(\omega)||=
	||g.\eta_{n}'-\eta_{n}'||<5^{-n},\,\left(\forall g\in F_{n}\right)
	\]
	Let $A_{n}:=\left\{ \omega\in\Omega\mid\max_{g\in F_n}\|g.\eta_{n}^{\omega}-\eta_{n}^{\omega}\|\geq2^{-n}\right\} $.
	Then by Markov's inequality $\nu\left(A_{n}\right)<\left(\frac{2}{5}\right)^{n}$,
	and therefore $\Omega_{0}=\bigcap_{n\in\mathbb{N}}\Omega\backslash A_{n}$
	has positive measure. For any $\omega\in\Omega_{0}$, $\eta_{n}^{\omega}$
	is a sequence of almost invariant measures on the orbit $\Lambda/\Lambda_{\omega}$ of $\omega$,
	and as a result, $\Lambda_{\omega}$ is co-amenable \cite{eymard1972moyennes}. It follows
	that the set of $\omega\in\Omega$ for which $\Lambda_{\omega}$ is
	co-amenable is a Borel (Lemma \ref{lem:co-amenable-subgroups-are-Borel})
	$\Lambda$-invariant set of positive $\nu$-measure, and so by ergodicity
	is has full measure. 
\end{proof}

\begin{proof}[Proof of Theorem \ref{thm:charm-dichotomy-IRS}]
	Let $\left(\Omega,\nu\right)$ be a p.m.p action of $\Lambda$ with spectral gap. Let $\varphi:g\mapsto\nu(\Fix (g))$ be
	the corresponding trace, and note that $\varphi(g)=\mu\left(H\leq\Lambda\mid g\in H\right)$
	where $\mu$ is the IRS of $\Lambda$ obtained by pushing forward
	$\nu$ under the stabilizer map $\Omega\to\Sub({\Lambda})$.
	By assumption, either $\varphi$ is amenable or it is supported on the amenable
	radical. If the former case occurs then $\mu$-a.e subgroup is co-amenable by Lemma \ref{lem:Bekka-amemable-co-amenable}. The latter case means that $\mu\left(H\leq\Lambda\mid g\notin H\right)=1-\varphi(g)=1$
	for any $g\in\Lambda\backslash\Rad({\Lambda})$. Thus, as $\Lambda$
	is countable:
	\[
	\mu\left(H\leq\Lambda\mid H\subseteq\Rad({\Lambda})\right)=\mu\left(\bigcap_{g\in\Lambda\backslash \Rad(\Lambda)}\left\{ H\leq\Lambda\mid g\notin H\right\} \right)=1
	\]
	meaning that $\mu$-a.e subgroup is contained in $\Rad(\Lambda)$.  
	
	%For the fact about IRS, recall that any ergodic IRS $\mu$ arises from some ergodic  p.m.p action $\left(\Omega,\nu\right)$ asabove. Then $\left(\Sub(\Lambda),\mu\right)$ is a factor of $(\Omega,\nu)$, 
\end{proof}

\subsection{Representations and their $C^*$-algebras}
The \emph{$C^*$-algebra of a representation $\pi:\Lambda\to \mathcal{U}(\mathcal{H})$} is by definition the $C^*$-subalgebra $C^*_\pi(\Lambda)\subseteq \mathcal{B}(\mathcal{H})$ generated by the image of $\pi$.

\begin{prop}\label{prop:charmenable-reps}
	Let $\Lambda$ be a charmenable group and let $\pi$ be a non-amenable
	representation. Denote by $\mathcal{S}$ the collection of all 
	representations of $\Lambda$ that are induced from a representation of $\Rad({\Lambda})$ and that are weakly contained in
	$\pi$. Then:
	\begin{enumerate}
		\item $\mathcal{S}$ is not empty. 
		\item For any $\rho\in\mathcal{S}$, the assignment $\pi(g)\mapsto\rho(g)$
		extends to a unital surjective $*$-homomorphism $\Theta_{\rho}:C_{\pi}^{*}(\Lambda)\to C_{\rho}^{*}(\Lambda)$. 
		\item Any maximal ideal of $C_{\pi}^{*}(\Lambda)$ is the kernel of $\Theta_{\rho}$
		for some $\rho\in\mathcal{S}$. 
		\item Any tracial state on $C_{\pi}^{*}(\Lambda)$ factorizes through $\Theta_{\rho}$
		for some $\rho\in\mathcal{S}$. 
	\end{enumerate}
\end{prop}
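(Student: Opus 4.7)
The plan is to dispatch part (2) as a standard consequence of weak containment, then to establish a single key lemma that will reduce parts (1), (3) and (4) to producing an appropriate trace inside a compact convex $\Lambda$-invariant subset of $\PD_1(\Lambda)$.

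Part (2) is standard: the weak containment $\rho\prec\pi$ is equivalent to the inequality $\|\sum c_g \rho(g)\|\leq \|\sum c_g \pi(g)\|$ for every finitely supported $c:\Lambda\to\C$, which shows that the assignment $\pi(g)\mapsto \rho(g)$ extends continuously to a unital $*$-homomorphism $\Theta_\rho$ from $C^*_\pi(\Lambda)$ onto $C^*_\rho(\Lambda)$.

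The key lemma I would prove is: if $\varphi\in\Tr(\Lambda)$ satisfies $\pi_\varphi\prec\pi$, then $\varphi$ is supported on $\Rad(\Lambda)$, and hence by Lemma~\ref{lem:induction-trivial-extension} its GNS representation $\pi_\varphi=\Ind_{\Rad(\Lambda)}^\Lambda \pi_{\varphi|_{\Rad(\Lambda)}}$ belongs to $\mathcal{S}$. By Proposition~\ref{prop:trace-dichotomy} such a $\varphi$ is either amenable or supported on $\Rad(\Lambda)$; in the amenable case, $\pi_\varphi\otimes\bar\pi_\varphi$ admits almost invariant vectors, and since weak containment passes to conjugate representations and to tensor products one has $\pi_\varphi\otimes\bar\pi_\varphi\prec\pi\otimes\bar\pi$, so $\pi\otimes\bar\pi$ also admits almost invariant vectors, contradicting the non-amenability of $\pi$. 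With the key lemma in hand, part (4) is immediate: for a tracial state $\tau$ on $C^*_\pi(\Lambda)$, the pullback $\varphi=\tau\circ\pi$ is a trace with $\pi_\varphi\prec\pi$, so $\rho=\pi_\varphi\in\mathcal{S}$, and $\tau=\tau'\circ\Theta_\rho$ where $\tau'$ is the tracial state on $C^*_\rho(\Lambda)$ determined by $\tau'(\rho(g))=\varphi(g)$.

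For parts (1) and (3) the required trace is supplied by the first clause of charmenability (ubiquity of traces). For (1), the set $K=\{\omega\circ\pi\mid \omega\text{ a state on }C^*_\pi(\Lambda)\}\subseteq\PD_1(\Lambda)$ is convex, weak-$*$ compact, and $\Lambda$-invariant (the $\Lambda$-action on $\PD_1(\Lambda)$ is induced from conjugation of states by $\pi(\Lambda)$), so a trace $\varphi\in K$ exists and the key lemma places $\pi_\varphi$ in $\mathcal{S}$. For (3), given a maximal ideal $I\lhd C^*_\pi(\Lambda)$, the analogous set $K_I$ of those $\omega\circ\pi$ with $\omega|_I=0$ is nonempty (states on the nonzero quotient $C^*_\pi(\Lambda)/I$ pull back to $K_I$), compact, convex, and $\Lambda$-invariant because $I$ is preserved by the inner action of $\pi(\Lambda)$; the trace $\varphi\in K_I$ produced by charmenability yields $\rho=\pi_\varphi\in\mathcal{S}$ with $\ker\Theta_\rho\supseteq I$, and the unitality of $\Theta_\rho$ combined with the maximality of $I$ forces $\ker\Theta_\rho=I$.

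The main obstacle will be the key lemma, specifically the tensor-square step that rules out the amenable case: although the chain of implications is short, it depends on a careful handling of weak containment under complex conjugation and tensor products, and on the fact that having almost invariant vectors is preserved by weak containment in the right direction. Each of these is standard, but all will need to be invoked explicitly to close the argument.
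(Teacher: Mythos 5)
Your proposal is correct and follows essentially the same route as the paper: the compact convex invariant set of states pulled back through $\pi$, the ubiquity of traces, the amenable-or-radical dichotomy of Proposition~\ref{prop:trace-dichotomy} combined with the fact that a representation weakly contained in a non-amenable one is non-amenable, and Lemma~\ref{lem:induction-trivial-extension} to identify the GNS representation as induced from $\Rad(\Lambda)$. The only cosmetic difference is in part (3), where you work with the set of states annihilating $I$ while the paper passes to the quotient representation $q\circ\pi$ and reapplies part (1) — these are the same argument in different clothing.
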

This proposition is analogous to  \cite[Proposition 3.6, 3.7]{bader2022charmenability}, and the proof is in spirit the same. For completeness, we write out the details.  
\begin{proof}
	Let $C\subseteq\PD_1({\Lambda})$ be the collection of all positive-definite
	functions of the form $\phi\circ\pi$ for a state $\phi$ on $C_{\pi}^{*}(\Lambda)$.
	This is a closed convex $\Lambda$-invariant subset.
	By charmenability it contains at least one trace $\varphi$. The GNS representation
	$\pi_{\varphi}$ of $\varphi$ is weakly contained in $\pi$ and therefore
	cannot be amenable. This means $\varphi$ is not amenable and so by Proposition \ref{prop:trace-dichotomy}, it is
	supported on the amenable radical.  
	By Lemma \ref{lem:induction-trivial-extension}, $\pi_{\varphi}$
	is induced from the amenable radical, i.e $\pi_{\varphi}\in\mathcal{S}$. This proves
	1, and 2 follows as it is one of the equivalent definitions of weak 	containment of representations \cite[Proposition 8.B.4]{bekka2019unitary}.

	For 3, let $I$ be a proper ideal of $C_{\pi}^{*}(\Lambda)$ with
	quotient map $q:C_{\pi}^{*}(\Lambda)\to C_{\pi}^{*}(\Lambda)/I$ and choose some faithful $*$-embedding $C_{\pi}^{*}(\Lambda)/I\subseteq \mathcal{B}(\mathcal{H}_0)$.
	The representation $q\circ\pi$ of $\Lambda$ is not amenable
	as it is weakly contained in $\pi$. By the above, there exists a representation $\rho_{0}\in \mathcal{S}$
	and a unital surjective
	$*$-homomorphisms $\sigma:C_{\pi}^{*}(\Lambda)/I\to C_{\rho_{0}}^{*}(\Lambda)$.
	In particular, $\Theta_{\rho_{0}}=\sigma\circ q$ is a unital surjective
	$*$-homomorphism and $I$ is contained
	in the kernel of $\Theta_{\rho_{0}}$. 
	
	Finally for 4, let $\phi$ be a tracial state of $C_{\pi}^{*}(\Lambda)$, let $\varphi:=\phi\circ\pi\in \Tr(\Lambda)$, and let  $\pi_{\varphi}$  be
	the GNS representation associated to $\varphi$. Then $\phi$ factorizes
	through $\Theta_{\pi_{\varphi}}:C_{\pi}^{*}(\Lambda)\to C_{\pi_{\varphi}}^{*}(\Lambda)$.
	As $\pi_{\varphi}$ is weakly contained in $\pi$ it cannot be amenable, so $\varphi$ must be
	supported on $\Rad({\Lambda})$ (Proposition \ref{prop:trace-dichotomy}). It follows from Lemma \ref{lem:induction-trivial-extension} that $\pi_{\varphi}$
	is induced from the amenable radical, i.e $\pi_{\varphi}\in\mathcal{S}$.
\end{proof}

\subsection{Concluding the proofs}

\begin{proof}[Proof of Proposition \ref{prop:charmenable dichotomy}]
	(\ref{charm-item-normal}), (\ref{charm-item-trace}) and (\ref{charm-item-URS}) are respectively established
	in \cite[Propositions 3.3, 3.2, 3.5]{bader2022charmenability}. (\ref{charm-item-vn}) is an immediate consequence of Theorem \ref{thm:thoma correspondence} and in light of the classical fact that all amenable finite factors are hyperfinite \cite[Chapter 11]{anantharaman2017introduction}. 
	(\ref{charm-item-unirep}) and (\ref{charm-item-C*}) follow from  Proposition \ref{prop:charmenable-reps}.   (\ref{charm-item-IRS}) follows from (\ref{charm-item-trace}) due to Theorem \ref{thm:charm-dichotomy-IRS}.
\end{proof}	

Note that the second condition of charmenability alone implies (\ref{charm-item-normal}), (\ref{charm-item-trace}), (\ref{charm-item-IRS}). 

\begin{proof}[Proof of Proposition \ref{prop:charmenable dichotomy prop (T)}]
	We start with the statement about IRS. Let $X$ be an ergodic p.m.p action of $\Lambda$, and let $\mu$ be the corresponding IRS. Since $\Lambda$ has property (T), $X$ has a spectral gap. We may therefore apply Theorem \ref{thm:charm-dichotomy-IRS}. If $\mu$-a.e subgroup is contained in $\Rad(\Lambda)$ then it is finitely supported as it is ergodic and $\Sub(\Rad(\Lambda))$ is countable. If $\mu$-a.e subgroup is co-amenable, then $\mu$-a.e subgroup is in fact of finite index as $\Lambda$ has property (T) \cite[Claim 5.9]{hartman2016stabilizer}.
	
	Now for the statement about URS, let $X$ be a URS of $\Lambda$ and apply Proposition \ref{prop:charmenable dichotomy} (\ref{charm-item-URS}). If  $X$ admits an invariant probability measure then it in particular admits an ergodic one, that is an ergodic IRS. We may therefore apply the above to conclude that this IRS is supported of a finite orbit which must be equal to $X$ by minimality. Suppose now that $X\subseteq \Sub(\Rad(\Lambda))$. The derived space $X'$, i.e. the set of accumulation points of the compact space $X$, is a closed subset of $X$ which is invariant under all homeomorphisms. By minimality, $X'$ is either empty or all $X$. The latter is not possible since perfect Polish spaces have the cardinality of the continuum.  It follows that $X$ is discrete and compact and therefore finite.
\end{proof}

\bibliographystyle{alpha}
\bibliography{ItivigiBib}

\begin{thebibliography}{BFLM11}

\bibitem[AGV14]{abert2014kesten}
Mikl{\'o}s Ab{\'e}rt, Yair Glasner, and B{\'a}lint Vir{\'a}g.
\newblock Kesten’s theorem for invariant random subgroups.
\newblock {\em Duke Mathematical Journal}, 163(3):465--488, 2014.

\bibitem[AP17]{anantharaman2017introduction}
Claire Anantharaman and Sorin Popa.
\newblock An introduction to $\mathrm{II}_1$ factors.
\newblock {\em preprint}, 8, 2017.

\bibitem[BBH21]{bader2021charmenability}
Uri Bader, R{\'e}mi Boutonnet, and Cyril Houdayer.
\newblock Charmenability of higher rank arithmetic groups.
\newblock {\em arXiv preprint arXiv:2112.01337}, 2021.

\bibitem[BBHP22]{bader2022charmenability}
Uri Bader, R{\'e}mi Boutonnet, Cyril Houdayer, and Jesse Peterson.
\newblock Charmenability of arithmetic groups of product type.
\newblock {\em Inventiones mathematicae}, pages 1--57, 2022.

\bibitem[BdlH19]{bekka2019unitary}
Bachir Bekka and Pierre de~la Harpe.
\newblock Unitary representations of groups, duals, and characters.
\newblock {\em arXiv preprint arXiv:1912.07262}, 2019.

\bibitem[Bek90]{bekka1990amenable}
Mohammed~EB Bekka.
\newblock Amenable unitary representations of locally compact groups.
\newblock {\em Inventiones mathematicae}, 100(1):383--401, 1990.

\bibitem[Bek07]{bekka2007operator}
Bachir Bekka.
\newblock Operator-algebraic superridigity for $\mathrm{SL}_n(\mathbb{Z}), n
  \geq 3$.
\newblock {\em Inventiones mathematicae}, 169(2):401--425, 2007.

\bibitem[Bek21]{bekka2021plancherel}
Bachir Bekka.
\newblock The plancherel formula for countable groups.
\newblock {\em Indagationes Mathematicae}, 32(3):619--638, 2021.

\bibitem[BF14]{bader2014boundaries}
Uri Bader and Alex Furman.
\newblock Boundaries, rigidity of representations, and lyapunov exponents.
\newblock {\em arXiv preprint arXiv:1404.5107}, 2014.

\bibitem[BF20]{bekka2020characters}
Bachir Bekka and Camille Francini.
\newblock Characters of algebraic groups over number fields.
\newblock {\em arXiv preprint arXiv:2002.07497}, 2020.

\bibitem[BFLM11]{BFLM2011stiffness}
Jean Bourgain, Alex Furman, Elon Lindenstrauss, and Shahar Mozes.
\newblock Stationary measures and equidistribution for orbits of nonabelian
  semigroups on the torus.
\newblock {\em Journal of the American Mathematical Society}, 24(1):231--280,
  2011.

\bibitem[BH21]{boutonnet2021stationary}
R{\'e}mi Boutonnet and Cyril Houdayer.
\newblock Stationary characters on lattices of semisimple lie groups.
\newblock {\em Publications math{\'e}matiques de l'IH{\'E}S}, 133(1):1--46,
  2021.

\bibitem[BL98]{benyamini1998geometric}
Yoav Benyamini and Joram Lindenstrauss.
\newblock {\em Geometric nonlinear functional analysis}, volume~48.
\newblock American Mathematical Soc., 1998.

\bibitem[BQ13]{benoist2013stationary}
Yves Benoist and Jean-Fran{\c{c}}ois Quint.
\newblock Stationary measures and invariant subsets of homogeneous spaces (ii).
\newblock {\em Journal of the American Mathematical Society}, 26(3):659--734,
  2013.

\bibitem[BS06]{bader2006factor}
Uri Bader and Yehuda Shalom.
\newblock Factor and normal subgroup theorems for lattices in products of
  groups.
\newblock {\em Inventiones mathematicae}, 163(2):415--454, 2006.

\bibitem[CM84]{carey-moran1984nil-char}
Alan~L Carey and William Moran.
\newblock Characters of nilpotent groups.
\newblock In {\em Mathematical Proceedings of the Cambridge Philosophical
  Society}, volume~96, pages 123--137. Cambridge University Press, 1984.

\bibitem[Con76]{connes1976classification}
Alain Connes.
\newblock Classification of injective factors.
\newblock {\em Annals of Mathematics}, pages 73--115, 1976.

\bibitem[DM14]{dudko2014finite}
Artem Dudko and Konstantin Medynets.
\newblock Finite factor representations of higman--thompson groups.
\newblock {\em Groups, Geometry, and Dynamics}, 8(2):375--389, 2014.

\bibitem[Eym72]{eymard1972moyennes}
Pierre Eymard.
\newblock Moyennes invariantes et representations des produits semi-directs.
\newblock In {\em Moyennes Invariantes et Repr{\'e}sentations Unitaires}, pages
  60--82. Springer, 1972.

\bibitem[Fur67]{furstenberg1967poisson}
Harry Furstenberg.
\newblock Poisson boundaries and envelopes of discrete groups.
\newblock {\em Bulletin of the American Mathematical Society}, 73(3):350--356,
  1967.

\bibitem[Fur98]{furstenberg1998stiffness}
Hillel Furstenberg.
\newblock Stiffness of group actions.
\newblock {\em Lie groups and ergodic theory (Mumbai, 1996)}, 14:105--117,
  1998.

\bibitem[GM07]{glasner2007amenable}
Yair Glasner and Nicolas Monod.
\newblock Amenable actions, free products and a fixed point property.
\newblock {\em Bulletin of the London Mathematical Society}, 39(1):138--150,
  2007.

\bibitem[GW15]{glasner2015uniformly}
Eli Glasner and Benjamin Weiss.
\newblock Uniformly recurrent subgroups.
\newblock {\em Recent trends in ergodic theory and dynamical systems},
  631:63--75, 2015.

\bibitem[HLT14]{hartman2014abramov}
Yair Hartman, Yuri Lima, and Omer Tamuz.
\newblock An abramov formula for stationary spaces of discrete groups.
\newblock {\em Ergodic Theory and Dynamical Systems}, 34(3):837--853, 2014.

\bibitem[Hou21]{houdayer2021noncommutative}
Cyril Houdayer.
\newblock Noncommutative ergodic theory of higher rank lattices.
\newblock {\em arXiv preprint arXiv:2110.07708}, 2021.

\bibitem[How77]{howe1977representations}
Roger Howe.
\newblock On representations of discrete, finitely generated, torsion-free,
  nilpotent groups.
\newblock {\em Pacific Journal of Mathematics}, 73(2):281--305, 1977.

\bibitem[HT16]{hartman2016stabilizer}
Yair Hartman and Omer Tamuz.
\newblock Stabilizer rigidity in irreducible group actions.
\newblock {\em Israel Journal of Mathematics}, 216(2):679--705, 2016.

\bibitem[Kai00]{Kaimanovich2000Poisson}
Vadim~A. Kaimanovich.
\newblock The {P}oisson formula for groups with hyperbolic properties.
\newblock {\em Ann. of Math. (2)}, 152(3):659--692, 2000.

\bibitem[Kai02]{kaimanovich2002extensions}
Vadim~A. Kaimanovich.
\newblock The {P}oisson boundary of amenable extensions.
\newblock {\em Monatsh. Math.}, 136(1):9--15, 2002.

\bibitem[Kai03]{kaimanovich2003double}
Vadim~A Kaimanovich.
\newblock Double ergodicity of the poisson boundary and applications to bounded
  cohomology.
\newblock {\em Geometric \& Functional Analysis GAFA}, 13(4):852--861, 2003.

\bibitem[Kan80]{kaniuth1980ideals}
Eberhard Kaniuth.
\newblock Ideals in group algebras of finitely generated fc-nilpotent discrete
  groups.
\newblock {\em Mathematische Annalen}, 248(2):97--108, 1980.

\bibitem[Kan06]{kaniuth2006induced}
Eberhard Kaniuth.
\newblock Induced characters, mackey analysis and primitive ideal spaces of
  nilpotent discrete groups.
\newblock {\em Journal of Functional Analysis}, 240(2):349--372, 2006.

\bibitem[KS21]{kennedy2021noncommutative}
Matthew Kennedy and Eli Shamovich.
\newblock Noncommutative choquet simplices.
\newblock {\em Mathematische Annalen}, pages 1--39, 2021.

\bibitem[LV22]{levit2022characters}
Arie Levit and Itamar Vigdorovich.
\newblock Characters of solvable groups, hilbert-schmidt stability and dense
  periodic measures.
\newblock {\em arXiv preprint arXiv:2206.02268}, 2022.

\bibitem[Mal51]{mal1951certain}
AI~Mal’cev.
\newblock On certain classes of infinite soluble groups.
\newblock {\em Mat. Sb}, 28(567-588):4, 1951.

\bibitem[Mar91]{margulis1991discrete}
Gregori~A Margulis.
\newblock {\em Discrete subgroups of semisimple Lie groups}, volume~17.
\newblock Springer Science \& Business Media, 1991.

\bibitem[MN36]{murray1936rings}
Francis~J Murray and J~v Neumann.
\newblock On rings of operators.
\newblock {\em Annals of Mathematics}, pages 116--229, 1936.

\bibitem[Mor01]{morris2001introduction}
Dave~Witte Morris.
\newblock Introduction to arithmetic groups.
\newblock {\em arXiv preprint math/0106063}, 2001.

\bibitem[Moz95]{mozes1995epimorphic}
Shahar Mozes.
\newblock Epimorphic subgroups and invariant measures.
\newblock {\em Ergodic Theory and Dynamical Systems}, 15(6):1207--1210, 1995.

\bibitem[OSV]{poulsen22freegroup}
Joav Orovitz, Raz Slutsky, and Itamar Vigdorovich.
\newblock The trace simplex of the free group and free product c$^*$-algebras.
\newblock {\em preprint}.

\bibitem[Pet13]{peterson2013notes}
Jesse Peterson.
\newblock Notes on von neumann algebras.
\newblock {\em Vanderbilt University}, 2013.

\bibitem[Pet14]{peterson2014character}
Jesse Peterson.
\newblock Character rigidity for lattices in higher-rank groups.
\newblock {\em Preprint.(http://www. math. vanderbilt. edu/peters10/rigidity.
  pdf)}, 2014.

\bibitem[Phe01]{phelps2001lectures}
Robert~R Phelps.
\newblock {\em Lectures on Choquet's theorem}.
\newblock Springer Science \& Business Media, 2001.

\bibitem[PRR93]{platonov-rapinchik-1993}
Vladimir Platonov, Andrei Rapinchuk, and Rachel Rowen.
\newblock {\em Algebraic groups and number theory}.
\newblock Academic press, 1993.

\bibitem[PT16]{peterson2016character}
Jesse Peterson and Andreas Thom.
\newblock Character rigidity for special linear groups.
\newblock {\em Journal f{\"u}r die reine und angewandte Mathematik (Crelles
  Journal)}, 2016(716):207--228, 2016.

\bibitem[Rag72]{raghunathan1972discrete}
Madabusi~Santanam Raghunathan.
\newblock {\em Discrete subgroups of Lie groups}, volume~3.
\newblock Springer, 1972.

\bibitem[Rob72]{finiteness-soluble-groups}
Derek~J.S. Robinson.
\newblock {\em Finiteness conditions and generalized soluble groups: Part 1}.
\newblock Berlin, Heidelberg, New York: Springer, 1972.

\bibitem[Tho64]{thoma1964unitare}
Elmar Thoma.
\newblock {\"U}ber unit{\"a}re darstellungen abz{\"a}hlbarer, diskreter
  gruppen.
\newblock {\em Mathematische Annalen}, 153(2):111--138, 1964.

\bibitem[Zim13]{zimmer2013ergodic}
Robert~J Zimmer.
\newblock {\em Ergodic theory and semisimple groups}, volume~81.
\newblock Springer Science \& Business Media, 2013.

\bibitem[Zim20]{zimmer2020extensions}
Robert~J Zimmer.
\newblock A. extensions of ergodic group actions.
\newblock In {\em Group Actions in Ergodic Theory, Geometry, and Topology},
  pages 17--53. University of Chicago Press, 2020.

\end{thebibliography}

\end{document}